\newcommand{\con}{\mathfrak c}
\newcommand{\vf}{\varphi}
\newcommand{\sub}{\subseteq}
\def\cA{{{\mathcal A}}}
\def\cB{{{\mathcal B}}}
\def\cC{{{\mathcal C}}}
\def\cG{{{\mathcal G}}}
\def\cJ{{{\mathcal J}}}
\def\cK{{{\mathcal K}}}
\def\cR{{{\mathcal R}}}
\def\AK{{\mathcal{AK}}}
\def\natnums{\mathbb N}
\def\reals{\mathbb R}
\def\R{\reals}
\def\N{\natnums}
\newcommand\eps{\ensuremath{\varepsilon}}
\newcommand{\impli}{\Rightarrow}
\newcommand{\qu}{\mathbb{Q}}
\newcommand{\erre}{\mathbb{R}}
\newcommand{\BB}{\protect{\mathcal B}}
\newcommand{\er}{\mathbb R}
\newcommand{\cf}{{\rm cf} }
\newcommand{\add}{{\rm add} }
\def\epsilon{\varepsilon}
\newcommand{\axiom}{$\mathbf{\Sigma^1_1}$\textbf{D}}
\newtheorem{theo}{Theorem}[section]
\newtheorem{lem}[theo]{Lemma}
\newtheorem{pro}[theo]{Proposition}
\newtheorem{cor}[theo]{Corollary}
\newtheorem{defi}[theo]{Definition}
\newtheorem{rem}[theo]{Remark}
\newtheorem{exa}[theo]{Example}
\newtheorem{thm}[theo]{Theorem}
\newtheorem{prop}[theo]{Proposition}
\newtheorem{defn}[theo]{Definition}
\newtheorem{proposition}[theo]{Proposition}
\newtheorem{problem}[theo]{Problem}
\theoremstyle{definition}
\theoremstyle{remark}
\numberwithin{equation}{section}
\title[Tukey classification of some ideals on $\omega$\dots]{Tukey classification
of some ideals on $\omega$ and the lattices of weakly compact sets in Banach spaces}
\author{ A. Avil\'{e}s}
\address{Departamento de Matem\'{a}ticas\\
Facultad de Matem\'{a}ticas\\ Universidad de Murcia\\ 30100 Espinardo, Murcia\\
Spain} \email{avileslo@um.es}
\author{G. Plebanek}
\address{Instytut Matematyczny\\ Uniwersytet Wroc\l awski\\ Pl.\ Grunwaldzki 2/4\\
50-384 Wroc\-\l aw\\ Poland} \email{grzes@math.uni.wroc.pl}
\author{J. Rodr\'{i}guez}
\address{Departamento de Matem\'{a}tica Aplicada\\
Facultad de Inform\'{a}tica\\ Universidad de Murcia\\ 30100 Espinardo, Murcia\\
Spain} \email{joserr@um.es}
\date{\today}
\subjclass[2010]{03E60, 46B20, 46B50}
\keywords{Banach space, weakly compact, Tukey ordering, analytic determinacy}
\thanks{A. Avil\'{e}s and J. Rodr\'{i}guez were partially supported by
the research projects MTM2011-25377 and MTM2014-54182-P funded by {\em Ministerio de Econom\'{i}a y Competitividad - FEDER}
and the research project 19275/PI/14 funded by {\em Fundaci\'{o}n S\'{e}neca - Agencia de Ciencia y Tecnolog\'{i}a
de la Regi\'{o}n de Murcia} within the framework of {\em PCTIRM 2011-2014}. G. Plebanek was partially supported by
NCN grant 2013/11/B/ST1/03596 (2014-2017)}
\begin{document}

\begin{abstract}
We study the lattice structure of the family of weakly compact subsets of the unit ball~$B_X$ of a separable Banach space~$X$,
equipped with the inclusion relation (this structure is denoted by $\cK(B_X)$) and also with the parametrized family
of ``almost inclusion'' relations $K \subseteq L+\epsilon B_X$, where $\epsilon>0$ (this structure is denoted by $\mathcal{AK}(B_X)$).
Tukey equivalence between partially ordered sets and a suitable extension to deal with $\mathcal{AK}(B_X)$ are used.
Assuming the axiom of analytic determinacy, we prove that separable Banach spaces fall into four categories, namely:
$\cK(B_X)$ is equivalent either to a singleton, or to $\omega^\omega$, or to
the family $\cK(\mathbb{Q})$ of compact subsets of the rational numbers, or
to the family $[\mathfrak{c}]^{<\omega}$ of all finite subsets of the continuum.
Also under the axiom of analytic determinacy, a similar classification of~$\mathcal{AK}(B_X)$ is obtained.
For separable Banach spaces not containing~$\ell^1$, we prove
in ZFC that $\cK(B_X) \sim \mathcal{AK}(B_X)$ are equivalent
to either $\{0\}$, $\omega^\omega$, $\mathcal{K}(\mathbb{Q})$ or $[\mathfrak{c}]^{<\omega}$.
The lattice structure of the family of all weakly null subsequences of an
unconditional basis is also studied.
\end{abstract}

\maketitle

\section{Introduction}\label{section:intro}

The purpose of this paper is to establish a classification of separable Banach spaces
according to how complicated the lattice of weakly compact subsets is. Let $\mathcal{K}(B_X)$ denote the
family of all weakly compact subsets of the unit ball~$B_X$ of a Banach space~$X$,
that we view as a partially ordered set endowed with inclusion. The way in which we measure
the complexity of $\mathcal{K}(B_X)$ is through Tukey reduction. This has become a standard way
to compare partially ordered sets, proven useful to isolate some essential features of the ordered
structure \cite{sol-tod}. Let us recall that two upwards-directed partially ordered sets
are Tukey equivalent if and only if they are order isomorphic to cofinal subsets
of some third upwards-directed partially ordered set. Our first main result is the following:

\renewcommand{\thetheo}{\Alph{theo}}

\begin{theo}[\axiom]\label{theoremA}
If $X$ is a separable Banach space, then $\mathcal{K}(B_X)$ is Tukey equivalent to one of the following
partially ordered sets:
\begin{enumerate}
\item[(i)] either to a singleton,
\item[(ii)] or to $\omega^\omega$ (ordered pointwise),
\item[(iii)] or to the family $\cK(\mathbb{Q})$ of compact subsets of the rational numbers (ordered by inclusion),
\item[(iv)] or to the family $[\mathfrak{c}]^{<\omega}$ of all finite subsets of the continuum (ordered by inclusion).
\end{enumerate}
\end{theo}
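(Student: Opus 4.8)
The plan is to translate the Banach-space problem into a purely descriptive--combinatorial one and then invoke a classification of the Tukey types of the lattices $\cK(M)$ of compact subsets of a definable separable space $M$. The starting point is the literal identification $\cK(B_X)=\cK(M)$, where $M=(B_X,w)$ is the unit ball carrying its weak topology: a set is weakly compact precisely when it is compact in $M$. By Kakutani's theorem $M$ is compact iff $X$ is reflexive, in which case $M$ is the maximum of $\cK(B_X)$ and we are in case (i). Conversely, if $X$ is non-reflexive I would first exclude the types $1$ and $\omega$: a countable cofinal family $\{L_n\}$ in $\cK(B_X)$ must contain every singleton, hence would give $B_X=\bigcup_n L_n$; a Baire-category argument on the Polish space $(B_X,\|\cdot\|)$ then produces a weakly compact set with nonempty norm interior, forcing $X$ to be reflexive. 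So for non-reflexive $X$ only the types (ii)--(iv) can survive, and since weakly compact sets are norm closed one has $|\cK(B_X)|\leq\mathfrak{c}$, whence $\cK(B_X)\leq_T[\mathfrak{c}]^{<\omega}$ for free.

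The descriptive complexity of $M$ is governed by Rosenthal's $\ell^1$ theorem, which is the decisive case split. If $X$ does not contain $\ell^1$, then by the Odell--Rosenthal and Bourgain--Fremlin--Talagrand theory $(B_{X^{**}},w^*)$ is a Rosenthal compactum (every element of $X^{**}$ is a Baire-class-one function on $(B_{X^*},w^*)$); in this regime each weakly compact set is metrizable and $M=X\cap B_{X^{**}}$ admits a code low enough in the projective hierarchy that the classification goes through in ZFC. If instead $X\supseteq\ell^1$, then $(B_{X^{**}},w^*)$ fails to be a Rosenthal compactum, $M$ is genuinely non-metrizable (a Schur-type phenomenon produces bounded sets compact for a countably generated topology but not weakly compact, so the naive reduction to an analytic metric space breaks down), and the code for $X\cap B_{X^{**}}$ climbs to the coanalytic level. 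This is exactly where the hypothesis \axiom\ becomes necessary.

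The engine is the classification of the Tukey type of $\cK(M)$ for $M$ in the relevant definable class, which I would assemble by composing two dichotomies. The first is of Fremlin type and separates $\omega^\omega$ from $\cK(\mathbb{Q})$: a completely metrizable $M$ yields $\cK(M)\leq_T\omega^\omega$ (via the $G_\delta$-presentation of $M$ inside a compactum), while otherwise $\cK(M)\geq_T\cK(\mathbb{Q})$. The second separates $\cK(\mathbb{Q})$ from $[\mathfrak{c}]^{<\omega}$, its content being that no Tukey type strictly between them can occur for a definable $M$. Combining these with the strict chain $1<_T\omega<_T\omega^\omega<_T\cK(\mathbb{Q})<_T[\mathfrak{c}]^{<\omega}$ and the exclusion of $1,\omega$ from the first paragraph forces $\cK(B_X)$ into exactly one of the four listed types, and the pairwise non-equivalence of the four posets gives uniqueness.

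The main obstacle is the top dichotomy, and it is here that analytic determinacy does the real work. For analytic $M$ the statement that $\cK(M)$ is either $\leq_T\cK(\mathbb{Q})$ or $\geq_T[\mathfrak{c}]^{<\omega}$ can be established outright, but for the coanalytic spaces arising from $\ell^1$-containing $X$ one needs \axiom\ to run the same tree/unfolding argument: the regularity of coanalytic sets it provides is precisely what rules out the intermediate ``$\cK(\mathbb{Q})$-but-not-maximal'' configurations. A secondary, functional-analytic difficulty is to package $\cK(B_X)$ for an arbitrary $\ell^1$-containing $X$ as $\cK(M)$ with $M$ in the scope of the engine, that is, to verify that the coanalytic code for $X\cap B_{X^{**}}$ interacts correctly with the Tukey reductions; this is where the combinatorics of the associated ideal on $\omega$ (as in the title) enters and does the bookkeeping.
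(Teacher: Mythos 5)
Your preliminary reductions are fine: case (i) is exactly reflexivity, your Baire-category argument correctly rules out countable cofinality for non-reflexive $X$ (the paper proves the stronger fact $\omega^\omega\preceq\cK(B_X)$, Proposition~\ref{KXKBX}(iii)), and $\cK(B_X)\preceq[\con]^{<\omega}$ does come for free from $|\cK(B_X)|\le\con$. The gap is that the ``engine'' you plan to invoke does not exist for the objects where it is needed. A classification of $\cK(M)$ for definable separable $M$ is available (Fremlin, Theorem~\ref{Fr91classification}) only when $M$ is a separable \emph{metrizable} space coanalytic in some Polish space; but $(B_X,w)$ is metrizable exactly when $X^*$ is norm separable, and that is precisely the easy case of the theorem, settled in ZFC in Proposition~\ref{posets:3}. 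Moreover, Fremlin's theorem never outputs $[\con]^{<\omega}$ for such spaces, so any argument that funnels $\cK(B_X)$ through a coanalytic separable metric space could never produce case (iv) --- yet case (iv) occurs, e.g.\ $\cK(B_{C[0,1]})\sim[\con]^{<\omega}$ (Example~\ref{exa:C01}). When $X^*$ is non-separable, both $(B_X,w)$ and the ambient $(B_{X^{**}},w^*)$ are non-metrizable, so there is no Polish space in which $B_X$ has a ``coanalytic code,'' and your two dichotomies for such $M$ are not theorems you can cite: they are the very thing that has to be proved. Your case split is also misplaced: non-metrizability is governed by separability of $X^*$, not by containment of $\ell^1$ (the James tree space contains no $\ell^1$ but has non-separable dual), and in the paper the ZFC classification for $\ell^1$-free spaces with non-separable dual is a genuinely separate argument (angelicity of $B_{X^{**}}$ plus Dodos's biorthogonal systems, Section~\ref{section:nol1}), not a consequence of ``low projective complexity.''

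The idea your proposal is missing is the translation of the non-metrizable problem into combinatorics on $\omega$. The paper fixes a countable norm-dense set $D\sub B_X$, proves $\cK(B_X)\sim\mathcal{RK}(D)$ (Lemma~\ref{relativeBanach}), and then uses Grothendieck's double-limit criterion to identify $\mathcal{RK}(D)$ with $\mathcal{I}^\perp$ for an \emph{analytic} family $\mathcal{I}$ of subsets of $\omega$ (Proposition~\ref{pro:usoDobleLimite}). Everything then rests on the purely combinatorial Theorem~\ref{projectiveideal}: the orthogonal of an analytic family is Tukey equivalent to one of the five canonical posets. This is where \axiom\ actually enters, and not where you placed it: one analyses an infinite game with coanalytic payoff (Theorem~\ref{projectivestronggap}) to get a Lusin-gap dichotomy --- either $\mathcal{I}$ and $\mathcal{I}^\perp$ are countably separated, in which case an auxiliary coanalytic separable metric space $E_0$ (a partial Alexandroff duplicate of $2^\omega$) is constructed with $\mathcal{I}^\perp\sim\cK(E_0)$ and only \emph{then} is Fremlin's theorem applied; or there is an injection $u:2^{<\omega}\to\omega$ sending $0$-chains into $\mathcal{I}$ and $1$-chains into $\mathcal{I}^\perp$, in which case chain combinatorics (Lemma~\ref{lem:GeneralLemma}) yield $[\con]^{<\omega}\preceq\mathcal{I}^\perp$. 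In particular, determinacy is not needed to ``regularize'' a coanalytic metric space (Fremlin's theorem for coanalytic separable metric spaces is used as a ZFC result throughout); it is needed to decide a gap dichotomy between an analytic ideal and its coanalytic orthogonal, a statement about subsets of $\omega$ that is required for every separable $X$ with non-separable dual, whether or not $\ell^1$ embeds. Without this bridge your proposal never connects the Banach space to any structure for which a classification theorem is available.
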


The symbol (\axiom) in this and later results means that the statement holds under the axiom of analytic determinacy
(which is consistent with ZFC if one believes in large cardinals). A reader unfamiliar with determinacy axioms
can think that, in practical terms, Theorem~\ref{theoremA} holds for any \emph{reasonable} Banach space,
not arising from any set-theoretic oddity. The case (i) corresponds to reflexivity, so the result can be interpreted
as saying that non-reflexive separable Banach spaces split into three categories, depending on three canonical patterns of
disposition in the lattice of weakly compact sets.
When $X^*$ (the dual of~$X$) is separable (for the norm topology), Theorem~\ref{theoremA} holds in ZFC without
any determinacy axiom required, and (iv) never happens. This particular case is a corollary to a result of Fremlin~\cite{fre12}, who
established the Tukey classification of the lattices of compact subsets of coanalytic metric spaces.
Our main contribution is therefore the case of non-separable dual.
In the case of separable Banach spaces not containing~$\ell^1$, the classification of
Theorem~\ref{theoremA} corresponds to the following well-studied classes of spaces:
\begin{enumerate}
\item[(i)] reflexive spaces,
\item[(ii)] non-reflexive spaces with separable dual and the PCP (point of continuity property),
\item[(iii)] spaces with separable dual that fail the PCP
\item[(iv)] spaces with non-separable dual that do not contain copies of~$\ell^1$,
\end{enumerate}
see Theorem~\ref{separabledual}. When $\ell^1$ is present, the classification of Theorem~\ref{theoremA} does not match,
to the best of our knowledge, previously studied classes. Let us stress that analytic determinacy and case~(iv) rise from
the use of a \emph{Lusin gap} dichotomy~\cite{tod-J-2} for gaps which are more complex than analytic,
which is in turn related to the validity of the open graph theorem for projective sets considered in~\cite{fen}.
The following purely combinatorial result is behind our approach to Theorem~\ref{theoremA}:

\begin{theo}[\axiom]\label{theoremNuevo}
Let $\mathcal{I}$ be an analytic family of subsets of $\omega$. Then
$\mathcal{I}^\perp$ (ordered by inclusion)
is Tukey equivalent to either $\{0\}$, $\omega$, $\omega^\omega$, $\mathcal{K}(\mathbb{Q})$ or $[\mathfrak{c}]^{<\omega}$.
\end{theo}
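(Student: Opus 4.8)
The plan is to split the analysis of $\mathcal{I}^\perp$ into a ``tame'' regime, governed by Fremlin's classification of $\mathcal{K}(M)$, and a ``wild'' regime forced by a Lusin gap. First I record the basic structure: since $\mathcal{I}$ is analytic, membership $A\in\mathcal{I}^\perp$ is the $\Pi^1_1$ condition $\forall I\,(I\in\mathcal{I}\Rightarrow |A\cap I|<\omega)$, so $\mathcal{I}^\perp$ is a coanalytic, hereditary ideal (it is closed under finite unions and contains $[\omega]^{<\omega}$), hence an upward-directed poset under $\subseteq$. Replacing $\mathcal{I}$ by its hereditary closure changes neither $\mathcal{I}^\perp$ nor its analyticity, so I assume $\mathcal{I}$ hereditary; then $A\in\mathcal{I}^\perp$ iff $A$ contains no infinite member of $\mathcal{I}$. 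Two bookend cases are immediate: if $\omega\in\mathcal{I}^\perp$ (equivalently $\mathcal{I}\subseteq[\omega]^{<\omega}$) then $\mathcal{I}^\perp$ has a greatest element and is Tukey equivalent to $\{0\}$; if $\mathcal{I}^\perp$ has countable cofinality but no greatest element, it is Tukey equivalent to $\omega$. Finally, since $|\mathcal{I}^\perp|\le\mathfrak{c}$, we always have $\mathcal{I}^\perp\le_T[\mathfrak{c}]^{<\omega}$, so this last type is an a priori upper bound that only needs to be attained.

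The core is a dichotomy obtained from a definable open graph coloring. On a suitable Polish coding of candidate members of $\mathcal{I}^\perp$ I would define a graph $G$ whose edges record the elementary orthogonality obstruction between two sets relative to $\mathcal{I}$; the orthogonality of $\mathcal{I}$ and $\mathcal{I}^\perp$ makes $G$ \emph{open} on a projective parameter space. Under analytic determinacy the open graph theorem applies beyond the analytic level -- this is exactly the extension exploited in \cite{fen} and in the Lusin-gap dichotomy of \cite{tod-J-2} -- yielding the alternative: either $G$ has a perfect clique, or its vertex set is covered by countably many $G$-independent pieces.

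In the first alternative, a perfect clique refines, by a fusion argument inside the perfect set, to a Lusin gap of size $\mathfrak{c}$ inside $\mathcal{I}^\perp$: a family $\{A_\alpha:\alpha<\mathfrak{c}\}\subseteq\mathcal{I}^\perp$ no infinite subfamily of which has an upper bound in $\mathcal{I}^\perp$, since any common upper bound would have to absorb an infinite union meeting $\mathcal{I}$ in an infinite set, hence leaving $\mathcal{I}^\perp$. By the standard characterization of the top Tukey type this gives $[\mathfrak{c}]^{<\omega}\le_T\mathcal{I}^\perp$, and combined with the universal bound above we conclude $\mathcal{I}^\perp\sim_T[\mathfrak{c}]^{<\omega}$.

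In the second (``tame'') alternative, the countable $G$-independent decomposition is used to build a separable, coanalytic, metrizable space $M$ whose bornology of relatively compact sets mirrors $\mathcal{I}^\perp$: I would send a compact subset of $M$ to its trace, a member of $\mathcal{I}^\perp$, and conversely send a member of $\mathcal{I}^\perp$ to a canonical compact set, checking that both assignments carry cofinal sets to cofinal sets, so that $\mathcal{K}(M)\sim_T\mathcal{I}^\perp$. Fremlin's theorem \cite{fre12} then forces $\mathcal{K}(M)$, and hence $\mathcal{I}^\perp$, to be one of $\{0\}$, $\omega$, $\omega^\omega$ or $\mathcal{K}(\mathbb{Q})$, completing the list. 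The main obstacle is precisely this realization step: converting the combinatorial tameness output by the open graph theorem into an honest coanalytic metric space with the correct lattice of compact sets, and verifying that a Lusin gap is the \emph{only} obstruction to such a representation. The determinacy hypothesis enters solely in the dichotomy, so on the tame side the argument is meant to run in ZFC.
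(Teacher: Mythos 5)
Your overall strategy is the one the paper itself follows (and its Remark~\ref{rem:AvilesTodorcevic} explicitly acknowledges the Feng--Todorcevic route you propose): a determinacy dichotomy separating a ``tame'' case, where $\mathcal{I}^\perp$ is realized as the lattice of compact subsets of a coanalytic separable metric space and Fremlin's Theorem~\ref{Fr91classification} applies, from a ``wild'' case yielding $[\mathfrak{c}]^{<\omega}$. The problem is that in both cases the decisive step is missing, and in the wild case the justification you give is incorrect as stated. What an open-graph dichotomy hands you is a perfect clique for the crossing coloring on disjoint pairs, i.e.\ a perfect family of pairs $(a_x,b_x)\in\mathcal{I}\times\mathcal{I}^\perp$ with $a_x\cap b_x=\emptyset$ and $(a_x\cap b_y)\cup(a_y\cap b_x)\neq\emptyset$ whenever $x\neq y$ (this pairs formulation is also the only one in which the coloring is genuinely open; the relation ``$A\cup B$ meets some member of $\mathcal{I}$ in an infinite set'' on $\mathcal{I}^\perp$ itself is analytic, not open). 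Pairwise crossing does not imply your claim that a common upper bound must ``absorb an infinite union meeting $\mathcal{I}$ in an infinite set.'' Indeed, if $c\in\mathcal{I}^\perp$ contains $b_{x_n}$ for infinitely many $n$, crossing only forces $c$ to meet all but at most one of the sets $a_{x_n}$, possibly in a single point each; since these are distinct members of $\mathcal{I}$, nothing contradicts $c\in\mathcal{I}^\perp$. For a contradiction you need one fixed member of $\mathcal{I}$ to meet $c$ infinitely, and that is exactly what alternative~(ii) of Theorem~\ref{projectivestronggap} supplies: an injective $u:2^{<\omega}\to\omega$ sending $0$-chains into $\mathcal{I}$ and $1$-chains into $\mathcal{I}^\perp$. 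Extracting such a $u$ from a perfect clique is the actual content of your ``fusion argument'': openness lets you stabilize, at each splitting node, one crossing witness and one crossing \emph{orientation} uniformly on clopen pieces, and you must relabel the two successors of each node so that the orientations cohere along branches; only then do the witnesses along branches $\sigma^n\to\sigma$ land inside the single set $a_{x_\sigma}\in\mathcal{I}$, which is what kills a would-be bound. Even granted $u$, the reduction $[\mathfrak{c}]^{<\omega}\preceq\mathcal{I}^\perp$ still needs the device of Lemma~\ref{lem:GeneralLemma}: each branch is assigned the union of the $u$-images of its own $1$-chain and of the $1$-chain of the bitwise flipped branch, precisely to handle the case where the splitting bits along the limit branch are eventually~$1$. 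None of this is ``standard''; it is the combinatorial heart of the paper's proof of Theorem~\ref{projectiveideal}.

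On the tame side, you correctly identify the realization of $\mathcal{I}^\perp$ as $\mathcal{K}(M)$ as the main obstacle and then leave it unproved, so that half is also incomplete. The paper carries it out by quoting \cite[Theorem~10 and Proposition~11]{avi-tod} (countable separation produces a bijection $f:\omega\to\mathbb{Q}$ and an analytic set $F\subseteq 2^\omega$ such that ${\rm acc}(f(a))\subseteq F$ for $a\in\mathcal{I}$ and ${\rm acc}(f(a))\subseteq 2^\omega\setminus F$ for $a\in\mathcal{I}^\perp$), building a partial Alexandroff duplicate $K$ of $2^\omega$ in which only the rational points are doubled, setting $E_0=\omega\cup(2^\omega\setminus F)$, checking that $a\in\mathcal{I}^\perp$ if and only if $a$ is relatively compact in~$E_0$, and invoking Lemma~\ref{relativecompact}. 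Finally, if you route the dichotomy through \cite{fen} and \cite{tod-J-2} rather than proving it directly, you must verify the pointclass: your graph's vertex set is the intersection of an analytic with a coanalytic set, which lies beyond Feng's ZFC theorem for analytic sets, and the paper only certifies that combination under projective determinacy; the paper's direct game-theoretic proof of Theorem~\ref{projectivestronggap} is exactly what brings the hypothesis down to analytic determinacy for the pair (analytic, coanalytic). Your two bookend observations ($\{0\}$ and $\omega$) are correct but redundant, since they fall out of Fremlin's classification in the tame case.
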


The information provided by $\mathcal{K}(B_X)$ can be refined by taking into consideration not only the inclusion relation,
but also the metric structure of~$X$. For this purpose we introduce the object $\mathcal{AK}(B_X)$ which consists again of the
family of weakly compact subsets of~$B_X$, but now endowed with the family of binary relations
$K\subset L + \varepsilon B_X$, parametrized by $\varepsilon>0$. We introduce a suitable notion of Tukey reduction
that allows to compare such structures among them, and to compare them with ordinary partially ordered sets.
For instance, the condition $\mathcal{AK}(B_X) \sim \omega$ is equivalent to saying that $X$ is non-reflexive and strongly
weakly compactly generated in the sense of~\cite{sch-whe} (see Theorem~\ref{swcg:2}).
We obtain the following classification result:

\begin{theo}[\axiom]\label{theoremB}
If $X$ is a separable Banach space, then:
\begin{itemize}
\item[(i)] either $\mathcal{AK}(B_X)\sim \{0\}$,
\item[(iia)] {or $\mathcal{AK}(B_X) \sim \omega$,}
\item[(iib)] {or $\mathcal{AK}(B_X) \sim \omega^\omega$,}
\item[(iii)] or $\mathcal{AK}(B_X)\sim \mathcal{K}(\mathbb{Q})$,
\item[(iv)] or $\mathcal{AK}(B_X)\sim [\mathfrak{c}]^{<\omega}$.
\end{itemize}
\end{theo}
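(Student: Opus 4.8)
The plan is to obtain Theorem~\ref{theoremB} as the metric counterpart of Theorem~\ref{theoremA}, reducing it to the combinatorial dichotomy of Theorem~\ref{theoremNuevo}. The key step is to code $X$ by an analytic ideal: I would fix a sequence that is fundamental for the weakly compact subsets of $B_X$ (the unconditional basis when $X$ has one, otherwise a suitable weakly null biorthogonal system together with a countable norming set) and let $\mathcal{I}$ be the analytic ideal on $\omega$ that records weak compactness along this sequence. The aim is to show that $\mathcal{AK}(B_X)$ is equivalent, in the extended Tukey sense introduced above, to $\mathcal{I}^{\perp}$ ordered by inclusion. Once this is in place, Theorem~\ref{theoremNuevo} applies and delivers exactly the five alternatives $\{0\}$, $\omega$, $\omega^\omega$, $\mathcal{K}(\mathbb{Q})$, $[\mathfrak{c}]^{<\omega}$ listed as (i), (iia), (iib), (iii), (iv).

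It then remains to see that the passage from inclusion to almost-inclusion is exactly what turns the four types of Theorem~\ref{theoremA} into these five. One direction is soft: since $K\subseteq L$ implies $K\subseteq L+\varepsilon B_X$ for every $\varepsilon>0$, the identity map witnesses $\mathcal{AK}(B_X)\le_T\mathcal{K}(B_X)$, so $\mathcal{AK}(B_X)$ always lies below the type produced by Theorem~\ref{theoremA}; in particular the reflexive case still collapses to $\{0\}$. By Theorem~\ref{theoremA} the value $\omega$ never occurs under plain inclusion, so the only new phenomenon is the splitting of $\omega^\omega$ into the two possibilities $\omega$ and $\omega^\omega$. The extra type $\omega$ is governed by Theorem~\ref{swcg:2}: here $\mathcal{AK}(B_X)\sim\omega$ holds precisely when $B_X$ is a norm-limit of an increasing sequence of weakly compact sets, i.e.\ when $X$ is non-reflexive and strongly WCG (spaces which under plain inclusion still fell into the $\omega^\omega$ class); otherwise the almost-inclusion order retains an unbounded $\omega^\omega$-family and the type stays $\omega^\omega$.

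For the two top cases I would prove a rigidity statement ensuring that metric relaxation cannot lower the type below the one given by Theorem~\ref{theoremA}: the witnesses that force $\mathcal{K}(\mathbb{Q})$, respectively $[\mathfrak{c}]^{<\omega}$, inside $\mathcal{I}^\perp$ --- a faithful copy of $\mathcal{K}(\mathbb{Q})$, respectively an independent family of size $\mathfrak{c}$ of the $\ell^1$-type arising from the Lusin-gap mechanism mentioned in the introduction --- remain unbounded for the relation $K\subseteq L+\varepsilon B_X$ with a fixed small $\varepsilon$. Together with the upper bound of the previous paragraph this pins down $\mathcal{AK}(B_X)$ to $\mathcal{K}(\mathbb{Q})$, respectively $[\mathfrak{c}]^{<\omega}$, and completes the list.

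The main obstacle is uniformity in the parameter $\varepsilon$. Theorem~\ref{theoremNuevo} is a statement about the single inclusion order on $\mathcal{I}^\perp$, whereas the equivalence $\mathcal{AK}(B_X)\sim\mathcal{I}^\perp$ must simultaneously respect the whole parametrized family of almost-inclusion relations; so the reductions extracted from the combinatorial proof have to be chosen independently of $\varepsilon$, and the extremal witnesses above must survive a fixed metric error rather than merely exact inclusion. Equivalently, the delicate point is the $\omega^\omega$-versus-$\omega$ dichotomy, where the failure of strong weak compact generation has to be converted into an $\omega^\omega$-indexed family with no bound modulo $\varepsilon_0 B_X$ for some fixed $\varepsilon_0>0$; this is the step that genuinely uses the geometry of $X$ rather than the bare ideal, and where I expect the real work to concentrate.
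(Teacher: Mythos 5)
Your plan founders at its first and central step: the claimed equivalence $\mathcal{AK}(B_X)\sim\mathcal{I}^\perp$, for an analytic ideal $\mathcal{I}$ recording relative weak compactness along a fixed countable sequence, is false, and the paper itself contains the counterexamples. If the coding sequence is a norm-dense set $D\subset B_X$, then $\mathcal{I}^\perp\sim\mathcal{RK}(D)\sim\mathcal{K}(B_X)$ (Proposition~\ref{pro:usoDobleLimite} together with Lemma~\ref{relativeBanach}), so by transitivity your equivalence would force $\mathcal{AK}(B_X)\sim\mathcal{K}(B_X)$ for every separable $X$; this fails for $X=\ell^1(\ell^2)$, where $\mathcal{AK}(B_X)\sim\omega$ while $\mathcal{K}(B_X)\sim\omega^\omega$ (Example~\ref{exa:l1l2}). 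If instead you code along an unconditional basis $\mathcal{B}$, only the one-directional reduction $\mathcal{RK}(\mathcal{B})\preceq\mathcal{AK}(B_X)$ holds (Lemma~\ref{lem:Johnson}); the equivalence fails already for $X=c_0$, whose basis is weakly null, so that $\mathcal{RK}(\mathcal{B})\sim\{0\}$ while $\mathcal{AK}(B_{c_0})\sim\mathcal{K}(\mathbb{Q})$ (Example~\ref{exa:c0l1} and the remark following Lemma~\ref{lem:Johnson}). The moral is that no ``discrete'' coding of this kind can compute $\mathcal{AK}(B_X)$: knowing which subsets of a fixed countable set are relatively weakly compact determines $\mathcal{K}(B_X)$ only, whereas the asymptotic structure genuinely depends on the metric. (A posteriori each of the five possible values of $\mathcal{AK}(B_X)$ is of the form $\mathcal{I}^\perp$, but producing such an $\mathcal{I}$ for a given $X$ already requires proving the theorem.) Consequently Theorem~\ref{theoremNuevo} cannot be invoked for $\mathcal{AK}(B_X)$ through any such coding.

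What you flag as ``the main obstacle'' --- producing witnesses that survive a \emph{fixed} error $\delta>0$ --- is therefore not a residual technicality but the entire content of the proof, and your proposal contains no mechanism for it. The paper proceeds differently: it first classifies $\mathcal{K}(B_X)$ (Theorem~\ref{theoremA}) and then proves the two case-matching equivalences $\mathcal{K}(B_X)\sim[\mathfrak{c}]^{<\omega}\Leftrightarrow\mathcal{AK}(B_X)\sim[\mathfrak{c}]^{<\omega}$ and $\mathcal{K}(B_X)\sim\mathcal{K}(\mathbb{Q})\Leftrightarrow\mathcal{AK}(B_X)\sim\mathcal{K}(\mathbb{Q})$ (Propositions~\ref{AKtukeytop} and~\ref{AKKQ}) by taking the witnesses produced by the discrete classification and uniformizing them: Rosenthal's $\ell^1$-theorem splits each chain-indexed sequence into weakly Cauchy or $\ell^1$ behaviour; Milliken's theorem (Lemma~\ref{Ramseychains}) and the types/nice-embedding machinery (Theorem~\ref{finitepartition}, Lemma~\ref{countablepartition}) make that behaviour constant on a subtree; and Borel measurability of $x^{**}\mapsto d(x^{**},X)$ (Lemma~\ref{lem:Measurability}) plus a perfect-set argument extract a single $\delta$ for which no weakly compact $L$ can $\delta$-almost contain the witnesses (Lemmas~\ref{wCauchy} and~\ref{l1seq}, then Lemma~\ref{lem:TukeyTop}). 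None of this appears in your sketch. Your treatment of the remaining split is also incomplete: the implication ``not SWCG and $\mathcal{AK}(B_X)\preceq\omega^\omega$ implies $\mathcal{AK}(B_X)\sim\omega^\omega$'' is exactly Lemma~\ref{lem:LV}, a Louveau--Velickovic-type argument that you assert rather than prove; and your parenthetical characterization of SWCG (``$B_X$ is a norm-limit of an increasing sequence of weakly compact sets'') is wrong, since by Grothendieck's test (Lemma~\ref{lem:Grothendieck}) it would make $B_X$ itself weakly compact, i.e., $X$ reflexive.
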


\renewcommand{\thetheo}{\thesection.\arabic{theo}}

{We enumerate the cases in this way, because (i), (iii) and~(iv) of Theorem~\ref{theoremB}
correspond exactly to cases (i), (iii) and~(iv) of Theorem~\ref{theoremA}, while
(ii) of Theorem~\ref{theoremA} splits into cases (iia) and (iib) of Theorem~\ref{theoremB}.
This result} provides finer information on the structure of the
family of weakly compact sets, and its proof requires substantial extra effort with respect to
Theorem~\ref{theoremA}. Without assuming any determinacy axiom, we prove that for Banach spaces not containing~$\ell^1$
the structures $\mathcal{K}(B_X)$ and $\mathcal{AK}(B_X)$ are equivalent (Theorem~\ref{separabledual}).

However, we do not know if, consistently,  there is a non-reflexive separable Banach space $X$ (necessarily with non-separable dual) such that
$\cK(B_X)$ is neither Tukey equivalent to $\omega^\omega$ nor to $\cK(\qu)$ nor to $[\con]^{<\omega}$
(and the same question for $\mathcal{AK}(B_X)$). A possible example in the absence of analytic determinacy is
provided in Theorem~\ref{omega1sequence} by the construction of a peculiar unconditional basis from a coanalytic set of cardinality~$\aleph_1$.

The structure of the paper is as follows. In Section~\ref{section:KBX} we review some basic
facts on $\mathcal{K}(B_X)$. In Section~\ref{section:AKBX} we introduce the main object of our study, the structure
$\mathcal{AK}(B_X)$, and analyse  its basic properties, including the connection with strongly weakly compactly generated spaces;
some illustrating examples are given as well.
Section~\ref{section:KBXclassification} is devoted to the proofs of Theorems~\ref{theoremA} and~\ref{theoremNuevo}, while
Section~\ref{section:atclassification} deals with the proof of Theorem~\ref{theoremB}.
Once these general classification results are established, in the next two sections we try to identify Banach spaces~$X$
for which  $\mathcal{AK}(B_X)$ is equivalent to some of our five basic posets
($\{0\}$, $\omega$, $\omega^\omega$, $\mathcal{K}(\mathbb{Q})$ and $[\mathfrak{c}]^{<\omega}$)
in the following cases: spaces without copies of $\ell^1$ in Section~\ref{section:nol1} and spaces
with unconditional basis in Section~\ref{section:unconditional}. Finally, Section~\ref{section:Problems}
contains some open problems.

\subsection*{Terminology}
Throughout this paper $X$ is a (real) Banach space. The weak topology on~$X$ is denoted by~$w$ and the weak$^*$ topology
on~$X^*$ is denoted by~$w^*$. The norm on~$X$ is denoted by $\|\cdot\|$ or $\|\cdot\|_X$ if needed explicitly. By a subspace
of $X$ we mean a closed linear subspace. Given a family $\{X_i\}_{i\in I}$ of Banach spaces, the symbols $(\bigoplus_{i\in I}X_i)_{c_0}$ and
$(\bigoplus_{i\in I}X_i)_{\ell^p}$ stand for their $c_0$-sum and $\ell^p$-sum ($1\leq p \leq \infty$), respectively.

The set of all natural numbers (identified as the first infinite ordinal) is denoted by~$\omega=\{0,1,\dots\}$,
while we write $\N=\{1,2,\dots\}$. Given a set~$S$, we denote by~$[S]^{<\omega}$ the set of all finite subsets of~$S$,
the cardinality of~$S$ is denoted by~$|S|$ and we write $\mathcal{P}(S)$ for the power set of~$S$.

The Cantor set (the set of all infinite sequences of~$0$'s and $1$'s) is denoted by~$2^\omega$ and
we write $2^{<\omega}$ for the {\em dyadic tree}, that is, the set of all finite sequences of $0$'s and $1$'s
(the empty sequence is included here).
Given $t,s\in 2^{<\omega} \cup 2^\omega$, we write $t\sqsubseteq s$ if $s$ extends~$t$.
Given $\sigma\in 2^\omega$ and $m<\omega$, we write
$\sigma|_m$ to denote the unique element of~$2^{<\omega}$ such that ${\rm length}(\sigma|_m)=m$
and $\sigma|_m \sqsubseteq \sigma$. The concatenation of
$t,s\in 2^{<\omega}$ is denoted by $t\smallfrown s$. For every $t=(t_0,\dots,t_n)\in 2^{<\omega}$ we define $t\smallfrown {\bf 0}\in 2^{\omega}$
by $(t_0,\dots,t_n,0,0,\dots)$.

\section{The lattice $\cK(B_X)$}\label{section:KBX}

We first recall the concept of Tukey reduction between arbitrary binary relations. A suitable reference
for the basic Tukey ordering theory is~\cite{fre13}.

\begin{defn}\label{posets:1}
Let $(U,R)$ and $(V,S)$ be two sets equipped with binary relations (so $R\sub U\times U$ and $S\sub V\times V)$.
\begin{itemize}
\item A function $f:U\to V$ is said to be a {\em Tukey function} if for every $v_0\in V$ there is $u_0\in U$ such that,
for every $u\in U$, the following implication holds:
$$
	(f(u),v_0)\in S \ \Longrightarrow \ (u,u_0)\in R.
$$
\item $(U,R)$ is said to be {\em Tukey reducible to} $(V,S)$ if there is a Tukey function $f:U\to V$. In this case,
we write $(U,R)\preceq (V,S)$ or simply $U \preceq V$.
\item $(U,R)$ and $(V,S)$ are said to be {\em Tukey equivalent} if both $(U,R) \preceq (V,S)$ and $(V,S) \preceq (U,R)$.
In this case, we write $(U,R)\sim (V,S)$ or simply $U \sim V$.
\item We write $(U,R)\prec (V,S)$, or simply $U \prec V$, whenever $(U,R)\preceq (V,S)$ but $(U,R)$ and $(V,S)$ are not Tukey equivalent.
\end{itemize}
\end{defn}

Typically $R$ and $S$ are partial orders on the corresponding sets; note that in such a case a function
$f:U\to V$ is Tukey if and only if the preimage of every bounded above subset of~$V$ is bounded above in~$U$.

If $P$ is a partially ordered set ({\em poset} for short), then $\cf(P)$ denotes its cofinality (i.e. the least cardinality of a cofinal subset of~$P$) and
$\add_\omega(P)$ is the least cardinality of a set $A\sub P$ which is not $\sigma$-bounded (i.e. $A$ cannot
be written as the union of countably many bounded above sets); we use the convention $\add_\omega(P)=\infty$ whenever $P$ is $\sigma$-bounded.
If $Q$ is another partially ordered set and $P\preceq Q$, then
$\add_\omega(P)\ge \add_\omega(Q)$ and $\cf(P)\le\cf(Q)$, see \cite[Theorem~1J]{fre13}.

Given any topological space $E$, we write $\cK(E)$ for the family of all compact subsets of~$E$. In the sequel we
always assume that $\cK(E)$ is equipped with the relation of inclusion, i.e. we consider the partially ordered set $(\cK(E),\sub)$.

It is time to present the five canonical partial orders that will show up once and again along this paper:
\begin{itemize}
\item $\{0\}$, a singleton.
\item $\omega = \{0,1,2,\ldots\}$, endowed with its natural order.
\item $\omega^\omega$, the set of all sequences of natural numbers $\{p_n\}_{n<\omega}$, endowed with the pointwise order (i.e. $\{p_n\}_{n<\omega}\leq \{q_n\}_{n<\omega}$ if and only if $p_n\leq q_n$ for all $n<\omega$).
\item $\mathcal{K}(\mathbb{Q})$, endowed with the inclusion order.
\item $[\mathfrak{c}]^{<\omega}$, endowed with the inclusion order.
\end{itemize}
The five posets are enumerated in increasing Tukey-complexity, that is
$$
	\{0\} \prec \omega \prec \omega^\omega \prec \mathcal{K}(\mathbb{Q}) \prec [\mathfrak{c}]^{<\omega}
$$
(see \cite{fre12}). The metric space $\qu$ (the space of rational numbers) is
homeomorphic to the subset of~$2^\omega$ made up of all eventually zero sequences, and from
now on we identify both spaces. Note that $\qu$ is Borel (hence coanalytic) in~$2^\omega$, but $\qu$
is not a Polish space. The coefficients $\cf(\cdot)$ and $\add_\omega(\cdot)$ of the partial orders above are as follows:
\[
	\add_\omega([\con]^{<\omega})=\omega_1,\qquad \add_\omega(\omega^\omega)=\add_\omega(\cK(\qu))={\mathfrak b},
\]
\[
	\cf(\omega^\omega)=\cf(\cK(\qu))={\mathfrak d},\qquad  \cf([\con]^{<\omega})=\con,
\]
see \cite{vanDouwen} and \cite[Theorem~16(c)]{fre12} for the case of~$\cK(\qu)$.

Our starting point is Fremlin's classification of $\mathcal{K}(E)$ when $E$ is a separable metric space
which is coanalytic in some Polish space, see~\cite[Theorem~15]{fre12}.

\begin{thm}[Fremlin]\label{Fr91classification}
Let $E$ be a separable metric space which is coanalytic in some Polish space. Then:
\begin{enumerate}
\item[(i)] $\mathcal{K}(E)\sim\{0\}$ if $E$ is compact.
\item[(ii)] $\mathcal{K}(E)\sim\omega$ if $E$ is locally compact, not compact.
\item[(iii)] $\mathcal{K}(E)\sim\omega^\omega$ if $E$ is Polish, not locally compact.
\item[(iv)] $\mathcal{K}(E)\sim\mathcal{K}(\mathbb{Q})$ if $E$ is not Polish.
\end{enumerate}
\end{thm}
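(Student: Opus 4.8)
The plan is to fix the Tukey type of $\cK(E)$ in each of the four regimes by squeezing it between two of the five canonical posets, which is enough since those are linearly ordered by~$\prec$. The two degenerate regimes are immediate. If $E$ is compact, then $E$ is the largest element of $\cK(E)$, so $\cK(E)\sim\{0\}$. If $E$ is locally compact, separable metrizable and non-compact, then it is Lindel\"of and hence $\sigma$-compact, and it admits an exhaustion $K_0\subseteq \wn K_1\subseteq K_1\subseteq \wn K_2\subseteq\cdots$ with $\bigcup_n K_n=E$; every compact $L\subseteq E$ is covered by the open sets $\wn K_n$ and so sits inside some $K_n$, whence $\{K_n\}_n$ is a cofinal chain of type $\omega$ (with no maximum, as $E$ is non-compact), giving $\cK(E)\sim\omega$.

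For the two substantial regimes I would first record the universal upper bound $\cK(E)\preceq\omega^\omega$, valid for every Polish~$E$. Fix a metric compactification $E\subseteq\wh E$ and write $E=\bigcap_n U_n$ with each $U_n\subseteq\wh E$ open. Send $K\in\cK(E)$ to $\varphi(K)\in\omega^\omega$, where $\varphi(K)(n)=\lceil 1/\mathrm{dist}(K,\wh E\setminus U_n)\rceil$ (finite, since the compact set $K$ lies in the open set $U_n$). Given $f\in\omega^\omega$, the set $L_f=\{x\in\wh E:\mathrm{dist}(x,\wh E\setminus U_n)\ge 1/(f(n)+1)\text{ for all }n\}$ is closed in $\wh E$, hence compact, and is contained in $\bigcap_n U_n=E$, so $L_f\in\cK(E)$; moreover $\varphi(K)\le f$ forces $K\subseteq L_f$. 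Thus $\varphi$ witnesses $\cK(E)\preceq\omega^\omega$.

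When $E$ is Polish but not locally compact I would produce the reverse inequality $\omega^\omega\preceq\cK(E)$ from a single point of non-local-compactness. Choose $x_0\in E$ with no relatively compact neighbourhood; by completeness each ball $\overline B(x_0,1/n)$ fails to be totally bounded, so it contains an infinite $\delta_n$-separated family $\{p_{n,j}:j<\omega\}$ for some $\delta_n>0$. Define $\Phi\colon\omega^\omega\to\cK(E)$ by $\Phi(f)=\{x_0\}\cup\{p_{n,j}:n\ge 1,\ j\le f(n)\}$; since only finitely many $p_{n,j}$ lie outside any ball about $x_0$, each $\Phi(f)$ is compact. If $\Phi(f)\subseteq K_0$ for a fixed compact $K_0$, then for every $n$ the set $K_0$ contains the $\delta_n$-separated points $p_{n,0},\dots,p_{n,f(n)}$; a compact set meets any $\delta_n$-separated family in a finite set, so $f(n)$ is bounded by some $f_0(n)$ depending only on $K_0$. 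Hence $\Phi$ is Tukey and $\omega^\omega\preceq\cK(E)$, which together with the previous paragraph gives $\cK(E)\sim\omega^\omega$ and settles case~(iii).

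For non-Polish $E$ the lower bound $\cK(\qu)\preceq\cK(E)$ comes from the Hurewicz-type dichotomy: a coanalytic separable metric space that is not Polish contains a subset $C$, closed in $E$, homeomorphic to $\qu$. Since $C$ is closed, the inclusion $\cK(C)\to\cK(E)$ is Tukey — given compact $L_0\subseteq E$, the compact set $L_0\cap C\subseteq C$ bounds every $K\in\cK(C)$ with $K\subseteq L_0$ — so $\cK(\qu)=\cK(C)\preceq\cK(E)$. The matching upper bound $\cK(E)\preceq\cK(\qu)$ is the heart of the theorem and the one place where coanalyticity is essential: I would attach to $E$ a coanalytic rank $\phi\colon E\to\omega_1$ and invoke the boundedness theorem for analytic sets, so that every compact (hence analytic) $K\subseteq E$ satisfies $\sup_{x\in K}\phi(x)<\omega_1$; this stratifies $\cK(E)$ into countable-rank layers, each of which embeds Tukey-wise into a copy of $\cK(\qu)$, and one assembles these into a single Tukey reduction using the stability $\cK(\qu)\sim\cK(\qu)^\omega$. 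I expect the main obstacle to be precisely this last step — making the rank stratification cohere into one global Tukey map into $\cK(\qu)$. Its necessity is structural: for non-definable $E$ the poset $\cK(E)$ can climb all the way up to $[\con]^{<\omega}$, which is exactly why the definability hypothesis (and, in the wider results of the paper, analytic determinacy) cannot be dropped.
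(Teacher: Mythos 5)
This theorem is not proved in the paper at all: it is quoted from Fremlin \cite[Theorem~15]{fre12}, so there is no internal proof to compare your attempt against, and it must be judged on its own merits. On that score, most of what you wrote is correct and complete: cases (i) and (ii), the upper bound $\cK(E)\preceq\omega^\omega$ for Polish $E$ via a metrizable compactification and a $G_\delta$ representation, the lower bound $\omega^\omega\preceq\cK(E)$ manufactured from $\delta_n$-separated families around a point of non-local-compactness, and the reduction $\cK(\qu)\preceq\cK(E)$ for non-Polish coanalytic $E$ via the Hurewicz dichotomy (a closed-in-$E$ copy of $\qu$, \cite[21.18]{kec-J}) together with the observation that inclusion of a closed subspace induces a Tukey map.

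The genuine gap is exactly where you flag it, and the strategy you propose cannot close it. First, it is circular: the level sets $E_\alpha=\{x\in E:\phi(x)\le\alpha\}$ of a $\Pi^1_1$-rank are Borel but in general not Polish, so the claim that each $\cK(E_\alpha)$ is Tukey below $\cK(\qu)$ is an instance of precisely the statement being proved (a non-Polish Borel set falls under case (iv)); nothing in the earlier parts of your argument covers such sets. Second, and more seriously, the assembly step provably cannot be done with the tools you name: $\cK(\qu)^\omega\sim\cK(\qu)$ merges countably many layers, but your stratification has $\omega_1$ of them, and the natural assembly yields only $\cK(E)\preceq\cK(\qu)\times[\omega_1]^{<\omega}$ --- which is exactly the weaker ZFC bound this paper does prove (Theorem~\ref{ZFCprojectiveideal}), and it is consistently strictly weaker than $\cK(\qu)$: if $\omega_1<{\mathfrak b}$, then $[\omega_1]^{<\omega}\not\preceq\cK(\qu)$, since a Tukey reduction would force $\omega_1=\add_\omega([\omega_1]^{<\omega})\ge\add_\omega(\cK(\qu))={\mathfrak b}$. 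In fact no abstract layer-by-layer argument can succeed: $[\omega_1]^{<\omega}=\bigcup_{\alpha<\omega_1}[\alpha]^{<\omega}$ is itself an increasing union of $\omega_1$ countable directed layers, each Tukey below $\omega$ and hence below $\cK(\qu)$, yet the union is not below $\cK(\qu)$. So Fremlin's reduction $\cK(E)\preceq\cK(\qu)$ must exploit the coanalytic structure of $E$ globally (a single reduction built from a tree or rank representation of $E$), not level by level; supplying that argument is the missing, and main, content of case (iv).
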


The Cartesian product of any family of partially ordered sets is endowed with
the coordinatewise order unless otherwise specified.
We include a short proof of the following known fact since we did not find a suitable reference for it.

\begin{lem}\label{lem:TukeyKQ}
$\cK(\qu)^\omega \sim \cK(\qu)$.
\end{lem}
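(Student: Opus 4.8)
The plan is to establish the Tukey equivalence $\cK(\qu)^\omega \sim \cK(\qu)$ by proving the two reductions separately. The nontrivial direction is $\cK(\qu)^\omega \preceq \cK(\qu)$, since the reverse direction $\cK(\qu) \preceq \cK(\qu)^\omega$ is immediate: the map $K \mapsto (K,K,K,\dots)$ (or even $K \mapsto (K,\emptyset,\emptyset,\dots)$ composed with the obvious embedding) is a Tukey function, because a subset of the product is bounded above exactly when each coordinate-projection is bounded above in $\cK(\qu)$, so the preimage of a bounded set is bounded.

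\emph{First} I would exploit the fact that $\qu$ is homeomorphic to a countable disjoint topological sum of copies of itself. Concretely, since $\qu \cong \qu \times \omega$ (the rationals are homeomorphic to countably many disjoint clopen copies of $\qu$, via a partition of $\qu$ into countably many clopen pieces each homeomorphic to $\qu$), we obtain a homeomorphism $h \colon \qu \times \omega \to \qu$. The idea is to encode a sequence $(K_n)_{n<\omega} \in \cK(\qu)^\omega$ as a single compact subset of $\qu$ by placing $K_n$ into the $n$-th copy. The obstruction to making this literally a map into $\cK(\qu)$ is compactness: the union $\bigcup_n h(K_n \times \{n\})$ of compact sets spread across infinitely many clopen pieces of $\qu$ need \emph{not} be compact (its closure may pick up a limit point, and in $\qu$ that union is typically not closed). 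So I cannot simply take the union.

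\emph{The key step}, and what I expect to be the main obstacle, is to repair this non-compactness while preserving the Tukey property. The fix is to define the reduction $f \colon \cK(\qu)^\omega \to \cK(\qu)$ only on sequences that are eventually empty, or more robustly to replace $\cK(\qu)^\omega$ by a Tukey-equivalent structure on which a union-type map does land in $\cK(\qu)$. The cleanest route is to first check $\cK(\qu)^\omega \sim (\cK(\qu) \times \omega^\omega$ is not quite right$)$; instead I would use that a subset $\cA \subseteq \cK(\qu)^\omega$ is bounded above if and only if there is a single $K \in \cK(\qu)$ and a single $g \in \omega^\omega$ controlling all coordinates, and build $f$ so that $f(K_0,K_1,\dots)$ is the compact set $\{h(q,n) : q \in K_n,\ n < N\} \cup \{h(q,n): q\in K_n,\ n\geq N\}$ truncated appropriately. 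Rather than force literal compactness of an infinite union, I would define $f(\bar K)$ to be the \emph{closure in $2^\omega$} of $\bigcup_n h(K_n \times \{n\})$ intersected back with $\qu$, and verify the Tukey condition directly: given a target $L \in \cK(\qu)$, the witness bound $u_0 = (L_0, L_1, \dots)$ is recovered by setting $L_n = \{q : h(q,n) \in L\}$, each of which is compact in $\qu$ because $h(\cdot,n)$ maps the $n$-th clopen copy homeomorphically onto a clopen subset of $\qu$, so $L$ meets it in a compact set.

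\emph{To close the argument}, once $f$ is defined I must check the Tukey implication: if $f(\bar K) \subseteq L$ then each $K_n \subseteq L_n$, i.e. $\bar K \leq u_0$ coordinatewise. This follows because $h(K_n \times \{n\}) \subseteq f(\bar K) \subseteq L$ forces $K_n \subseteq L_n$ by the definition of $L_n$ and injectivity of $h$. The only genuine technical care is confirming that $f$ actually takes values in $\cK(\qu)$ — that the closure operation, combined with the clopen separation of the copies, yields a set that is both compact and contained in $\qu$; here one uses that a set which is closed in $2^\omega$ and contained in $\qu$ is compact, and that the clopen partition guarantees no spurious limit points are introduced outside $\qu$ as long as the tails $K_n$ shrink, which can be arranged by a preliminary reduction shrinking each $\cK(\qu)^\omega$-element to an eventually-controlled representative. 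This shrinking preserves Tukey type because it is cofinal, so the composition remains a valid Tukey reduction and yields $\cK(\qu)^\omega \preceq \cK(\qu)$, completing the equivalence.
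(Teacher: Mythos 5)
Your easy direction $\cK(\qu)\preceq\cK(\qu)^\omega$ is fine, but the key step of the hard direction --- producing a map that actually lands in $\cK(\qu)$ --- has a genuine gap, and none of your proposed repairs closes it. With a true clopen \emph{partition} $\qu=\bigsqcup_n U_n$, a compact set $K\sub\qu$ meets only finitely many $U_n$ (the $U_n$ form a pairwise disjoint open cover of $K$, so a finite subcover already contains $K$); hence no compact subset of $\qu$ can contain $h(K_n\times\{n\})$ for infinitely many nonempty $K_n$, and a union-type map into a clopen partition is impossible, not merely delicate. Your fallbacks fail concretely: (a) taking the closure in $2^\omega$ and intersecting back with $\qu$ does not produce a compact set --- if each $K_n=\{q_n\}$ is a single point of the $n$-th piece, then all accumulation points of $\{q_n:n<\omega\}$ in $2^\omega$ lie outside $\qu$ (any $x\in\qu$ lies in some clopen $U_m$, which contains only $q_m$), so the operation returns the infinite, closed-and-discrete-in-$\qu$ union itself; (b) the ``preliminary shrinking to eventually-controlled representatives'' is unavailable, because the sequences whose union is compact are exactly the eventually empty ones, and these are \emph{not} cofinal in $\cK(\qu)^\omega$; moreover replacing an element by a smaller one is not an operation that preserves the Tukey property.

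The missing idea is to use clopen copies that \emph{accumulate at a point of $\qu$} and to add that point. Take $q_0:=\mathbf{0}\in\qu$ and $V_n:=\{x\in\qu:\ x|_n=0^n \mbox{ and } x(n)=1\}$; each $V_n$ is clopen in $\qu$ and homeomorphic to $\qu$ via some $h_n$ (drop the first $n+1$ coordinates), the $V_n$ are pairwise disjoint, $\qu=\{q_0\}\cup\bigcup_n V_n$, and every neighbourhood of $q_0$ contains all but finitely many $V_n$. Then $f((K_n)):=\{q_0\}\cup\bigcup_n h_n(K_n)$ \emph{is} compact: a sequence in it either has infinitely many terms in a single compact $h_n(K_n)$ or converges to $q_0$. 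Your verification then goes through verbatim with $L_n:=h_n^{-1}(L\cap V_n)$: if $f((K_n))\sub L$ then $K_n\sub L_n$ for all $n$. With this one-point-compactification twist your argument becomes a correct and genuinely different proof from the paper's: the paper sidesteps the compactness problem by mapping $\cK(\qu)^\omega$ into $\cK(\qu^\omega)$ via $(K_n)\mapsto\prod_n K_n$ (Tychonoff gives compactness for free) and then quotes Fremlin's classification (Theorem~\ref{Fr91classification}) for the separable, coanalytic, non-Polish space $\qu^\omega$ to get $\cK(\qu^\omega)\sim\cK(\qu)$. Your repaired route is more elementary, avoiding Fremlin's theorem entirely; the paper's route trades the point-set bookkeeping for an appeal to that classification.
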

\begin{proof}
$\qu^\omega$ is a separable metrizable space which is Borel (hence coanalytic)
in~$(2^\omega)^\omega$, but $\qu^\omega$ is
not Polish. Hence Theorem~\ref{Fr91classification} yields $\cK(\qu^\omega) \sim \cK(\qu)$.
On the other hand, we clearly have $ \cK(\qu)\preceq \cK(\qu)^\omega$. Finally, it is easy to check
that the mapping
\[
	f: \cK(\qu)^\omega \to \cK(\qu^\omega), \quad
	f((K_n)):=\prod_{n<\omega}K_n,
\]
is Tukey, so $\cK(\qu)^\omega\preceq \cK(\qu^\omega)$. It follows that $\cK(\qu)^\omega \sim \cK(\qu)$.
\end{proof}

\begin{rem}\label{rem:CardinalTukey}
If $P$ is any upwards directed partially ordered set, then
the mapping $f: P \to [P]^{<\omega}$ given by $f(p):=\{p\}$ is Tukey and so $P \preceq [P]^{<\omega}$.
\end{rem}

Let us turn to the Banach space setting. We consider the Banach space~$X$ equipped with its weak topology. Thus,
for any $A\sub X$, the symbol $\cK(A)$ denotes the family of all weakly compact subsets of~$A$ ordered by inclusion.

Proposition~\ref{KXKBX} below explains that there is essentially no difference between
considering~$\cK(X)$ and $\cK(B_X)$.
It also establishes that the lowest possible Tukey equivalence class for $\mathcal{K}(B_X)$, after $\{0\}$, is
that of $\omega^\omega$. In particular, this excludes $\omega$.
The proof of the Tukey reduction $\omega^\omega \preceq \cK(B_X)$ in~(iii)
is a simple adaptation of \cite[Lemma~11]{fre12}.

\begin{proposition}\label{KXKBX}
\mbox{ }
\begin{enumerate}
\item[(i)] $\cK(X)\sim \cK(B_X)\times\omega$.
\item[(ii)] If $X$ is reflexive, then $\cK(B_X)\sim \{0\}$ and $\cK(X) \sim \omega$.
\item[(iii)] If $X$ is not reflexive, then $\omega^\omega \preceq \cK(B_X) \sim \cK(X)$.
\end{enumerate}
\end{proposition}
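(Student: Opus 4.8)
The plan is to treat the three parts in turn, using throughout the criterion recorded after Definition~\ref{posets:1}: between posets, $f$ is a Tukey function precisely when the preimage of every bounded-above set is bounded above. We may assume $X\neq\{0\}$ and fix a unit vector $e\in X$. For~(i) I would produce Tukey reductions both ways. Each weakly compact $K\subseteq X$ is norm bounded, so let $n(K)\geq1$ be least with $K\subseteq n(K)B_X$; since scaling by a nonzero scalar is a weak homeomorphism, $\frac{1}{n(K)}K\in\cK(B_X)$, and I set $f(K)=(\frac{1}{n(K)}K,\,n(K))$. If $f(K)\leq(L,m)$ in $\cK(B_X)\times\omega$, then $K\subseteq n(K)L\subseteq\bigcup_{j=1}^{m}jL$, and this finite union of weakly compact sets is weakly compact and does not depend on~$K$; hence $\cK(X)\preceq\cK(B_X)\times\omega$. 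In the other direction I set $g(L,m)=L\cup\{me\}$, a weakly compact subset of~$X$: if $g(L,m)\subseteq K^*$ and $K^*\subseteq NB_X$, then $L\subseteq K^*\cap B_X$ (weakly compact, as $B_X$ is weakly closed) and $m=\|me\|\leq N$, so preimages of bounded sets are bounded and $\cK(B_X)\times\omega\preceq\cK(X)$. Part~(ii) is then immediate: by Kakutani's theorem reflexivity makes $B_X$ weakly compact, so $B_X$ is the top element of $\cK(B_X)$ and $\cK(B_X)\sim\{0\}$; substituting into~(i) gives $\cK(X)\sim\{0\}\times\omega\sim\omega$.

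The heart of the matter is the non-reflexive case~(iii), where the real work is the reduction $\omega^\omega\preceq\cK(B_X)$, an adaptation of~\cite[Lemma~11]{fre12} to the weak topology. Non-reflexivity means $B_X$ is not weakly compact, so by the Eberlein--\v{S}mulian theorem there is a sequence $(x_n)\subseteq B_X$ admitting no weakly convergent subsequence; after thinning I may take the $x_n$ pairwise distinct. For $p\in\omega^\omega$ I define
\[
K_p=\{0\}\cup\{2^{-k}x_n:\ k\geq1,\ n\leq p(k)\}\subseteq B_X,
\]
and check two things, each through Eberlein--\v{S}mulian. First, $K_p$ is weakly compact: a sequence in $K_p$ either meets infinitely many levels $k$, in which case $\|2^{-k}x_n\|\leq 2^{-k}$ forces a subsequence tending to $0\in K_p$ in norm, or it is confined to finitely many levels, each of which contributes only finitely many points, so a subsequence is constant. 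Second, $p\mapsto K_p$ is Tukey: if $K_p\subseteq L$ with $L$ weakly compact, then for each fixed $k$ the set $\{n:2^{-k}x_n\in L\}$ is finite---otherwise $L$ would contain an infinite, hence relatively weakly compact, subset of $\{2^{-k}x_n:n\in\omega\}$, contradicting the absence of weakly convergent subsequences---and letting $q(k)$ be its maximum one gets $p\leq q$, a bound depending only on~$L$.

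It remains to deduce $\cK(B_X)\sim\cK(X)$ for non-reflexive $X$, which I obtain from~(i) together with the absorption $\cK(B_X)\times\omega\sim\cK(B_X)$. The reduction $\cK(B_X)\preceq\cK(B_X)\times\omega$ is trivial; conversely the union map $(K,K')\mapsto K\cup K'$ shows $\cK(B_X)\times\cK(B_X)\preceq\cK(B_X)$ (from $K\cup K'\subseteq M$ one reads off $K,K'\subseteq M$), and precomposing the second factor with a Tukey map $\omega\to\cK(B_X)$---available since $\omega\preceq\omega^\omega\preceq\cK(B_X)$ by the reduction just built---yields $\cK(B_X)\times\omega\preceq\cK(B_X)$. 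Hence $\cK(X)\sim\cK(B_X)\times\omega\sim\cK(B_X)$. I expect the weak compactness and Tukey verifications for $K_p$ to be the only genuinely delicate point, precisely because the weak topology is not metrizable, so both arguments must be routed through sequential (Eberlein--\v{S}mulian) compactness rather than handled topologically.
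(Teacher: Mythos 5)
Your proposal is, in substance, the paper's own proof, but one step fails as literally written. In part (iii) you define
\[
K_p=\{0\}\cup\{2^{-k}x_n:\ k\geq 1,\ n\leq p(k)\},
\]
so the coordinate $p(0)$ is not encoded in $K_p$ at all. Hence if $K_p\subseteq L$ and $q(k):=\max\{n:2^{-k}x_n\in L\}$, you only get $p(k)\leq q(k)$ for $k\geq 1$, while $p(0)$ is completely unconstrained; the preimage under $p\mapsto K_p$ of the principal ideal below $L$ is therefore unbounded in $\omega^\omega$, and the map is not Tukey. The repair is immediate: index the levels by all of $\omega$, e.g.\ set $K_p=\{0\}\cup\{2^{-(k+1)}x_n:\ k<\omega,\ n\leq p(k)\}$ and $q(k):=\max\{n:2^{-(k+1)}x_n\in L\}$ (or $0$ if this set is empty); then $p\leq q$ as desired. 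With this correction, your two verifications --- norm compactness of $K_p$, and finiteness of each level's trace on $L$ via Eberlein--\v{S}mulian together with the absence of weakly convergent subsequences of $(x_n)$ --- are exactly the paper's argument: the paper chooses, for each level $n$, a sequence in $\frac{1}{n+1}B_X$ without weakly convergent subsequences, and your scaled points $2^{-k}x_n$ are a concrete instance of that.

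Beyond this, you deviate from the paper only in two details, both legitimate. In (i), where the paper bounds the preimage of $(L,m)$ by $m\cdot\overline{{\rm aco}}(L)$ and must invoke Krein's theorem for its weak compactness, you use the finite union $\bigcup_{j=1}^{m}jL$, which is weakly compact for elementary reasons (finite unions of weakly compact sets), so Krein's theorem is not needed. In (iii), where the paper proves $\cK(B_X)\times\omega\preceq\cK(B_X)$ directly via the map $(K,n)\mapsto K\cup\{x_0,\dots,x_n\}$, you derive it from soft Tukey calculus: the union map gives $\cK(B_X)\times\cK(B_X)\preceq\cK(B_X)$, and composing the second factor with a Tukey map $\omega\to\cK(B_X)$ (which exists since $\omega\preceq\omega^\omega\preceq\cK(B_X)$) finishes. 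This makes the equivalence $\cK(X)\sim\cK(B_X)$ logically dependent on the $\omega^\omega$-reduction, whereas the paper's direct map needs only the sequence $(x_n)$; since you establish the $\omega^\omega$-reduction first, your order of deduction is sound.
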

\begin{proof}
(i). Every $K\in\cK(X)$ is bounded; let $n_K$ denote the least $n\in \N$ such that $K\sub nB_X$.
The mapping $\tau:\cK(X)\to \cK(B_X)\times\omega$, where
\[
	\tau(K):=\left(\frac{1}{n_K}K, n_K\right),
\]
is Tukey. Indeed, take any $(L,m)\in \cK(B_X)\times\omega$ and
define $\widehat{L}:=m \cdot \overline{{\rm aco}}(L) \in \cK(X)$ (note that
the closed absolutely convex hull $\overline{{\rm aco}}(L)$ of~$L$ is weakly
compact by Krein's theorem). If $K\in \cK(X)$ satisfies
$\tau(K)\leq (L,m)$, then we have $n_K\le m$ and $(1/n_K)K\sub L$, hence
$K\sub n_K L\sub \widehat{L}$.

On the other hand, to see that $\cK(B_X)\times\omega \preceq \cK(X)$ we can take the mapping
$\tau': \cK(B_X)\times\omega \to \cK(X)$ given by
$$
	\tau'(K,n):=K\cup\{nx_0\},
$$
where $x_0$ is a fixed norm one vector in~$X$. We claim that $\tau'$ is Tukey. Indeed, given
any $L\in \cK(X)$, we choose $n_0<\omega$ large enough such that $\sup_{x\in L}\|x\|\leq n_0$ and we consider
$(L\cap B_X,n_0)\in \cK(B_X)\times \omega$. Clearly, if $(K,n)\in \cK(B_X)\times\omega$ satisfies
$\tau'(K,n) = K\cup\{nx_0\} \sub L$, then $(K,n) \leq (L\cap B_X,n_0)$.

(ii). If $X$ is reflexive, then $B_X$ is weakly compact and so $\mathcal{K}(B_X)\sim \{0\}$. By~(i) we also have $\cK(X) \sim \omega$.

(iii).
If $X$ is not reflexive, then $B_X$ is not weakly compact, so
there is a sequence $(x_n)$ in~$B_X$ without weakly convergent subsequences.
Define $f:\cK(B_X)\times\omega \to \cK(B_X)$ by $f(K,n):=K\cup\{x_0,\dots,x_n\}$. Then $f$ is a Tukey map. Indeed, take any
$L\in\cK(B_X)$ and choose $n_0< \omega$ such that $x_n\not\in L$ for all $n\ge n_0$.
Clearly, if $(K,n) \in \cK(B_X) \times \omega$ satisfies $f(K,n)\sub L$, then $(K,n) \leq (L,n_0)$.
Hence $\cK(B_X)\times\omega \preceq \cK(B_X)$. Bearing in mind~(i), we conclude that
$\cK(X) \sim \cK(B_X)$.

For each $n<\omega$ the ball $\frac{1}{n+1}B_X$ is not weakly sequentially compact, so we may choose
a sequence $(x_{ni})_i$ in $\frac{1}{n+1}B_X$ without weakly convergent subsequences.
We claim that for every $\vf\in \omega^\omega$ the set
$$
	\tau(\vf):=\{x_{ni}:\, i\le\vf(n)\} \cup \{0\}
$$
is norm compact. Indeed, let $(y_k)$ be a sequence in~$\tau(\vf)$. If $(y_k)$ is not eventually~$0$, we
can find a subsequence, not relabeled, of the form $y_k=x_{n_k i_k}$ where
$i_k\le\vf(n_k)$. Now there are two possibilities:
\begin{itemize}
\item There is a further subsequence $(y_{k_j})$ such that $n_{k_j} < n_{k_{j+1}}$ for every~$j<\omega$. Then
$\|y_{n_{k_j}}\|=\|x_{n_{k_j} i_{k_j}}\|\leq \frac{1}{n_{k_j}+1} \to 0$ as $j\to\infty$.
\item There exist a further subsequence $(y_{k_j})$ and $n<\omega$ such that $n_{k_j} = n$
for every~$j<\omega$. Since $i_{k_j}\le\vf(n)$ for every $j<\omega$, the sequence $(y_{k_j})=(x_{n i_{k_j}})$ admits
a constant subsequence.
\end{itemize}
This proves that $\tau(\vf)$ is norm compact.

We now check that the map $\tau:\omega^\omega\to \cK(B_X)$ is Tukey. To this end,
fix $L\in \cK(B_X)$ and define $\vf_L\in \omega^\omega$ by $\vf_L(n):=\max\{i: x_{ni}\in L\}$.
Take any $\vf\in \omega^\omega$ such that $\tau(\vf)\sub L$. Clearly, for every $n<\omega$ we have
$\vf(n)\le\vf_L(n)$, hence $\vf\le\vf_L$.
\end{proof}

\begin{rem}\label{rem:Ksubspaces}
If $Y \sub X$ is a subspace, then $\cK(B_Y) \preceq \cK(B_X)$.
\end{rem}
\begin{proof}
The mapping $\tau: \cK(B_Y) \to \cK(B_X)$ given by $\tau(K):=K$ is Tukey, because
$L\cap B_Y \in \cK(B_Y)$ for every $L\in \cK(B_X)$.
\end{proof}

The space~$X$ is said to have the {\em Point of Continuity Property} ({\em PCP} for short) if,
for every weakly closed bounded set $A\sub X$, the identity mapping on~$A$ has at least one
point of weak-to-norm continuity. For instance, every Banach space with the Radon-Nikod\'{y}m property
has the PCP (see e.g. \cite[Corollary~3.14]{edg-whe}). In \cite[Theorem~A]{edg-whe} it was proved
that $(B_X,w)$ is a Polish space if and only if $X$ has the PCP and $X^*$ is separable.
This equivalence and Theorem~\ref{Fr91classification} yield the following result.

\begin{pro}\label{posets:3}
Suppose $X^*$ is separable. Then:
\begin{enumerate}
\item[(i)] $\mathcal{K}(B_X) \sim \{0\}$ if $X$ is reflexive.
\item[(ii)] $\mathcal{K}(B_X)\sim \omega^\omega$ if $X$ is not reflexive and has the PCP.
\item[(iii)] $\mathcal{K}(B_X)\sim \mathcal{K}(\mathbb{Q})$ if $X$ does not have the PCP.
\end{enumerate}
\end{pro}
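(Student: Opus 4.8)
The plan is to apply Fremlin's classification (Theorem~\ref{Fr91classification}) to the topological space $E := (B_X,w)$, after checking that it satisfies the hypotheses of that theorem, and then to read off each of the three alternatives from the topological nature of~$E$, using the Edgar--Wheeler equivalence and Proposition~\ref{KXKBX} to resolve the one ambiguous case. Case (i) is in fact already contained in Proposition~\ref{KXKBX}(ii), so the real content lies in (ii) and (iii).

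First I would verify the standing hypotheses of Theorem~\ref{Fr91classification} for $E$. Since $X^*$ is separable, so is $X$, and the weak topology on the bounded set $B_X$ is metrizable and separable; concretely, fixing a norm-dense sequence $(f_n)$ in $B_{X^*}$, the metric $d(x,y)=\sum_{n}2^{-n}|f_n(x-y)|$ induces $w$ on~$B_X$. To see that $E$ is coanalytic in a Polish space I would use the canonical embedding $J:B_X\to (B_{X^{**}},w^*)$: the target is compact (Banach--Alaoglu) and metrizable (as $X^*$ is separable), hence Polish, and its subspace topology on $J(B_X)$ is exactly~$w$, since $J(x_\alpha)\to J(x)$ in $w^*$ means $f(x_\alpha)\to f(x)$ for every $f\in X^*$. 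Now $J(B_X)$ consists precisely of those $\xi\in B_{X^{**}}$ whose restriction to $(B_{X^*},w^*)$ is continuous, equivalently sequentially continuous. The set of pairs $((f_k),f)$ with $f_k\to f$ in $(B_{X^*},w^*)$ is Borel in the Polish space $(B_{X^*})^\omega\times B_{X^*}$, and for such a pair the failure $\langle \xi,f_k\rangle\not\to\langle\xi,f\rangle$ is a Borel condition in~$\xi$; hence the complement of $J(B_X)$ is analytic and $J(B_X)$ is coanalytic. Thus Theorem~\ref{Fr91classification} applies to~$E$.

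With this in place, two of the three cases are immediate. For (i), reflexivity makes $B_X$ weakly compact, so $E$ is compact and Fremlin~(i) gives $\mathcal{K}(B_X)\sim\{0\}$. For (iii), the Edgar--Wheeler theorem quoted before the statement says that, since $X^*$ is separable, $E=(B_X,w)$ is Polish exactly when $X$ has the PCP; so failure of the PCP makes $E$ non-Polish, and Fremlin~(iv) yields $\mathcal{K}(B_X)\sim\mathcal{K}(\mathbb{Q})$.

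The only case requiring an extra argument is (ii). Here $X$ has the PCP, so by Edgar--Wheeler $E$ is Polish, and since $X$ is not reflexive $E$ is not compact; among Fremlin's four alternatives only ``locally compact, not compact'' (giving $\omega$) and ``Polish, not locally compact'' (giving $\omega^\omega$) remain, so I must exclude the first. This is where I would invoke Proposition~\ref{KXKBX}(iii): for non-reflexive $X$ it gives $\omega^\omega\preceq\mathcal{K}(B_X)$, whereas $\mathcal{K}(B_X)\sim\omega$ would force $\omega^\omega\preceq\omega$, which is false. Hence $E$ cannot be locally compact and Fremlin~(iii) gives $\mathcal{K}(B_X)\sim\omega^\omega$. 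I expect the only genuinely technical point to be the verification that $E$ is coanalytic; everything else is bookkeeping with the two cited theorems, the exclusion of the $\omega$-case in~(ii) being the sole place where a result internal to the paper, rather than Fremlin or Edgar--Wheeler, is needed.
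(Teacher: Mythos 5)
Your global strategy is the same as the paper's: view $(B_X,w)$ inside the Polish space $(B_{X^{**}},w^*)$, check it is coanalytic there, then combine Fremlin's Theorem~\ref{Fr91classification} with \cite[Theorem~A]{edg-whe}, using Proposition~\ref{KXKBX}(iii) to rule out $\mathcal{K}(B_X)\sim\omega$ in case (ii); those parts of your argument are correct and essentially identical to the paper's. The gap is precisely in the step you yourself singled out as the technical one. You describe $J(B_X)$ as the set of $\xi\in B_{X^{**}}$ whose restriction to $(B_{X^*},w^*)$ is sequentially continuous (fine, by Krein--\v{S}mulian), and then argue: the set of convergent pairs $((f_k),f)$ is Borel, and \emph{for such a pair} the failure of $\xi(f_k)\to\xi(f)$ is a Borel condition on $\xi$; hence the complement of $J(B_X)$ is analytic. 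That inference is invalid. The complement of $J(B_X)$ is the projection, onto the $\xi$-coordinate, of
\[
A:=\bigl\{((f_k),f,\xi):\ f_k\to f \mbox{ in } w^*,\ \xi(f_k)\not\to\xi(f)\bigr\},
\]
and to conclude that this projection is analytic you need $A$ itself to be analytic (say Borel) \emph{jointly}, not merely that each section of $A$ over a fixed pair $((f_k),f)$ is Borel: a set all of whose sections are Borel (even singletons) can project onto a non-analytic set. Joint Borel-ness of $A$ amounts to joint Borel measurability of the pairing $(f,\xi)\mapsto\xi(f)$ on $(B_{X^*},w^*)\times(B_{X^{**}},w^*)$, and this is a genuine issue, because that map is \emph{not} separately continuous: for $\xi\notin X$ it fails to be $w^*$-continuous in~$f$.

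The gap is repairable along your lines: taking a countable set $\{x_i\}\subseteq B_X$ which is $w^*$-dense in $B_{X^{**}}$ and, for each $n$, a continuous partition of unity $(\phi_{n,j})_j$ on $(B_{X^{**}},w^*)$ subordinate to a finite cover by balls of radius $\frac{1}{n}$ centred at points $x_{i_j}$, the jointly continuous functions $(f,\xi)\mapsto\sum_j\phi_{n,j}(\xi)\,f(x_{i_j})$ converge pointwise to the pairing, which is therefore of Baire class~1, hence Borel; then $A$ is Borel and your projection argument works. But this is exactly the argument you would have had to supply. The paper sidesteps the whole issue with a more elementary observation: for a norm-dense sequence $(x_k)$ in $B_X$,
\[
B_X=\bigcap_{n<\omega}\bigcup_{k<\omega}\Bigl(x_k+\frac{1}{n+1}B_{X^{**}}\Bigr)\cap B_{X^{**}},
\]
and each set in the union is $w^*$-compact, so $B_X$ is an $\mathcal{F}_{\sigma\delta}$ subset of $(B_{X^{**}},w^*)$ --- in particular Borel, hence coanalytic --- with no measurability subtleties at all. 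With that substitution, the rest of your proof is correct as written.
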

\begin{proof}
(i) is clear and does not require the separability of~$X^*$.
On the other hand, since $X^*$ is separable, $(B_X,w)$ is separable metrizable
and $(B_{X^{**}}, w^*)$ is a compact metrizable space (hence a Polish space).
Let $\{x_k:k<\omega\}$ be a norm dense subset of~$B_{X}$. Then
\[
	B_X=\bigcap_{n<\omega}\bigcup_{k<\omega}
	\left(x_k+\frac{1}{n+1} B_{X^{**}}\right)\cap B_{X^{**}},
\]
so $B_X$ is an $\mathcal{F}_{\sigma\delta}$ subset of $(B_{X^{**}},w^*)$. In particular,
$(B_X,w)$ is coanalytic in $(B_{X^{**}}, w^*)$.

By Theorem~\ref{Fr91classification}, we have $\cK(B_X)\sim\omega^\omega$ if and only if $(B_X,w)$ is Polish but not locally compact,
while $\cK(B_X)\sim\cK(\mathbb{Q})$ if and only if $(B_X,w)$ is not Polish.
The possibility that $\mathcal{K}(B_X)\sim \omega$ is excluded by Proposition~\ref{KXKBX}(iii).
The conclusion now follows from the aforementioned \cite[Theorem~A]{edg-whe}.
\end{proof}

In view of Proposition~\ref{posets:3}, we have $\cK(B_{c_0}) \sim \cK(\mathbb{Q})$
(as the space $c_0$ fails the PCP, see e.g. \cite[Example~3.3]{edg-whe}).
Bearing in mind Remark~\ref{rem:Ksubspaces}, we get the following:

\begin{cor}\label{cor:Kc0}
If $X$ contains an isomorphic copy of~$c_0$, then $\cK(\mathbb{Q}) \preceq \cK(B_X)$.
\end{cor}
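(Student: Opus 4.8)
The plan is to reduce everything to the space $c_0$ itself by locating an isomorphic copy inside $X$. Since $X$ contains an isomorphic copy of~$c_0$, I would fix a subspace $Y\subseteq X$ together with an isomorphism $T\colon c_0\to Y$ (the range of such an embedding is automatically closed, so $Y$ is indeed a subspace in the sense of the paper). Remark~\ref{rem:Ksubspaces} then gives $\cK(B_Y)\preceq \cK(B_X)$ for free, so it suffices to establish $\cK(\qu)\sim \cK(B_Y)$; composing the two relations yields $\cK(\qu)\preceq \cK(B_X)$, which is the assertion.

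The substance of the argument is transferring the identity $\cK(B_{c_0})\sim\cK(\qu)$ — valid by Proposition~\ref{posets:3}, since $c_0$ fails the PCP — from $c_0$ to the copy~$Y$. The hard part, and the main obstacle, is that $T$ is only an isomorphism and not an isometry, so $T(B_{c_0})\neq B_Y$ and one cannot simply push the structure $\cK(B_{c_0})$ forward onto $\cK(B_Y)$ while respecting the unit balls. I would sidestep this by passing to the full spaces, where the specific norm plays no role. Both $c_0$ and $Y$ are non-reflexive, so Proposition~\ref{KXKBX}(iii) gives $\cK(B_{c_0})\sim\cK(c_0)$ and $\cK(B_Y)\sim\cK(Y)$. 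Because $T$ and $T^{-1}$ are bounded linear maps, they are weak-to-weak continuous, so $T$ restricts to a homeomorphism $(c_0,w)\to(Y,w)$; consequently $K\mapsto T(K)$ is an inclusion-preserving bijection from the weakly compact subsets of~$c_0$ onto those of~$Y$, that is, an order isomorphism $\cK(c_0)\cong\cK(Y)$. Chaining the equivalences gives $\cK(B_Y)\sim\cK(Y)\sim\cK(c_0)\sim\cK(B_{c_0})\sim\cK(\qu)$, completing the proof.

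As a more self-contained alternative that avoids Proposition~\ref{KXKBX}, I could construct the Tukey reduction $\cK(B_{c_0})\preceq\cK(B_X)$ by hand. Writing $\|Tx\|\le M\|x\|$ for all $x\in c_0$, the map $\Phi\colon\cK(B_{c_0})\to\cK(B_X)$ defined by $\Phi(K):=\frac{1}{M}T(K)$ takes values in $\cK(B_X)$, since $T$ preserves weak compactness and $T(B_{c_0})\subseteq MB_X$. To verify the Tukey property, given $L\in\cK(B_X)$ I would set $K_L:=B_{c_0}\cap M\,T^{-1}(L\cap Y)$, which is weakly compact (as $L\cap Y$ is weakly compact and $T^{-1}$ is weak-to-weak continuous) and contained in $B_{c_0}$; any $K$ with $\Phi(K)\subseteq L$ satisfies $T(K)\subseteq M(L\cap Y)$ and hence $K\subseteq K_L$. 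Either route turns on the same point — that an isomorphism carries weakly compact sets to weakly compact sets and preserves inclusion — so I would present the first one, since it reuses the already-established Remark~\ref{rem:Ksubspaces} and Proposition~\ref{KXKBX} and keeps the exposition short.
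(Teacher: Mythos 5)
Your proof is correct, and its skeleton --- locate a closed copy $Y$ of $c_0$ inside $X$, invoke Remark~\ref{rem:Ksubspaces} to get $\cK(B_Y)\preceq \cK(B_X)$, and feed in the equivalence with $\cK(\qu)$ coming from Proposition~\ref{posets:3} --- is the same as the paper's. The one point of divergence is how the isomorphism-versus-isometry issue is handled. The paper disposes of it implicitly: Proposition~\ref{posets:3} is a statement about \emph{any} space with separable dual failing the PCP, and both hypotheses are isomorphic invariants, so it applies to the copy $Y$ itself and yields $\cK(B_Y)\sim\cK(\qu)$ directly, with no need to compare $B_Y$ with $B_{c_0}$. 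You instead transfer the equivalence from the concrete space $c_0$ to~$Y$ by passing to the full spaces via Proposition~\ref{KXKBX}(iii) and observing that $T$ induces an order isomorphism between $\cK(c_0)$ and $\cK(Y)$; this costs two extra equivalences, but it isolates a reusable principle (the Tukey type of $\cK(B_Z)$ is an isomorphic invariant, the analogue for $\cK$ of Proposition~\ref{at:7}(v) for $\mathcal{AK}$) instead of re-verifying the hypotheses of Proposition~\ref{posets:3} for~$Y$. Your hands-on alternative with $\Phi(K)=\frac{1}{M}T(K)$ is also sound --- the key small point, which you use correctly, is that $ML\cap Y=M(L\cap Y)$ because $Y$ is a linear subspace --- and it shows that for $\cK$, unlike for $\mathcal{AK}$ in Proposition~\ref{at:7}(iv), no complementation of the copy is needed. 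Either route is a legitimate filling-in of the step the paper leaves tacit.
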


The converse is not valid in general: there exist Banach spaces with separable dual, not containing $c_0$,
and failing the PCP (see \cite[Section~IV]{gho-alt-2}).

The classification of Proposition~\ref{posets:3}
no longer holds without the separability assumption on~$X^*$, as we show in Example~\ref{posets:4}
below, which turns out to be a particular case of a result in Section~\ref{section:AKBX} (Corollary~\ref{as:4-cor}(iv)).
The space $L^1[0,1]$ fails the PCP (see e.g. \cite[Section~6]{edg-whe}) and its reflexive subspaces
are precisely those having separable dual or, equivalently, those containing no copy~$\ell^1$
(see e.g. \cite[p.~94]{die-J}).

\begin{exa}\label{posets:4}
Let $X$ be a non-reflexive subspace of~$L^1[0,1]$. Then $\cK(B_X)\sim\omega^\omega$.
\end{exa}
\begin{proof} The reduction $\omega^\omega\preceq \cK(B_X)$ follows from Proposition~\ref{KXKBX}(iii).
We next prove the reduction $\cK(B_X) \preceq \omega^\omega$.

We denote by~$\lambda$ the Lebesgue measure on the Borel $\sigma$-algebra $\mathcal{B}$ of~$[0,1]$.
For a given $f\in L^1[0,1]$ and $n<\omega$ we denote by $o(f,n)$ the least $k<\omega$ such that, for every $B\in \mathcal{B}$,
the following implication holds:
\[
	\mbox{if } \lambda(B)\le \frac{1}{k+1} \ \mbox{ then } \left| \int_{B} f\, d\lambda\right|\le \frac{1}{n+1}.
\]
The classical Dunford-Pettis criterion (see e.g. \cite[Theorem~5.2.9]{alb-kal}) states that a bounded set
$F\sub L^1[0,1]$ is relatively weakly compact if and only if it is uniformly integrable, that is,
$\{o(f,\cdot):f\in F\}$ is bounded above in~$\omega^\omega$
(note that $\int_{A} |f|\, d\lambda\leq 2\sup\{\left| \int_{B} f\, d\lambda\right|: \, B \sub A, \, B\in\mathcal{B}\}$
for every $f\in L^1[0,1]$ and $A\in \mathcal{B}$). For every $K\in \cK(B_X)$ we fix
$\vf_K\in \omega^\omega$ such that $o(f,\cdot) \leq \vf_K$ for all $f\in K$. We claim that the mapping $\cK(B_X) \to \omega^\omega$
given by $K \mapsto \vf_K$ is Tukey. Indeed, fix $\psi\in \omega^\omega$ and define
\begin{multline*}
	K:=\{f\in B_X: \, o(f,\cdot) \leq \psi\}= \\ =
	\bigcap_{n<\omega} \Bigl\{f\in B_X: \, \Bigl|\int_B f \, d\lambda\Bigr| \leq \frac{1}{n+1} \
	\mbox{ whenever }\lambda(B) \leq \frac{1}{\psi(n)+1} \Bigr\}.
\end{multline*}
Since $K$ is weakly closed and $o(f,\cdot) \leq \psi$ for every~$f\in K$, it follows that $K\in \cK(B_X)$.
Now, if $L\in \cK(B_X)$ satisfies $\vf_L\leq\psi$, then $L \sub K$.
This shows that $\cK(B_X)\preceq \omega^\omega$.
\end{proof}

\section{The asymptotic structure $\mathcal{AK}(B_X)$}\label{section:AKBX}

Our inspiration for introducing the notion of asymptotic structure
(Definition~\ref{at:1} below) is the following class of Banach spaces.

\begin{defn}
A Banach space $X$ is called {\em strongly weakly compactly generated} ({\em SWCG} for short) if there exists
a weakly compact set $K\sub X$ such that for every $\varepsilon>0$ and every
weakly compact set $L\sub X$ there is $n<\omega$ such that $L\sub nK + \varepsilon B_X$.
In this case, we say that $K$ {\em strongly generates}~$X$.
\end{defn}

This is a well studied class of Banach spaces that includes reflexive spaces, separable spaces with the Schur property
and the space $L^1(\mu)$ for any probability measure~$\mu$, but excludes $c_0$ and $C[0,1]$ among others.
For more information on SWCG spaces, we refer the reader to \cite[Section~6.4]{fab-alt-JJ}
and \cite{fab-mon-ziz,kam-mer,kam-mer2,mer-sta-2,sch-whe}.

Observe that the notion of SWCG space cannot be defined in terms of the partially ordered set $\mathcal{K}(X)$,
as it involves the relations of ``almost inclusion'' $L\subset K + \varepsilon B_X$ between weakly compact sets,
not just the inclusion relation $L\subset K$. The following definitions are intended to
develop a Tukey theory that takes into account these ``almost inclusion'' relations.

\begin{defn}\label{at:1}
An {\em asymptotic structure} is a set $P$ endowed with a
family of binary relations $\{\leq_t\}_{t>0}$ satisfying:
\begin{itemize}
\item[(i)] $p \leq_t p$ for every $p\in P$ and every $t>0$;
\item[(ii)] if $t<s$ and $p_1\leq_t p_2$, then $p_1 \leq_s p_2$;
\item[(iii)] the binary relation $\bigcap_{t>0}\leq_t$ is a partial order on $P$.
\end{itemize}
\end{defn}

Note that every poset $(P,\leq)$ can be viewed naturally
as an asymptotic structure by declaring $\leq_t := \leq$ for all $t>0$.

\begin{defn}\label{at:3}
Let $P$ and $Q$ be asymptotic structures.
\begin{enumerate}
\item[(i)] A Tukey reduction $f:P\Rightarrow Q$ is a family of functions $\{f_\varepsilon : P \to Q \}_{\varepsilon>0}$
such that for every $\varepsilon>0$ there is $\delta>0$ such that
\[
	f_\eps: (P,\leq_\eps)\to (Q,\leq_\delta) \quad\mbox{is Tukey},
\]
i.e. for every $q_0\in Q$ there is $p_0\in P$ such that $p\leq_\varepsilon p_0$ whenever $f_\varepsilon(p)\leq_\delta q_0$.
\item[(ii)] We say that $P$ is {\em Tukey reducible} to~$Q$ (and we write $P \preceq Q$) if there is a Tukey reduction $P\Rightarrow Q$.
\item[(iii)] We say that $P$ and $Q$ are {\em Tukey equivalent} (and we write $P \sim Q$) if both $P \preceq Q$ and $Q \preceq P$.
\end{enumerate}
\end{defn}

\begin{rem}\label{at:4}
Tukey reduction between asymptotic structures is transitive.
\end{rem}
\begin{proof}
Let $P$, $Q$ and $R$ be asymptotic structures for which there exist Tukey reductions
$f:P\Rightarrow Q$ and $g:Q\Rightarrow R$. For every $\eps>0$ we choose $\delta(\eps)>0$ such that
$f_\eps:(P,\leq_\eps)\to (Q,\leq_{\delta(\eps)})$ is a Tukey map. Then the family of functions
$\{g_{\delta(\eps)}\circ f_\eps\}_{\eps>0}$ is a Tukey reduction $P\Rightarrow R$.
\end{proof}

\begin{rem}\label{at:5}
Let $P$ be an asymptotic structure and $Q$ an ordinary poset (that we view
as an asymptotic structure in the natural way). Then:
\begin{itemize}
\item[(i)] $P\preceq Q$ if and only if there is a family of functions $\{f_\eps:P\to Q\}_{\eps>0}$
such that
\[
	f_\eps:(P,\leq_\eps)\to Q \quad \mbox{is Tukey}
\]
for every $\eps>0$.
\item[(ii)] $Q\preceq P$ if and only if there is a Tukey function $g:Q \to (P,\leq_\delta)$
for some $\delta>0$.
\end{itemize}
\end{rem}

Let $P$ be an asymptotic structure equipped with the family of binary relations $\{\leq_t\}_{t>0}$.
A set $D\sub P$ is said to be {\em cofinal} if for every $t>0$ and every $p\in P$ there is
$d\in D$ such that $p\leq_t d$. The {\em cofinality} of~$P$ is defined by
$$
	\cf(P):=\max\{\aleph_0,\mbox{ the least cardinality of a cofinal subset of }P\}.
$$
A set $A\subset P$ is said to be {\em $\sigma$-bounded} if for every $t>0$ there is a sequence $(p_n)$ in~$P$ such that
for every $a\in A$ there is $n<\omega$ with $a\leq_t p_n$. We define $\add_\omega(P)$
as the least cardinality of a subset of~$P$ which is not $\sigma$-bounded (with
the convention $\add_\omega(P)=\infty$ if $P$ is $\sigma$-bounded).

\begin{lem}\label{SG:1}
Let $P$ and $Q$ be asymptotic structures such that $P\preceq Q$. Then:
\begin{enumerate}
\item[(i)] $\cf(P) \leq \cf(Q)$;
\item[(ii)] $\add_\omega(P)\ge \add_\omega(Q)$.
\end{enumerate}
Consequently, $\add_\omega(P)= \add_\omega(Q)$ and $\cf(P) = \cf(Q)$ whenever $P\sim Q$.
\end{lem}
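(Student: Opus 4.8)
The plan is to adapt the classical argument that $\cf$ and $\add_\omega$ are invariants of the Tukey order (as in \cite[Theorem~1J]{fre13}) to the parametrized setting, the only novelty being careful bookkeeping of the relations $\{\leq_t\}_{t>0}$ and the shift of level from $\eps$ to the associated $\delta$. Throughout, I would fix a Tukey reduction $\{f_\eps:P\to Q\}_{\eps>0}$ witnessing $P\preceq Q$ and, for each $\eps>0$, a number $\delta(\eps)>0$ such that $f_\eps:(P,\leq_\eps)\to(Q,\leq_{\delta(\eps)})$ is Tukey; that is, for every $q_0\in Q$ there is $p_0\in P$ (depending on $\eps$ and $q_0$) with the property that $f_\eps(p)\leq_{\delta(\eps)}q_0$ implies $p\leq_\eps p_0$.

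For (i), I would start from a cofinal set $D\sub Q$ with $|D|=\cf(Q)$ and build a cofinal subset of $P$ of cardinality at most $\cf(Q)$. Fix a sequence $\eps_n\downarrow 0$. For each $n<\omega$ and each $d\in D$, the Tukey property of $f_{\eps_n}$ at target level $\delta(\eps_n)$ yields a witness $p_{n,d}\in P$ such that $f_{\eps_n}(p)\leq_{\delta(\eps_n)}d$ implies $p\leq_{\eps_n}p_{n,d}$. Setting $E:=\{p_{n,d}:n<\omega,\ d\in D\}$ gives $|E|\leq\aleph_0\cdot|D|=\cf(Q)$. To verify that $E$ is cofinal, take any $t>0$ and $p\in P$, pick $n$ with $\eps_n\leq t$, use cofinality of $D$ (at level $\delta(\eps_n)$) to find $d\in D$ with $f_{\eps_n}(p)\leq_{\delta(\eps_n)}d$, conclude $p\leq_{\eps_n}p_{n,d}$, and finally invoke monotonicity (Definition~\ref{at:1}(ii)) to upgrade this to $p\leq_t p_{n,d}$. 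Hence $\cf(P)\leq|E|\leq\cf(Q)$.

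For (ii), I would show that every $A\sub P$ with $|A|<\add_\omega(Q)$ is $\sigma$-bounded, which immediately yields $\add_\omega(P)\geq\add_\omega(Q)$. Here the levels can be treated one at a time: fix $t>0$ and put $\delta:=\delta(t)$. Since $|f_t(A)|\leq|A|<\add_\omega(Q)$, the set $f_t(A)\sub Q$ is $\sigma$-bounded, so there is a sequence $(q_n)$ in $Q$ such that each $a\in A$ satisfies $f_t(a)\leq_\delta q_n$ for some $n$. Applying the Tukey property of $f_t$ to each $q_n$ produces $p_n\in P$ with $f_t(p)\leq_\delta q_n\Rightarrow p\leq_t p_n$; then every $a\in A$ satisfies $a\leq_t p_n$ for the appropriate $n$. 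As $t>0$ was arbitrary, $A$ is $\sigma$-bounded. The degenerate case $\add_\omega(Q)=\infty$ (i.e.\ $Q$ itself $\sigma$-bounded) is covered by taking $A=P$, since subsets of $\sigma$-bounded sets are $\sigma$-bounded.

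Finally, the stated consequence is immediate: if $P\sim Q$ then both $P\preceq Q$ and $Q\preceq P$, so (i) gives $\cf(P)\leq\cf(Q)\leq\cf(P)$ and (ii) gives $\add_\omega(P)\geq\add_\omega(Q)\geq\add_\omega(P)$. I do not expect a genuine obstacle here; the only point needing care is the quantifier structure of Definition~\ref{at:3}, which assigns a single target level $\delta(\eps)$ to each source level $\eps$. This forces two different combinatorial moves: for cofinality one must amalgamate over an entire null sequence $\eps_n\downarrow 0$ (which is exactly why the bound carries the harmless factor $\aleph_0$, consistent with the convention $\cf\geq\aleph_0$), whereas for $\add_\omega$ each level $t$ can be handled in isolation.
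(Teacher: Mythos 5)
Your proof is correct and follows essentially the same route as the paper's: for (i) you amalgamate witnesses over a countable family of levels (a null sequence $\eps_n\downarrow 0$, where the paper uses $\mathbb{Q}^+$) and upgrade via the monotonicity property of Definition~\ref{at:1}(ii), and for (ii) you treat each level $t$ in isolation exactly as the paper does. The cardinal bookkeeping and the handling of the degenerate case $\add_\omega(Q)=\infty$ are also in order.
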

\begin{proof} Let $\{f_\varepsilon : P \to Q \}_{\varepsilon>0}$
be a family of functions giving a Tukey reduction $P \Rightarrow Q$.

(i). Let $D \sub Q$ be a cofinal set. For each $\varepsilon>0$ and each $d\in D$, we fix
\begin{itemize}
\item $\delta_\varepsilon>0$ such that $f_\eps: (P,\leq_\eps) \to (Q,\delta_\eps)$ is Tukey;
\item $p_{d,\varepsilon}\in P$ such that $p\leq_\eps p_{d,\varepsilon}$ whenever $f_\varepsilon(p) \leq_{\delta_\eps} d$.
\end{itemize}
We claim that the set $C:=\{p_{d,\varepsilon} : d\in D, \, \varepsilon\in \mathbb{Q}^+\}$ is cofinal in~$P$.
Indeed, take $\eps>0$ and $p\in P$. Pick a rational $0<\eps'<\eps$ and use the cofinality of~$D$
to find $d\in D$ such that $f_{\varepsilon'}(p)\leq_{\delta_{\eps'}} d$.
Then $p\leq_{\varepsilon'} p_{d,\varepsilon'}$ and so
$p\leq_{\varepsilon} p_{d,\varepsilon'}$ as well. This proves that
$C$ is cofinal in~$P$ and, bearing in mind that $|C|\leq \max\{\aleph_0,|D|\}$, we conclude
that $\cf(P)\leq \max\{\aleph_0,|D|\}$. As $D$ is an arbitrary cofinal subset of~$Q$, we get
$\cf(P)\leq \cf(Q)$.

(ii). Fix $\kappa< \add_\omega(Q)$ and consider any set $A\sub P$ with $|A|\leq \kappa$. We shall check that $A$ is
$\sigma$-bounded. Take any $\eps>0$ and choose $\delta>0$ such that $f_\eps:(P,\leq_\eps) \to (Q,\leq_\delta)$
is Tukey. Since $|f_\eps(A)|\le |A|\le \kappa<\add_\omega(Q)$, there is
a sequence $(q_n)$ in~$Q$ such that for every $p\in A$ we have $f_\eps(p)\leq_\delta q_n$ for some $n<\omega$.
Now, for each $n<\omega$ we take $p_n\in P$ such that $p\leq_\eps p_n$ whenever $f_\eps(p)\leq_\delta q_n$.
Since for every $p\in A$ there is $n<\omega$ such that $p\leq_\eps p_n$,
it follows that $A$ is $\sigma$-bounded and this shows $\add_\omega(P)\ge \add_\omega(Q)$.
The proof is over.
\end{proof}

Let us present an example of asymptotic structure in the Banach space setting.

\begin{defn}\label{at:2}
Given a Banach space~$X$ and $E \sub X$, we define an asymptotic structure $\mathcal{AK}(E)$ as follows:
\begin{enumerate}
\item[(i)] the underlying set is the family of all weakly compact subsets of~$E$;
\item[(ii)] for every $t>0$ the binary relation $\leq_t$ is defined by
$$
	K \leq_t L \quad \Longleftrightarrow \quad
	K\subset L + tB_X.
$$
\end{enumerate}
\end{defn}

In Proposition~\ref{at:7} we explore the general relations between
$\mathcal{K}(B_X)$, $\AK(B_X)$ and $\AK(X)$. We first need some lemmata.
The first one is sometimes called {\em Grothendieck's test}
of weak compactness (see e.g. \cite[p.~227, Lemma~2]{die-J}).
This lemma is closely related to the so-called De Blasi measure of weak noncompactness~\cite{bla},
which is implicitly used later in the paper.

\begin{lem}\label{lem:Grothendieck}
A bounded set $C \sub X$ is relatively weakly compact if and only if
for every $\eps>0$ there is $K\in \cK(X)$ such that $C \sub K+\eps B_X$.
\end{lem}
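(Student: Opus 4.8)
The plan is to prove the nontrivial implication (``almost inclusion'' $\Rightarrow$ relative weak compactness) by passing to the bidual $X^{**}$ and invoking the classical dictionary: a bounded set $C\subset X$ is relatively weakly compact if and only if its $\sigma(X^{**},X^*)$-closure $\overline{C}^{w^*}$, computed in $X^{**}$ via the canonical embedding, is contained in~$X$. The reverse implication ($\Rightarrow$) is immediate: if $C$ is relatively weakly compact, then $K:=\overline{C}^{\,w}$ is itself weakly compact and $C\subset K\subset K+\varepsilon B_X$ for every $\varepsilon>0$, so one may take $K_\varepsilon=K$ throughout.

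For the substantial direction, assume that for each $\varepsilon>0$ there is a weakly compact $K_\varepsilon\subset X$ with $C\subset K_\varepsilon+\varepsilon B_X$. First I would assemble the ingredients living in $X^{**}$. Since $C$ is bounded, $\overline{C}^{w^*}$ is $w^*$-compact by Banach--Alaoglu. Each $K_\varepsilon$, being weakly compact in $X$, is $w^*$-compact in $X^{**}$, because the canonical embedding carries $(X,w)$ homeomorphically onto its image endowed with the relative $w^*$-topology; in particular $K_\varepsilon$ is $w^*$-closed. Finally, by Goldstine's theorem the $w^*$-closure of $\varepsilon B_X$ in $X^{**}$ equals $\varepsilon B_{X^{**}}$.

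The core step is the inclusion $\overline{C}^{w^*}\subset K_\varepsilon+\varepsilon B_{X^{**}}$. The point is that $K_\varepsilon+\varepsilon B_{X^{**}}$ is $w^*$-closed, being the sum of a $w^*$-compact set and a $w^*$-closed set; since $C\subset K_\varepsilon+\varepsilon B_X\subset K_\varepsilon+\varepsilon B_{X^{**}}$, taking $w^*$-closures yields the claimed inclusion. Consequently every $x^{**}\in \overline{C}^{w^*}$ satisfies $\mathrm{dist}(x^{**},X)\le \mathrm{dist}(x^{**},K_\varepsilon)\le \varepsilon$, the distances being taken in the norm of $X^{**}$. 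Letting $\varepsilon\to 0$ and using that $X$ is norm-closed in $X^{**}$, we obtain $x^{**}\in X$. Hence $\overline{C}^{w^*}\subset X$, and therefore $C$ is relatively weakly compact.

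The main obstacle, and really the only delicate point, is the upgrade of the ``almost inclusion'' in $X$ to an honest inclusion of $w^*$-closures in $X^{**}$. This rests on the fact that summing with a $w^*$-compact set preserves $w^*$-closedness, so that no spurious closure is introduced on the $K_\varepsilon$ side, combined with Goldstine's theorem to identify the $w^*$-closure of the ball as $\varepsilon B_{X^{**}}$. Everything else is the standard bidual characterization of relative weak compactness, which I would either quote or reprove in one line from the homeomorphism property of the canonical embedding.
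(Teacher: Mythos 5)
Your proof is correct, and it is essentially the paper's "own" proof: the paper states this lemma without proof, citing it as Grothendieck's classical test of weak compactness (Diestel, \emph{Sequences and Series in Banach Spaces}, p.~227, Lemma~2), and your bidual argument — the $w^*$-compactness of weakly compact sets viewed in $X^{**}$, the $w^*$-closedness of $K_\varepsilon+\varepsilon B_{X^{**}}$ as the sum of a $w^*$-compact set and a $w^*$-closed set, and the estimate $d(x^{**},X)\le\varepsilon$ followed by letting $\varepsilon\to 0$ — is exactly the classical proof given in that reference. One cosmetic remark: your appeal to Goldstine's theorem is superfluous, since the argument only uses the trivial inclusion $\varepsilon B_X\subseteq \varepsilon B_{X^{**}}$ rather than the precise $w^*$-closure of $\varepsilon B_X$.
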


\begin{lem}\label{lem:famous}
Let $C \sub X$ be a bounded set. The following statements are equivalent:
\begin{enumerate}
\item[(i)] $C$ is relatively weakly compact.
\item[(ii)] For every sequence $(x_n)$ in~$C$ and every $\delta>0$ there is $K\in \cK(X)$ such that
$x_n\in K+\delta B_X$ for infinitely many $n$'s.
\end{enumerate}
\end{lem}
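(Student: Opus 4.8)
The plan is to prove the two implications separately, the essential external ingredients being Grothendieck's test (Lemma~\ref{lem:Grothendieck}) and the Eberlein--\v{S}mulian theorem. The implication (i)~$\Rightarrow$~(ii) is immediate: if $C$ is relatively weakly compact, then by Lemma~\ref{lem:Grothendieck}, for every $\delta>0$ there is a single $K\in\cK(X)$ with $C\sub K+\delta B_X$, so every term of any sequence $(x_n)$ in~$C$ lies in $K+\delta B_X$.

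For the substantial direction (ii)~$\Rightarrow$~(i), it suffices, by Eberlein--\v{S}mulian, to show that every sequence $(x_n)$ in~$C$ has a weakly convergent subsequence. Fix such a sequence. The idea is to apply hypothesis~(ii) repeatedly along nested index sets. Setting $A_0:=\omega$, suppose an infinite $A_{m-1}\sub\omega$ has been chosen; applying~(ii) to the subsequence $(x_n)_{n\in A_{m-1}}$ with $\delta=1/m$ produces a weakly compact set $K_m\in\cK(X)$ and an infinite $A_m\sub A_{m-1}$ such that $x_n\in K_m+\frac1m B_X$ for all $n\in A_m$. Choosing a strictly increasing diagonal sequence $n_m\in A_m$ yields a subsequence $(x_{n_m})$ with the key feature that, for each fixed $m'$ and every $m\ge m'$, one has $n_m\in A_m\sub A_{m'}$, hence $x_{n_m}\in K_{m'}+\frac{1}{m'}B_X$.

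It remains to verify, via Grothendieck's test, that the range $\{x_{n_m}:m\ge 1\}$ is relatively weakly compact. Given $\eps>0$, choose $m'$ with $1/m'<\eps$; then the tail $\{x_{n_m}:m\ge m'\}$ is contained in $K_{m'}+\eps B_X$, while the initial segment $F:=\{x_{n_1},\dots,x_{n_{m'-1}}\}$ is finite, hence weakly compact. Thus $\{x_{n_m}:m\ge 1\}\sub (K_{m'}\cup F)+\eps B_X$ with $K_{m'}\cup F\in\cK(X)$, and Lemma~\ref{lem:Grothendieck} gives relative weak compactness of the range. A final application of Eberlein--\v{S}mulian produces a weakly convergent subsequence of $(x_{n_m})$, which is a subsequence of the original $(x_n)$, completing the argument.

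I expect the main obstacle to be precisely the coordination step in~(ii)~$\Rightarrow$~(i). The weakly compact set granted by~(ii) is allowed to depend on both the sequence and on~$\delta$, so one cannot hope to cover $C$, or even a whole subsequence, by any single~$K$. The nested-refinement-and-diagonalize construction is what converts these a priori uncoordinated ``infinitely many'' statements, obtained for the varying thresholds $\delta=1/m$, into one subsequence whose tails are simultaneously almost-covered by weakly compact sets with shrinking error, at which point Grothendieck's test applies cleanly.
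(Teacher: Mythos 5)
Your proof is correct and takes essentially the same route as the paper's: the same nested refinement of infinite index sets at scales $\delta=1/m$, the same strictly increasing diagonal sequence, and the same concluding application of Grothendieck's test (Lemma~\ref{lem:Grothendieck}) to the decomposition of the range into a tail inside $K_{m'}+\eps B_X$ plus a finite (hence weakly compact) initial segment. The only cosmetic difference is that you invoke the Eberlein--\v{S}mulian theorem explicitly at both ends, whereas the paper leaves it implicit in the reduction to finding a relatively weakly compact subsequence.
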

\begin{proof}
The implication (i)$\impli$(ii) is obvious.
Conversely, assume that (ii) holds.
It suffices check that every sequence $(x_n)$ in~$C$
admits a relatively weakly compact subsequence.
By~(ii) (taking $\delta=1$) we can choose infinite $N_0 \sub \omega$ and $K_0 \in \cK(X)$ such that
$x_n \in K_0+B_X$ for every $n\in N_0$. Condition~(ii) applied to the subsequence $(x_n)_{n\in N_0}$ and $\delta=\frac{1}{2}$
ensures the existence of an infinite set $N_1 \sub N_0$ and $K_1\in \cK(X)$ such that
$x_n \in K_1+\frac{1}{2} B_X$ for every $n\in N_1$. Continuing in this manner, we can find a decreasing sequence
$(N_m)$ of infinite subsets of~$\omega$ and a sequence $(K_m)$ in~$\cK(X)$ such that
\begin{equation}\label{equation:oh}
	x_n \in K_m+\frac{1}{m+1} B_X \quad \mbox{for every }n\in N_m \mbox{ and }m<\omega.
\end{equation}
Let $(n_m)$ be a strictly increasing sequence such that $n_m\in N_m$ for all $m< \omega$. We claim that
$A:=\{x_{n_m}:m< \omega\}$ is relatively weakly compact. Indeed, since $C$ is bounded, so is~$A$. Fix $\epsilon>0$. Choose $m<\omega$
large enough such that $\frac{1}{m+1}\leq \epsilon$. Given any $k\geq m$, we have
$n_k\in N_k \sub N_m$ and~\eqref{equation:oh} yields
$x_{n_k} \in K_m + \frac{1}{m+1} B_X \sub K_m +\epsilon B_X$.
Hence
$$
	A \sub \Big(\{x_{n_1},\dots,x_{n_m}\}\cup K_m \Big)+ \epsilon B_X,
$$
where $\{x_{n_1},\dots,x_{n_m}\}\cup K_m \in \cK(X)$. An appeal to Lemma~\ref{lem:Grothendieck}
ensures that $A$ is relatively weakly compact.
\end{proof}

\begin{lem}\label{SuperLemma}
Let $A \sub B_X$, $C \sub X$ and $\eps>0$. If $A \sub C + \epsilon B_X$, then
$$
	A \sub \Bigl(\frac{1}{1+\epsilon}C\Bigr)\cap B_X + \frac{2\epsilon}{1+\epsilon}B_X.
$$
\end{lem}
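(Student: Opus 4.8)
The plan is to verify the claimed inclusion pointwise, producing for each element of $A$ an explicit decomposition. Fix $a\in A$. By hypothesis $a\in B_X$, so $\|a\|\le 1$, and since $A\sub C+\epsilon B_X$ there exist $c\in C$ and $e\in X$ with $\|e\|\le\epsilon$ such that $a=c+e$. The key idea is the choice of scaling factor: I propose to split $a$ as $a=d+(a-d)$ with main part $d:=\frac{1}{1+\epsilon}c$, and to show that $d\in\bigl(\frac{1}{1+\epsilon}C\bigr)\cap B_X$ while the error $a-d$ has norm at most $\frac{2\epsilon}{1+\epsilon}$.

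First I would check that $d$ lands in the target set. That $d\in\frac{1}{1+\epsilon}C$ is immediate from $c\in C$. To see $d\in B_X$, I use the crude estimate $\|c\|=\|a-e\|\le\|a\|+\|e\|\le 1+\epsilon$, which yields $\|d\|=\frac{\|c\|}{1+\epsilon}\le 1$. This is precisely the reason the factor $\frac{1}{1+\epsilon}$ is the right one: it is chosen so that $c$ is pushed into $\frac{1}{1+\epsilon}C$ while the bound $\|c\|\le 1+\epsilon$ simultaneously forces the rescaled point back into the unit ball.

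It then remains to bound the error term. Substituting $c=a-e$ gives
\[
	a-d=a-\frac{a-e}{1+\epsilon}=\frac{\epsilon}{1+\epsilon}\,a+\frac{1}{1+\epsilon}\,e,
\]
so the triangle inequality yields
\[
	\|a-d\|\le\frac{\epsilon}{1+\epsilon}\|a\|+\frac{1}{1+\epsilon}\|e\|
	\le\frac{\epsilon}{1+\epsilon}+\frac{\epsilon}{1+\epsilon}=\frac{2\epsilon}{1+\epsilon}.
\]
Hence $a=d+(a-d)\in\bigl(\frac{1}{1+\epsilon}C\bigr)\cap B_X+\frac{2\epsilon}{1+\epsilon}B_X$, and since $a\in A$ was arbitrary the inclusion follows.

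I do not expect any genuine obstacle here: the argument is a one-point normed-space computation, and the entire content is the selection of the scaling constant $\frac{1}{1+\epsilon}$, dictated by the two competing requirements above. The coefficient $2$ in the numerator of the error bound is exactly what the triangle inequality delivers once both summands $\frac{\epsilon}{1+\epsilon}a$ and $\frac{1}{1+\epsilon}e$ are each controlled by $\frac{\epsilon}{1+\epsilon}$.
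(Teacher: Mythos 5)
Your proof is correct and takes essentially the same approach as the paper: the paper first records $A \sub C\cap(1+\epsilon)B_X + \epsilon B_X$ (your estimate $\|c\|\le\|a\|+\|e\|\le 1+\epsilon$) and then applies the set-level inclusion $A \sub \frac{1}{1+\epsilon}A + \frac{\epsilon}{1+\epsilon}B_X$, which unwinds pointwise to exactly your decomposition $a = \frac{1}{1+\epsilon}c + \bigl(\frac{\epsilon}{1+\epsilon}a + \frac{1}{1+\epsilon}e\bigr)$ with the same scaling factor and the same error bound. There is no gap; the two arguments differ only in being phrased at the level of sets versus individual points.
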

\begin{proof}
Note that
\begin{equation}\label{eqn:SL}
	A \sub C\cap (1+\epsilon)B_X + \epsilon B_X.
\end{equation}
Indeed, given any $x\in A$ we can write $x=y+z$, where $y\in C$ and $z\in \epsilon B_X$, and so
$\|y\|\leq \|x\|+\|z\|\leq 1+\epsilon$. Therefore
\begin{multline*}
	A
	\stackrel{A \sub B_X}{\sub}
	\frac{1}{1+\epsilon}A + \frac{\epsilon}{1+\epsilon}B_X
	\stackrel{\eqref{eqn:SL}}{\sub}
	\frac{1}{1+\epsilon}\Bigl(C\cap (1+\epsilon)B_X + \epsilon B_X\Bigr)
	+\frac{\epsilon}{1+\epsilon}B_X = \\
	= \Bigl(\frac{1}{1+\epsilon}C\Bigr)\cap B_X + \frac{2\epsilon}{1+\epsilon}B_X,
\end{multline*}
as required.
\end{proof}

\begin{prop}\label{at:7}
\mbox{ }
\begin{enumerate}
\item[(i)] $\mathcal{AK}(E) \preceq \cK(E)$ for every $E\sub X$.
\item[(ii)] $\mathcal{AK}(B_X) \preceq \mathcal{AK}(X)$.
\item[(iii)] $\mathcal{AK}(B_X)\times \omega \preceq \mathcal{AK}(B_X)$ whenever $X$ is non-reflexive.
\item[(iv)] If $Y$ is a Banach space which is isomorphic to a complemented subspace of~$X$, then $\AK(B_Y) \preceq \AK(B_X)$.
\item[(v)] If $X$ and $Y$ are isomorphic Banach spaces, then $\mathcal{AK}(B_X) \sim \mathcal{AK}(B_Y)$.
\end{enumerate}
\end{prop}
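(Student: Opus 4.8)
I will handle the five items by transporting weakly compact sets along natural maps and keeping the almost--inclusion radius under control by shrinking the parameter $\delta$ proportionally to $\eps$, using Remark~\ref{at:5} to unwind reductions that involve an ordinary poset and Definition~\ref{at:3} for the purely asymptotic ones. For~(i), since $\cK(E)$ is an ordinary poset, by Remark~\ref{at:5}(i) it suffices to show that for each $\eps>0$ the identity $\mathrm{id}\colon(\AK(E),\leq_\eps)\to\cK(E)$ is Tukey: given $L_0\in\cK(E)$ take $K_0:=L_0$, since $K\sub L_0$ forces $K\sub L_0+\eps B_X$. For~(ii) I again use the identity on underlying sets, but now $L_0$ ranges over $\AK(X)$ and need not lie in $B_X$, so I bring in Lemma~\ref{SuperLemma}: fixing $\eps>0$ and putting $\delta:=\eps/2$, if $K\in\AK(B_X)$ satisfies $K\sub L_0+\delta B_X$ then, as $K\sub B_X$, the lemma yields $K\sub K_0+\frac{2\delta}{1+\delta}B_X\sub K_0+\eps B_X$ with $K_0:=\bigl(\tfrac{1}{1+\delta}L_0\bigr)\cap B_X$, a weakly closed subset of the weakly compact set $\tfrac{1}{1+\delta}L_0$ and hence a member of $\AK(B_X)$.

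For~(iii), where $\AK(B_X)\times\omega$ carries the coordinatewise relations $(K,n)\leq_t(L,m)\Leftrightarrow K\leq_t L\text{ and }n\leq m$, I mimic Proposition~\ref{KXKBX}(iii) but need a sequence staying uniformly far from weak compactness. As $X$ is non-reflexive, $B_X$ is not relatively weakly compact, so negating condition~(ii) of Lemma~\ref{lem:famous} produces a sequence $(x_n)$ in $B_X$ and a threshold $\delta_0>0$ such that $\{n:x_n\in K+\delta_0 B_X\}$ is finite for every $K\in\cK(X)$. Define $f(K,n):=K\cup\{x_0,\dots,x_n\}\in\AK(B_X)$; fixing $\eps>0$, put $\delta:=\min(\eps,\delta_0)$, and given $L_0\in\AK(B_X)$ set $K_0:=L_0$ and $m_0:=\max\{n:x_n\in L_0+\delta_0 B_X\}$ (finite). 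If $f(K,n)\leq_\delta L_0$ then $K\sub L_0+\delta B_X\sub K_0+\eps B_X$ while $x_n\in L_0+\delta_0 B_X$ forces $n\leq m_0$, so $(K,n)\leq_\eps(K_0,m_0)$.

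For~(iv), fix an isomorphism $T\colon Y\to Z\sub X$ onto the complemented subspace $Z$ with bounded projection $P\colon X\to Z$, and put $c:=\|T\|$, $p:=\|P\|$, $q:=\|T^{-1}\|$ and $f(K):=\tfrac1c T(K)$, which lands in $\AK(B_X)$ because $T$ is weak--weak continuous and $c\geq\|T\|$. Fixing $\eps>0$ and $\delta:=\eps/(2cpq)$, suppose $K\in\AK(B_Y)$ satisfies $\tfrac1c T(K)\sub L_0+\delta B_X$; applying $P$ (the identity on $Z\supseteq T(K)$) and then $cT^{-1}$ gives $K\sub M+c\delta pq\,B_Y$ with $M:=cT^{-1}(P(L_0))\in\cK(Y)$, and since $K\sub B_Y$, Lemma~\ref{SuperLemma} applied in $Y$ produces $K\sub K_0+\eps B_Y$ with $K_0:=\bigl(\tfrac{1}{1+c\delta pq}M\bigr)\cap B_Y\in\AK(B_Y)$. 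Then~(v) follows by applying~(iv) twice (with $Z=X$ and with $Z=Y$, $P=\mathrm{id}$): an isomorphism $X\cong Y$ exhibits each space as isomorphic to a complemented subspace of the other, so $\AK(B_X)\preceq\AK(B_Y)\preceq\AK(B_X)$.

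The main obstacle, I expect, is the parameter bookkeeping in~(ii) and~(iv): the transported set ceases to lie in the unit ball and the operator norms inflate the almost--inclusion radius, so the real content is that Lemma~\ref{SuperLemma} lets one reabsorb the excess into the target ball while keeping $K_0$ a genuine element of $\AK(B_\bullet)$, at the sole cost of taking $\delta$ a fixed fraction of $\eps$. The remaining delicate point, in~(iii), is securing one $\delta_0$ that works simultaneously against all weakly compact sets, which is precisely the quantitative form of non-reflexivity provided by Lemma~\ref{lem:famous}.
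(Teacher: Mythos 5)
Your proof is correct and follows essentially the same route as the paper's: the identity/transport maps in (i)--(ii), Lemma~\ref{SuperLemma} to reabsorb the excess into the unit ball, the sequence and uniform threshold $\delta_0$ from Lemma~\ref{lem:famous} in (iii), the map $K\mapsto\tfrac{1}{\|T\|}T(K)$ with the projection $P$ in (iv), and (v) by applying (iv) twice. Your explicit parameter choices ($\delta=\eps/2$ in (ii), $\delta=\eps/(2cpq)$ in (iv)) differ only cosmetically from the paper's and are equally valid.
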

\begin{proof}
(i). A Tukey reduction $f:\mathcal{AK}(E) \Rightarrow \cK(E)$ is defined by
taking $f_\eps(K):=K$ for every $\eps>0$ and every weakly compact set~$K \sub E$.

(ii). We define $f_\eps: \mathcal{AK}(B_X) \to \mathcal{AK}(X)$ by $f_\eps (K):=K$ for every $\eps>0$. Let us check that $\{f_\eps\}_{\eps>0}$
is a Tukey reduction. Fix $\varepsilon>0$,
choose $0<\epsilon'<\min\{\epsilon,2\}$ and take $\delta:=\frac{\epsilon'}{2-\epsilon'}$. We shall prove that
$f_\eps: (\mathcal{AK}(B_X),\leq_\eps) \to (\mathcal{AK}(X),\leq_\delta)$ is Tukey. Indeed, given $L_0\in\mathcal{AK}(X)$, we take $K_0 :=(\frac{1}{1+\delta}L_0)\cap B_X\in \mathcal{AK}(B_X)$. Then Lemma~\ref{SuperLemma} implies that
$K\sub K_0 + \varepsilon B_X$ for every $K \in \AK(B_X)$ satisfying $K\sub L_0 + \delta B_X$.

(iii). Since $X$ is not reflexive, $B_X$ is not weakly compact and Lemma~\ref{lem:famous}
ensures the existence of a sequence $(x_n)$ in~$B_X$ and a constant $\delta_0>0$ such that for every $K\in \cK(X)$
the set $\{n<\omega:x_n\in K+\delta_0 B_X\}$ is finite.
For every $\eps>0$ we define
$$
	\tau_\eps: \AK(B_X)\times\omega\to\AK(B_X),
	\quad \tau_\eps(K,n):=K\cup \{x_0,\dots,x_n\}.
$$
We claim that the family $\{\tau_\eps\}_{\eps>0}$ defines a Tukey reduction
$\AK(B_X)\times\omega\Rightarrow \AK(B_X)$.
Indeed, fix $\eps>0$ and take $\delta:=\min\{\delta_0,\eps\}$.
Given $L_0\in \AK(B_X)$, there is $n_0<\omega$ such that $x_n\not\in L_0+\delta_0 B_X$ for all $n> n_0$.
Therefore, if $(K,n)\in \AK(B_X)\times\omega$ satisfies $\tau_\epsilon(K,n) \sub L_0+\delta B_X$,
then $K \sub L_0+\epsilon B_X$ and $n\le n_0$.

(iv). Let $Z$ be a complemented subspace of~$X$ which is isomorphic to~$Y$. Let $T:Y \to Z$ be an isomorphism and let $P:X \to X$ be a projection
onto~$Z$. For every $\eps>0$ we define the map
$$
	f_\eps:\AK(B_Y) \to \AK(B_X), \quad
	f_\eps(K):=\frac{1}{\|T\|}T(K).
$$
Let us check that $\{f_\eps\}_{\eps>0}$ defines a Tukey reduction $\AK(B_Y) \Rightarrow \AK(B_X)$.
Fix $\eps>0$ and take $\delta>0$ small enough such that
\begin{equation}\label{eqn:TT}
	\frac{2\delta\|T\|\|T^{-1}\|\|P\|}{1+\delta\|T\|\|T^{-1}\|\|P\|} \leq \epsilon.
\end{equation}
Write $\eps':=\delta\|T\|\|T^{-1}\|\|P\|$. Given $L_0\in \AK(B_X)$, set
$$
	K_0:=\Bigl(\frac{\|T\|}{1+\eps'}T^{-1}(P(L_0))\Bigr)\cap B_Y \in \AK(B_Y).
$$
Now if $K \in \AK(B_Y)$ satisfies $f_\eps(K) \sub L_0+\delta B_X$, then
$$
	K \sub
	\|T\| T^{-1}((L_0+\delta B_X)\cap Z) \sub
	\|T\| T^{-1}(P(L_0+\delta B_X)) \sub
	\|T\| T^{-1}(P(L_0)) + \eps' B_Y
$$
and so Lemma~\ref{SuperLemma} implies that
$$
	K \sub K_0 + \frac{2\eps'}{1+\eps'}B_Y \stackrel{\eqref{eqn:TT}}{\sub} K_0+\eps B_Y.
$$
This finishes the proof of~(iv).
Finally, (v) follows at once from~(iv).
\end{proof}

Theorem~\ref{mer_sta} in Section~\ref{section:unconditional}
will make clear that the assertion of Proposition~\ref{at:7}(iv)
is no longer true if $Y$ is just an uncomplemented subspace of~$X$.

We presented SWCG Banach spaces as a motivation for introducing the asymptotic
structure $\mathcal{AK}(B_X)$. To see how this class of spaces fits in the theory,
we need the following elementary characterization (the equivalence (i)$\Leftrightarrow$(ii) was already pointed
out in \cite[Theorem~2.1]{sch-whe}).

\begin{lem}\label{charSWCG}
The following statements are equivalent:
\begin{enumerate}
\item[(i)] $X$ is SWCG;
\item[(ii)] there exist countably many weakly compact sets $\{K_n : n<\omega\}$ in~$X$ such that for every
$\varepsilon>0$ and every weakly compact set $L\subset X$ there is $n<\omega$ such that $L\sub K_n + \varepsilon B_X$;
\item[(iii)] there exist countably many weakly compact sets $\{S_n : n<\omega\}$ in~$B_X$ such that for every
$\varepsilon>0$ and every weakly compact set $L\subset B_X$ there is $n<\omega$ such that $L\sub S_n + \varepsilon B_X$.
\end{enumerate}
\end{lem}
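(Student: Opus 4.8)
The plan is to establish the cycle (i)$\Rightarrow$(ii), (ii)$\Leftrightarrow$(iii), (ii)$\Rightarrow$(i), which together give all three equivalences. The implication (i)$\Rightarrow$(ii) is immediate: if $K$ strongly generates $X$, then the countable family $K_n:=nK$ (each weakly compact, being a scalar multiple of $K$) witnesses~(ii), since $L\subset nK+\varepsilon B_X$ is exactly $L\subset K_n+\varepsilon B_X$.

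For (ii)$\Rightarrow$(i) the real task is to manufacture a \emph{single} weakly compact set out of the sequence $\{K_n\}$. First I would replace each $K_n$ by its closed absolutely convex hull $\overline{{\rm aco}}(K_n)$, which is again weakly compact by Krein's theorem and still witnesses~(ii); so I may assume each $K_n$ is absolutely convex with $0\in K_n$. Next, choosing positive scalars $\lambda_n$ with $\lambda_n K_n\subset 2^{-n}B_X$ (possible since each $K_n$ is bounded), I would set $K:=\overline{{\rm aco}}\bigl(\bigcup_n \lambda_n K_n\bigr)$. The set $\bigcup_n\lambda_n K_n$ is relatively weakly compact: given $\varepsilon>0$, pick $N$ with $2^{-N}\le\varepsilon$; then $\bigcup_{n\ge N}\lambda_n K_n\subset \varepsilon B_X$, while the finite union $\bigcup_{n<N}\lambda_n K_n$ is weakly compact, so Grothendieck's test (Lemma~\ref{lem:Grothendieck}) applies. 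By Krein's theorem $K$ is weakly compact, and since $K\supset\lambda_n K_n$ we get $K_n\subset\lambda_n^{-1}K\subset mK$ for any integer $m\ge\lambda_n^{-1}$ (here the absolute convexity of $K$ and $0\in K$ are used). Combined with~(ii) this shows $K$ strongly generates $X$.

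For the equivalence (ii)$\Leftrightarrow$(iii) I would argue by scaling and truncation. For (iii)$\Rightarrow$(ii): given a weakly compact $L\subset X$, fix an integer $k$ with $L\subset kB_X$, so $k^{-1}L$ is a weakly compact subset of $B_X$; applying (iii) to $k^{-1}L$ with $\varepsilon/k$ yields some $S_n$ with $k^{-1}L\subset S_n+(\varepsilon/k)B_X$, i.e. $L\subset kS_n+\varepsilon B_X$. Thus the countable family $\{kS_n:n,k<\omega\}$ witnesses~(ii). Conversely, for (ii)$\Rightarrow$(iii) I would normalize the $K_n$ to be absolutely convex with $0\in K_n$ and put $S_n:=K_n\cap B_X\subset B_X$. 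Given a weakly compact $L\subset B_X$ and $\varepsilon>0$, choose $\varepsilon'$ with $\frac{2\varepsilon'}{1+\varepsilon'}\le\varepsilon$ and use~(ii) to find $n$ with $L\subset K_n+\varepsilon' B_X$; Lemma~\ref{SuperLemma} then gives $L\subset\bigl(\frac{1}{1+\varepsilon'}K_n\bigr)\cap B_X+\frac{2\varepsilon'}{1+\varepsilon'}B_X$, and since $\frac{1}{1+\varepsilon'}K_n\subset K_n$ we obtain $\bigl(\frac{1}{1+\varepsilon'}K_n\bigr)\cap B_X\subset S_n$, whence $L\subset S_n+\varepsilon B_X$.

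The main obstacle is the single-set construction in (ii)$\Rightarrow$(i): one must choose the scalars $\lambda_n$ so that $\bigcup_n\lambda_n K_n$ is simultaneously bounded and relatively weakly compact, and then verify weak compactness of its closed absolutely convex hull. This is exactly where Grothendieck's test and Krein's theorem enter, and where the absolute-convexity normalization pays off, since it is what makes the inclusions $\lambda_n^{-1}K\subset mK$ available so as to recover the ``$nK$'' form in the definition of SWCG.
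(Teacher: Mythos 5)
Your proof is correct, but its core differs from the paper's in two ways: in how the cycle of implications is arranged, and in how the single strongly generating set is built. The paper proves (i)$\Rightarrow$(ii)$\Rightarrow$(iii)$\Rightarrow$(i) (plus the easy (iii)$\Rightarrow$(ii)), so the single-set construction happens in (iii)$\Rightarrow$(i): setting $S_n':=\overline{{\rm aco}}(S_n)$, it takes the infinite ``diagonal sum'' $K:=\bigl\{\sum_{n<\omega}2^{-n}x_n:\ x_n\in S_n'\bigr\}$, proves weak compactness by viewing $K$ as the image of the compact product $\prod_n S_n'$ (product of weak topologies, Tychonoff) under the weak-to-weak continuous summation map, and concludes from $S_n\subset 2^nK$. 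You instead close the loop with (ii)$\Rightarrow$(i), scaling each $K_n$ into $2^{-n}B_X$ so that the union $\bigcup_n\lambda_nK_n$ is bounded and passes Grothendieck's test (Lemma~\ref{lem:Grothendieck}), and then applying Krein's theorem to $K:=\overline{{\rm aco}}\bigl(\bigcup_n\lambda_nK_n\bigr)$, with $K_n\subset\lambda_n^{-1}K\subset mK$ following from the absolute convexity of~$K$. This trades the paper's product-space continuity argument for two black boxes the paper has already stated at that point, so both routes are equally available; they are also closely related, since each embeds a scaled copy of every member of the countable family into~$K$. For (ii)$\Leftrightarrow$(iii) you and the paper use the same mechanism, namely Lemma~\ref{SuperLemma} for the downward truncation and scaling for the upward direction; the only difference is cosmetic: the paper takes the countable family of all rational scalings $pK_n\cap B_X$, $p\in\mathbb{Q}^+$, so that $\bigl(\frac{1}{1+\varepsilon'}K_n\bigr)\cap B_X$ is literally a member of the family, whereas you first normalize each $K_n$ to be absolutely convex so that the single truncation $K_n\cap B_X$ absorbs $\bigl(\frac{1}{1+\varepsilon'}K_n\bigr)\cap B_X$. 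One small remark: in your (ii)$\Rightarrow$(i) step the normalization of the $K_n$ is not really what makes $\lambda_n^{-1}K\subset mK$ work (that follows automatically because $K$ is defined as a closed absolutely convex hull); where the normalization genuinely earns its keep is in your version of (ii)$\Rightarrow$(iii).
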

\begin{proof}
(i)$\impli$(ii) is trivial.

(ii)$\impli$(iii). For each $n<\omega$ and each $p \in \qu^+$, we consider the weakly compact set
$S_{n,p} := p K_n \cap B_X$. Fix $\epsilon>0$ and a weakly compact set $L \sub B_X$. Choose $\eps' \in \qu^+$ such that
$\frac{2\eps'}{1+\eps'}\leq \eps$. By the assumption, there is
$n<\omega$ such that $L \sub K_n+\eps' B_X$. An appeal to Lemma~\ref{SuperLemma}
yields
$$
	L \sub \Bigl(\frac{1}{1+\eps'}K_n\Bigr)\cap B_X + \frac{2\eps'}{1+\eps'}B_X \sub
	\Bigl(\frac{1}{1+\eps'}K_n\Bigr)\cap B_X + \eps B_X.
$$
So, the family $\{S_{n,p}:n<\omega,p\in \qu^+\}$ satisfies the required property.

(iii)$\impli$(ii). The family of weakly compact sets
$\{p S_n:n<\omega,p \in \qu^+\}$ fulfills the required property, as can be easily checked.

(iii)$\impli$(i). Define $S'_n=\overline{{\rm aco}}(S_n)$ for every $n<\omega$.
Since each $S'_n$ is weakly compact, the set
\[
	K:=\left\{\sum_{n<\omega}\frac{1}{2^{n}} \, x_n: \, x_n\in S'_n \mbox{ for all }n<\omega\right\}
\]
is weakly compact as well. Indeed, note that the mapping
$$
	\prod_{n<\omega}S'_n \to X,
	\quad
	(x_n) \mapsto \sum_{n<\omega}\frac{1}{2^{n}} \, x_n,
$$
is continuous when $\prod_{n<\omega}S'_n$ is equipped with the product of the weak topology
and $X$ is equipped with the weak topology. Bearing in mind that $S_n \sub 2^{n} K$ for every $n<\omega$, it
is easy to check that $K$ strongly generates~$X$.
\end{proof}

\begin{thm}\label{swcg:2}
\mbox{ }
\begin{enumerate}
\item[(i)] $X$ is reflexive if and only if $\mathcal{AK}(B_X)\sim \{0\}$.
\item[(ii)] $X$ is SWCG and non-reflexive if and only if $\mathcal{AK}(B_X)\sim \omega$.
\end{enumerate}
\end{thm}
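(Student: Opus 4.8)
The plan is to read both equivalences directly off the definitions, using the cofinality invariant $\cf$ as the bridge between the Tukey class $\omega$ and the countable generating family appearing in the definition of SWCG. For part~(i), I would first note that reflexivity makes $B_X$ itself weakly compact, so $B_X\in\mathcal{AK}(B_X)$ and $K\leq_\varepsilon B_X$ for every weakly compact $K\subset B_X$ and every $\varepsilon>0$; by Remark~\ref{at:5}(i) the constant maps $f_\varepsilon\equiv 0$ then witness $\mathcal{AK}(B_X)\preceq\{0\}$, while $\{0\}\preceq\mathcal{AK}(B_X)$ is trivial, giving $\mathcal{AK}(B_X)\sim\{0\}$. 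For the converse, $\mathcal{AK}(B_X)\preceq\{0\}$ unwinds (again via Remark~\ref{at:5}(i)) to the statement that for each $\varepsilon>0$ there is a single $K_\varepsilon\in\mathcal{AK}(B_X)$ with $K\leq_\varepsilon K_\varepsilon$ for all $K$. Testing this against the singletons $\{x\}$ with $x\in B_X$ yields $B_X\subset K_\varepsilon+\varepsilon B_X$ for every $\varepsilon>0$, and Lemma~\ref{lem:Grothendieck} (Grothendieck's test) forces $B_X$ to be weakly compact, i.e. $X$ reflexive.

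For the forward implication of part~(ii), suppose $X$ is SWCG and non-reflexive. The reduction $\omega\preceq\mathcal{AK}(B_X)$ comes essentially for free from Proposition~\ref{at:7}(iii): since $\{0\}\preceq\mathcal{AK}(B_X)$, one has $\omega\preceq\mathcal{AK}(B_X)\times\omega\preceq\mathcal{AK}(B_X)$. For $\mathcal{AK}(B_X)\preceq\omega$ I would invoke the characterization of SWCG in Lemma~\ref{charSWCG}(iii) to fix weakly compact sets $\{S_n\}\subset B_X$ witnessing strong generation inside the ball, replace them by the increasing sets $S_n':=\bigcup_{k\le n}S_k$ (still weakly compact and contained in $B_X$), and define
\[
	f_\varepsilon(K):=\min\Bigl\{n:\ K\subset S_n'+\tfrac{\varepsilon}{2}B_X\Bigr\}.
\]
The covering property guarantees that $f_\varepsilon$ is well defined, and taking $K_0:=S_{n_0}'$ shows that $f_\varepsilon(K)\le n_0$ implies $K\subset S_{n_0}'+\tfrac{\varepsilon}{2}B_X\subset K_0+\varepsilon B_X$; thus each $f_\varepsilon:(\mathcal{AK}(B_X),\leq_\varepsilon)\to\omega$ is Tukey and Remark~\ref{at:5}(i) gives $\mathcal{AK}(B_X)\preceq\omega$.

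For the converse of~(ii), assume $\mathcal{AK}(B_X)\sim\omega$. Non-reflexivity is immediate: were $X$ reflexive, part~(i) would give $\mathcal{AK}(B_X)\sim\{0\}$, hence $\omega\sim\{0\}$, which is false since $\omega$ has no upper bound and so $\omega\not\preceq\{0\}$. For SWCG, I would use Lemma~\ref{SG:1}(i): from $\mathcal{AK}(B_X)\preceq\omega$ we get $\cf(\mathcal{AK}(B_X))\le\cf(\omega)=\aleph_0$, so there is a countable cofinal family $\{K_n\}\subset\mathcal{AK}(B_X)$. By the definition of cofinality in an asymptotic structure, this says exactly that for every $t>0$ and every weakly compact $K\subset B_X$ there is $n$ with $K\subset K_n+tB_X$, which is precisely condition~(iii) of Lemma~\ref{charSWCG}; hence $X$ is SWCG.

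The genuinely load-bearing point is this last step, and it is more a conceptual identification than a hard estimate: recognizing that a countable cofinal set in $\mathcal{AK}(B_X)$ is \emph{literally} the SWCG generating family, so that the distinction between the Tukey classes $\{0\}$ and $\omega$ matches exactly the dichotomy ``$B_X$ weakly compact'' versus ``$X$ merely SWCG''. The only mild subtlety I anticipate is that $\{0\}$ and $\omega$ share the same cardinal coefficients $\cf$ and $\add_\omega$, so the separation of cases~(i) and~(ii) cannot be detected by those invariants and must instead be argued through boundedness, namely the failure of $\omega\preceq\{0\}$.
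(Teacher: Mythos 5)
Your proof is correct. For part~(ii) it is essentially the paper's argument: the reduction $\mathcal{AK}(B_X)\preceq\omega$ is built from the family of Lemma~\ref{charSWCG}(iii) exactly as in the paper (your monotonization $S_n'$ and the $\varepsilon/2$ are harmless extra care), the reduction $\omega\preceq\mathcal{AK}(B_X)$ uses Proposition~\ref{at:7}(iii) in both cases, and your converse via $\cf(\mathcal{AK}(B_X))\leq\cf(\omega)=\aleph_0$ (Lemma~\ref{SG:1}(i)) is the paper's converse in disguise: the paper extracts the sets $S_{\varepsilon,n}$ directly from the witnesses of the Tukey reduction $\mathcal{AK}(B_X)\Rightarrow\omega$, which is precisely how Lemma~\ref{SG:1}(i) is proved, so the two routes coincide once that lemma is unfolded. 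The genuine divergence is in the converse of part~(i). The paper shows that non-reflexivity implies $\omega\preceq\mathcal{AK}(B_X)$, again via Proposition~\ref{at:7}(iii), whose proof rests on Lemma~\ref{lem:famous} (a sequence in $B_X$ staying $\delta_0$-far from every weakly compact set); your argument instead unwinds $\mathcal{AK}(B_X)\preceq\{0\}$ into the statement $B_X\subset K_\varepsilon+\varepsilon B_X$ for each $\varepsilon>0$ by testing against singletons, and concludes reflexivity from Grothendieck's test (Lemma~\ref{lem:Grothendieck}). This is more elementary and self-contained, avoiding Lemma~\ref{lem:famous} entirely, though it yields only $\mathcal{AK}(B_X)\not\sim\{0\}$ rather than the stronger fact $\omega\preceq\mathcal{AK}(B_X)$ for all non-reflexive spaces, which the paper gets as a byproduct and which you must (and do) invoke separately from Proposition~\ref{at:7}(iii) in part~(ii). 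Your closing remark is also well taken: under the paper's conventions for asymptotic structures, $\{0\}$ and $\omega$ share the same $\cf$ and $\add_\omega$, so separating cases~(i) and~(ii) indeed requires the direct boundedness observation $\omega\not\preceq\{0\}$ rather than a cardinal invariant.
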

\begin{proof}
(i). Since $\AK(B_X) \preceq K(B_X)$ (Proposition~\ref{at:7}(i)),
we have $\mathcal{AK}(B_X)\sim \{0\}$ when $X$ is reflexive. Conversely, if $X$ is non-reflexive,
then Proposition~\ref{at:7}(iii) yields $\mathcal{AK}(B_X)\times \omega \preceq \AK(B_X)$
and so $\omega \preceq \AK(B_X)$.

(ii). By the proof of~(i), it only remains to prove that
\begin{center}
	$X$ is SWCG if and only if $\mathcal{AK}(B_X)\preceq \omega$.
\end{center}
Suppose first that $X$ is SWCG and let $\{S_n:n<\omega\}$ be a family of weakly compact subsets of~$B_X$ as in Lemma~\ref{charSWCG}(iii).
We can assume that $S_n \sub S_{n+1}$ for every $n<\omega$.
Given $\varepsilon>0$, we define $f_\eps: \mathcal{AK}(B_X) \to \omega$ as follows: for every $K\in \AK(B_X)$, we choose
$f_\eps(K) <\omega$ satisfying $K\sub S_{f_\eps(K)} + \varepsilon B_X$. It is clear
that $\{f_\eps\}_{\eps>0}$ defines a Tukey reduction $\mathcal{AK}(B_X)\Rightarrow \omega$.
Hence $\mathcal{AK}(B_X)\preceq \omega$.

Conversely, suppose there is a Tukey reduction $f:\mathcal{AK}(B_X)\Rightarrow \omega$.
For every $\eps\in \qu^+$ and every $n<\omega$, we take $S_{\eps,n}\in \AK(B_X)$ such that
$L \sub S_{\eps,n}+\eps B_X$ whenever $L \in \AK(B_X)$ satisfies $f_\eps(L)\leq n$.
Of course, the collection $\{S_{\varepsilon,n}:\eps\in \qu^+,n<\omega\}$ fulfills condition (iii) in Lemma~\ref{charSWCG},
and so $X$ is SWCG.
\end{proof}

The previous results say that $X$ is SWCG if and only if $\cf(\mathcal{AK}(B_X))=\aleph_0$. For an arbitrary space,
$\cf(\AK(B_X))$ is the least cardinality of a family $\mathcal{G}$ of weakly compact subsets of~$X$ which {\em strongly generates}~$X$, in
the sense that for every weakly compact set $K \sub X$ and every $\epsilon>0$ there is $G \in \mathcal{G}$ such that
$K \sub G + \epsilon B_X$.
On the other hand, the cardinal invariant $\add_\omega(\AK(B_X))$ is the least cardinality of
a family $\mathcal{H} \sub \cK(X)$ for which there exists $\eps>0$ such that for every sequence $(K_n)$ in~$\cK(X)$
there is $K\in\mathcal{H}$ such that $K\not\subset K_n+\eps B_X$ for all $n<\omega$.
The following remark collects some basic properties of these coefficients.

\begin{rem}\label{SG:2}
\mbox{ }

\begin{enumerate}
\item[(i)] $\add_\omega(\AK(B_X))\le \cf(\AK(B_X))$ whenever $X$ is not SWCG.
\item[(ii)] $\cf(\AK(B_X))\leq \mathfrak{c}$ whenever $B_{X^*}$ is $w^*$-separable.
\item[(iii)] If $Y$ is a complemented subspace of~$X$, then $\add_\omega(\AK(B_Y))\ge\add_\omega(\AK(B_X))$ and $\cf(\AK(B_Y)) \leq \cf(\AK(B_X))$.
\end{enumerate}
\end{rem}
\begin{proof}
Part~(i) is immediate. (ii) is a consequence of the fact that $X$ embeds isomorphically into~$\ell^\infty$
and so it has $\mathfrak{c}$ many weakly compact subsets. (iii) follows
directly from Proposition \ref{at:7}(iv) and Lemma~\ref{SG:1}.
\end{proof}

\begin{thm}\label{as:4}
If $\AK(B_X)\preceq P$ for a partially ordered set~$P$, then $\cK(B_X)\preceq P^\omega$.
\end{thm}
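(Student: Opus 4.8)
The plan is to build a single Tukey map $g\colon\cK(B_X)\to P^\omega$ out of the given family of reductions, using one coordinate for each scale $\eps_n\to 0$, and then to recover genuine weak compactness of the bound from the ``almost inclusion'' data via Grothendieck's test (Lemma~\ref{lem:Grothendieck}).

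First I would unpack the hypothesis. By Remark~\ref{at:5}(i), $\AK(B_X)\preceq P$ provides a family $\{f_\eps\colon\AK(B_X)\to P\}_{\eps>0}$ such that each $f_\eps\colon(\AK(B_X),\leq_\eps)\to P$ is Tukey. The point to keep in mind is that $\cK(B_X)$ and $\AK(B_X)$ share the same underlying set (the weakly compact subsets of~$B_X$). Fixing $\eps_n:=\frac{1}{n+1}$, I define
$$
	g\colon\cK(B_X)\to P^\omega,\qquad g(K):=(f_{\eps_n}(K))_{n<\omega}.
$$
Since $\cK(B_X)$ and $P^\omega$ are ordinary posets, by the reformulation of Tukey-ness after Definition~\ref{posets:1} it suffices to show that, for each $\bar p=(p_n)_{n<\omega}\in P^\omega$, there is $L_0\in\cK(B_X)$ with the property that $g(K)\leq\bar p$ implies $K\sub L_0$.

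So fix $\bar p=(p_n)$. For each $n$, since $f_{\eps_n}$ is Tukey on $(\AK(B_X),\leq_{\eps_n})$, there is $L_n\in\AK(B_X)$ such that $f_{\eps_n}(K)\leq p_n$ forces $K\leq_{\eps_n}L_n$, i.e.\ $K\sub L_n+\eps_n B_X$. Set
$$
	L_0:=B_X\cap\bigcap_{n<\omega}\bigl(L_n+\eps_n B_X\bigr).
$$
If $g(K)\leq\bar p$, then $f_{\eps_n}(K)\leq p_n$ for all~$n$, so $K\sub L_n+\eps_n B_X$ for all~$n$; combined with $K\sub B_X$ this yields $K\sub L_0$. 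It therefore remains only to verify that $L_0$ belongs to $\cK(B_X)$.

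This last point is the crux, and the only place where the metric (asymptotic) information is converted back into genuine weak compactness. The set $L_0$ is bounded (it lies in~$B_X$) and weakly closed: each $L_n$ is weakly compact and each $\eps_n B_X$ is weakly closed by Mazur's theorem, so each sum $L_n+\eps_n B_X$ is weakly closed, and hence so is the intersection above. To see that $L_0$ is weakly compact I apply Grothendieck's test (Lemma~\ref{lem:Grothendieck}): given $\eps>0$, choose $n$ with $\eps_n\leq\eps$; then $L_0\sub L_n+\eps_n B_X\sub L_n+\eps B_X$ with $L_n\in\cK(X)$, so $L_0$ is relatively weakly compact, and being weakly closed it is weakly compact. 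As $L_0\sub B_X$, we get $L_0\in\cK(B_X)$, finishing the verification that $g$ is Tukey and hence $\cK(B_X)\preceq P^\omega$. I expect this final compactness step to be the main obstacle: the coordinate maps $f_{\eps_n}$ only control $K$ up to an $\eps_n$-error, and it is exactly the fact that $\eps_n\to 0$, together with Grothendieck's criterion, that upgrades the weakly closed bounded set $L_0$ to an honest weakly compact bound.
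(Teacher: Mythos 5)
Your proof is correct and follows essentially the same route as the paper's: the same map $K\mapsto(f_{1/(n+1)}(K))_{n<\omega}$, the same bound $\bigcap_{n<\omega}\bigl(L_n+\tfrac{1}{n+1}B_X\bigr)$, and the same appeal to Lemma~\ref{lem:Grothendieck} to upgrade the almost-inclusion data to weak compactness. The only difference is that you spell out the weak-closedness of the intersection (via Mazur and compact-plus-closed), a detail the paper leaves implicit in its citation of Grothendieck's test.
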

\begin{proof}
Let $\{f_\eps:\AK(B_X)\to P\}_{\eps>0}$ be a Tukey reduction.
Define
\[
	F:
	\cK(B_X)\to P^\omega,
	\quad F(L):=(f_1(L), f_{\frac{1}{2}}(L), f_{\frac{1}{3}}(L),\dots).
\]
We claim that $F$ is a Tukey map. Indeed, fix any $(p_n)\in P^\omega$.
For every $n<\omega$ there is $K_n \in \AK(B_X)$ such that $L \sub K_n + \frac{1}{n+1}B_X$
whenever $L \in \AK(B_X)$ fulfills $f_{\frac{1}{n+1}}(L) \leq p_n$. Note that
\[
	K:=\bigcap_{n<\omega} \Bigl(K_n+\frac{1}{n+1}B_X\Bigr)
\]
is a weakly compact subset of~$B_X$ (apply Lemma~\ref{lem:Grothendieck}).
Clearly, if $L \in \cK(B_X)$ satisfies
$F(L) \leq (p_n)$, then $L \sub K$. It follows that $F$ is Tukey, as claimed.
\end{proof}

{Our proof of the next lemma imitates the argument of \cite[Proposition 1]{lou-vel}.
For the tree $\omega^{<\omega}$ of all finite sequences of natural numbers
we use notations similar to those used for the dyadic tree~$2^{<\omega}$. For instance, given $s=(s_0,\dots,s_k)\in \omega^{<\omega}$
and $n<\omega$, we write $s\smallfrown n=(s_0,\dots,s_k,n)$.}

\begin{lem}\label{lem:LV}
{If $X$ is separable and $\mathcal{AK}(B_X) \preceq \omega^\omega$, then either $X$ is SWCG
or $\mathcal{AK}(B_X)\sim\omega^\omega$.}
\end{lem}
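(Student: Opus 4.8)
The plan is to establish the dichotomy by proving that if $X$ is \emph{not} SWCG, then $\omega^\omega\preceq\mathcal{AK}(B_X)$; combined with the hypothesis $\mathcal{AK}(B_X)\preceq\omega^\omega$ this yields $\mathcal{AK}(B_X)\sim\omega^\omega$. So assume $X$ is not SWCG. Then $X$ is non-reflexive (reflexive spaces are SWCG), and I first record that the hypothesis already forces $\cK(B_X)\sim\omega^\omega$: indeed Theorem~\ref{as:4} applied with $P=\omega^\omega$ gives $\cK(B_X)\preceq(\omega^\omega)^\omega\sim\omega^\omega$, while Proposition~\ref{KXKBX}(iii) gives the reverse reduction $\omega^\omega\preceq\cK(B_X)$. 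By Remark~\ref{at:5}(ii) it now suffices to produce a single $\delta>0$ and a single Tukey map $g:\omega^\omega\to(\mathcal{AK}(B_X),\leq_\delta)$.

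The target of the construction is a doubly indexed family $\{x_{n,i}:n,i<\omega\}\subseteq B_X$ with two properties (for a fixed $\varepsilon_0>0$ chosen below): \emph{(R)} every row is $\varepsilon_0$-singular, i.e.\ for every weakly compact $L$ the set $\{i:x_{n,i}\in L+\varepsilon_0 B_X\}$ is finite; and \emph{(W)} every transversal $\{x_{n,j_n}:n<\omega\}$ is relatively weakly compact. Granting such a family, I set $\delta:=\varepsilon_0$ and let $g(\varphi)$ be the weak closure of $\{0\}\cup\{x_{n,\varphi(n)}:n<\omega\}$, which is a weakly compact subset of $B_X$ by~(W). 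That $g$ is Tukey is then immediate: given $L_0\in\mathcal{AK}(B_X)$, put $\varphi_0(n):=\max\{i:x_{n,i}\in L_0+\varepsilon_0 B_X\}$, finite by~(R); if $g(\varphi)\subseteq L_0+\varepsilon_0 B_X$ then $x_{n,\varphi(n)}\in L_0+\varepsilon_0 B_X$ for every $n$, so $\varphi\leq\varphi_0$. It is worth noting that the existence of such a family forces $X$ to fail SWCG: were $X$ SWCG, Lemma~\ref{charSWCG} would give a sequence $\{K_m\}$ of weakly compact sets with every weakly compact $L\subseteq B_X$ contained in some $K_m+\tfrac{\varepsilon_0}{2}B_X$; then each $g(\varphi)$ would lie in some $K_{m(\varphi)}+\tfrac{\varepsilon_0}{2}B_X$, and by~(R) the set $\{\varphi:m(\varphi)=m\}$ would be pointwise bounded for each $m$, exhibiting $\omega^\omega$ as a countable union of bounded sets, which is impossible since $\add_\omega(\omega^\omega)=\mathfrak{b}>\aleph_0$.

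Finally I would construct $\{x_{n,i}\}$ by recursion, in the spirit of \cite[Proposition~1]{lou-vel}, fixing first (via Lemma~\ref{charSWCG}, amalgamating scales) a single $\varepsilon_0>0$ for which $X$ is not $\varepsilon_0$-generated, i.e.\ no sequence of weakly compact sets $\varepsilon_0$-covers every weakly compact subset of~$B_X$. The singularity~(R) of each new row comes from this failure: having built finitely many rows together with a countable reservoir of weakly compact approximants, I can choose the next row so that its tail escapes every approximant by more than $\varepsilon_0$. Property~(W) is the delicate point and is where the hypothesis $\mathcal{AK}(B_X)\preceq\omega^\omega$ is genuinely used: fixing a Tukey reduction $h:(\mathcal{AK}(B_X),\leq_{\varepsilon_0})\to\omega^\omega$, I would use the $h$-coordinates as a ruler to place the entries so that every transversal stays relatively weakly compact, verifying this through Lemma~\ref{lem:famous} and Grothendieck's test (Lemma~\ref{lem:Grothendieck}) by keeping the De~Blasi measure of the transversal tails small. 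The main obstacle is precisely the tension between~(R) and~(W)—each row must sit $\varepsilon_0$-far from every weakly compact set, yet each transversal, taking one entry per row, must be relatively weakly compact—and overcoming it is exactly what the reducibility $\mathcal{AK}(B_X)\preceq\omega^\omega$ buys: it keeps the weak non-compactness of $X$ organized along a copy of $\omega^\omega$ rather than along a more complex pattern such as $\cK(\mathbb{Q})$ or $[\mathfrak{c}]^{<\omega}$, for which no such family could exist.
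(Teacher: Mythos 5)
Your reduction of the lemma to a combinatorial statement is fine: granting a family $\{x_{n,i}:n,i<\omega\}\subseteq B_X$ with properties (R) and (W), your map $g$ is indeed Tukey into $(\mathcal{AK}(B_X),\leq_{\varepsilon_0})$, and by Remark~\ref{at:5}(ii) this gives $\omega^\omega\preceq\mathcal{AK}(B_X)$, which is all that is needed. The problem is that the family is never constructed, and the sketch you give for it does not work. The failure of SWCG (Lemma~\ref{charSWCG}) is a statement about weakly compact \emph{sets}: for every countable family $\{K_n\}$ there is a weakly compact $K$ with $K\not\subseteq K_n+\varepsilon_0 B_X$ for all $n$. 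It does not produce \emph{points} whose tails escape all weakly compact sets. If, as your recursion suggests, you pick the entries of a row inside such escaping sets $K$, then that row is relatively weakly compact, hence contained in $\overline{\{x_{n,i}\}_i}^{\,w}+0\cdot B_X$, and (R) fails outright; if instead you build (R)-rows from non-reflexivity via Lemma~\ref{lem:famous} (as in Proposition~\ref{at:7}(iii)), nothing connects them to the SWCG failure or to the reduction $h$, and there is no argument at all for (W). ``Using the $h$-coordinates as a ruler'' is precisely the step that would have to be proved, and it is also unclear that a family satisfying both (R) and (W) exists in every non-SWCG separable space with $\mathcal{AK}(B_X)\preceq\omega^\omega$; your own closing sentence concedes this is the main obstacle without overcoming it. Note also that escaping a ``countable reservoir of approximants'' is strictly weaker than (R), which quantifies over \emph{all} weakly compact $L$.

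For comparison, the paper's proof (imitating \cite[Proposition~1]{lou-vel}) never manufactures points and thus avoids the (R)/(W) tension altogether. It fixes $\varepsilon$ witnessing non-SWCG, a Tukey map $\phi:(\mathcal{AK}(B_X),\leq_{\varepsilon/2})\to\omega^\omega$, and calls a subfamily of $\mathcal{AK}(B_X)$ \emph{large} if it meets the non-SWCG condition; the sets $I_s=\{K:\,s\sqsubseteq\phi(K)\}$ are then followed along a branch $\sigma\in\omega^\omega$ with every $I_{\sigma|_n}$ large. Separability of $X$ yields countable families $\{L_{n,k}\}_k\subseteq I_{\sigma|_n}$ whose union is dense in $\bigcup I_{\sigma|_n}$, and the key point is that for each $f\in\omega^\omega$ the set $\{\phi(L_{n,k}):k\leq f(n)\}$ is bounded in $\omega^\omega$ (because $\sigma|_n\sqsubseteq\phi(L_{n,k})$ pins down the first $n$ coordinates), so the Tukey property of $\phi$ itself supplies a weakly compact $\psi(f)$ with $L_{n,k}\subseteq\psi(f)+\tfrac{\varepsilon}{2}B_X$ for $k\leq f(n)$. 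Thus the role your property (W) was meant to play is performed by $\phi$, and Tukey-ness of $\psi$ follows from largeness of the $I_{\sigma|_n}$ rather than from any row-singularity of points. If you want to salvage your approach, you would have to either prove existence of the (R)/(W) family in this generality (which looks at least as hard as the lemma) or switch from points to weakly compact sets along the lines above.
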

\begin{proof} If $X$ is not SWCG, then there exists $\varepsilon>0$ such that,
for every countable family $\{K_n:n<\omega\}$ of weakly compact subsets of~$B_X$,
there is $K\in \mathcal{AK}(B_X)$ such that $K\not\subseteq K_n + \varepsilon B_X$ for any $n<\omega$
(apply Lemma~\ref{charSWCG}). Let us say that a set $\mathcal{W}\sub \mathcal{AK}(B_X)$ is \emph{large} if for every countable family $\{K_n:n<\omega\}$ of
weakly compact subsets of~$B_X$, there is $K\in\mathcal{W}$ such that $K\not\subseteq K_n + \varepsilon B_X$ for any $n<\omega$.

Set $\eps':=\frac{\eps}{2}$. Fix a Tukey function $\phi: (\mathcal{AK}(B_X),\leq_{\eps'}) \to \omega^\omega$.
For each $s\in\omega^{<\omega}$, let
$$
	I_s := \{K\in\mathcal{AK}(B_X) : \, s\sqsubseteq \phi(K)\}.
$$
We know that $I_\emptyset = \mathcal{AK}(B_X)$ is large. It is clear that if $I_s$ is
large, then there is $n<\omega$ such that $I_{s\smallfrown n}$ is large
(because $I_s=\bigcup_{m<\omega}I_{s\smallfrown m}$). Hence, we can find
$\sigma\in\omega^\omega$ such that $I_{\sigma|_n}$ is large for every $n<\omega$.

Since $X$ is separable, for every $n<\omega$ we can choose $\{L_{n,k} : k<\omega\}\sub I_{\sigma|_n}$ such that
\begin{equation}\label{eqn:denseLV}
	\overline{\bigcup_{k<\omega}L_{n,k}} \supseteq \bigcup \{K: \, K\in I_{\sigma|_n}\}.
\end{equation}

{\em Claim.} For every $f\in\omega^\omega$ the family $\{\phi(L_{n,k}): n<\omega, k\leq f(n)\}$ is bounded in~$\omega^\omega$.
Indeed, define $g\in \omega^\omega$ by declaring
$$
	g(m):=\max\big\{\sigma(m), \, \max\{\phi(L_{n,k})(m): \, n\leq m, \, k\leq f(n)\}\big\}, \quad m<\omega.
$$
Given any $n<\omega$ and $k\leq f(n)$, we have $\phi(L_{n,k}) \leq g$ in~$\omega^\omega$, because
$\sigma|_n \sqsubseteq \phi(L_{n,k})$ implies that $\sigma(m) = \phi(L_{n,k})(m)$ for all $m<n$.

By the previous claim and the fact that $\phi: (\mathcal{AK}(B_X),\leq_{\eps'}) \to \omega^\omega$ is Tukey,
it follows that for every $f\in \omega^\omega$ there is $\psi(f)\in\mathcal{AK}(B_X)$ such that
\begin{equation}\label{eqn:LVTukey}
	L_{n,k}\sub \psi(f) + \varepsilon' B_X
	\quad\mbox{whenever }k\leq f(n).
\end{equation}
In order to finish the proof it suffices to check that $\psi:\omega^\omega\to (\mathcal{AK}(B_X),\leq_{\eps'})$ is
Tukey. Fix $L\in \mathcal{AK}(B_X)$. Suppose, for contradiction, that
$F:=\{f\in \omega^\omega:\psi(f) \sub L + \eps' B_X\}$ is not bounded. Then there is $n<\omega$ such that
$\{f(n) : f\in F\}$ is unbounded in~$\omega$. Hence, for every $k<\omega$ there is $f\in F$ such that $k \leq f(n)$
and so
$$
	L_{n,k}\stackrel{\eqref{eqn:LVTukey}}{\sub} \psi(f) + \eps' B_X \sub L + \varepsilon B_X.
$$
Bearing in mind~\eqref{eqn:denseLV}, we get $\bigcup \{K: K\in I_{\sigma|_n} \}\sub L + \varepsilon B_X$,
which contradicts that $I_{\sigma|_n}$ is large. The proof is over.
\end{proof}

Statement~(iv) of the following corollary generalizes Example~\ref{posets:4}.

\begin{cor}\label{as:4-cor} \mbox{ }
\begin{itemize}
\item[(i)] If $\AK(B_X)\sim P$ for a partially ordered set~$P$, then $P\preceq \cK(B_X)\preceq P^\omega$.
\item[(ii)] $\mathcal{K}(B_X) \sim \omega^\omega$ if and only if $\omega\preceq \mathcal{AK}(B_X)\preceq \omega^\omega$.
\item[(iii)] If $\AK(B_X)\sim \cK(\qu)$, then $\cK(B_X)\sim \cK(\qu)$.
\item[(iv)] If $X$ is SWCG, then $\cK(B_Y)\sim \omega^\omega$ for every non-reflexive subspace $Y \sub X$.
\item[(v)] {If $X$ is SWCG, then $\mathcal{AK}(B_Y)\sim \omega^\omega$ for every separable and
non-SWCG subspace $Y \sub X$.}
\end{itemize}
\end{cor}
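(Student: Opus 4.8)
The plan is to read off (i)--(iii) formally from the transfer principle of Theorem~\ref{as:4}, from Proposition~\ref{at:7}(i), from transitivity of Tukey reductions (Remark~\ref{at:4}), and from the stability identities $(\omega^\omega)^\omega\sim\omega^\omega$ and $\cK(\qu)^\omega\sim\cK(\qu)$; the real work lies in a single construction underlying (iv) and (v). I begin with~(i). From $\AK(B_X)\sim P$ we have $\AK(B_X)\preceq P$, so Theorem~\ref{as:4} gives $\cK(B_X)\preceq P^\omega$ at once. For the reverse inequality, $P\preceq\AK(B_X)$ (again from $\sim$) composes with $\AK(B_X)\preceq\cK(B_X)$ (Proposition~\ref{at:7}(i)) by transitivity; as $P$ and $\cK(B_X)$ are ordinary posets this reads as $P\preceq\cK(B_X)$.

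Parts (ii) and (iii) are then corollaries of~(i). For~(iii), take $P=\cK(\qu)$ in~(i) to get $\cK(\qu)\preceq\cK(B_X)\preceq\cK(\qu)^\omega$, and invoke $\cK(\qu)^\omega\sim\cK(\qu)$ (Lemma~\ref{lem:TukeyKQ}); hence $\cK(B_X)\sim\cK(\qu)$. For~(ii), if $\cK(B_X)\sim\omega^\omega$ then $X$ is non-reflexive (otherwise $\cK(B_X)\sim\{0\}$ by Proposition~\ref{KXKBX}(ii)), so $\omega\preceq\AK(B_X)$ exactly as in the proof of Theorem~\ref{swcg:2}(i), while $\AK(B_X)\preceq\cK(B_X)\sim\omega^\omega$ supplies the upper bound. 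Conversely, from $\AK(B_X)\preceq\omega^\omega$ and Theorem~\ref{as:4} we get $\cK(B_X)\preceq(\omega^\omega)^\omega\sim\omega^\omega$, and $\omega\preceq\AK(B_X)$ forces $X$ non-reflexive (Theorem~\ref{swcg:2}(i), since $\omega\not\preceq\{0\}$), so $\omega^\omega\preceq\cK(B_X)$ by Proposition~\ref{KXKBX}(iii); thus $\cK(B_X)\sim\omega^\omega$.

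The crux is the following lemma behind (iv) and (v): \emph{if $X$ is SWCG and $Y\sub X$ is any closed subspace, then $\AK(B_Y)\preceq\omega^\omega$.} To prove it, fix a weakly compact, absolutely convex $K_0\sub X$ strongly generating~$X$ (passing to the closed absolutely convex hull, weakly compact by Krein's theorem, preserves strong generation and yields $nK_0\sub mK_0$ for $n\le m$). For each $\eps>0$ let $f_\eps(K)\in\omega^\omega$ have $m$-th coordinate the least $n$ with $K\sub nK_0+\frac{1}{m+1}B_X$; this exists because $X$ is SWCG and $K$, being weakly compact in~$B_Y$, is weakly compact in~$X$. Given $\psi\in\omega^\omega$, I would set
\[
	L_0:=\Bigl(\bigcap_{m<\omega}\bigl(\psi(m)K_0+\tfrac{1}{m+1}B_X\bigr)\Bigr)\cap B_Y,
\]
which is weakly closed and bounded, and relatively weakly compact by Grothendieck's test (Lemma~\ref{lem:Grothendieck}), hence a member of $\AK(B_Y)$; and $f_\eps(K)\le\psi$ forces $K\sub\psi(m)K_0+\frac{1}{m+1}B_X$ for every $m$, so $K\sub L_0$ and in particular $K\le_\eps L_0$. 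Thus each $f_\eps:(\AK(B_Y),\le_\eps)\to\omega^\omega$ is Tukey, giving $\AK(B_Y)\preceq\omega^\omega$ by Remark~\ref{at:5}(i). Now (iv) follows: a non-reflexive $Y$ satisfies $\omega^\omega\preceq\cK(B_Y)$ (Proposition~\ref{KXKBX}(iii)) while $\AK(B_Y)\preceq\omega^\omega$ yields $\cK(B_Y)\preceq\omega^\omega$ (Theorem~\ref{as:4}), so $\cK(B_Y)\sim\omega^\omega$; and (v) follows because, for separable $Y$, Lemma~\ref{lem:LV} applied to $\AK(B_Y)\preceq\omega^\omega$ leaves only $\AK(B_Y)\sim\omega^\omega$ once $Y$ is assumed non-SWCG.

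The main obstacle I anticipate is exactly this lemma. Since $Y$ need not be complemented, Proposition~\ref{at:7}(iv) is unavailable and one cannot simply transport $\AK(B_X)\preceq\omega$ down to~$Y$. The delicate point is the clash between the almost-inclusion relation of $\AK(B_Y)$, defined with $\eps B_Y$, and the approximations in $\eps B_X$ provided by the SWCG structure of~$X$; the construction sidesteps it because the witnessing bound is attained by honest inclusion $K\sub L_0$, so the ambient balls $B_X$ may be used freely in building $L_0$ while the Tukey condition is phrased with $\le_\eps$ in~$Y$. The only genuine verification is that $L_0$ is weakly compact rather than merely weakly closed, and this is precisely Grothendieck's test, once one notes that each $\psi(m)K_0+\frac{1}{m+1}B_X$ is weakly closed (a weakly compact set plus a weakly closed one).
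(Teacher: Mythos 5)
Your proof is correct. Parts (i)--(iii) follow the paper's own argument essentially verbatim: (i) is Theorem~\ref{as:4} plus Proposition~\ref{at:7}(i) and transitivity, (iii) is (i) with Lemma~\ref{lem:TukeyKQ}, and (ii) combines Proposition~\ref{at:7}(i),(iii), Theorem~\ref{as:4}, $(\omega^\omega)^\omega\sim\omega^\omega$ and Proposition~\ref{KXKBX}(iii), exactly as in the paper. Where you genuinely diverge is in (iv)--(v). The paper gets (iv) by composing already-proved reductions applied to the ambient space: $\AK(B_X)\preceq\omega$ (Theorem~\ref{swcg:2}), hence $\cK(B_X)\preceq\omega^\omega$ (Theorem~\ref{as:4}), and then $\cK(B_Y)\preceq\cK(B_X)$ (Remark~\ref{rem:Ksubspaces}); (v) is then read off from (ii), (iv) and Lemma~\ref{lem:LV}. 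You instead prove a self-contained lemma localized at the subspace: for \emph{every} closed subspace $Y$ of an SWCG space $X$ one has $\AK(B_Y)\preceq\omega^\omega$, witnessed by the explicit sets $L_0=\bigl(\bigcap_{m<\omega}(\psi(m)K_0+\frac{1}{m+1}B_X)\bigr)\cap B_Y$, whose weak compactness you correctly verify via Grothendieck's test (Lemma~\ref{lem:Grothendieck}); you also correctly defuse the $B_Y$-versus-$B_X$ issue by noting the Tukey bound is realized by honest inclusion $K\sub L_0$, and the passage to the absolutely convex hull of the generator is exactly what makes $f_\eps(K)\le\psi$ transfer to $K\sub\psi(m)K_0+\frac{1}{m+1}B_X$. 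Your lemma is in effect the proof of Theorem~\ref{as:4} (which also intersects sets $K_n+\frac{1}{n+1}B_X$ and invokes Grothendieck) fused with Theorem~\ref{swcg:2} and restricted to $B_Y$; the same conclusion $\AK(B_Y)\preceq\omega^\omega$ is available in the paper by composing $\AK(B_Y)\preceq\cK(B_Y)\preceq\cK(B_X)\preceq\omega^\omega$, so neither route yields strictly more, but yours makes the mechanism (how the strong generator of $X$ controls the weakly compact subsets of an uncomplemented $Y$) explicit, while the paper's is shorter because it reuses Remark~\ref{rem:Ksubspaces} instead of rebuilding the reduction by hand. Your deductions of (iv) from the lemma plus Proposition~\ref{KXKBX}(iii) and Theorem~\ref{as:4} (now applied to $Y$), and of (v) from the lemma plus Lemma~\ref{lem:LV}, are both sound.
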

\begin{proof}
(i) is a consequence of Theorem~\ref{as:4} and Proposition~\ref{at:7}(i).

(ii). Suppose first that $\mathcal{K}(B_X)\sim \omega^\omega$. On one hand $\mathcal{AK}(B_X)\preceq \mathcal{K}(B_X)\sim \omega^\omega$
(Proposition~\ref{at:7}(i)). On the other hand, $X$ is not reflexive, so by Proposition~\ref{at:7}(iii) we get $\omega\preceq \mathcal{AK}(B_X)$.
Conversely, if $\omega\preceq \mathcal{AK}(B_X)\preceq \omega^\omega$, then Theorem~\ref{as:4}
yields $\mathcal{K}(B_X) \preceq \omega^\omega$
(note that $\omega^\omega$ and $(\omega^\omega)^\omega$ are Tukey equivalent).
Since $X$ is not reflexive (because $\mathcal{AK}(B_X)\not\sim\{0\}$), Proposition~\ref{KXKBX}(iii)
allows us to deduce that $\mathcal{K}(B_X) \sim \omega^\omega$.

(iii) follows from (i), the Tukey equivalence $\cK(\qu)^\omega\sim \cK(\qu)$ (which was proved in Lemma~\ref{lem:TukeyKQ})
and Proposition~\ref{at:7}(i).

(iv). If $Y\sub X$ is a non-reflexive subspace, then $\omega^\omega\preceq \cK(B_Y)$ (by Proposition~\ref{KXKBX}(iii)).
Since $X$ is SWCG, we have $\AK(B_X) \preceq \omega$ (by Theorem~\ref{swcg:2}), and so Theorem \ref{as:4}
yields $\cK(B_X) \preceq \omega^\omega$. Since $\cK(B_Y) \preceq \cK(B_X)$ (Remark~\ref{rem:Ksubspaces}), we get
$\cK(B_Y) \sim \omega^\omega$.

{Finally, (v) is an immediate consequence of (ii), (iv) and Lemma~\ref{lem:LV}.}
\end{proof}

The structure $\mathcal{K}(B_X)$ cannot identify the class of SWCG spaces. Indeed,
there exist non-reflexive Banach spaces with separable dual having the PCP
(like the predual of the James tree space, see \cite[Section~6]{edg-whe}). Such a space~$X$ satisfies
$\mathcal{K}(B_X)\sim \omega^\omega$ (by Proposition~\ref{posets:3}) but it is not SWCG: note
that any SWCG space is weakly sequentially complete \cite[Theorem~2.5]{sch-whe}, and so it is reflexive if
and only if it does not contain isomorphic copies of~$\ell^1$ (thanks to Rosenthal's $\ell^1$-theorem, \cite[Theorem~5.37]{fab-ultimo}).

To illustrate the concept of Tukey reduction of asymptotic structures of weakly compact sets,
we now exhibit some examples where $\AK(B_X)$ can be identified.
The first one says, in particular, that for $X=c_0$
we have $\AK(B_X)\sim \cK(B_X)\sim \cK(\qu)$
(this is also a particular case of Theorem~\ref{separabledual} in Section \ref{section:nol1}) and so
\[
	\add_\omega(\AK(B_{c_0}))= \add_\omega(\cK(\qu))={\mathfrak b} \quad\mbox{and}\quad  \cf(\AK(B_{c_0}))=\cf (\cK(\qu))={\mathfrak d}.
\]

\begin{exa}\label{exa:c0l1}
Let $(X_n)$ be a sequence of separable Banach spaces having the Schur property.
Then $X=(\bigoplus_{n<\omega} X_n)_{c_0}$ satisfies $\AK(B_X)\sim \cK(B_X)\sim \cK(\qu)$.
\end{exa}
\begin{proof} For each $x\in X$
we write $|x|:=(\|\pi_n(x)\|_{X_n}) \in \mathbb{R}^\omega$, where
$\pi_n:X \to X_n$ denotes the $n$-th coordinate projection. For each $n<\omega$, we write
$\cK_0(B_{X_n})$ to denote the collection of all weakly compact subsets of~$B_{X_n}$ containing~$0$, which is
partially ordered by the inclusion relation. Since $X_n$ has the Schur property, its weakly compact subsets are norm compact.
Recall that $\mathbb{Q}$ is identified with the subspace of~$2^\omega$ made up of all eventually
zero sequences. Using the previous identification, we write each $a\in \mathbb{Q}$ as a sequence $a=(a(n))$
of~$0$'s and~$1$'s which is eventually zero.

{\em Step~1.} For every $A\in \cK(\qu)$ and every $(L_n)\in \prod_{n<\omega}\cK_0(B_{X_n})$, the set
\[
	K_{A,(L_n)}:=\Bigl\{x\in \bigcap_{n<\omega}\pi_n^{-1}(L_n): \, |x|\le a \mbox{ for some } a\in A\Bigr\} \sub B_X
\]
is weakly compact. Indeed, let $(x^k)$ be a sequence in~$K_{A,(L_n)}$. Since for each $n<\omega$ the
sequence $(\pi_n(x^k))$ is contained in the norm compact set~$L_n$, we can pass to a subsequence of~$(x^k)$, not relabeled,
such that for every $n<\omega$ the sequence $(\pi_n(x^k))$ is norm convergent to some~$\varphi_n\in L_n$.
For each $k<\omega$ we fix $a^k\in A$ such that $|x^k|\le a^k$. By passing to a further subsequence, we
can assume that $a^k\to a\in A$. Pick $n_0<\omega$ such that $a(n)=0$ for every $n> n_0$ and define
$x\in X$ by declaring $\pi_n(x):=\varphi_n$ for all $n\leq n_0$ and $\pi_n(x):=0$ for all $n>n_0$,
so that $x\in \bigcap_{n<\omega}\pi_n^{-1}(L_n)$. Observe that $|x|\leq a$, because
for every $n<\omega$ we have
\begin{equation}\label{eqn:Xn}
	\|\varphi_n\|_{X_n}=\lim_{k\to \infty}\|\pi_n(x^k)\|_{X_n} \leq \lim_{k\to \infty}a^k(n)=a(n).
\end{equation}
Hence $x\in K_{A,(L_n)}$. Note that~\eqref{eqn:Xn} also implies that $\varphi_n=0$ for all $n>n_0$. Finally,
observe that $(x^k)$ converges weakly to~$x$, because $(x^k)$ is bounded
and for every $n<\omega$ we have $\pi_n(x^k) \to \varphi_n=\pi_n(x)$ in~$X_n$.

{\em Step~2.} Let $\eps>0$ and define $s_\eps: X \to 2^\omega$ by
$$
	s_\eps(x)(n):=\begin{cases}
	1 & \text{if $\|\pi_n(x)\|_{X_n}\ge \eps$}, \\
	0 & \text{otherwise}.
	\end{cases}
$$
If $K\in \cK(B_{X})$, then $\tau_\eps(K):=\overline{\{s_\eps(x):x\in K\}} \sub \qu$, i.e.
$\tau_\eps(K)\in\cK(\qu)$. Indeed, fix $a\in \tau_\eps(K)$ and let $(x^k)$ be a sequence in~$K$ such that $s_\eps(x^k) \to a$ in~$2^\omega$.
We can additionally assume that $(x^k)$ converges weakly to some $x\in K$, so that for every $n<\omega$
we have $\pi_n(x^k) \to \pi_n(x)$ in~$X_n$. Therefore, for each $n<\omega$ we have
$$
	\|\pi_n(x)\|_{X_n}\ge\eps \quad \mbox{whenever }a(n)=\lim_{k\to \infty}s_\eps(x^k)(n)=1.
$$
Since the set $\{n<\omega: \|\pi_n(x)\|_{X_n}\geq \eps\}$ is finite, it follows that $a\in \qu$.

{\em Step~3.} For every $\eps>0$ we define the map
$$
	F_\eps: \AK(B_X) \to \cK(\qu) \times \prod_{n<\omega}\cK(B_{X_n}), \quad
	F_\eps(K):=\bigl(\tau_\eps(K),(\pi_n(K))\bigr).
$$
Then $\{F_\eps\}_{\eps>0}$ defines a Tukey reduction $\AK(B_X) \Rightarrow
\cK(\qu) \times \prod_{n<\omega}\cK(B_{X_n})$. Indeed, fix
$A\in \cK(\qu)$ and $(L_n)\in \prod_{n<\omega}\cK(B_{X_n})$. Write $L'_n:=L_n\cup \{0\}\in \cK_0(B_{X_n})$ for all $n<\omega$.
Let $K_{A,(L'_n)} \in \AK(B_X)$ be as in Step~1. We next check that if $K\in \AK(B_X)$ satisfies
$F_\eps(K) \leq (A,(L_n))$, then $K\sub K_{A,(L'_n)}+\eps B_X$. To this end,
take $x\in K$. Then $a:=s_\eps(x)\in \tau_\eps(K) \sub A$. Define $y\in B_X$ by declaring
$\pi_n(y):=\pi_n(x)$ whenever $\|\pi_n(x)\|_{X_n}\geq \eps$ and
$\pi_n(y):=0$ otherwise. Clearly, we have $\|x-y\|_X \leq \eps$. On the other hand,
since $\pi_n(x) \in \pi_n(K) \sub L'_n$ and $0\in L'_n$ for every $n<\omega$, we also have
$y\in \bigcap_{n<\omega}\pi_n^{-1}(L'_n)$; since $a(n)=1$ whenever $\pi_n(y)\neq 0$, we conclude that
$y\in K_{A,(L'_n)}$. This proves that $K\sub K_{A,(L'_n)}+\eps B_X$.

{\em Step~4.} Since each $X_n$ is SWCG, we have either $\cK(B_{X_n}) \sim \{0\}$ or $\cK(B_{X_n}) \sim \omega^\omega$
(apply Corollary~\ref{as:4-cor}(iv)), hence either $\prod_{n<\omega}\cK(B_{X_n}) \sim \{0\}$
or $\prod_{n<\omega}\cK(B_{X_n}) \sim \omega^\omega$ and therefore
$$
	\AK(B_X) \preceq \cK(\qu) \times \prod_{n<\omega}\cK(B_{X_n}) \preceq \cK(\qu)\times \cK(\qu) \sim \cK(\qu).
$$

{\em Step~5.} If $K \sub B_X$ is weakly compact, then the set
$$
	A_K:=\{a\in 2^\omega: \, a \le 2|x|\mbox{ for some } x\in K\}
$$
belongs to~$\cK(\mathbb{Q})$. Indeed, since for every $x\in X$ the set $\{n<\omega: \|\pi_n(x)\|_{X_n} \geq \frac{1}{2}\}$
is finite, $A_K \sub \qu$. Therefore, it suffices to check that $A_K$ is closed in~$2^\omega$.
To this end, let $(a^k)$ be a sequence in~$A_K$ which converges to~$a\in 2^\omega$. For each $k<\omega$ we pick
$x^k\in K$ such that $a^k\le 2|x^k|$. Since $K$ is weakly compact, by passing to a subsequence
we may assume that $(x^k)$ is weakly convergent to some~$x\in K$. There is $n_0<\omega$ such that for every
$n\geq n_0$ we have $\|\pi_n(x)\|_{X_n}<\frac{1}{2}$. For each $n<\omega$ with $n\geq n_0$
the sequence $(\pi_n(x^k))$ is norm convergent to~$\pi_n(x)$, hence
$$
	a^k(n)\le 2\|\pi_n(x^k)\|_{X_n}< 1 \quad
	\mbox{for }k \mbox{ large enough,}
$$
and so $a(n)=0$. It follows that $a\in \qu$.

{\em Step~6.}
Fix a norm one vector $x_n\in X_n$ for every $n<\omega$.
We define a mapping $G: \cK(\qu) \to \AK(B_X)$ by $G(A):=K_{A,(L_n)}$ where $L_n:=\{0,x_n\}$ for all $n<\omega$.
In order to check that $G$ gives a Tukey reduction $\cK(\qu) \Rightarrow \AK(B_X)$, we only have to
show that $G:\cK(\qu) \to (\AK(B_X),\leq_\delta)$ is Tukey for $\delta=\frac{1}{2}$. Fix $K \in \AK(B_X)$ and take $A_{K}\in \cK(\qu)$
as in Step~5. We shall check that $A \sub A_K$ for every $A \in \cK(\qu)$ satisfying $K_{A,(L_n)} \sub K+\frac{1}{2}B_X$.
Indeed, fix $a\in A$ and define $x \in K_{A,(L_n)}$ by declaring $\pi_n(x):=a(n) x_n$ for all $n<\omega$.
There is $y\in K$ such that $\|x-y\|_X \le \frac{1}{2}$. Clearly, we have $a\le 2|y|$ and so $a\in A_K$.
This shows that
$$
	\cK(\qu) \preceq \AK(B_X).
$$

Finally, note that by Steps~4 and~6 we have $\AK(B_X) \sim \cK(\qu)$. An
appeal to Corollary~\ref{as:4-cor}(iii) allows us to conclude
that $\mathcal{K}(B_X)\sim \cK(\qu)$ as well. The proof is finished.
\end{proof}

The following result is an improvement of Corollary~\ref{cor:Kc0}
within the class of Banach spaces having the {\em Separable Complementation Property} ({\em SCP} for short),
which includes all weakly compactly generated spaces. Recall
that $X$ is said to have the SCP (see e.g. \cite[Section~13.2]{fab-ultimo}) if every separable subspace of~$X$ is contained in a
complemented separable subspace of~$X$. By Sobczyk's theorem (see e.g. \cite[Theorem 2.5.8]{alb-kal}), any copy of~$c_0$ in
a Banach space having the SCP is complemented.

\begin{cor}
If $X$ contains a complemented copy~$c_0$, then $\cK(\qu)\preceq \AK(B_X)$.
\end{cor}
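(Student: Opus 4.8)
The plan is to deduce the corollary from two facts already established in the excerpt, combined by transitivity of Tukey reduction. First I would observe that $c_0$ is itself an instance of the spaces treated in Example~\ref{exa:c0l1}: taking $X_n = \reals$ for every $n<\omega$ (each one-dimensional space trivially has the Schur property), we have $c_0 = (\bigoplus_{n<\omega}\reals)_{c_0}$, so Example~\ref{exa:c0l1} gives $\AK(B_{c_0}) \sim \cK(\qu)$. In particular, $\cK(\qu) \preceq \AK(B_{c_0})$.

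Next, since $X$ contains a complemented copy of~$c_0$, the space $c_0$ is (isomorphic to) a complemented subspace of~$X$. Proposition~\ref{at:7}(iv) then applies with $Y = c_0$ and yields $\AK(B_{c_0}) \preceq \AK(B_X)$. Chaining the two reductions through the transitivity of Tukey reduction between asymptotic structures (Remark~\ref{at:4}), I would conclude
\[
	\cK(\qu) \preceq \AK(B_{c_0}) \preceq \AK(B_X),
\]
which is exactly the assertion. Note that, since $\AK(B_X) \preceq \cK(B_X)$ always holds by Proposition~\ref{at:7}(i), this strengthens Corollary~\ref{cor:Kc0} by locating $\cK(\qu)$ already inside the finer asymptotic structure, at the cost of requiring the copy of~$c_0$ to be complemented.

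There is essentially no analytic obstacle here, because the real work has been carried out elsewhere: the delicate constructions of the weakly compact sets $K_{A,(L_n)}$ and of the Tukey reductions $F_\eps$ and $G$ live inside the proof of Example~\ref{exa:c0l1}, and the transport of the almost-inclusion relations across an isomorphism onto a complemented subspace is handled in Proposition~\ref{at:7}(iv). The only point deserving a moment's attention is the identification of $c_0$ with the $c_0$-sum of one-dimensional summands so that Example~\ref{exa:c0l1} is literally applicable; once this is recorded, the corollary is immediate.
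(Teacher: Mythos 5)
Your proof is correct and is essentially identical to the paper's own argument, which likewise deduces the corollary from Example~\ref{exa:c0l1} (applied to $c_0$ as the $c_0$-sum of one-dimensional, hence Schur, spaces) together with Proposition~\ref{at:7}(iv) and transitivity of Tukey reduction. The paper even records the identification $\AK(B_{c_0})\sim\cK(\qu)$ explicitly just before Example~\ref{exa:c0l1}, so your only added content is spelling out that routine specialization.
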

\begin{proof}
This follows from Example~\ref{exa:c0l1} and Proposition~\ref{at:7}(iv).
\end{proof}

Our next example requires an auxiliary lemma.

\begin{lem}\label{lem:cofinal-l1sum}
Let $\{X_i\}_{i\in I}$ be a family of Banach spaces and let $X=(\bigoplus_{i\in I} X_i)_{\ell^1}$.
Then
$$
	\AK(B_X) \preceq \prod_{i\in I}\cK(B_{X_i}) \times [I]^{<\omega}.
$$
\end{lem}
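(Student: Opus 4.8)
The right-hand poset $Q:=\prod_{i\in I}\cK(B_{X_i})\times[I]^{<\omega}$ is an ordinary partially ordered set, so by Remark~\ref{at:5}(i) it is enough to produce, for each $\eps>0$, a single Tukey map $f_\eps\colon(\AK(B_X),\leq_\eps)\to Q$. Writing $\pi_i\colon X\to X_i$ for the coordinate projections (so that $\sum_{i\in I}\|\pi_i(x)\|_{X_i}\le 1$ for every $x\in B_X$), the plan is to let the first coordinate of $f_\eps(K)$ record the projections $(\pi_i(K))_{i\in I}$ and the second coordinate record a finite set of indices carrying all but an $\eps$-fraction of the mass of~$K$. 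Precisely, I would set
\[
	f_\eps(K):=\bigl((\pi_i(K))_{i\in I},\,F_\eps(K)\bigr),
\]
where $F_\eps(K)\in[I]^{<\omega}$ is chosen so that $\sum_{i\notin F_\eps(K)}\|\pi_i(x)\|_{X_i}\le\eps$ for every $x\in K$.

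Two things need checking for $f_\eps$ to be well defined. First, each $\pi_i$ is weak-to-weak continuous, hence $\pi_i(K)$ is a weakly compact subset of $B_{X_i}$, that is $\pi_i(K)\in\cK(B_{X_i})$. Second --- and this is the crux --- one must know that a finite set $F_\eps(K)$ as above exists at all; equivalently, that weakly compact subsets of an $\ell^1$-sum have uniformly small tails. This is the characterization that a bounded set $K\subseteq(\bigoplus_i X_i)_{\ell^1}$ is relatively weakly compact if and only if each $\pi_i(K)$ is relatively weakly compact and $\sup_{x\in K}\sum_{i\notin F}\|\pi_i(x)\|_{X_i}\to0$ as $F$ runs over the finite subsets of~$I$. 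I expect the necessity of this uniform tail condition to be the main obstacle. It can be obtained by a gliding-hump argument: were the tails not uniformly small at some level $\eps_0$, one could extract from $K$ a sequence $(x^n)$ and pairwise disjoint finite ``bands'' $B_n\subseteq I$ on which $x^n$ carries $\ell^1$-mass bounded below, producing an almost disjointly supported sequence spanning a copy of the $\ell^1$-basis; such a sequence has no weakly convergent subsequence, contradicting the weak sequential compactness of~$K$ (Eberlein--\v{S}mulian). The cross terms coming from the lower bands are neutralized by first passing to a weakly convergent subsequence $x^n\rightharpoonup x^\infty$ and testing against a single functional $\phi=\sum_n\pm\,\phi^{(n)}\in(\bigoplus_i X_i^*)_{\ell^\infty}=X^*$ assembled from band-supported norming functionals, using that $\phi^{(n)}(x^m)\to\phi^{(n)}(x^\infty)$ and that $\sum_n\sum_{i\in B_n}\|\pi_i(x^\infty)\|<\infty$.

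It then remains to verify that $f_\eps$ is Tukey into the ordinary poset $Q$. Given a target $((A_i)_{i\in I},F)\in Q$, I would take as witness
\[
	K_0:=\Bigl\{y\in B_X:\ \pi_i(y)\in A_i\ \text{for }i\in F,\ \pi_i(y)=0\ \text{for }i\notin F\Bigr\}.
\]
The embedding of the (finite) product $\prod_{i\in F}X_i$ into $X$ is weak-to-weak continuous, so the image of the weakly compact set $\prod_{i\in F}A_i$ is weakly compact; intersecting with the weakly closed ball $B_X$ shows that $K_0$ is weakly compact, hence $K_0\in\AK(B_X)$. Now suppose $K\in\AK(B_X)$ satisfies $f_\eps(K)\le((A_i)_i,F)$, i.e. $\pi_i(K)\subseteq A_i$ for all $i$ and $F_\eps(K)\subseteq F$. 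For $x\in K$, let $P_F(x)$ denote its truncation to the coordinates in $F$. Then $P_F(x)\in K_0$ (its $F$-coordinates $\pi_i(x)\in\pi_i(K)\subseteq A_i$ lie in the $A_i$, and $\|P_F(x)\|_X\le\|x\|_X\le1$), while
\[
	\|x-P_F(x)\|_X=\sum_{i\notin F}\|\pi_i(x)\|_{X_i}\le\sum_{i\notin F_\eps(K)}\|\pi_i(x)\|_{X_i}\le\eps,
\]
the first inequality holding because $F\supseteq F_\eps(K)$. Hence $K\subseteq K_0+\eps B_X$, that is $K\leq_\eps K_0$, which is exactly the Tukey property demanded by Remark~\ref{at:5}(i). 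This yields $\AK(B_X)\preceq \prod_{i\in I}\cK(B_{X_i})\times[I]^{<\omega}$ and completes the proof.
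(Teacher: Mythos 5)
Your proof is correct and is essentially the paper's: the same reduction $K\mapsto\bigl((\pi_i(K))_{i\in I},F_\eps(K)\bigr)$, hinging on the same key fact that weakly compact subsets of an $\ell^1$-sum have uniformly small tails, which the paper does not reprove by a gliding-hump argument but simply quotes from \cite[Lemma~7.2(ii)]{kac-alt}. Your verification is in one respect slightly cleaner: since the truncation $P_F(x)$ of a point $x\in B_X$ automatically stays in $B_X$, you can use the witness $\bigl(\bigoplus_{i\in F}A_i\bigr)\cap B_X$ directly, whereas the paper only records the inclusion $K\subseteq\bigoplus_{j\in J}L_j+r(\eps)B_X$ (forgetting that the approximating points lie in $B_X$) and must then rescale by $\frac{1}{1+r(\eps)}$ via Lemma~\ref{SuperLemma} to return inside the unit ball.
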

\begin{proof}
For each $i\in I$, let $\pi_i:X \to X_i$ be the $i$-th coordinate projection.
Given any finite set $J \sub I$ and weakly compact sets $K_j \sub X_j$ for $j\in J$, the set
$$
	\bigoplus_{j\in J} K_j :=\Bigl(\bigcap_{j\in J}\pi_j^{-1}(K_j)\Bigr)\cap
	\Bigl(\bigcap_{i\in I\setminus J} \pi_i^{-1}(\{0\})\Bigr) \sub X
$$
is weakly compact. The following fact is exactly the statement of \cite[Lemma~7.2(ii)]{kac-alt}.

{\em Fact.} For every weakly compact set $K \sub X$ and every $\epsilon>0$ there is a finite set $J(K,\eps) \sub I$ such that
$$
	\sum_{i\in I \setminus J(K,\eps)}\|\pi_i(x)\|_{X_i} \leq \epsilon \quad
	\mbox{for all }x\in K
$$
and so
\begin{equation}\label{eqn:JKeps}
	K \sub \bigoplus_{j\in J(K,\eps)} \pi_j(K) + \epsilon B_X.
\end{equation}

Write $P=\prod_{i\in I}\cK(B_{X_i})$.
For every $\eps>0$ we fix $r(\eps)>0$ small enough
such that $\frac{2r(\eps)}{1+r(\eps)}\leq \eps$ and define a mapping $f_\eps: \AK(B_X) \to P \times [I]^{<\omega}$ by
$$
	f_\eps(K):=\Bigl((\pi_i(K))_{i\in I},J(K,r(\eps))\Bigr).
$$
We claim that the family $\{f_\eps\}_{\eps>0}$ defines a Tukey reduction $\AK(B_X) \Rightarrow P \times [I]^{<\omega}$.
Indeed, fix $\eps>0$, take $(L_i)_{i\in I}\in P$ and $J \sub I$ finite.  Consider the weakly compact subset of~$B_X$ defined by
$$
	K_0:=\Bigl(\frac{1}{1+r(\eps)} \bigoplus_{j\in J} L_j \Bigr)\cap B_X.
$$
Let $K \in \AK(B_X)$ such that $f_\eps(K) \leq ((L_i)_{i\in I},J)$. Then $J(K,r(\eps)) \sub J$ and
$$
	K  \stackrel{\eqref{eqn:JKeps}}{\sub}
	\bigoplus_{j\in J(K,r(\eps))} \pi_j(K) + r(\epsilon) B_X
	\sub
	\bigoplus_{j\in J} L_j + r(\epsilon) B_X,
$$
which implies (via Lemma~\ref{SuperLemma}) that $K \sub K_0 + \eps B_X$. The proof is over.
\end{proof}

\begin{exa}\label{exa:l1c0}
The space $X=\ell^1(c_0)$ satisfies $\AK(B_X)\sim \cK(B_X)\sim \cK(\qu)$.
\end{exa}
\begin{proof}
We already know that $\cK(B_{c_0})\sim \cK(\qu)$, hence
$\cK(B_{c_0})^\omega \sim \cK(\qu)^\omega \sim \cK(\qu)$
(see Lemma~\ref{lem:TukeyKQ}). Thus, Lemma~\ref{lem:cofinal-l1sum} yields
$$
	\AK(B_X) \preceq \cK(\qu) \times [\omega]^{<\omega} \sim \cK(\qu) \times \omega \sim \cK(\qu).
$$
On the other hand, since $X$ contains complemented subspaces isomorphic to~$c_0$ and
$\AK(B_{c_0}) \sim \cK(\qu)$ (Example~\ref{exa:c0l1}), we have
$\cK(\qu) \preceq \AK(B_X)$ (Proposition~\ref{at:7}(iv)).
Thus $\AK(B_X) \sim \cK(\qu)$ and so Corollary~\ref{as:4-cor}(iii)
applies to get $\cK(B_X) \sim \cK(\qu)$.
\end{proof}

The $\ell^1$-sum of countably many SWCG spaces is SWCG
(see \cite[Proposition~2.9]{sch-whe}). Bearing in mind this fact, Theorem~\ref{swcg:2}
and Corollary~\ref{as:4-cor}(iv) yield:

\begin{exa}\label{exa:l1l2}
The space $X=\ell^1(\ell^2)$ satisfies $\AK(B_X)\sim \omega$ and $\cK(B_X)\sim \omega^\omega$.
\end{exa}

The case of $\ell^p$-sums, $1<p<\infty$, is different. For instance, $\ell^p(\ell^1)$ is not isomorphic to a subspace
of a SWCG space (see \cite[Corollary~2.29]{kam-mer2}). In Example~\ref{exa:l2l1}
we shall check that this space satisfies $\AK(B_X)\sim \cK(B_X)\sim \omega^\omega$.

\section{Classification of $\cK(B_X)$ under analytic determinacy}\label{section:KBXclassification}

This section is devoted to the proof of the following theorem,
but the machinery developed here will be used in further sections.

\begin{thm}[\axiom]\label{PDKclassification}
If $X$ is separable, then $\mathcal{K}(B_X)$ is Tukey equivalent to either
$\{0\}$, $\omega^\omega$, $\mathcal{K}(\mathbb{Q})$ or $[\mathfrak{c}]^{<\omega}$.
\end{thm}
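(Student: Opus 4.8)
The plan is to combine two inputs: the purely combinatorial classification of $\mathcal{I}^\perp$ for analytic families (Theorem~\ref{theoremNuevo}) and the lower bound $\omega^\omega\preceq\cK(B_X)$ from Proposition~\ref{KXKBX}(iii). First I would dispose of the reflexive case: if $X$ is reflexive then $B_X$ is weakly compact, so $\cK(B_X)\sim\{0\}$ by Proposition~\ref{KXKBX}(ii), which is outcome~(i). From now on assume $X$ is non-reflexive; then Proposition~\ref{KXKBX}(iii) gives $\omega^\omega\preceq\cK(B_X)$, so once we know that $\cK(B_X)$ is Tukey equivalent to one of the five canonical posets $\{0\},\omega,\omega^\omega,\cK(\qu),[\con]^{<\omega}$, the two smallest are automatically excluded and we land in outcome (ii), (iii) or~(iv). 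When $X^*$ is separable this is already settled in ZFC by Proposition~\ref{posets:3}, so the genuine content lies in the non-separable-dual case, where $(B_X,w)$ need not be metrizable and Fremlin's Theorem~\ref{Fr91classification} cannot be applied directly. Thus the whole problem reduces to producing an analytic family $\mathcal{I}$ of subsets of a countable set with $\cK(B_X)\sim\mathcal{I}^\perp$, and then quoting Theorem~\ref{theoremNuevo}.

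To build $\mathcal{I}$ I would exploit the separability of $X$ together with the De~Blasi measure of weak noncompactness, accessed through Grothendieck's test (Lemma~\ref{lem:Grothendieck}). Fix a sequence $(x_n)_{n<\omega}$ that is norm-dense in $B_X$, and encode a weakly compact set $K\sub B_X$ by recording, at each scale $1/m$, which of the $x_n$ lie within distance $1/m$ of $K$; this produces a subset of $\omega\times\omega$ whose $m$-th section decreases with~$m$. The crucial analyticity input is that ``$\{x_n:n\in A\}$ is relatively weakly compact'' is an analytic condition on $A\in 2^\omega$: by Lemma~\ref{lem:Grothendieck} it says that for every $k$ there is a norm-compact set $C$ (ranging over the Polish space $\cK(X)$ under the Hausdorff metric) with $\{x_n:n\in A\}\sub C+\frac1k B_X$, i.e. a countable intersection of projections of Borel sets, hence $\mathbf{\Sigma}^1_1$. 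Running this through the scale coding yields the desired analytic family $\mathcal{I}$, whose orthogonal $\mathcal{I}^\perp$ is the combinatorial shadow of $\cK(B_X)$.

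The two Tukey reductions witnessing $\cK(B_X)\sim\mathcal{I}^\perp$ are then set up as follows. In one direction, $A\mapsto\overline{\{x_n:n\in A\}}^{\,w}$ sends a relatively-weakly-compact index set to a weakly compact subset of $B_X$, and it is Tukey because a family of such closures is contained in a single $L\in\cK(B_X)$ exactly when the union of the corresponding index sets is itself relatively weakly compact. In the other direction one sends a weakly compact $K$ to the collection of index sets it controls across all scales. Here lies the main obstacle: a weakly compact set may contain weak limit points that are not among the $x_n$, and, crucially, $K+\eps B_X$ is never weakly compact, so the naive $\eps$-thickened shadow $\{n:\operatorname{dist}(x_n,K)\le\eps\}$ need not land inside $\mathcal{I}$. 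This is precisely the gap between $\cK(B_X)$ and $\mathcal{AK}(B_X)$, and reconciling the scales requires a diagonalization in the spirit of Lemma~\ref{lem:famous} (passing to a nested sequence of index sets and extracting a relatively weakly compact diagonal) so that the $\eps$-approximate shadows can be traded for genuine members of $\mathcal{I}$ at a coarser scale. Once the coding is shown to be faithful, that is, to preserve boundedness in both directions despite the non-metrizability of $(B_X,w)$, Theorem~\ref{theoremNuevo} gives $\mathcal{I}^\perp\sim\{0\},\omega,\omega^\omega,\cK(\qu)$ or $[\con]^{<\omega}$, and combining with $\omega^\omega\preceq\cK(B_X)$ finishes the proof.
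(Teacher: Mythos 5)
Your overall skeleton matches the paper's: handle the reflexive case and exclude $\omega$ via Proposition~\ref{KXKBX}, reduce $\cK(B_X)$ to the orthogonal $\mathcal{I}^\perp$ of an analytic family of subsets of~$\omega$, and quote Theorem~\ref{theoremNuevo}. The paper does exactly this through Lemma~\ref{relativeBanach} ($\cK(B_X)\sim\mathcal{RK}(D)$ for a countable norm-dense $D\sub B_X$) and Proposition~\ref{pro:usoDobleLimite} ($\mathcal{RK}(D)\sim\mathcal{I}^\perp$). But your construction of~$\mathcal{I}$ contains a genuine error. You claim that ``$\{x_n:n\in A\}$ is relatively weakly compact'' is an analytic condition on~$A$ because, by Lemma~\ref{lem:Grothendieck}, it is equivalent to: for every $k$ there is a \emph{norm-compact} set $C$ with $\{x_n:n\in A\}\sub C+\frac{1}{k}B_X$. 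That is not what Lemma~\ref{lem:Grothendieck} says: Grothendieck's test requires the approximating sets to be \emph{weakly} compact. With norm-compact $C$ your condition asserts total boundedness, i.e.\ relative \emph{norm} compactness, which is far too strong (the unit vector basis of~$c_0$ is relatively weakly compact but satisfies no such approximation). Nor can you repair this by letting $C$ range over weakly compact sets: when $X^*$ is non-separable --- the only case at stake --- the weakly compact subsets of~$B_X$ do not form a Polish parameter space, and that circularity is precisely the difficulty the theorem must overcome. In fact the relative weak compactness of $\{x_n:n\in A\}$ is a coanalytic condition on~$A$ (it says exactly that $A\in\mathcal{I}^\perp$), not an analytic one; and note that Theorem~\ref{theoremNuevo} does not ask for the family of relatively weakly compact index sets to be analytic --- it asks for an analytic family $\mathcal{I}$ whose \emph{orthogonal} is exactly that family.

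The paper produces such an $\mathcal{I}$ with Grothendieck's \emph{double limit} criterion rather than Grothendieck's test: $\mathcal{I}$ is the family of all $a\sub\omega$ for which some sequence $(y_m^*)$ in $(B_{X^*},w^*)$ --- a Polish space precisely because $X$ is separable --- witnesses distinct iterated limits along~$a$. Lemma~\ref{lem:Description} makes the witnessing relation Borel, so $\mathcal{I}$ is analytic as a projection, and the double limit criterion gives that $C\sub D$ is relatively weakly compact if and only if its index set lies in~$\mathcal{I}^\perp$; this is an isomorphism of posets between $\mathcal{RK}(D)$ and~$\mathcal{I}^\perp$, with no scales, no thickening, and no diagonalization. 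Relatedly, the obstacle you flag at the end (that $K+\eps B_X$ is never weakly compact, so $\eps$-shadows need not land in the coding) is resolved in the paper not by a multi-scale diagonalization but by Lemma~\ref{relativeBanach}: given $L\in\cK(B_X)$, take a norm-dense sequence in~$L$ with each term repeated infinitely often, perturb the $n$-th term into~$D$ by at most $\frac{1}{n+1}$, and use the (correctly quoted) Lemma~\ref{lem:Grothendieck} to see that the resulting subset of~$D$ is relatively weakly compact; this assignment is Tukey. As written, the two load-bearing steps of your proposal --- the analyticity of the coding and the faithfulness of the reduction --- are respectively wrong and missing.
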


The first step is to state the consequences of the axiom of analytic determinacy (denoted by \axiom) that we shall need: Theorems~\ref{projectivestronggap}
and~\ref{projectiveideal}. We shall not state here the axiom and we refer the reader to~\cite{jec} for basic knowledge on
games, winning strategies and determinacy. (This is only needed to understand our proof of Theorem~\ref{projectivestronggap}.)

We say that a family $\mathcal{A}$ of subsets of~$\omega$ is {\em hereditary} if for every $A\in \mathcal{A}$ and every
$A'\subset A$ we have $A'\in\mathcal{A}$. If $\mathcal{A}$ and $\mathcal{B}$ are two
families of subsets of~$\omega$, we say that
$\mathcal{A}$ and $\mathcal{B}$ are {\em countably separated} if there exists a countable family $\mathcal{C}$ of subsets of~$\omega$ such that
for every $a\in \mathcal{A}$ and every $b\in \mathcal{B}$ with $a\cap b = \emptyset$ there is $c\in\mathcal{C}$ such that $a \subset c$ and
$b\subset \omega\setminus c$. By identifying each subset of~$\omega$ with its characteristic function,
any family $\mathcal{A}$ of subsets of~$\omega$ can be viewed as a subset of~$2^\omega$, and in this sense
we can say that $\mathcal{A}$ is analytic, coanalytic, etc.

Given $i\in\{0,1\}$, an infinite set $S\sub 2^{<\omega}$ is said to be an {\em $i$-chain} if it can be
enumerated as $S=\{s^p:p<\omega\}$ where $s^p \smallfrown i \sqsubseteq s^{p+1}$ for all $p<\omega$.
The set of all $i$-chains of~$2^{<\omega}$ is denoted by~$\mathcal{C}_i$.

\begin{thm}[\axiom]\label{projectivestronggap}
Let $\mathcal{A}_0$ and $\mathcal{A}_1$ be hereditary families of subsets of~$\omega$ such that
$\mathcal{A}_0$ is analytic and $\mathcal{A}_1$ is coanalytic. Then:
\begin{enumerate}
\item[(i)] either $\mathcal{A}_0$ and $\mathcal{A}_1$ are countably separated,
\item[(ii)] or there exists an injective function $u:2^{<\omega}\to \omega$ such that
$u(S)\in \mathcal{A}_i$ whenever $S \sub 2^{<\omega}$ is an $i$-chain, $i\in\{0,1\}$.
\end{enumerate}
\end{thm}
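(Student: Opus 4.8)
The statement is a gap dichotomy of the kind established by \stevo\ for analytic gaps in~\cite{tod-J-2}, and my plan is to prove it by the same game-theoretic scheme, pushing the complexity one level up so that the coanalyticity of~$\mathcal{A}_1$ forces the use of analytic determinacy rather than Borel determinacy. First I would fix tree representations. Since $\mathcal{A}_0$ is analytic, write $\mathcal{A}_0=p[T_0]$ for a tree $T_0$ on $2\times\omega$, so that $a\in\mathcal{A}_0$ is witnessed by some $f\in\omega^\omega$ with $(a,f)\in[T_0]$; since $\mathcal{A}_1$ is coanalytic, its complement is analytic and membership $b\in\mathcal{A}_1$ is the $\mathbf{\Pi}^1_1$ condition $\forall g\,(b,g)\notin[T_1]$. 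Heredity of both families lets me normalize these trees, and it is what makes it enough to control the images of the \emph{maximal} $i$-chains: if $S'\sub S$ then $u(S')\sub u(S)$, so $u(S)\in\mathcal{A}_i$ together with heredity gives $u(S')\in\mathcal{A}_i$ for free.

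The core is a single two-player game $G$ in which a \emph{builder} attempts to manufacture the embedding~$u$ together with the branching structure of~$2^{<\omega}$, while a \emph{separator} attempts to interpose a candidate separating set, committing to the membership of each integer at a finite stage. Along a run the builder also plays, coordinate by coordinate, a witness $f$ certifying that the integers accumulated along the current $0$-chain~$S$ satisfy $(u(S),f)\in[T_0]$. Forcing the builder to exhibit this witness turns the analytic side $u(S)\in\mathcal{A}_0$ into a \emph{closed} condition on the run, so that the only membership still to be checked in the payoff is the coanalytic condition $u(S)\in\mathcal{A}_1$ along $1$-chains. Consequently the payoff is $\mathbf{\Pi}^1_1$, and analytic determinacy, equivalently $\mathbf{\Pi}^1_1$-determinacy after swapping the players, hands me a winning strategy for one of the two players. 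This is exactly the point at which the complexity of~$\mathcal{A}_1$ obliges me to spend a determinacy axiom, in line with the open graph principle for projective sets of~\cite{fen}.

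If the separator has a winning strategy, I read alternative~(i) from it: by the usual unraveling of the strategy, whose moves come from a countable reservoir and whose membership commitments are made at finite stages, one extracts a countable separating family $\mathcal{C}$, and playing an arbitrary disjoint pair $(a,b)$ with $a\in\mathcal{A}_0$, $b\in\mathcal{A}_1$ against the strategy shows that some $c\in\mathcal{C}$ must separate it. If instead the builder wins, I obtain~(ii) by running the strategy simultaneously along every node of~$2^{<\omega}$ in a fusion recursion: the single-branch strategy is iterated so that at each node its two continuations realize a $0$-extension and a $1$-extension, producing an injection $u\colon 2^{<\omega}\to\omega$ whose values along $0$-chains carry built $[T_0]$-witnesses (hence lie in~$\mathcal{A}_0$) and whose values along $1$-chains meet the $\mathbf{\Pi}^1_1$ winning condition (hence lie in~$\mathcal{A}_1$), heredity covering all non-maximal chains as noted above.

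The hard part will be the uniformity of witnesses across the continuum-many chains. A single $\omega$-length run carries a witness for only one chain, so the delicate step is the fusion that upgrades the builder's one-branch strategy to a coherent $2^{<\omega}$-indexed family of witnesses serving all $0$-chains at once, while keeping $u$ injective and preventing the witnesses for the two sides from interfering. Dually, the game must be designed so that the separator's strategy genuinely yields a \emph{countable} family and not a separator depending continuously on the opponent's infinite play. Arranging that the payoff sits exactly at the $\mathbf{\Pi}^1_1$ level, so that analytic determinacy suffices and nothing stronger creeps in, is the other point that needs care.
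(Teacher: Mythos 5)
Your proposal is correct and follows essentially the same route as the paper's proof: the paper plays exactly your game, with Player~I as your builder (playing at each step an integer together with one more coordinate of a witness for the analytic side, which makes that side of the payoff closed) and Player~II as your separator (playing the bits that commit each integer to a side), so the payoff for Player~I is $\mathbf{\Pi}^1_1$ and analytic determinacy applies; a winning strategy for Player~II yields the countable separating family from its finite-stage bit commitments, and a winning strategy for Player~I, evaluated against all finite bit-sequences, yields the map on~$2^{<\omega}$. The two points you defer are handled easily there: the ``uniformity of witnesses'' is a non-issue because each $i$-chain is certified by the single run along any branch extending it together with heredity, and global injectivity of~$u$ (the strategy only gives injectivity along each run, hence on each chain) is restored by an elementary chain-preserving injectivization lemma.
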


\begin{rem}\label{rem:AvilesTodorcevic}
\rm If both $\mathcal{A}_0$ and $\mathcal{A}_1$ are analytic, then the statement of Theorem~\ref{projectivestronggap}
holds in ZFC, as shown by Todorcevic~\cite{tod-J-2} in a slightly different language.
His proof relies on the open graph theorem of Feng~\cite{fen}, who
also proves that an open graph theorem holds for projective sets under projective determinacy.
So it follows from combining the argument of both papers that Theorem~\ref{projectivestronggap}
holds under projective determinacy when the families are projective.
Here we present a direct proof for the case that we are going to use. Similar results for more than
two families can be found in~\cite{avi-tod,avi-tod-2}.
\end{rem}

We shall need the following elementary lemma.

\begin{lem}\label{lem:injectivization}
Let $W$ be a set and let $u':2^{<\omega}\to W$ be a function which is injective when restricted to each
$i$-chain, $i\in \{0,1\}$. Then there is a function $v:2^{<\omega}\to 2^{<\omega}$ such that:
\begin{enumerate}
\item[(i)] $u'\circ v$ is injective;
\item[(ii)] $v(S)$ is an $i$-chain whenever~$S \sub 2^{<\omega}$ is an $i$-chain, $i\in \{0,1\}$.
\end{enumerate}
\end{lem}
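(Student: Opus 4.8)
The plan is to build $v$ as a tree embedding $v:2^{<\omega}\to 2^{<\omega}$ satisfying two structural conditions --- (a) $v$ is $\sqsubseteq$-monotone, and (b) $v(a\smallfrown i)\sqsupseteq v(a)\smallfrown i$ for all $a\in 2^{<\omega}$ and $i\in\{0,1\}$ --- from which property~(ii) follows automatically. Indeed, if $S=\{s^p:p<\omega\}$ is an $i$-chain then $s^p\smallfrown i\sqsubseteq s^{p+1}$, whence (a) and (b) give $v(s^p)\smallfrown i\sqsubseteq v(s^p\smallfrown i)\sqsubseteq v(s^{p+1})$, so $v(S)$ is again an $i$-chain once $v$ is known to be injective. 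Every $v$ of the recursive form $v(a\smallfrown i):=v(a)\smallfrown i\smallfrown w_{a,i}$ (with a fixed value $v(\emptyset)$ and arbitrary \emph{filler words} $w_{a,i}\in 2^{<\omega}$) is such a tree embedding, and the fillers are the only free parameters; the whole point is to choose them so as to force~(i).

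The key observation is that~(i) needs to be arranged only for $\sqsubseteq$-incomparable pairs. Indeed, if $s\sqsubset t$ then $v(s)\sqsubset v(t)$, and letting $j\in\{0,1\}$ be the digit of $v(t)$ at position $|v(s)|$ we obtain $v(s)\smallfrown j\sqsubseteq v(t)$, so $v(s)$ and $v(t)$ sit on a common $j$-chain; since $u'$ is injective on that chain, $u'(v(s))\neq u'(v(t))$ comes for free. Thus comparable images are separated by $u'$ automatically, and it suffices to secure $u'(v(s))\neq u'(v(t))$ when $s$ and $t$ are incomparable.

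To achieve this I would enumerate $2^{<\omega}=\{t_n:n<\omega\}$ with $|t_n|$ non-decreasing, so that $t_0=\emptyset$ and every node is listed after its parent, and define $v(t_n)$ by recursion on~$n$. At stage $n$, write $t_n=a\smallfrown i$ where $a$ is the (already-treated) parent of $t_n$, set $q:=v(a)\smallfrown i$, and let $F_n:=\{u'(v(t_m)):m<n,\ t_m\ \mbox{incomparable to}\ t_n\}$, a finite set. The nodes $q,\,q\smallfrown 0,\,q\smallfrown 00,\dots$ form a $0$-chain, on which $u'$ is injective and hence takes infinitely many values; so I can pick $k$ with $u'(q\smallfrown 0^k)\notin F_n$ and put $v(t_n):=q\smallfrown 0^k$. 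This respects (a)--(b) and yields $u'(v(t_n))\neq u'(v(t_m))$ for every incomparable $t_m$ with $m<n$; combined with the automatic separation of comparable pairs, $u'\circ v$ is injective, which is~(i).

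The delicate point --- and the step I expect to be the main obstacle --- is to reconcile the two demands: the fillers must be taken large enough to dodge the finite forbidden set $F_n$, while the recipe must still obey (a)--(b) so that chain-preservation survives. This is resolved by the two facts that an arbitrary filler never breaks (a)--(b), and that above \emph{every} node $q$ there is a ready-made $0$-chain along which the hypothesis forces $u'$ to be injective, hence to attain infinitely many values, so the finite obstruction is always avoidable. It remains only to record that $v$ is injective --- immediate from (b), since two incomparable nodes diverge at some first digit and their images then diverge at the corresponding digit --- which also guarantees that $v(S)$ is infinite, completing~(ii).
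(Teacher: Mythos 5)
Your proof is correct and is essentially the paper's own argument: both process the nodes of $2^{<\omega}$ in a length-respecting order and define $v$ recursively by appending, after the mandatory digit, a block of repeated digits chosen so that $u'$ of the resulting node --- which ranges injectively over a ready-made chain, hence over infinitely many values --- avoids the finitely many forbidden values. The only differences are cosmetic: the paper's filler repeats the last digit of $s$ rather than $0$'s, and it simply dodges \emph{all} previously assigned $u'$-values at each stage, which makes your (correct, and in itself nice) observation that comparable pairs are separated automatically unnecessary.
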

\begin{proof}
Consider the order $\leq_{lex}$ on~$2^{<\omega}$ defined by
declaring that $t\leq_{lex} s$ if either ${\rm length}(t) < {\rm length}(s)$ or
${\rm length}(t) = {\rm length}(s)$ and $t \leq s$ with respect to the usual lexicographical order on~$2^{{\rm length}(t)}$.
Since initial segments with respect to $\leq_{lex}$ are finite we can
construct the required  function $v$ inductively as follows.

Set $v(\emptyset):=\emptyset$. Let $s=(s_0,\dots,s_p)\in 2^{<\omega}$ and suppose that
$v(t)$ is already defined for every $t\leq_{lex} s$ with $t\neq s$.
Since $u'$ is injective on every $s_p$-chain, there is $n_s \in \N$ such that the element of~$2^{<\omega}$ given by
$$
	v(s):=v(s_0,\dots,s_{p-1})\smallfrown \underbrace{(s_p,\dots,s_p)}_{\text{of length }n_s}
$$
satisfies that $u'(v(s)) \neq u'(v(t))$ for every $t\leq_{lex} s$ with $t\neq s$. By the very construction, $u'\circ v$
is injective, and so is~$v$. Clearly, we also have
\begin{equation}\label{eqn:monotonicV}
	v(t) \sqsubseteq v(s) \quad
	\mbox{whenever }t \sqsubseteq s.
\end{equation}
Let $S=\{s^p:p<\omega\}$ be an $i$-chain, $i\in \{0,1\}$, enumerated
so that $s^p \smallfrown i \sqsubseteq s^{p+1}$ for all $p<\omega$.
Then
$$
	v(s^p)\smallfrown i \sqsubseteq
	v(s^p \smallfrown i) \stackrel{\eqref{eqn:monotonicV}}{\sqsubseteq} v(s^{p+1})
	\quad
	\mbox{ for every }p<\omega,
$$
hence $v(S)$ is an $i$-chain as well.
\end{proof}

\begin{proof}[Proof of Theorem~\ref{projectivestronggap}]
Since $\mathcal{A}_0$ is analytic, there is a continuous function
$f:\omega^\omega \to 2^\omega$ such that $f(\omega^\omega) = \mathcal{A}_0$.
Consider an infinite game with two players where, at each step~$k$, Player I plays $(m_k,\bar{m}_k) \in \omega\times \omega$,
and Player II plays $i_k \in \{0,1\}$. At the end of the game, Player I wins if and only if the following three conditions hold:
\begin{enumerate}
\item $m_k\neq m_{k'}$ for every $k\neq k'$,
\item $\{m_k : i_k=0\}\subset f(\bar{m}_0,\bar{m}_1,\dots)$,
\item $\{m_k : i_k=1\}\in \mathcal{A}_1$.
\end{enumerate}
It is easy to check that the set of all infinite rounds $(m_0,\bar{m}_0,i_0,m_1,\bar{m}_1,i_1,\dots)$
where Player~I wins is coanalytic in the Polish space~$R:=(\omega \times \omega \times \{0,1\})^\omega$.
A brief indication follows. On one hand, the set determined by condition~(1)
$$
	\{(m_0,\bar{m}_0,i_0,m_1,\bar{m}_1,i_1,\dots)\in R: \, m_k\neq m_{k'} \mbox{ for every }k\neq k'\}
$$
is closed in~$R$. On the other hand, the set determined by condition~(2) is Borel in~$R$, since it can be written as
$$
	\bigcap_{k<\omega}\{(m_0,\bar{m}_0,i_0,m_1,\bar{m}_1,i_1,\dots)\in R: \, i_k=1\} \cup U_k
$$
where
$$
	U_k=\bigcup_{l<\omega}\big\{(m_0,\bar{m}_0,i_0,m_1,\bar{m}_1,i_1,\dots)\in R: \, i_k=0, \, m_k=l, f(\bar{m}_0,\bar{m}_1,\dots)[l]=1\big\}.
$$
Finally, note that for every $l<\omega$ the mapping
$\Phi_l: R \to \{0,1\}$ defined by
$$
	\Phi_l((m_0,\bar{m}_0,i_0,m_1,\bar{m}_1,i_1,\dots)):=
	\begin{cases}
	1 & \text{if there is $k<\omega$ such that $i_k=1$ and $m_k=l$}\\
	0 & \text{otherwise}
	\end{cases}
$$
is Borel, and so does its product $\Phi:=(\Phi_l)_{l<\omega}: R \to 2^\omega$.
The subset of~$R$ determined by condition~(3) coincides with $\Phi^{-1}(\mathcal{A}_1)$, which
is coanalytic in~$R$.

So, under the axiom of analytic determinacy, this game is determined and we have two cases.

{\em Case~1. Player~I has a winning strategy.} We shall check that condition~(ii) of the theorem holds.
For each $s=(s_0,\dots,s_p)\in 2^{<\omega}$, let $(m_{p+1},\bar{m}_{p+1})$ be the move of Player~I under his strategy
after Player~II has played $s_0,\dots,s_p$ and Player~I followed his strategy. Define the function
$u':2^{<\omega}\to \omega$ by $u'(s) = m_{p+1}$. Now, let $S \sub 2^{<\omega}$ be any $i$-chain, $i\in \{0,1\}$.
Write $S = \{s^p:p<\omega\}$, where $s^p=(s_0,\dots,s_{n_p})$, the sequence
$(n_p)$ is strictly increasing and $s_{n_{p}+1} = i$ for all $p<\omega$. Consider the
infinite round $(m_0,\bar{m}_0,s_0,m_1,\bar{m}_1,s_1,\dots)$ played according to the winning strategy
of Player~I. Then $u'(S)=\{m_{n_p+1}:p<\omega\} \sub \{m_k: s_k=i\}$. Since Player~I is the winner of this round
we conclude that $u'(S) \in \cA_i$ (bear in mind that~$\cA_i$ is hereditary) and
that the restriction $u'|_S$ is injective. An appeal to Lemma~\ref{lem:injectivization}
ensures the existence of a function $u:2^{<\omega}\to \omega$ as required in~(ii).

{\em Case~2. Player~II has a winning strategy.} We shall show that $\cA_0$ and~$\cA_1$ are countably separated.
For every $i\in\{0,1\}$, every finite round of the game
$$
	\xi = (m_0,\bar{m}_0,i_0,\ldots,m_k,\bar{m}_k,i_k)
$$
and every $\bar{m}\in \omega$, we define $c^i[\xi,\bar{m}]$
to be the set of all $m<\omega$ such that the strategy of Player~II chooses $1-i$ after
$$
	(m_0,\bar{m}_0,i_0,\dots,m_k,\bar{m}_k,i_k,m,\bar{m})
$$
is played. Notice that $c^0[\xi,\bar{m}]\cap c^1[\xi,\bar{m}] = \emptyset$. The countable family
of subsets of~$\omega$ that will witness the countable separation of~$\cA_0$ and~$\cA_1$ is:
$$
	\mathcal{C} := \bigl\{c\sub\omega : \,  c  \triangle c^0[\xi,\bar{m}]
	\, \mbox{ is finite for some }\bar{m}<\omega \mbox{ and some } \xi\bigr\}.
$$
Indeed, fix $a_0\in\mathcal{A}_0$ and $a_1\in\mathcal{A}_1$ such that $a_0\cap a_1 = \emptyset$.
Take $(\bar{n}_k)\in \omega^\omega$ such that $a_0=f((\bar{n}_k))$.
We say that a finite round of the game
$$
	\xi= (m_0,\bar{m}_0,i_0,\dots,m_k,\bar{m}_k,i_k)
$$
is {\em acceptable} if it is played according to the strategy of Player~II and the following conditions hold:
\begin{enumerate}
\item[(a)] $\bar{m}_j = \bar{n}_j$ for all $j$;
\item[(b)] $m_j \neq m_{j'}$ whenever $j\neq j'$;
\item[(c)] $\{m_j : i_j=0\}\subset a_0$;
\item[(d)] $\{m_j : i_j=1\}\subset a_1$.
\end{enumerate}
Let $\Xi$ be the family of all acceptable $\xi$'s.
Given $\xi= (m_0,\bar{n}_0,i_0,\dots,m_k,\bar{n}_k,i_k)\in \Xi$, we say that $\xi$ is {\em extensible}
if there exist $m\in \omega \setminus \{m_0,\dots,m_k\}$ and $i\in\{0,1\}$ such that
$$
	(m_0,\bar{n}_0,i_0,\dots,m_k,\bar{n}_k,i_k,m,\bar{n}_{k+1},i)\in \Xi.
$$
Then there is some $\xi\in \Xi$ which is not extensible, because otherwise there would exist
an infinite round of the game, played according to the strategy of Player~II, in which Player I wins.
Fix a non-extensible $\xi= (m_0,\bar{n}_0,i_0,\ldots,m_k,\bar{n}_k,i_k)\in \Xi$ and
define
$$
	c := \bigl(c^0[\xi,\bar{n}_{k+1}]\setminus\{m_0,\dots,m_k\} \bigr) \cup \{m_j\in a_0 : \, j\leq k\}\in\mathcal{C}.
$$
We claim that $a_0 \sub c$ and $a_1 \cap c=\emptyset$. Indeed, this follows at
once from the fact that $c^0[\xi,\bar{n}_{k+1}]\cap c^1[\xi,\bar{n}_{k+1}] = \emptyset$ and the inclusion
\begin{equation}\label{eqn:CXI}
	a_i\sub c^i[\xi,\bar{n}_{k+1}]\cup\{m_0,\dots,m_k\} \quad \mbox{for }i\in\{0,1\}.
\end{equation}
To check~\eqref{eqn:CXI}, fix $i\in \{0,1\}$ and note that for every $m\in a_i \setminus \{m_0,\dots,m_k\}$ the non-extendability of~$\xi$ ensures that
$$
	(m_0,\bar{n}_0,i_0,\dots,m_k,\bar{n}_k,i_k,m,\bar{n}_{k+1},i)\not\in \Xi
$$
and so $m\in c^{i}[\xi,\bar{n}_{k+1}]$. The proof of the theorem is over.
\end{proof}

\begin{lem}\label{relativecompact}
Let $D$ be a dense subset of a metric space~$E$ and let $\mathcal{K}_E(D)$ be the family of all subsets
of~$D$ which are relatively compact in~$E$, ordered by inclusion. Then $\mathcal{K}_E(D) \sim \mathcal{K}(E)$.
\end{lem}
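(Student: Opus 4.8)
The plan is to prove the two Tukey reductions $\cK_E(D)\preceq\cK(E)$ and $\cK(E)\preceq\cK_E(D)$ separately, in both cases through explicit monotone maps, the basic tool being that the closure (in $E$) of a relatively compact set is compact.

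For the reduction $\cK_E(D)\preceq\cK(E)$ I would take the map $f(A):=\overline{A}$, the closure of $A$ in $E$; since $A\in\cK_E(D)$ is relatively compact, $\overline A$ is compact and $f$ indeed lands in $\cK(E)$. To check that $f$ is Tukey, given $L\in\cK(E)$ I would put $A_0:=L\cap D$. This is a subset of $D$ whose $E$-closure is contained in the compact set $L$, hence $A_0\in\cK_E(D)$; and whenever $f(A)=\overline A\sub L$ we have $A\sub\overline A\sub L$ together with $A\sub D$, so $A\sub L\cap D=A_0$. This settles the easy direction.

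The substantial direction is $\cK(E)\preceq\cK_E(D)$, where I must manufacture, for each compact $K\sub E$, a relatively compact subset $h(K)$ of the dense set $D$ whose closure recovers $K$. For each $n<\omega$ I would use compactness of $K$ to cover it by finitely many balls of radius $\tfrac{1}{n+1}$ centred at points of $K$, and, using density of $D$, pick one point of $D$ inside each of these balls; let $F_n(K)\sub D$ be the resulting finite set and set $h(K):=\bigcup_{n<\omega}F_n(K)$. Two properties are needed. First, $K\sub\overline{h(K)}$: each $k\in K$ lies within $\tfrac{2}{n+1}$ of some point of $F_n(K)$, so $k$ is a limit of points of $h(K)$. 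Second, $h(K)\in\cK_E(D)$: every point of $F_n(K)$ lies within $\tfrac{1}{n+1}$ of $K$, so any sequence drawn from $h(K)$ either meets some (finite) $F_n(K)$ infinitely often, yielding a constant subsequence, or may be taken with its index $n\to\infty$, in which case its distance to $K$ tends to $0$ and, by compactness of $K$, it has a subsequence converging in $E$ to a point of $K$. Hence every sequence in $h(K)$ has an $E$-convergent subsequence, so $\overline{h(K)}$ is compact.

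To finish I would verify that $h\colon\cK(E)\to\cK_E(D)$ is Tukey: given $A_0\in\cK_E(D)$, put $L_0:=\overline{A_0}\in\cK(E)$; then $h(K)\sub A_0$ forces $K\sub\overline{h(K)}\sub\overline{A_0}=L_0$. Combining the two reductions gives $\cK_E(D)\sim\cK(E)$. I expect the only real obstacle to be the construction of $h(K)$: the naive choice of a countable dense-in-$K$ subset of $D$ (approximating a fixed countable dense subset of $K$ point by point) need not be relatively compact, because points attached to different base points at a fixed scale can fail to accumulate. Taking genuinely \emph{finite} $\tfrac{1}{n+1}$-nets in $D$ at each scale $n$—which is exactly where compactness of $K$ is used—is what simultaneously guarantees relative compactness of $h(K)$ and the inclusion $K\sub\overline{h(K)}$.
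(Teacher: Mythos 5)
Your proposal is correct and follows essentially the same route as the paper: the easy direction uses the closure map with $L\cap D$ as the bound, and the substantial direction builds, for each compact $K$, the union over $n$ of finite $\tfrac{1}{n+1}$-nets chosen from $D$, verifies relative compactness by the same two-case sequential argument (constant subsequence versus levels tending to infinity plus compactness of $K$), and checks the Tukey property via $K\sub\overline{h(K)}\sub\overline{A_0}$. Nothing further is needed.
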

\begin{proof}
The function $G: \mathcal{K}_E(D)\to \mathcal{K}(E)$ given by $G(A):=\overline{A}$ is Tukey, because
for each $K\in \cK(E)$ the set $K\cap D\in \cK_E(D)$ contains
every $A\in \cK_E(D)$ for which $G(A)=\overline{A}\subset K$.

In order to check the Tukey reduction $\mathcal{K}(E) \preceq \mathcal{K}_E(D)$,
fix $L\in \mathcal{K}(E)$. Given any $n<\omega$, let $\mathcal{U}_n^L$ be a finite cover of~$L$ by open balls of radius~$\frac{1}{n+1}$
such that $L \cap B\neq \emptyset$ for every $B\in \mathcal{U}_n^L$. Since $D$ is dense in~$E$, we can select
a finite set $F_n(L) \sub D \cap (\bigcup \mathcal{U}_n^L)$ such that $F_n(L) \cap B$ is a singleton for every $B\in \mathcal{U}_n^L$.
Define
$$
	F(L) := \bigcup_{n<\omega } F_n(L) \sub D.
$$
Clearly, $L \sub \overline{F(L)}$. We claim that $F(L)\in \cK_E(D)$. Indeed, let
$(x_m)$ be a sequence in~$F(L)$ with infinitely many distinct terms. Then we can find
two strictly increasing subsequences $(m_k)$ and $(n_k)$ such that $x_{m_k}\in F_{n_k}(L)$ for all $k<\omega$.
Pick a sequence $(y_k)$ in~$L$ such that the distance between $y_k$ and $x_{m_k}$ is less than
or equal to~$\frac{2}{n_k+1}$ for all~$k<\omega$. Since $L$ is compact, $(y_k)$
admits a convergent subsequence, and so does~$(x_{m_k})$. This proves that $\overline{F(L)}$ is compact.

Finally, we check that $F:\mathcal{K}(E) \to \mathcal{K}_E(D)$ is Tukey.
Indeed, if $R\in \mathcal{K}_E(D)$, then $\overline{R}\in \cK(E)$ contains
every $L \in \cK(E)$ satisfying $F(L)\subset R$ (since $L \sub \overline{F(L)}$).
\end{proof}

The following lemma will also be needed in Section~\ref{section:atclassification}.

\begin{lem}\label{lem:GeneralLemma}
Let $W$ be a set, $f:2^{<\omega} \to W$ a function, $\mathcal{A} \sub \mathcal{P}(W)$
and ``$\leq$'' a binary relation on~$\mathcal{P}(W)$ satisfying the following properties:
\begin{enumerate}
\item[(i)] $\cA$ is closed under finite unions;
\item[(ii)] for every $A,B \sub W$ with $B \sub \bigcup\{C: C \leq A\}$ we have $B \leq A$;
\item[(iii)] $f(S) \in \cA$ for every $1$-chain $S \sub 2^{<\omega}$;
\item[(iv)]  $f(S) \not\leq A$ for every $0$-chain $S \sub 2^{<\omega}$ and every $A\in \cA$.
\end{enumerate}
Then $[\mathfrak{c}]^{<\omega} \preceq (\cA,\leq)$.
\end{lem}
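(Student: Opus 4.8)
The plan is to build a Tukey function $g\colon [\con]^{<\omega}\to(\cA,\le)$ by hand. First I would extract the only real content of~(ii): writing $D_A:=\bigcup\{C\sub W:C\le A\}$, condition~(ii) says precisely that $B\le A$ if and only if $B\sub D_A$, for every $B\sub W$ (one direction is the definition of $D_A$, the other is~(ii)). I would also note that, since $f$ is a function on \emph{nodes}, $f(S\cup S')=f(S)\cup f(S')$, so $f$ carries unions of subsets of $2^{<\omega}$ to unions in~$W$.

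Next I would construct the indexing family. Fix an injection $\con\ni\alpha\mapsto\sigma_\alpha\in 2^\omega$ with each $\sigma_\alpha$ having infinitely many $0$'s and infinitely many $1$'s. For a branch $\sigma$ with infinitely many $1$'s put $S(\sigma):=\{\sigma|_n:\sigma(n)=1\}$; this is a $1$-chain, so $f(S(\sigma))\in\cA$ by~(iii). Now set $a_\alpha:=f(S(\sigma_\alpha))\cup f(S(\overline{\sigma_\alpha}))$, where $\overline\sigma$ is the bitwise complement, and define $g(F):=\bigcup_{\alpha\in F}a_\alpha$; both $a_\alpha$ and $g(F)$ lie in~$\cA$ by~(i). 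Using the characterization of~$\le$ together with $f(\bigcup\,\cdot\,)=\bigcup f(\,\cdot\,)$ one checks that $g(F)\le A$ if and only if $a_\alpha\le A$ for every $\alpha\in F$. Consequently $g$ is a Tukey function as soon as, for each $A\in\cA$, the set $B(A):=\{\alpha<\con:a_\alpha\le A\}$ is finite, since then $F_0:=B(A)$ satisfies $g(F)\le A\Rightarrow F\sub F_0$.

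The heart of the matter is the following claim: if $(\rho_k)$ is a sequence of branches, each with infinitely many $1$'s, converging in $2^\omega$ to some $\tau$ with $\rho_k>_{\mathrm{lex}}\tau$ for all $k$, then $\bigcup_k S(\rho_k)$ contains a $0$-chain. To see this, let $\ell_k$ be the first level at which $\rho_k$ and $\tau$ differ; since $\rho_k\to\tau$ these levels tend to infinity, so after passing to a subsequence we may assume $\ell_0<\ell_1<\cdots$. Because $\rho_k>_{\mathrm{lex}}\tau$ we have $\rho_k(\ell_k)=1$ and $\tau(\ell_k)=0$, so $t_k:=\tau|_{\ell_k}=\rho_k|_{\ell_k}\in S(\rho_k)$, and $t_k\smallfrown 0=\tau|_{\ell_k+1}\sqsubseteq\tau|_{\ell_{k+1}}=t_{k+1}$; hence $\{t_k\}$ is a $0$-chain contained in the union.

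Finally I would prove $B(A)$ finite by contradiction. If $B(A)$ is infinite, injectivity gives infinitely many distinct branches $\sigma_\alpha$ ($\alpha\in B(A)$), so by compactness of $2^\omega$ a subsequence $(\sigma_{\alpha_k})$ converges to some~$\tau$. As the terms are distinct, either infinitely many satisfy $\sigma_{\alpha_k}>_{\mathrm{lex}}\tau$ or infinitely many satisfy $\sigma_{\alpha_k}<_{\mathrm{lex}}\tau$. In the first case the claim, applied to those $\sigma_{\alpha_k}$, produces a $0$-chain inside $\bigcup_{\alpha\in B(A)}S(\sigma_\alpha)$; in the second case the complements $\overline{\sigma_{\alpha_k}}$ converge to $\overline\tau$ with $\overline{\sigma_{\alpha_k}}>_{\mathrm{lex}}\overline\tau$, and the claim gives a $0$-chain inside $\bigcup_{\alpha\in B(A)}S(\overline{\sigma_\alpha})$. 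Either way there is a $0$-chain $Z\sub\bigcup_{\alpha\in B(A)}a_\alpha$'s underlying node set, so $f(Z)\sub\bigcup_{\alpha\in B(A)}a_\alpha\sub D_A$, whence $f(Z)\le A$ by~(ii), contradicting~(iv). The main obstacle is exactly this last dichotomy: a convergent sequence of branches may peel off its limit on the ``$0$-side,'' and for such a sequence the $1$-chains $S(\sigma_\alpha)$ alone yield \emph{no} $0$-chain in their union; the trick of also inserting the complementary $1$-chain $S(\overline{\sigma_\alpha})$ into each $a_\alpha$ converts a $0$-side approach into a $1$-side approach and is what makes the argument go through.
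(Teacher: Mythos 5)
Your proposal is correct and is essentially the paper's own proof: your sets $a_\alpha = f(S(\sigma_\alpha))\cup f(S(\overline{\sigma_\alpha}))$ coincide with the paper's $F(\sigma)=f(S_1(\sigma))\cup f(S_2(\sigma))$, your map $g$ is the paper's $\bar F(B)=\bigcup_{\sigma\in B}F(\sigma)$, and your finiteness claim for $B(A)$ is the paper's claim that $\Omega_A=\{\sigma\in G:\,F(\sigma)\le A\}$ is finite. Even the proof of that claim is the same argument in different clothing: the paper's dichotomy on the digit of the limit at the first disagreement ($\sigma_{p_{j_m}+1}=0$ versus $1$) is exactly your lexicographic dichotomy, and in both cases one produces a $0$-chain of prefixes of the limit whose image violates condition~(iv) via condition~(ii).
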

\begin{proof}
Let $G$ be the set of all $\sigma=(\sigma_k)\in 2^\omega$ having infinitely many $0$'s and infinitely many~$1$'s.
For such a~$\sigma$, we consider the following
$1$-chains contained in~$2^{<\omega}$:
\begin{multline*}
	S_1(\sigma):=\big\{(\sigma_0,\dots,\sigma_k): \, \sigma_{k+1}=1\big\}
	\quad\mbox{and}\quad \\
	S_2(\sigma):=\big\{(1-\sigma_0,\dots,1-\sigma_k): \, \sigma_{k+1}=0\big\},
\end{multline*}
so that $F(\sigma):=f(S_1(\sigma))\cup f(S_2(\sigma)) \in \cA$ (by~(i) and~(iii)).

{\sc Claim.} {\em For every $A\in \cA$ the set $\Omega_{A}:=\big\{\sigma\in G: \, F(\sigma) \leq A\big\}$ is finite.}
Our proof is by contradiction. Suppose $\Omega_{A}$ is infinite
and let $(\sigma^n)$ be a sequence in~$\Omega_{A}$ of pairwise distinct elements.
Write $\sigma^n=(\sigma^n_k)$ for each $n<\omega$.
By passing to a subsequence, we can assume that $(\sigma^n)$ converges to some~$\sigma = (\sigma_k)\in 2^\omega$
such that $\sigma\neq \sigma^n$ for all $n<\omega$. Now, we
can find recursively two strictly increasing subsequences $(n_j)$ and $(p_j)$ of~$\omega$
such that, for every $j<\omega$, we have $\sigma^{n_j}_k=\sigma_k$ for all $k\leq p_j$ and $\sigma^{n_j}_{p_j+1}\neq \sigma_{p_j+1}$.
Let $(j_m)$ be a strictly increasing subsequence of~$\omega$ for which
the sequence $(\sigma_{p_{j_m}+1})$ is constant. There are two cases:
\begin{itemize}

\item If $\sigma_{p_{j_m}+1}=0$ for every $m<\omega$, then the set
$\{(\sigma_0,\dots,\sigma_{p_{j_m}}) : m<\omega\} \sub 2^{<\omega}$
is a $0$-chain, hence $B:=\{f(\sigma_0,\dots,\sigma_{p_{j_m}}) : m<\omega\} \not\leq A$ by property~(iv).
On the other hand, we have
$$
	f(\sigma_0,\dots,\sigma_{p_{j_m}})=
	f(\sigma_0^{n_{j_m}},\dots,\sigma_{p_{j_m}}^{n_{j_m}})
	\in f(S_1({\sigma^{n_{j_m}}})) \sub F(\sigma^{n_{j_m}}) \leq A
$$
for every $m<\omega$, hence property~(ii) yields~$B\leq A$, which is a contradiction.

\item If $\sigma_{p_{j_m}+1}=1$ for every $m<\omega$, the argument is similar. Indeed, the set
$\{(1-\sigma_0,\dots,1-\sigma_{p_{j_m}}) : m<\omega\} \sub 2^{<\omega}$
is a $0$-chain and so (iv) yields
$$
	B:=\{f(1-\sigma_0,\dots,1-\sigma_{p_{j_m}}) : m<\omega\} \not\leq A.
$$
On the other hand,
$$
	f(1-\sigma_0,\dots,1-\sigma_{p_{j_m}})=
	f(1-\sigma_0^{n_{j_m}},\dots,1-\sigma_{p_{j_m}}^{n_{j_m}})
	\in f(S_2({\sigma^{n_{j_m}}})) \sub F(\sigma^{n_{j_m}}) \leq A
$$
for every $m<\omega$. It follows from~(ii) that $B\leq A$, again a contradiction.
\end{itemize}
This proves that $\Omega_{A}$ is finite, as claimed.

Finally, the mapping
$$
	\bar{F}:[G]^{<\omega} \to (\cA,\leq),
	\quad
	\bar{F}(B) := \bigcup_{\sigma\in B} F(\sigma),
$$
is Tukey. Indeed, take any $A\in \mathcal{A}$. If $B \sub G$ is finite and $\bar{F}(B) \leq A$,
then for every $\sigma\in B$ we have $F(\sigma) \sub \bar{F}(B) \leq A$ and so
$F(\sigma)\leq A$ (by~(ii)); therefore, $\sigma\in \Omega_A$. It follows
that $B \sub \Omega_A$. This shows that $\bar{F}$ is Tukey.
Since $G$ has cardinality~$\mathfrak{c}$, we get $[\mathfrak{c}]^{<\omega} \preceq (\cA,\leq)$.
\end{proof}

Given a family $\mathcal{I}$ of subsets of~$\omega$, its {\em orthogonal} is defined by
$$
	\mathcal{I}^\perp=\{a\sub \omega: \, a\cap b \mbox{ is finite for every }b\in \mathcal{I}\}.
$$

\begin{thm}[\axiom]\label{projectiveideal}
Let $\mathcal{I}$ be an analytic family of subsets of $\omega$. Then
$\mathcal{I}^\perp$ (ordered by inclusion)
is Tukey equivalent to either $\{0\}$, $\omega$, $\omega^\omega$, $\mathcal{K}(\mathbb{Q})$ or $[\mathfrak{c}]^{<\omega}$.
\end{thm}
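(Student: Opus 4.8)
The plan is to run the gap dichotomy of Theorem~\ref{projectivestronggap} on the pair $(\mathcal{I},\mathcal{I}^\perp)$ and treat its two alternatives separately, matching the non-separated one with $[\con]^{<\omega}$ and the separated one with Fremlin's classification (Theorem~\ref{Fr91classification}). First I would make harmless reductions. Replacing $\mathcal{I}$ by its hereditary closure $\{a':\exists a\in\mathcal{I},\ a'\sub a\}$ (still analytic, being a projection) leaves $\mathcal{I}^\perp$ unchanged, so I may assume $\mathcal{I}$ is hereditary; then $b\in\mathcal{I}^\perp$ exactly when $b$ has no infinite subset in $\mathcal{I}$. Since ``$a\cap b$ finite'' is Borel and $\mathcal{I}$ is analytic, $\mathcal{I}^\perp$ is coanalytic; it is also hereditary and upward directed (an ideal containing the finite sets). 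Thus $\mathcal{A}_0:=\mathcal{I}$ and $\mathcal{A}_1:=\mathcal{I}^\perp$ meet the hypotheses of Theorem~\ref{projectivestronggap}, leaving two cases.

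In the non-separated case there is an injective $u:2^{<\omega}\to\omega$ with $u(S)\in\mathcal{I}$ for every $0$-chain and $u(S)\in\mathcal{I}^\perp$ for every $1$-chain. I would feed $f:=u$, $W:=\omega$, $\mathcal{A}:=\mathcal{I}^\perp$ and ${\leq}:={\sub}$ into Lemma~\ref{lem:GeneralLemma}: hypotheses (i) and (ii) are immediate ($\mathcal{I}^\perp$ is closed under finite unions and $\bigcup\{C:C\sub A\}=A$), (iii) is the $1$-chain clause, and (iv) holds because an infinite $u(S)\in\mathcal{I}$ cannot be contained in any $A\in\mathcal{I}^\perp$ (this would force $A\cap u(S)=u(S)$ to be both infinite and finite). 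Hence $[\con]^{<\omega}\preceq\mathcal{I}^\perp$. The reverse is soft: $|\mathcal{I}^\perp|\le\con$, so $\mathcal{I}^\perp\preceq[\mathcal{I}^\perp]^{<\omega}\preceq[\con]^{<\omega}$ by Remark~\ref{rem:CardinalTukey}, giving $\mathcal{I}^\perp\sim[\con]^{<\omega}$.

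In the separated case, let $\mathcal{C}=\{c_n\}$ separate $\mathcal{I}$ and $\mathcal{I}^\perp$; enlarging $\mathcal{C}$ by all singletons (preserving separation and countability) I embed $\omega$ into $2^\omega$ via $\Phi(m):=(\mathbf 1_{c_n}(m))_n$, an injection whose image $D:=\Phi(\omega)$ is discrete (the singleton $\{m\}\in\mathcal{C}$ isolates $\Phi(m)$). The goal is to realize $\mathcal{I}^\perp$ as the relatively compact subsets of a coanalytic subspace of $2^\omega$. Let $B_{\mathcal{I}}$ (resp. $B_\perp$) be the set of $x\in 2^\omega$ that are limit points of $\Phi(a)$ for some infinite $a\in\mathcal{I}$ (resp. some infinite $a\in\mathcal{I}^\perp$). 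Separation forces $B_{\mathcal{I}}\cap B_\perp=\emptyset$: if $x$ witnessed both, then after deleting the finite overlap we get disjoint $a'\in\mathcal{I}$ and $b\in\mathcal{I}^\perp$ both clustering at $x$, and a separating $c_n$ with $a'\sub c_n$, $b\cap c_n=\emptyset$ makes the $n$-th coordinate of $x$ equal to both $1$ and $0$. Since every cluster point of $D$ arises from an infinite subset of $\omega$ lying in $\mathcal{I}^\perp$ or containing an infinite $\mathcal{I}$-set, one has $\overline{D}=D\sqcup B_{\mathcal{I}}\sqcup B_\perp$, so I may set $E:=D\cup B_\perp=\overline{D}\setminus B_{\mathcal{I}}$. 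As $B_{\mathcal{I}}$ is analytic (project the Borel relation ``$x$ is a limit point of $\Phi(a)$'' against the analytic $\mathcal{I}$), $E$ is coanalytic in $2^\omega$ and $D$ is dense in $E$. A compactness argument in $2^\omega$, using that $D$ is discrete, that $\mathcal{I}^\perp$ is hereditary, and that every $b\notin\mathcal{I}^\perp$ contains an infinite $\mathcal{I}$-set, shows $b\in\mathcal{I}^\perp$ iff $\Phi(b)$ is relatively compact in $E$. Thus $\mathcal{I}^\perp$ is order-isomorphic to $\cK_E(D)$, whence $\mathcal{I}^\perp\sim\cK(E)$ by Lemma~\ref{relativecompact}, and Theorem~\ref{Fr91classification} yields $\mathcal{I}^\perp\sim\{0\}$, $\omega$, $\omega^\omega$ or $\cK(\qu)$ according to the topological type of $E$.

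The main obstacle is precisely this separated case: pinning down the right space $E\sub 2^\omega$ and proving the equivalence $b\in\mathcal{I}^\perp\Leftrightarrow\Phi(b)\text{ relatively compact in }E$. Everything hinges on the disjointness $B_{\mathcal{I}}\cap B_\perp=\emptyset$, which is exactly what countable separation buys and what makes the assignment of ``good'' versus ``bad'' cluster points consistent; the remaining verifications are routine bookkeeping with Lemma~\ref{relativecompact} and Fremlin's theorem.
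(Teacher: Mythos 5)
Your proof is correct, and its overall architecture is the same as the paper's: reduce to hereditary $\mathcal{I}$, apply Theorem~\ref{projectivestronggap} to the pair $(\mathcal{I},\mathcal{I}^\perp)$, settle the non-separated alternative via Lemma~\ref{lem:GeneralLemma} together with Remark~\ref{rem:CardinalTukey} (your treatment of that case is literally the paper's), and funnel the separated alternative into Lemma~\ref{relativecompact} and Fremlin's Theorem~\ref{Fr91classification}. The genuine difference is how the separated case produces the coanalytic space $E$ with a countable dense discrete set realizing $\mathcal{I}^\perp$ as its relatively compact subsets. The paper cites \cite[Theorem~10 and Proposition~11]{avi-tod} to obtain a bijection $f:\omega\to\qu$ and an analytic set $F\sub 2^\omega$ absorbing the accumulation points of all $\mathcal{I}$-sets; since the image $\qu$ is not discrete in $2^\omega$, it then builds a partial Alexandroff duplicate $K$ of~$2^\omega$ (duplicating only the rational points) so that the copy of $\omega$ becomes isolated and dense in $E_0=\omega\cup(2^\omega\setminus F)$. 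You avoid both the citation and the duplicate: adding the singletons to the separating family makes the characteristic-function embedding $\Phi$ injective with discrete image $D$ already inside $2^\omega$; your analytic set $B_{\mathcal{I}}$ plays the role of $F$; and the disjointness $B_{\mathcal{I}}\cap B_\perp=\emptyset$ --- which you correctly isolate as the exact point where countable separation is used --- replaces the accumulation-point dichotomy quoted from \cite{avi-tod}. Your supporting claims all check out: the limit-point relation is Borel, so $B_{\mathcal{I}}$ is analytic and $E=\overline{D}\setminus B_{\mathcal{I}}$ is coanalytic; discreteness of $D$ plus hereditariness of $\mathcal{I}$ and $\mathcal{I}^\perp$ gives the three-way splitting $\overline{D}=D\sqcup B_{\mathcal{I}}\sqcup B_\perp$ and the equivalence $b\in\mathcal{I}^\perp\Leftrightarrow\overline{\Phi(b)}\sub E$, so $b\mapsto\Phi(b)$ is an order isomorphism onto $\cK_E(D)$. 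What your route buys is self-containedness (in effect you reprove the special case of the cited gap-theoretic result needed here); what the paper's route buys is brevity, outsourcing the construction to a known statement whose form ($E_0$ coanalytic with $\omega$ dense) it then reuses verbatim in Theorem~\ref{ZFCprojectiveideal} and in Proposition~\ref{AKKQ}.
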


\begin{proof}
Notice that the family $\{a\sub\omega : \, a\subset b \mbox{ for some }b \in\mathcal{I}\}$
is also analytic, so we can assume that $\mathcal{I}$ is hereditary.
Since $\mathcal{I}^\perp$ is coanalytic and hereditary,
Theorem~\ref{projectivestronggap} can be applied to the families~$\mathcal{I}$ and~$\mathcal{I}^\perp$.
Two cases arise.

{\em Case~1. $\mathcal{I}$ and~$\mathcal{I}^\perp$ are countably separated.}
Recall that $\mathbb{Q}$ is identified with the subspace of~$2^\omega$ made up of all eventually
zero sequences.
By \cite[Theorem~10 and Proposition~11]{avi-tod}, there exist a bijection $f:\omega \to \qu$
and an analytic set $F\subset 2^\omega$ such that
${\rm acc}(f(a)) \sub F$ (resp. ${\rm acc}(f(a)) \sub 2^\omega \setminus F$) for every $a\in \mathcal{I}$
(resp. $a\in \mathcal{I}^\perp$). Here we write ${\rm acc}(C)$ to denote the set of all accumulation points of a set~$C\sub 2^\omega$.
Let $K$ be the disjoint union $\omega \cup 2^\omega$ equipped with the compact metrizable topology defined by:
\begin{itemize}
\item each $n<\omega$ is isolated in~$K$;
\item for each $x\in 2^\omega$, the collection
$$
	\bigl\{f^{-1}(O\setminus\{x\})\cup O: \, O \mbox{ is an open neighborhood of }x \mbox{ in }2^\omega \Bigr\}
$$
is a basis of neighborhoods of~$x$ in~$K$.
\end{itemize}
This is just a variant of the Alexandroff duplicate of $2^\omega$ in which we are duplicating only rational numbers. In this way, the topology inherited by~$2^\omega$ as a subspace of~$K$ coincides
with its usual topology. Define $E_0:=\omega \cup (2^\omega \setminus F) \sub K$,
so that $E_0$ is coanalytic in~$K$ and $\omega$ is dense in~$E_0$. Let $g:\omega \to K$ be the identity mapping.

{\sc Claim.} {\em A set $a\sub \omega$ belongs to $\mathcal{I}^\perp$
if and only if $g(a)\in \cK_{E_0}(\omega)$.} Indeed, note first that
the condition $g(a)\in \cK_{E_0}(\omega)$ is equivalent to saying that $\overline{g(a)} \sub E_0$
(because $K$ is compact metrizable). Suppose that $a \in \mathcal{I}^\perp$.
Then ${\rm acc}(f(a)) \sub 2^\omega \setminus F \sub E_0$.
Take any $x\in \overline{g(a)}$. If $x\in \omega$, then $x\in E_0$. If $x\not\in \omega$, then
$x\in \overline{g(a)}\setminus g(a)\sub {\rm acc}(g(a))$ and so $x\in {\rm acc}(f(a)) \sub E_0$.
This proves that $\overline{g(a)} \sub E_0$ whenever $a\in \mathcal{I}^\perp$.
Conversely, let $a \sub \omega$ be a set satisfying~$\overline{g(a)} \sub E_0$ and take any $b\in \mathcal{I}$. Then
${\rm acc}(f(b)) \sub F$ and so
$$
	{\rm acc}(f(a\cap b)) \sub
	{\rm acc}(f(a)) \cap {\rm acc}(f(b)) \sub \overline{g(a)} \cap F \sub E_0 \cap F = \emptyset,
$$
hence $a\cap b$ is finite. This shows that $a\in \mathcal{I}^\perp$ and finishes the proof of the claim.

Now, the mapping $\mathcal{I}^\perp \to \cK_{E_0}(\omega)$ given by $a\mapsto g(a)$
is an isomorphism of partially ordered sets, hence
$\mathcal{I}^\perp \sim \cK_{E_0}(\omega)$. By Lemma~\ref{relativecompact}, we have
$\cK_{E_0}(\omega) \sim \cK(E_0)$ and so $\mathcal{I}^\perp \sim \cK(E_0)$. Finally,
an appeal to Fremlin's Theorem~\ref{Fr91classification}
allows us to conclude that $\mathcal{I}^\perp$ is Tukey equivalent to either $\{0\}$, $\omega$, $\omega^\omega$ or $\mathcal{K}(\mathbb{Q})$.

\emph{Case~2. There is an injective function $u:2^{<\omega}\to \omega$ such that
$u(S)\in \mathcal{I}$ (resp. $u(S)\in \mathcal{I}^\perp$)
whenever $S \sub 2^{<\omega}$ is a $0$-chain (resp. $1$-chain).}
In this case we have $\mathcal{I}^\perp\sim [\mathfrak{c}]^{<\omega}$. Indeed,
since $\mathcal{I}^\perp$
is upwards directed and has cardinality~$\leq \mathfrak{c}$, we have $\mathcal{I}^\perp \preceq [\mathfrak{c}]^{<\omega}$
(Remark~\ref{rem:CardinalTukey}). On the other hand, the Tukey
reduction $[\mathfrak{c}]^{<\omega} \preceq \mathcal{I}^\perp$ follows
from Lemma~\ref{lem:GeneralLemma} applied to the function $f:=u$ and the family $\mathcal{A}:=\mathcal{I}^\perp$
equipped with the inclusion relation. The proof of the theorem is finished.
\end{proof}

The following result is similar to~Lemma~\ref{relativecompact}, now dealing
with the weak topology of the Banach space~$X$.

\begin{lem}\label{relativeBanach}
Suppose $X$ is separable. Let $D \sub B_X$ be a norm dense set and let $\mathcal{RK}(D)$ denote the family of all subsets
of~$D$ which are relatively weakly compact, ordered by inclusion. Then $\mathcal{K}(B_X)
\sim \mathcal{RK}(D)$.
\end{lem}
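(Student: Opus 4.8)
The plan is to mimic the proof of Lemma~\ref{relativecompact}, replacing the compact-metric setting by the weak topology of~$B_X$. Since $X$ is separable, $(B_X,w)$ is metrizable by some compatible metric~$d$; this is the essential tool that lets the entire argument of Lemma~\ref{relativecompact} go through. First I would exhibit the easy reduction $\mathcal{RK}(D) \preceq \mathcal{K}(B_X)$. The map $G:\mathcal{RK}(D)\to\mathcal{K}(B_X)$ given by $G(A):=\overline{A}^{\,w}$ (the weak closure) is well-defined because a relatively weakly compact subset of~$B_X$ has weakly compact closure contained in~$B_X$. To see $G$ is Tukey, fix $K\in\mathcal{K}(B_X)$ and note that $K\cap D\in\mathcal{RK}(D)$ bounds every $A\in\mathcal{RK}(D)$ with $\overline{A}^{\,w}\subset K$, since $A\subset\overline{A}^{\,w}\cap D\subset K\cap D$.

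The harder direction is $\mathcal{K}(B_X)\preceq\mathcal{RK}(D)$, where I would construct a Tukey map $F:\mathcal{K}(B_X)\to\mathcal{RK}(D)$ by a density-and-net argument exactly parallel to the metric case. Given $L\in\mathcal{K}(B_X)$, for each $n<\omega$ I would take a finite cover $\mathcal{U}_n^L$ of~$L$ by $d$-balls of radius $\frac{1}{n+1}$ meeting~$L$, and use norm density of~$D$ in~$B_X$ to pick one point of~$D$ from each such ball, forming a finite set $F_n(L)\subset D$; then set $F(L):=\bigcup_{n<\omega}F_n(L)$. One checks $L\subset\overline{F(L)}^{\,w}$, and that $F(L)$ is relatively weakly compact by the same subsequence-extraction argument: any sequence in~$F(L)$ with infinitely many distinct terms lies within $d$-distance $\frac{2}{n_k+1}\to 0$ of a sequence in the weakly compact set~$L$, which admits a weakly convergent subsequence, forcing a weakly convergent subsequence of the original. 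Finally $F$ is Tukey because, for $R\in\mathcal{RK}(D)$, the set $\overline{R}^{\,w}\in\mathcal{K}(B_X)$ bounds every $L$ with $F(L)\subset R$, since $L\subset\overline{F(L)}^{\,w}\subset\overline{R}^{\,w}$.

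The one point requiring genuine care, rather than verbatim transcription, is that the metric~$d$ metrizing $(B_X,w)$ need not be a norm metric, so the phrase ``$d$-ball of radius $\frac{1}{n+1}$'' and the estimate ``within $d$-distance $\frac{2}{n_k+1}$'' must be read with respect to~$d$ throughout, and the covers $\mathcal{U}_n^L$ must be finite, which holds because each $L$ is $d$-compact. I expect this bookkeeping to be the main obstacle: one must be consistent that all compactness, covering, and convergence statements are taken in the weak topology via~$d$, while the density of~$D$ is a \emph{norm} density statement; these interact only through the fact that a finite cover by $d$-balls still allows selecting a $D$-point in each $d$-ball (norm density implies $d$-density since the norm topology refines the weak topology on~$B_X$). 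Once this translation is fixed, the proof is a direct adaptation of Lemma~\ref{relativecompact}, and combining both reductions gives $\mathcal{K}(B_X)\sim\mathcal{RK}(D)$.
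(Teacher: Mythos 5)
Your argument breaks down at its very first step: the claim that separability of~$X$ makes $(B_X,w)$ metrizable is false. The weak topology on~$B_X$ is metrizable if and only if $X^*$ is norm separable; for instance $X=\ell^1$ and $X=C[0,1]$ are separable while $(B_X,w)$ is not metrizable (for $\ell^1$, metrizability combined with the Schur property would force the weak and norm topologies to coincide on~$B_{\ell^1}$, which they do not, since the unit sphere is norm closed but weakly dense in the ball). Every ingredient of your harder reduction $\mathcal{K}(B_X)\preceq\mathcal{RK}(D)$ --- the finite covers $\mathcal{U}_n^L$ by $d$-balls, the selection of points of~$D$ inside $d$-balls, the estimate that terms of the sequence are within $d$-distance $\frac{2}{n_k+1}\to 0$ of points of~$L$ --- presupposes a metric~$d$ inducing the weak topology on all of~$B_X$, so nothing survives precisely in the cases the lemma is needed for. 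Indeed, when $X^*$ \emph{is} separable the statement is an immediate special case of Lemma~\ref{relativecompact} applied to $E=(B_X,w)$ (norm density implies density in~$E$, and relative weak compactness of a subset of~$B_X$ is the same as relative compactness in~$E$), and that case is anyway settled by Proposition~\ref{posets:3}; the substance of the lemma lies in separable spaces with non-separable dual, e.g.\ those containing~$\ell^1$.

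The repair is to give up both the metric and the finite covers (a weakly compact~$L$ admits no finite cover by small \emph{norm} balls) and to argue as the paper does: for each $L\in\mathcal{K}(B_X)$ fix a \emph{countable} norm-dense sequence $(x^L_n)$ in~$L$ in which every element is repeated infinitely many times, choose $y^L_n\in D$ with $\|x^L_n-y^L_n\|\leq\frac{1}{n+1}$, and set $G(L):=\{y^L_n:n<\omega\}$. Relative weak compactness of~$G(L)$ is not obtained by subsequence extraction in a metric but from Grothendieck's test (Lemma~\ref{lem:Grothendieck}), since $y^L_n\in L+\frac{1}{n+1}B_X$ for every~$n$, so for each $\varepsilon>0$ the set $G(L)$ lies in $K+\varepsilon B_X$ for the weakly compact set $K=L\cup\{y^L_0,\dots,y^L_m\}$ with $\frac{1}{m+1}\leq\varepsilon$. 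Then $G$ is Tukey: if $G(L)\subseteq C_0\in\mathcal{RK}(D)$, the repetition of terms gives, for each $x\in L$ and $\varepsilon>0$, an index~$m$ with $\frac{1}{m+1}\leq\varepsilon$ and $\|x^L_m-x\|\leq\varepsilon$, hence $\|y^L_m-x\|\leq 2\varepsilon$, so $L$ is contained in the norm closure of~$G(L)$ and therefore in $\overline{C_0}^{w}\in\mathcal{K}(B_X)$. Your easy direction $\mathcal{RK}(D)\preceq\mathcal{K}(B_X)$ via $A\mapsto\overline{A}^{w}$ is correct and agrees with the paper.
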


\begin{proof}
It is clear that $F:\mathcal{RK}(D) \to \mathcal{K}(B_X)$, $F(A) := \overline{A}^w$, is a Tukey function.
Let us check that $\mathcal{K}(B_X)\preceq \mathcal{RK}(D)$. Fix $L\in \mathcal{K}(B_X)$. Let
$(x^L_n)$ be a norm dense sequence in~$L$ such that every element is repeated infinitely many times.
For each $n<\omega$, choose $y^L_n \in D$ such that $\|x^L_n - y^L_n\|\leq \frac{1}{n+1}$.
Since $L$ is weakly compact and $y^L_n \in L+\frac{1}{n+1}B_X$ for every $n<\omega$,
an appeal to Lemma~\ref{lem:Grothendieck} ensures that
the set $\{y^L_n: \, n<\omega\}$ is relatively weakly compact. Define
$$
	G:\mathcal{K}(B_X)\to \mathcal{RK}(D), \quad
	G(L) := \{y^L_n: \, n<\omega\}.
$$
To prove that $G$ is a Tukey function, fix $C_0 \in \mathcal{RK}(D)$ and define $K_0:=\overline{C_0}^w \in \cK(B_X)$. Take any
$L\in \mathcal{K}(B_X)$ satisfying $G(L) \sub C_0$. Given $x\in L$ and $\eps>0$, we can choose
$m<\omega$ large enough such that $\frac{1}{m+1}\leq \epsilon$ and
$\|x^L_{m}-x\|\leq \epsilon$, hence $\|y^L_{m}-x\|\leq 2\eps$. Therefore,
$L \sub \overline{G(L)} \sub \overline{C_0} \sub K_0$. This shows that $G$ is Tukey.
\end{proof}

The proof of the next elementary result is left to the reader.

\begin{lem}\label{lem:Description}
Let $(x_n)$ be a sequence in~$B_X$. Let $\Omega(x_n) \sub 2^\omega \times (B_{X^*})^\omega$ be the
set of all pairs $(a,(y_m^*))$ for which
the iterated limits
$$
		\lim_{n\in a}\lim_{m\to \infty} y_m^*(x_n) \quad
		\mbox{and} \quad
		\lim_{m\to \infty}\lim_{n\in a} y_m^*(x_n)
$$
exist and are distinct. Then $\Omega(x_n)$ is Borel when $2^\omega \times (B_{X^*})^\omega$
is equipped with the product topology induced by the usual topology of~$2^\omega$ and the $w^*$-topology of~$X^*$.
\end{lem}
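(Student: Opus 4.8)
The plan is to realise $\Omega(x_n)$ as a set built from continuous functions by countably many limit operations, each of which preserves Borel measurability. First I would record the ambient setup: since $X$ is separable, $(B_{X^*},w^*)$ is compact metrizable, so $Z:=2^\omega\times (B_{X^*})^\omega$ is a Polish space. For fixed $m,n$ the evaluation $\phi_{m,n}(a,(y^*_k)):=y^*_m(x_n)$ is continuous on $Z$ (it is $w^*$-continuous in the $m$-th coordinate and does not depend on $a$), and $|\phi_{m,n}|\le 1$ everywhere because $x_n\in B_X$ and $y^*_m\in B_{X^*}$. Thus all the quantities occurring in the iterated limits are uniformly bounded Borel functions of the pair $(a,(y^*_m))$, and the whole difficulty is to check that the limit operations keep us inside the Borel $\sigma$-algebra.

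Next I would isolate the one nonstandard ingredient, a \emph{limit-along-$a$} lemma: if $(c_n)_{n<\omega}$ is a uniformly bounded sequence of Borel functions on $Z$, then $z\mapsto \limsup_{n\in a}c_n(z)$ and $z\mapsto \liminf_{n\in a}c_n(z)$ are Borel on the ($G_\delta$) set $\{a \text{ infinite}\}$. The key point is that for fixed $N$ and rational $t$ the set $\{z:\sup\{c_n(z):n\ge N,\ a(n)=1\}>t\}=\bigcup_{n\ge N}\big(\{a(n)=1\}\cap\{c_n>t\}\big)$ is Borel, since $\{a(n)=1\}$ is clopen in $2^\omega$ and $\{c_n>t\}$ is Borel; taking $\inf_N$ shows $\limsup_{n\in a}c_n$ is Borel, and $\liminf_{n\in a}c_n$ similarly. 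On the Borel set $\{a \text{ infinite}\}\cap\{\limsup_{n\in a}c_n=\liminf_{n\in a}c_n\}$ the limit $\lim_{n\in a}c_n$ exists and is Borel. I would combine this with the standard fact that for a sequence $(f_m)$ of Borel functions the domain $\{z:\lim_m f_m(z)\text{ exists in }\reals\}$ is Borel and $\lim_m f_m$ is Borel on it.

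With these tools the two iterated limits assemble mechanically. For the first, set $g_n:=\limsup_m \phi_{m,n}$, Borel on $Z$; on the Borel set $D:=\bigcap_n(\{a(n)=0\}\cup\{z:\lim_m\phi_{m,n}(z)\text{ exists}\})$ the value $g_n(z)$ coincides with $\lim_m y^*_m(x_n)$ for every $n\in a$, so the limit-along-$a$ lemma applied to $(g_n)$ produces a Borel set $A_1\sub\{a\text{ infinite}\}\cap D$ on which the first iterated limit exists, together with its Borel value $G$. For the second, fix $m$ and use the limit-along-$a$ lemma to obtain $h_m:=\lim_{n\in a}\phi_{m,n}$, Borel on the Borel set where it exists; applying the standard limit-over-$m$ fact to $(h_m)$ yields a Borel set $A_2$ with Borel value $H$. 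Finally $\Omega(x_n)=A_1\cap A_2\cap\{G\neq H\}$ is Borel; the pairs with $a$ finite form a countable (hence Borel) subset of $Z$ and are dealt with separately according to the convention adopted for $\lim_{n\in a}$.

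The only real obstacle is precisely the limit over $n\in a$, where the index set of the limit is itself the variable $a$ ranging over $2^\omega$. The ordinary descriptive-set-theoretic stability of limits under Borel operations handles the limits over $m$ without comment, but the $a$-dependence is exactly what the decoupling $\bigcup_{n\ge N}(\{a(n)=1\}\cap\{c_n>t\})$ resolves, by separating the clopen membership condition $a(n)=1$ from the value $c_n(z)$. Once this is in hand, everything else is routine bookkeeping with countable unions and intersections.
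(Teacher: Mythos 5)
The paper leaves this lemma's proof to the reader, so there is no argument of the authors to compare with; your proof is correct, and its one genuinely nonroutine point --- the identity $\{z:\sup\{c_n(z):n\ge N,\ n\in a\}>t\}=\bigcup_{n\ge N}\bigl(\{a(n)=1\}\cap\{c_n>t\}\bigr)$, which turns the limit along the \emph{variable} set $a$ into a Borel operation --- is exactly the right one. Two cosmetic remarks: the separability of $X$ (not assumed in the statement) is never actually used, since your argument consists solely of countable Boolean operations on Borel sets and functions and needs no Polishness; and the pairs $(a,(y_m^*))$ with $a$ finite do not form a countable set, but rather a countable union of closed slices $\{a\}\times(B_{X^*})^\omega$, which is Borel --- the only property you actually need.
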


\begin{pro}\label{pro:usoDobleLimite}
If $D \sub B_X$ is a countable set, then $\mathcal{RK}(D) \sim \mathcal{I}^\perp$ for some
analytic family~$\mathcal{I}$ of subsets of $\omega$.
\end{pro}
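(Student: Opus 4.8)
The plan is to reduce to the separable case and to realize $\mathcal{RK}(D)$ \emph{literally} as an orthogonal. Replacing $X$ by the closed linear span of~$D$ (which changes neither the weak topology on subsets of~$D$ nor their relative weak compactness) we may assume $X$ separable, so that $(B_{X^*},w^*)$ is a compact metric space. Fix an injective enumeration $D=\{x_n:n<\omega\}$ and identify each subset of~$D$ with the corresponding subset of~$\omega$; then
\[
\mathcal{RK}(D)=\{a\sub\omega:\ \{x_n:n\in a\}\ \text{is relatively weakly compact}\},
\]
ordered by inclusion, and it suffices to produce an analytic $\mathcal{I}\sub\mathcal{P}(\omega)$ with $\mathcal{I}^\perp=\mathcal{RK}(D)$, since an equality of posets is in particular a Tukey equivalence. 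All the sets involved are bounded, so Grothendieck's classical double-limit criterion for relative weak compactness applies, and the Borel set $\Omega(x_n)$ of Lemma~\ref{lem:Description} will supply the analyticity.

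First I would define $\mathcal{I}$ through a \emph{uniform} double-limit witness: declare $b\in\mathcal{I}$ if there exist $(y^*_m)\in(B_{X^*})^\omega$ and $\delta\in\qu^+$ such that (A) $\lim_{n\in b}y^*_m(x_n)=0$ for every $m<\omega$, and (B) $\lim_m y^*_m(x_n)$ exists for every $n\in b$ and $\bigl|\lim_{n\in b}\lim_m y^*_m(x_n)\bigr|\ge\delta$. The role of~(A) and~(B) is robustness under passing to subsets: if $b\in\mathcal{I}$ is witnessed by $(y^*_m)$, then for \emph{every} infinite $b'\sub b$ one has $(b',(y^*_m))\in\Omega(x_n)$, because a subsequence of a convergent sequence retains its limit, so along $b'$ the outer-then-inner iterated limit is still $0$ while the inner-then-outer one still has modulus $\ge\delta$. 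By the double-limit criterion this forces $\{x_n:n\in b'\}$ to be non-relatively-weakly-compact for every infinite $b'\sub b$. The inclusion $\mathcal{RK}(D)\sub\mathcal{I}^\perp$ is then immediate: if $a\in\mathcal{RK}(D)$ and some $b\in\mathcal{I}$ had $a\cap b$ infinite, then $a\cap b$ would be an infinite subset of~$b$ indexing a set which is both non-relatively-weakly-compact and contained in the relatively weakly compact $\{x_n:n\in a\}$, a contradiction.

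For the analyticity of~$\mathcal{I}$ I would check that the set $\Theta$ of triples $(b,(y^*_m),\delta)\in 2^\omega\times(B_{X^*})^\omega\times\qu^+$ satisfying (A) and (B) is Borel. Each evaluation $(y^*_m)\mapsto y^*_m(x_n)$ is $w^*$-continuous, and conditions (A) and~(B) are expressed purely through the existence and prescribed values (or moduli) of limits of such sequences; these are Borel by exactly the bookkeeping used in the proof of Lemma~\ref{lem:Description}. Since $(B_{X^*})^\omega\times\qu^+$ is Polish, the projection $\mathcal{I}=\mathrm{proj}_{2^\omega}(\Theta)$ is analytic, as required.

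It remains to prove $\mathcal{I}^\perp\sub\mathcal{RK}(D)$, which I regard as the main obstacle and which I would isolate as the following claim: \emph{if $\{x_n:n\in a\}$ is not relatively weakly compact, then some infinite $b\sub a$ lies in~$\mathcal{I}$.} Granting this, any $a\notin\mathcal{RK}(D)$ yields $b\sub a$ with $b\in\mathcal{I}$ and $a\cap b=b$ infinite, so $a\notin\mathcal{I}^\perp$, giving the desired inclusion and hence $\mathcal{I}^\perp=\mathcal{RK}(D)$. To establish the claim I would use that a bounded set fails to be relatively weakly compact exactly when its $w^*$-closure in~$X^{**}$ escapes~$X$, producing $T\in \overline{\{x_n:n\in a\}}^{\,w^*}\sm X$. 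Since $T\notin X$ its kernel is $w^*$-dense in~$X^*$, so, using the $w^*$-metrizability of~$B_{X^*}$, I can select $u^*\in B_{X^*}$ with $T(u^*)\neq0$ together with a sequence $(y^*_m)$ in $\ker T\cap B_{X^*}$ that is $w^*$-convergent to~$u^*$. Evaluating at the countably many functionals $u^*,y^*_0,y^*_1,\dots$ lands the bounded sequence $\bigl(u^*(x_n),(y^*_m(x_n))_m\bigr)_{n\in a}$ in the compact metric cube $[-1,1]^{1+\omega}$, whose cluster value determined by~$T$ is realized along some infinite $b\sub a$; along this $b$ one gets $\lim_{n\in b}y^*_m(x_n)=T(y^*_m)=0$ for all~$m$, while $\lim_m y^*_m(x_n)=u^*(x_n)$ and $\lim_{n\in b}u^*(x_n)=T(u^*)\neq0$. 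Thus $(y^*_m)$ and any rational $0<\delta\le|T(u^*)|$ witness $b\in\mathcal{I}$. The delicate points — guaranteeing that the approximating sequence $(y^*_m)$ can be taken inside $\ker T\cap B_{X^*}$, and that the prescribed cluster value is attained along an \emph{infinite} $b$ simultaneously in all these countably many coordinates — are where separability (hence metrizability of $(B_{X^*},w^*)$) is indispensable.
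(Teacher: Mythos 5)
Your plan has the same architecture as the paper's proof: replace $X$ by $\overline{\rm span}(D)$, identify subsets of~$D$ with subsets of~$\omega$, let analyticity come from projecting a Borel set of double-limit configurations (the bookkeeping of Lemma~\ref{lem:Description}), and prove the literal identity $\mathcal{I}^\perp=\mathcal{RK}(D)$, which is stronger than a Tukey equivalence. Where you genuinely diverge is in the hard inclusion $\mathcal{I}^\perp\sub\mathcal{RK}(D)$: the paper obtains both inclusions at once by quoting Grothendieck's double-limit criterion \cite[1.6]{flo} as an equivalence (its ``if'' direction hands you, inside any $a\notin\mathcal{RK}(D)$, a subsequence and functionals witnessing membership in~$\mathcal{I}$), whereas you use a normalized family (inner limits~$0$, outer limit of modulus $\geq\delta$) and re-prove that direction from scratch via a $w^*$-cluster point $T\in X^{**}\sm X$. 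The normalization does make heredity of witnesses under infinite subsets transparent, and your extraction of an \emph{infinite} $b$ is sound: since $T\notin X$, $T$ remains in the $w^*$-closure of $\{x_n: n\in a,\ n>N\}$ for every~$N$, and metrizability of the cube then yields strictly increasing indices. (Minor point: your injective enumeration presumes $D$ infinite; the finite case is trivial, but the paper dispatches it separately and so should you.)

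The one step that does not stand as justified is the selection of $u^*$ and $(y_m^*)$. From ``$\ker T$ is $w^*$-dense in~$X^*$'' plus metrizability of $(B_{X^*},w^*)$ you cannot conclude that some $u^*\in B_{X^*}$ with $T(u^*)\neq 0$ is a $w^*$-limit of a sequence from $\ker T\cap B_{X^*}$: density of the unbounded subspace $\ker T$ is witnessed by unbounded nets and says nothing about the trace of $\ker T$ on the ball, and metrizability only helps \emph{after} you know such a $u^*$ exists in the closure of $\ker T\cap B_{X^*}$. What you actually need is that $\ker T\cap B_{X^*}$ is not $w^*$-closed, and this is exactly the Krein--\v{S}mulian theorem: if $\ker T\cap B_{X^*}$ were $w^*$-closed, then so would be $\ker T\cap rB_{X^*}=r\,(\ker T\cap B_{X^*})$ for every $r>0$, hence the convex set $\ker T$ would be $w^*$-closed, making $T$ a $w^*$-continuous functional, i.e.\ $T\in X$, a contradiction. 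Granting this, any $u^*\in \overline{\ker T\cap B_{X^*}}^{w^*}\sm \ker T$ lies in $B_{X^*}$, satisfies $T(u^*)\neq 0$, and the rest of your argument goes through verbatim. Alternatively you could drop the kernel device: the standard interleaving proof of the criterion (Hahn-Banach in $X^{***}$ plus Goldstine gives, for every finite $F\sub X$ and $\eps>0$, some $y^*\in B_{X^*}$ with $|y^*(z)|<\eps$ on~$F$ and $T(y^*)>d(T,X)-\eps$) produces witnesses directly -- or simply cite the ``if'' direction of the double-limit criterion, as the paper does.
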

\begin{proof}
If $D$ is finite, then $\mathcal{RK}(D)\sim \{0\} \sim \mathcal{P}(\omega) =\mathcal{I}^\perp$ by taking $\mathcal{I}=\{\{\emptyset\}\}$.
Suppose now that $D$ is infinite and enumerate $D=\{x_n:n<\omega\}$.
Since $\overline{{\rm span}}(D)$ is separable, we can assume without loss of generality that $X$ is separable,
so that $(B_{X^*})^\omega$ is Polish when
equipped with the product topology induced by the $w^*$-topology of~$X^*$.
By Lemma~\ref{lem:Description}, the set
$$
	\mathcal{I}:=\bigl\{a\in 2^\omega: \, (a,(y_m^*))\in \Omega(x_n) \mbox{ for some }(y_m^*)\in (B_X^*)^\omega\bigr\}
$$
is analytic. We shall prove that $\mathcal{I}^\perp \sim \mathcal{RK}(D)$. To this end, note that
Grothendieck's double limit criterion (see e.g.~\cite[1.6]{flo})
says that a set $C \sub D$ is relatively weakly compact if and only if
for every subsequence $(x_{n_k})$ contained in~$C$ and every sequence
$(y_m^*)$ in~$B_X^*$, the iterated limits
$$
		\lim_{k\to \infty}\lim_{m\to \infty} y_m^*(x_{n_k})\quad\mbox{and}\quad \lim_{m\to \infty}\lim_{k\to \infty} y_m^*(x_{n_k})
$$
coincide whenever they exist. It is now easy to check the following statements:
\begin{itemize}
\item If $C \in \mathcal{RK}(D)$, then $F(C):=\{n\in \omega: x_n \in C\} \in \mathcal{I}^\perp$.
\item If $a\in \mathcal{I}^\perp$, then $G(a):=\{x_n:n\in a\} \in \mathcal{RK}(D)$.
\end{itemize}
Clearly, the functions $F$ and $G$
are mutually inverse and give an isomorphism between the partially ordered sets
$\mathcal{RK}(D)$ and~$\mathcal{I}^\perp$. In particular, $\mathcal{RK}(D) \sim \mathcal{I}^\perp$.
\end{proof}

By combining Theorem~\ref{projectiveideal} and Proposition~\ref{pro:usoDobleLimite} we get

\begin{cor}[\axiom]\label{cor:interrogacion}
If $D \sub B_X$ is a countable set, then $\mathcal{RK}(D)$ is Tukey equivalent to either $\{0\}$, $\omega$, $\omega^\omega$,
$\mathcal{K}(\mathbb{Q})$ or $[\mathfrak{c}]^{<\omega}$.
\end{cor}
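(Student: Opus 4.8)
The plan is to obtain the corollary by chaining the two results that immediately precede it, so that essentially no new argument is required. First I would invoke Proposition~\ref{pro:usoDobleLimite}: since $D \sub B_X$ is countable, there exists an analytic family $\mathcal{I}$ of subsets of~$\omega$ with $\mathcal{RK}(D) \sim \mathcal{I}^\perp$. The content being used here is that the relative weak compactness of subsets of~$D$ is governed, via Grothendieck's double limit criterion, by an orthogonality condition against an analytic family of subsets of~$\omega$; Proposition~\ref{pro:usoDobleLimite} packages exactly this equivalence, and it holds in ZFC.

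Next I would apply Theorem~\ref{projectiveideal}, which is the point where the axiom of analytic determinacy enters: for any analytic family $\mathcal{I}$, the orthogonal $\mathcal{I}^\perp$ (ordered by inclusion) is Tukey equivalent to one of $\{0\}$, $\omega$, $\omega^\omega$, $\mathcal{K}(\mathbb{Q})$ or $[\mathfrak{c}]^{<\omega}$. Combining this with the previous step and using that Tukey equivalence is transitive (indeed it is an equivalence relation, being symmetric and transitive by Definition~\ref{posets:1}), I conclude that $\mathcal{RK}(D)$ itself is Tukey equivalent to one of these five canonical posets.

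The main conceptual work is therefore already discharged inside Proposition~\ref{pro:usoDobleLimite} and Theorem~\ref{projectiveideal}, so the only thing to verify in the corollary is that the two equivalences compose. The sole point requiring care is the bookkeeping of hypotheses: Proposition~\ref{pro:usoDobleLimite} is a ZFC statement, whereas Theorem~\ref{projectiveideal} requires \axiom, and hence the determinacy hypothesis is inherited by the corollary precisely as stated. No obstacle beyond this routine transitive composition is expected.
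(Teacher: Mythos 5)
Your proof is correct and is exactly the paper's argument: the paper introduces this corollary with the single line ``By combining Theorem~\ref{projectiveideal} and Proposition~\ref{pro:usoDobleLimite} we get\dots'', i.e.\ the same composition of $\mathcal{RK}(D)\sim\mathcal{I}^\perp$ (ZFC) with the classification of $\mathcal{I}^\perp$ under analytic determinacy, via transitivity of Tukey equivalence. Your bookkeeping of where the determinacy hypothesis enters is also accurate.
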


We can now prove the main result of this section.

\begin{proof}[Proof of Theorem~\ref{PDKclassification}]
By Lemma~\ref{relativeBanach} and Corollary~\ref{cor:interrogacion} (applied to
a fixed countable norm dense set $D \sub B_X$), $\mathcal{K}(B_X)$ is Tukey equivalent to
either $\{0\}$, $\omega^\omega$, $\mathcal{K}(\mathbb{Q})$
or~$[\mathfrak{c}]^{<\omega}$ (remember that the case $\mathcal{K}(B_X)\sim \omega$ was excluded in
Proposition~\ref{KXKBX}).
\end{proof}

We finish this section by remarking that, even in the absence of analytic determinacy,
the arguments in this section can still provide some information in ZFC.

\begin{thm}\label{ZFCprojectiveideal}
Let $\mathcal{I}$ be an analytic family of subsets of~$\omega$. Then either $\mathcal{I}^\perp \sim [\mathfrak{c}]^{<\omega}$ or
$\mathcal{I}^\perp \preceq \mathcal{K}(\mathbb{Q}) \times [\omega_1]^{<\omega}$. Therefore, the same holds for
$\mathcal{K}(B_X)$ when $X$ is separable.
\end{thm}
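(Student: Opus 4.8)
The plan is to follow the two-case structure of the proof of Theorem~\ref{projectiveideal}, replacing the single use of the analytic-determinacy dichotomy (Theorem~\ref{projectivestronggap} applied to the pair $(\mathcal{I},\mathcal{I}^\perp)$, where $\mathcal{I}^\perp$ is only coanalytic) by applications of its ZFC form to pairs of \emph{analytic} families, as permitted by Remark~\ref{rem:AvilesTodorcevic}. As before I may assume $\mathcal{I}$ is hereditary and analytic; then $\mathcal{I}^\perp$ is coanalytic, hereditary, and closed under finite unions, hence directed by inclusion. The dichotomy I would invoke is: either some hereditary analytic $\mathcal{J}\subseteq\mathcal{I}^\perp$ fails to be countably separated from~$\mathcal{I}$, or every such $\mathcal{J}$ is countably separated from~$\mathcal{I}$. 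In the first case I apply the ZFC form of Theorem~\ref{projectivestronggap} to the two analytic hereditary families $\mathcal{I}$ and~$\mathcal{J}$; failure of countable separation produces an injection $u:2^{<\omega}\to\omega$ with $u(S)\in\mathcal{I}$ for every $0$-chain~$S$ and $u(S)\in\mathcal{J}\subseteq\mathcal{I}^\perp$ for every $1$-chain~$S$. Exactly as in Case~2 of the proof of Theorem~\ref{projectiveideal}, Lemma~\ref{lem:GeneralLemma} applied to $f:=u$ and $\mathcal{A}:=\mathcal{I}^\perp$ (with ``$\leq$'' the inclusion) gives $[\con]^{<\omega}\preceq\mathcal{I}^\perp$, and since $\mathcal{I}^\perp$ is directed of cardinality $\leq\con$, Remark~\ref{rem:CardinalTukey} supplies the reverse reduction, whence $\mathcal{I}^\perp\sim[\con]^{<\omega}$.

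The substantial case is the second, in which I must produce the bound $\mathcal{I}^\perp\preceq\cK(\qu)\times[\omega_1]^{<\omega}$. Here I would use the classical ZFC decomposition of the coanalytic set $\mathcal{I}^\perp$ as an increasing union $\mathcal{I}^\perp=\bigcup_{\alpha<\omega_1}\mathcal{B}_\alpha$ of Borel sets, replacing each $\mathcal{B}_\alpha$ by its hereditary closure $\mathcal{B}_\alpha'$, which is analytic, hereditary, increasing in~$\alpha$, contained in~$\mathcal{I}^\perp$, and still has union~$\mathcal{I}^\perp$. By the hypothesis of this case each $\mathcal{B}_\alpha'$ is countably separated from~$\mathcal{I}$, so the construction of Case~1 of the proof of Theorem~\ref{projectiveideal} applies to the pair $(\mathcal{I},\mathcal{B}_\alpha')$: via \cite{avi-tod} it yields a bijection $f_\alpha:\omega\to\qu$ and an analytic set $F_\alpha\subseteq2^\omega$ with ${\rm acc}(f_\alpha(a))\subseteq F_\alpha$ for $a\in\mathcal{I}$ and ${\rm acc}(f_\alpha(a))\subseteq2^\omega\setminus F_\alpha$ for $a\in\mathcal{B}_\alpha'$. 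Setting $E_\alpha:=\omega\cup(2^\omega\setminus F_\alpha)$ (coanalytic) and $\mathcal{G}_\alpha:=\cK_{E_\alpha}(\omega)$, the two directions of the Claim in that Case~1 give the sandwich $\mathcal{B}_\alpha'\subseteq\mathcal{G}_\alpha\subseteq\mathcal{I}^\perp$, while Lemma~\ref{relativecompact} together with Fremlin's Theorem~\ref{Fr91classification} provides a genuine Tukey reduction $g_\alpha:\mathcal{G}_\alpha\to\cK(\qu)$.

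It then remains to assemble these pieces. Let $\rho(a)$ be the least $\alpha$ with $a\in\mathcal{B}_\alpha'$ (well defined since the $\mathcal{B}_\alpha'$ increase and cover $\mathcal{I}^\perp$), so that $a\in\mathcal{G}_{\rho(a)}$, and define
\[
	\Phi:\mathcal{I}^\perp\to\cK(\qu)\times[\omega_1]^{<\omega},\qquad
	\Phi(a):=\bigl(g_{\rho(a)}(a),\{\rho(a)\}\bigr).
\]
To check that $\Phi$ is Tukey, observe that a bounded subset of the target lies in $\{K:K\subseteq K_0\}\times\{S:S\subseteq S_0\}$ for some $K_0\in\cK(\qu)$ and finite $S_0\subseteq\omega_1$; its $\Phi$-preimage is the union over the finitely many $\beta\in S_0$ of the sets $\{a\in\mathcal{G}_\beta:g_\beta(a)\subseteq K_0\}$, each bounded in $\mathcal{G}_\beta\subseteq\mathcal{I}^\perp$ because $g_\beta$ is Tukey, and a finite union of bounded sets is bounded in the directed poset~$\mathcal{I}^\perp$. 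This yields $\mathcal{I}^\perp\preceq\cK(\qu)\times[\omega_1]^{<\omega}$ and completes the dichotomy. The assertion for $\cK(B_X)$ with $X$ separable then follows from Lemma~\ref{relativeBanach} and Proposition~\ref{pro:usoDobleLimite}, which identify $\cK(B_X)\sim\mathcal{RK}(D)\sim\mathcal{I}^\perp$ for a suitable analytic family~$\mathcal{I}$ (taking $D$ a countable norm-dense subset of~$B_X$).

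The main obstacle I anticipate is precisely the assembly in the second case. A naive attempt to reduce each constituent $\mathcal{B}_\alpha'$ directly to $\cK(\qu)$ breaks down, because the witnessing upper bounds produced by a Tukey reduction need not lie inside $\mathcal{B}_\alpha'$; the resolution is to pass to the intermediate families $\mathcal{G}_\alpha=\cK_{E_\alpha}(\omega)$, which genuinely sit inside $\mathcal{I}^\perp$ and carry honest Tukey reductions to~$\cK(\qu)$, and to use the constituent decomposition only to furnish the $[\omega_1]^{<\omega}$ coordinate. Verifying that Case~1 of Theorem~\ref{projectiveideal} really applies verbatim to each analytic pair $(\mathcal{I},\mathcal{B}_\alpha')$ (in particular that \cite{avi-tod} gives the inclusion $\mathcal{G}_\alpha\subseteq\mathcal{I}^\perp$ from the $\mathcal{I}$-side estimate on ${\rm acc}$) is the one point requiring care.
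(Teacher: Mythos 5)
Your proposal is correct and takes essentially the same route as the paper's own proof: the same dichotomy on countable separation of $\mathcal{I}$ from analytic subfamilies of $\mathcal{I}^\perp$, with the ZFC form of Theorem~\ref{projectivestronggap} plus Lemma~\ref{lem:GeneralLemma} giving $\mathcal{I}^\perp\sim[\mathfrak{c}]^{<\omega}$ in the non-separated case, and in the separated case the decomposition $\mathcal{I}^\perp=\bigcup_{\alpha<\omega_1}\mathcal{J}_\alpha$ into analytic pieces, the sandwich $\mathcal{J}_\alpha\subseteq\mathcal{K}_{E_\alpha}(\omega)\subseteq\mathcal{I}^\perp$ combined with Lemma~\ref{relativecompact} and Fremlin's Theorem~\ref{Fr91classification}, and the assembled map $a\mapsto(\psi_{\alpha(a)}(a),\{\alpha(a)\})$ into $\mathcal{K}(\mathbb{Q})\times[\omega_1]^{<\omega}$. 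Your extra care (taking hereditary closures of the Borel constituents and spelling out why the assembled map is Tukey, using directedness of $\mathcal{I}^\perp$) merely makes explicit what the paper leaves implicit.
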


\begin{proof}
We can distinguish two cases. In the first case, if $\mathcal{I}$ is not countably separated from some analytic subfamily
of~$\mathcal{I}^\perp$, then the argument of Case~2 of the proof of Theorem~\ref{projectiveideal} can be applied
(because Theorem~\ref{projectivestronggap} holds in ZFC when the families are both analytic, see Remark~\ref{rem:AvilesTodorcevic})
and we obtain $\mathcal{I}^\perp \sim [\mathfrak{c}]^{<\omega}$.

The second case is that $\mathcal{I}$ is countably separated from every analytic subset of $\mathcal{I}^\perp$.
Since $\mathcal{I}^\perp$ is coanalytic, we can write $\mathcal{I}^\perp = \bigcup_{\alpha<\omega_1} \mathcal{J}_\alpha$ where
each $\mathcal{J}_\alpha$ is analytic. The arguments of Case~1 of the proof of
Theorem~\ref{projectiveideal} can be applied to $\mathcal{I}^\perp$ and $\mathcal{J}_\alpha$ for each $\alpha<\omega_1$,
obtaining that $\mathcal{J}_\alpha \subset \mathcal{J}'_\alpha \subset \mathcal{I}^\perp$ where
$\mathcal{J}'_\alpha \sim \mathcal{K}_{E^\alpha_0}(\omega)$ for some coanalytic metric space $E^\alpha_0$ in which $\omega$ is a dense subset.
By Fremlin's Theorem~\ref{Fr91classification}, for every $\alpha < \omega_1$ there is a Tukey reduction
$\psi_\alpha:\mathcal{J}'_\alpha \to \mathcal{K}(\mathbb{Q})$. For every $a\in \mathcal{I}^\perp$
we choose $\alpha(a)<\omega_1$ such that $a\in\mathcal{J}_{\alpha(a)}$.
Then the function
$$
	\psi:\mathcal{I}^\perp\to \mathcal{K}(\mathbb{Q}) \times [\omega_1]^{<\omega},
	\quad
	\psi(a) := (\psi_{\alpha(a)}(a),\{\alpha(a)\}),
$$
is Tukey and the proof is over.
\end{proof}

\begin{cor}\label{cor:cfZFC}
Let $\mathcal{I}$ be an analytic family of subsets of~$\omega$. Then either $cf(\mathcal{I}^\perp) = \mathfrak{c}$ or
$cf(\mathcal{I}^\perp)\leq \mathfrak{d}$. Therefore, the same holds for $\mathcal{K}(B_X)$ when $X$ is separable.
\end{cor}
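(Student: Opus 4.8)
The plan is to read off the cofinality dichotomy directly from the Tukey dichotomy already established in Theorem~\ref{ZFCprojectiveideal}, using only that Tukey reducibility does not increase cofinality (the fact recalled after Definition~\ref{posets:1}, namely $P\preceq Q \Rightarrow \cf(P)\le \cf(Q)$, and hence $\cf(P)=\cf(Q)$ when $P\sim Q$) together with the values of $\cf(\cdot)$ on the canonical posets listed in Section~\ref{section:KBX}.

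First I would split into the two cases furnished by Theorem~\ref{ZFCprojectiveideal}. In the first case $\mathcal{I}^\perp \sim [\con]^{<\omega}$, so since Tukey equivalent posets share the same cofinality and $\cf([\con]^{<\omega})=\con$, we obtain $\cf(\mathcal{I}^\perp)=\con$ immediately.

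The only computation lies in the second case, $\mathcal{I}^\perp \preceq \cK(\qu)\times[\omega_1]^{<\omega}$, where I must bound $\cf(\cK(\qu)\times[\omega_1]^{<\omega})$. Here I would use that, for (nonempty) directed posets, the cofinality of a product is the maximum of the cofinalities of the factors: the product of two cofinal sets is cofinal in the product, giving the upper bound, while each coordinate projection carries a cofinal set of the product onto a cofinal set of the factor, giving the lower bound. Since $\cf(\cK(\qu))=\mathfrak{d}$ and $\cf([\omega_1]^{<\omega})=\omega_1$ (a cofinal family of finite subsets of $\omega_1$ must have union equal to $\omega_1$, which forces at least $\omega_1$ members, while $[\omega_1]^{<\omega}$ itself has cardinality $\omega_1$), this yields $\cf(\cK(\qu)\times[\omega_1]^{<\omega})=\max\{\mathfrak{d},\omega_1\}$. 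Finally, because $\mathfrak{d}\ge\mathfrak{b}\ge\aleph_1=\omega_1$, the maximum collapses to $\mathfrak{d}$, and monotonicity of $\cf(\cdot)$ under $\preceq$ gives $\cf(\mathcal{I}^\perp)\le\mathfrak{d}$, as required.

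For the Banach-space assertion I would simply invoke the corresponding half of Theorem~\ref{ZFCprojectiveideal}, which provides exactly the same dichotomy for $\cK(B_X)$ when $X$ is separable, namely $\cK(B_X)\sim[\con]^{<\omega}$ or $\cK(B_X)\preceq\cK(\qu)\times[\omega_1]^{<\omega}$, and then repeat the two cofinality computations verbatim. There is no genuine obstacle in this corollary; the only points deserving care are the evaluation of $\cf(\cdot)$ for the product and for $[\omega_1]^{<\omega}$, and the standard inequality $\mathfrak{d}\ge\omega_1$ that makes the maximum equal to $\mathfrak{d}$.
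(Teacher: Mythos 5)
Your proposal is correct and is exactly the argument the paper intends: Corollary~\ref{cor:cfZFC} is stated without proof as an immediate consequence of Theorem~\ref{ZFCprojectiveideal}, read through the monotonicity of $\cf(\cdot)$ under Tukey reduction and the values $\cf([\con]^{<\omega})=\con$, $\cf(\cK(\qu))=\mathfrak{d}$. Your filled-in details (the product cofinality being the maximum of the factors' cofinalities, $\cf([\omega_1]^{<\omega})=\omega_1$, and $\omega_1\le\mathfrak{d}$) are all standard and correct.
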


\section{Classification of $\mathcal{AK}(B_X)$ under analytic determinacy}\label{section:atclassification}

This section is devoted to the proof of the following:

\begin{thm}[\axiom]\label{introAKBXclassification}
If $X$ is separable, then one of the following holds:
\begin{enumerate}
\item[(i)] $\mathcal{AK}(B_X) \sim \cK(B_X) \sim \{0\}$,
\item[(iia)] {$\mathcal{AK}(B_X) \sim \omega$ and $\cK(B_X)\sim \omega^\omega$,}
\item[(iib)] {$\mathcal{AK}(B_X) \sim \cK(B_X) \sim \omega^\omega$,}
\item[(iii)] $\mathcal{AK}(B_X) \sim \cK(B_X) \sim \mathcal{K}(\mathbb{Q})$,
\item[(iv)] $\mathcal{AK}(B_X) \sim \cK(B_X) \sim [\mathfrak{c}]^{<\omega}$.
\end{enumerate}
\end{thm}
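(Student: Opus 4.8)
The plan is to reduce everything to the single statement that, for separable $X$, the asymptotic structure $\AK(B_X)$ is Tukey equivalent to one of $\{0\}$, $\omega$, $\omega^\omega$, $\cK(\qu)$ or $[\con]^{<\omega}$, and then to read off the five cases of the theorem together with the corresponding values of $\cK(B_X)$. The second half is painless given Section~\ref{section:AKBX}. Suppose $\AK(B_X)\sim P$ for one of these five posets~$P$; by Corollary~\ref{as:4-cor}(i) we have $P\preceq \cK(B_X)\preceq P^\omega$. If $P=\{0\}$, then $\cK(B_X)\sim\{0\}$ and $X$ is reflexive, giving~(i). If $P=\omega$, then $X$ is non-reflexive and SWCG by Theorem~\ref{swcg:2}, and $\omega^\omega\preceq\cK(B_X)\preceq\omega^\omega$ by Proposition~\ref{KXKBX}(iii), so $\cK(B_X)\sim\omega^\omega$, giving~(iia). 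If $P=\omega^\omega$, then $\cK(B_X)\preceq(\omega^\omega)^\omega\sim\omega^\omega\preceq\cK(B_X)$, giving~(iib). If $P=\cK(\qu)$, Corollary~\ref{as:4-cor}(iii) yields $\cK(B_X)\sim\cK(\qu)$, giving~(iii). Finally, if $P=[\con]^{<\omega}$, then $[\con]^{<\omega}\preceq\cK(B_X)$ while Theorem~\ref{PDKclassification} forces $\cK(B_X)\preceq[\con]^{<\omega}$, so $\cK(B_X)\sim[\con]^{<\omega}$, giving~(iv).

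Thus the whole theorem reduces to the classification of $\AK(B_X)$, which I would prove by running the argument of Section~\ref{section:KBXclassification} in the asymptotic category, using the De Blasi measure of weak noncompactness $\gamma$ to keep track of the scale parameter. Fix a countable norm dense $D=\{x_n\}\sub B_X$. As in Lemma~\ref{relativeBanach} and Proposition~\ref{pro:usoDobleLimite}, the inclusion order of $\cK(B_X)$ is governed by the analytic family $\mathcal I$ of ``bad'' index sets for Grothendieck's double limit criterion, with $\cK(B_X)\sim\mathcal I^\perp$. The point is that the \emph{same} family $\mathcal I$ also controls the almost-inclusion relations $\leq_\eps$ of $\AK(B_X)$ through $\gamma$, so the gap dichotomy of Theorem~\ref{projectivestronggap}, applied to the analytic $\mathcal I$ and the coanalytic $\mathcal I^\perp$, splits the analysis into the same two cases as in the proof of Theorem~\ref{projectiveideal}.

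In the Lusin gap case there is an injective $u:2^{<\omega}\to\omega$ sending $0$-chains into $\mathcal I$ and $1$-chains into $\mathcal I^\perp$. I would set $f(s):=x_{u(s)}$, take $\cA$ to be the relatively weakly compact subsets of $B_X$ (closed under finite unions), and let $\leq$ be almost-inclusion at a fixed scale $\eps>0$, and then verify the hypotheses of Lemma~\ref{lem:GeneralLemma}: along $1$-chains $f(S)$ is relatively weakly compact (condition~(iii)), while along $0$-chains $f(S)$ has $\gamma(f(S))\ge\eps$, so $f(S)\not\leq A$ for every $A\in\cA$ (condition~(iv)). This yields $[\con]^{<\omega}\preceq(\cA,\leq_\eps)$, hence $[\con]^{<\omega}\preceq\AK(B_X)$ by Remark~\ref{at:5}(ii); since $\AK(B_X)\preceq\cK(B_X)\sim[\con]^{<\omega}$ (Proposition~\ref{at:7}(i)), we obtain $\AK(B_X)\sim[\con]^{<\omega}$. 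In the countably separated case the argument of Case~1 of Theorem~\ref{projectiveideal} reduces $\cK(B_X)$, and with the scale bookkeeping also $\AK(B_X)$, to a relative-compactness structure on a coanalytic metric space $E_0$; Fremlin's Theorem~\ref{Fr91classification} leaves the possibilities $\{0\}$, $\omega^\omega$ and $\cK(\qu)$. When $E_0$ is compact, $X$ is reflexive and $\AK(B_X)\sim\{0\}$; when $E_0$ is Polish but not locally compact one has $\AK(B_X)\preceq\omega^\omega$, and Lemma~\ref{lem:LV} gives either $X$ SWCG (so $\AK(B_X)\sim\omega$) or $\AK(B_X)\sim\omega^\omega$ — exactly the split between (iia) and (iib); and when $E_0$ is not Polish one produces, again at a single fixed scale, a Tukey reduction $\cK(\qu)\preceq\AK(B_X)$, whence $\AK(B_X)\sim\cK(\qu)$.

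The main obstacle is precisely this asymptotic bookkeeping. The results of Section~\ref{section:KBXclassification} concern the \emph{inclusion} order, and transferring them to $\AK(B_X)$ requires upgrading every ``not relatively weakly compact'' conclusion into a \emph{quantitative} ``$\gamma\ge\eps$'' conclusion valid at a single scale $\eps>0$, uniformly over the continuum-sized family of chains produced by the gap dichotomy; this uniformity is exactly what condition~(iv) of Lemma~\ref{lem:GeneralLemma} and the fixed-scale reduction $\cK(\qu)\preceq\AK(B_X)$ demand. Extracting a uniform scale, rather than one depending on the individual chain, is the delicate point, and I expect it to be handled by choosing the functionals witnessing non-compactness coherently along the tree $2^{<\omega}$, so that the resulting De Blasi estimates do not degenerate as the chains vary.
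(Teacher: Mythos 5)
Your first paragraph is sound: deducing the five joint cases from a classification of $\AK(B_X)$ works, and your treatment of the low-complexity cases (reflexive; $\AK(B_X)\preceq\omega^\omega$ split into (iia)/(iib) via Theorem~\ref{swcg:2} and Lemma~\ref{lem:LV}; Fremlin's theorem in the countably separated case) matches what the paper does, modulo running the reduction in the opposite direction (the paper proves Theorem~\ref{PDKclassification} for $\cK(B_X)$ first and then lifts to $\AK(B_X)$ via Propositions~\ref{AKtukeytop} and~\ref{AKKQ}). The genuine gap is exactly at the point you flag and defer: in the Lusin-gap case, condition~(iv) of Lemma~\ref{lem:GeneralLemma} requires a \emph{single} $\eps>0$ such that $f(S)\not\sub A+\eps B_X$ for \emph{every} $0$-chain $S$ and every relatively weakly compact $A$. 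What the dichotomy of Theorem~\ref{projectiveideal} hands you is only qualitative: each individual $f(S)$ fails to be relatively weakly compact, so by Lemma~\ref{lem:Grothendieck} there is some $\eps_S>0$ working for that chain, but there are continuum many chains and nothing prevents $\inf_S \eps_S=0$. Your proposed remedy --- ``choosing the functionals witnessing non-compactness coherently along the tree'' --- is not an argument; no coherent choice over $\con$ many chains is exhibited, and it is not clear any direct choice can be made without a stabilization principle.

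The paper closes this gap with Ramsey-theoretic machinery that is absent from your proposal. For the $[\con]^{<\omega}$ case (Proposition~\ref{AKtukeytop}): color each $1$-chain according to Rosenthal's dichotomy (weakly Cauchy versus equivalent to the $\ell^1$-basis), check this coloring is analytic measurable (Lemma~\ref{lem:measurability}), and apply Milliken's theorem (Lemma~\ref{Ramseychains}) to pass to a subtree on which all chains have the same type; then stratify by the quantitative parameter --- $d(x^{**},X)>\frac{1}{n+1}$ for the weak-Cauchy limits, resp.\ $\frac{1}{n+1}$-controlled $\ell^1$-behaviour --- observe each stratum is Borel, pick an uncountable one, extract a perfect set and apply Lemma~\ref{lem:PerfectSubtree} to obtain a further subtree along which the parameter is bounded below by a fixed $\frac{1}{m+1}$; finally Lemmas~\ref{wCauchy} and~\ref{l1seq} convert this into the uniform condition~(iv). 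The $\cK(\qu)$ case (Proposition~\ref{AKKQ}) is harder still and is not ``the same argument with scale bookkeeping'': one needs Hurewicz's theorem to plant a closed copy of $\qu$ inside the non-Polish coanalytic space $E_0$, and then the Avil\'{e}s--Todorcevic partition theorems for types (Theorem~\ref{finitepartition} and, for the countable stratification by scale, Lemma~\ref{countablepartition}) to achieve the uniform-scale conclusion simultaneously over all six discrete types of subsets of $2^{<\omega}$. Without these stabilization steps your outline establishes the theorem only in the cases that already follow from Section~\ref{section:AKBX}, not in the two cases that carry the real content.
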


This will be obtained by gathering together Theorem~\ref{PDKclassification} and Propositions~\ref{AKtukeytop}
and~\ref{AKKQ} below. The general scheme is that we shall take the Tukey reductions that we constructed in the proof of
Theorem~\ref{PDKclassification} for $\mathcal{K}(B_X)$ and we shall apply some refinements, mainly of Ramsey-theoretic nature,
to obtain Tukey reductions for $\mathcal{AK}(B_X)$.

\begin{pro}[\axiom]\label{AKtukeytop}
If $X$ is separable, then $\mathcal{K}(B_X) \sim [\mathfrak{c}]^{<\omega}$ if and only if
$\mathcal{AK}(B_X)\sim [\mathfrak{c}]^{<\omega}$.
\end{pro}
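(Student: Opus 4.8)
The plan is to prove the equivalence $\cK(B_X)\sim[\con]^{<\omega}$ iff $\AK(B_X)\sim[\con]^{<\omega}$ by establishing each direction separately and exploiting the combinatorial machinery already developed. One direction is essentially for free: by Corollary~\ref{as:4-cor}(i), if $\AK(B_X)\sim[\con]^{<\omega}$ then $[\con]^{<\omega}\preceq \cK(B_X)\preceq ([\con]^{<\omega})^\omega$, and since $([\con]^{<\omega})^\omega\sim [\con]^{<\omega}$ (a product of countably many copies is cofinally equivalent to the original, as its cofinality is still $\con$ and its $\add_\omega$ is still $\omega_1$), we immediately get $\cK(B_X)\sim[\con]^{<\omega}$. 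So the real content is the forward implication: assuming $\cK(B_X)\sim[\con]^{<\omega}$, produce the reduction $[\con]^{<\omega}\preceq \AK(B_X)$ (the reverse reduction $\AK(B_X)\preceq[\con]^{<\omega}$ follows from Remark~\ref{rem:CardinalTukey} together with Remark~\ref{SG:2}(ii), which bounds $\cf(\AK(B_X))\leq\con$ in the $w^*$-separable setting).

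For the forward direction, first I would transfer the problem from $\cK(B_X)$ to the combinatorial picture used in Theorem~\ref{PDKclassification}. Fixing a countable norm-dense $D\sub B_X$, Lemma~\ref{relativeBanach} and Proposition~\ref{pro:usoDobleLimite} give $\cK(B_X)\sim \mathcal{RK}(D)\sim \mathcal{I}^\perp$ for an analytic family $\mathcal{I}$ of subsets of $\omega$. By Theorem~\ref{projectiveideal} (and the dichotomy in its proof), $\cK(B_X)\sim[\con]^{<\omega}$ forces Case~2 of that proof: there is an injective $u:2^{<\omega}\to\omega$ with $u(S)\in\mathcal{I}$ for every $0$-chain $S$ and $u(S)\in\mathcal{I}^\perp$ for every $1$-chain $S$. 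Translating back through the isomorphism $\mathcal{I}^\perp\cong\mathcal{RK}(D)$, the points $\{x_{u(t)}:t\in 2^{<\omega}\}$ of $D$ become the raw material for building a spread-out family of weakly compact sets: along any $1$-chain the corresponding vectors index a relatively weakly compact subset of $D$, while along any $0$-chain they index a set that is \emph{not} relatively weakly compact.

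The heart of the argument is then to upgrade these inclusion-based facts to the metric (``almost inclusion'') relations of $\AK(B_X)$, and here I would invoke Lemma~\ref{lem:GeneralLemma}, which was deliberately stated for an abstract binary relation ``$\leq$'' precisely so that it applies to $\leq_\eps$. I would set $W:=X$, take $f:=$ the map $t\mapsto$ (the relevant weakly compact hull of the vectors indexed along $t$), let $\cA$ be a suitable family of weakly compact sets closed under finite unions, and choose the relation $\leq$ to be $\leq_\eps$ for a single fixed small $\eps>0$. The two crucial hypotheses to verify are (iii), that $f(S)\in\cA$ for every $1$-chain (relative weak compactness, coming from the $\mathcal{I}^\perp$ membership via Lemma~\ref{lem:Grothendieck}), and (iv), that $f(S)\not\leq_\eps A$ for every $0$-chain $S$ and every $A\in\cA$. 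Property~(iv) is the main obstacle: it demands that the non-relatively-weakly-compact sets arising from $0$-chains stay uniformly bounded away, in the De~Blasi sense, from \emph{every} member of $\cA$, simultaneously for a single $\eps$. I expect to establish this by a Ramsey-type/diagonal argument combined with Lemma~\ref{lem:famous}: a $0$-chain yields a sequence in $B_X$ admitting no relatively weakly compact subsequence, so by Lemma~\ref{lem:famous} there is a fixed $\delta_0>0$ such that the sequence cannot be $\delta_0$-approximated by any single weakly compact set along infinitely many indices, and choosing $\eps<\delta_0$ (uniformly, after passing to a subfamily so that one $\delta_0$ works) gives property~(iv). Once Lemma~\ref{lem:GeneralLemma} applies, it delivers $[\con]^{<\omega}\preceq(\cA,\leq_\eps)\preceq\AK(B_X)$, and combined with $\AK(B_X)\preceq[\con]^{<\omega}$ we conclude $\AK(B_X)\sim[\con]^{<\omega}$, completing the equivalence.
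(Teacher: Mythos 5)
Your overall skeleton is the same as the paper's: the easy direction via Proposition~\ref{at:7}(i) (or Corollary~\ref{as:4-cor}(i)) together with $\cK(B_X)\preceq[\con]^{<\omega}$, then the hard direction by passing to a countable dense set, invoking Case~2 of Theorem~\ref{projectiveideal} for the analytic family of Proposition~\ref{pro:usoDobleLimite}, and finally feeding the resulting data into Lemma~\ref{lem:GeneralLemma} with the relation $\leq_\eps$ for a single fixed $\eps$ (this last packaging is exactly Lemma~\ref{lem:TukeyTop} in the paper). However, the step you yourself flag as ``the main obstacle'' --- hypothesis~(iv), i.e.\ one $\eps>0$ working \emph{simultaneously} for all $0$-chains --- is precisely where all of the paper's work lies, and your proposed resolution (``a Ramsey-type/diagonal argument combined with Lemma~\ref{lem:famous}, choosing $\eps<\delta_0$ uniformly after passing to a subfamily'') is not an argument. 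Lemma~\ref{lem:famous} does give, for each individual $0$-chain $S$, some $\delta_S>0$ such that $f(S)\not\sub L+\delta_S B_X$ for every weakly compact $L$; but there are continuum many $0$-chains, the values $\delta_S$ may accumulate at $0$, and there is no countable list over which to diagonalize. Worse, the ``subfamily'' you pass to cannot be arbitrary: Lemma~\ref{lem:GeneralLemma} consumes the entire binary-tree structure (its proof quantifies over all $\sigma\in 2^\omega$ and the associated chains $S_1(\sigma)$, $S_2(\sigma)$), so what is really needed is a \emph{subtree} $\xi:2^{<\omega}\to 2^{<\omega}$ all of whose $0$-chains satisfy $\delta_{\xi(S)}\geq\delta_0$ for one fixed $\delta_0$, while all $1$-chains keep indexing relatively weakly compact sets.

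Extracting such a subtree is the heart of the paper's proof, and it cannot be done from Lemma~\ref{lem:famous} alone, because the stratification $\{S:\delta_S>1/n\}$ has no evident definability: $\delta_S$ is an infimum over the family of \emph{all} weakly compact sets, and without analytic/Borel measurability no Ramsey or perfect-set theorem applies to it. The paper instead first colors the relevant chains according to whether $(x_{u(S_n)})$ is weakly Cauchy or an $\ell^1$-sequence --- conditions that \emph{are} suitably measurable (Lemma~\ref{lem:measurability}) --- and uses Milliken's tree Ramsey theorem (Lemma~\ref{Ramseychains}) together with Rosenthal's $\ell^1$-theorem to make one of these two behaviours uniform on a subtree. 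Only then does a quantitative parameter with good definability appear: $d\big(w^*\mbox{-}\lim_n x_{u(S_n)},X\big)$ in the weakly Cauchy case (Borel by Lemma~\ref{lem:Measurability}), or the $\ell^1$-constant in the other case. Stratifying the branches by this parameter into countably many Borel sets, picking an uncountable one, a copy of $2^\omega$ inside it, and a subtree through it (Lemma~\ref{lem:PerfectSubtree}) finally produces the uniform $\delta_0$, which Lemmas~\ref{wCauchy} and~\ref{l1seq} convert into your condition~(iv). None of this machinery appears in your sketch, so the central step of the proof is missing. (Two minor points: your justification that $([\con]^{<\omega})^\omega\sim[\con]^{<\omega}$ via equality of $\cf$ and $\add_\omega$ is invalid reasoning --- equal invariants never imply Tukey equivalence --- although the fact itself is true by Remark~\ref{rem:CardinalTukey}, since that product is directed and of cardinality $\con$; and in Lemma~\ref{lem:GeneralLemma} the function $f$ must take values in $W=X$, so $f(t)$ should be the single point $x_{u(t)}$ rather than a ``weakly compact hull''.)
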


\begin{pro}[\axiom]\label{AKKQ}
If $X$ is separable, then $\mathcal{K}(B_X) \sim \mathcal{K}(\mathbb{Q})$ if and
only if $\mathcal{AK}(B_X)\sim \mathcal{K}(\mathbb{Q})$.
\end{pro}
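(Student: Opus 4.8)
The plan is to prove the two implications separately, the first being immediate and the second carrying all the weight. For the implication $\mathcal{AK}(B_X)\sim\cK(\qu)\Rightarrow\cK(B_X)\sim\cK(\qu)$ there is nothing to do: it is exactly Corollary~\ref{as:4-cor}(iii). So I would concentrate on the converse. Assuming $\cK(B_X)\sim\cK(\qu)$, Proposition~\ref{at:7}(i) already gives $\mathcal{AK}(B_X)\preceq\cK(B_X)\sim\cK(\qu)$, so half of the desired equivalence is free. It therefore remains to produce a reduction $\cK(\qu)\preceq\mathcal{AK}(B_X)$, and by Remark~\ref{at:5}(ii) this amounts to exhibiting a \emph{single} Tukey map $g\colon\cK(\qu)\to(\mathcal{AK}(B_X),\leq_\delta)$ for one fixed $\delta>0$.

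To build $g$ I would re-enter the proof of Theorem~\ref{PDKclassification}. Fix a countable norm dense set $D=\{x_n:n<\omega\}\sub B_X$; then $\cK(B_X)\sim\mathcal{RK}(D)\sim\mathcal{I}^\perp$ for the analytic hereditary family $\mathcal{I}$ coming from the double limit criterion (Lemma~\ref{relativeBanach}, Proposition~\ref{pro:usoDobleLimite}). Since $\cK(B_X)\sim\cK(\qu)$ is none of $\{0\}$, $\omega$, $\omega^\omega$, we are necessarily in Case~1 of the proof of Theorem~\ref{projectiveideal}: $\mathcal{I}$ and $\mathcal{I}^\perp$ are countably separated and we obtain a bijection $f\colon\omega\to\qu$, an analytic $F\sub 2^\omega$ and a coanalytic metric space $E_0=\omega\cup(2^\omega\sm F)$ in which $\omega$ is dense, with $\mathcal{I}^\perp\cong\cK_{E_0}(\omega)$ and $\cK(E_0)\sim\cK(\qu)$; in particular $E_0$ is not Polish. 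Composing Fremlin's reduction $\cK(\qu)\preceq\cK(E_0)$ with the isomorphisms of Lemma~\ref{relativecompact} and of Case~1 yields a Tukey map $\psi\colon\cK(\qu)\to\mathcal{I}^\perp$ for inclusion, and I would then set $g(A):=\overline{\{x_n:n\in\psi(A)\}}^w$, which lands in $\mathcal{AK}(B_X)$ because $\psi(A)\in\mathcal{I}^\perp$ makes $\{x_n:n\in\psi(A)\}$ relatively weakly compact.

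Verifying that $g$ is Tukey for some $\leq_\delta$ is the delicate point. Given $L\in\mathcal{AK}(B_X)$, the natural bound is $b_L:=\{n:x_n\in L+\delta B_X\}$: indeed $g(A)\sub L+\delta B_X$ forces $\psi(A)\sub b_L$, and \emph{if} $b_L\in\mathcal{I}^\perp$ then the Tukey property of $\psi$ delivers $A_L\in\cK(\qu)$ with $\psi(A)\sub b_L\Rightarrow A\sub A_L$, closing the argument. The membership $b_L\in\mathcal{I}^\perp$ would follow from a \emph{uniform gap}: if there is $\eps_0>0$ such that every $a\notin\mathcal{I}^\perp$ has De Blasi measure of weak noncompactness $\gamma(\{x_n:n\in a\})\geq\eps_0$, then for $\delta<\eps_0$ no infinite $a\sub b_L$ can lie in $\mathcal{I}$ (such an $a$ would satisfy $\{x_n:n\in a\}\sub L+\delta B_X$, so $\gamma<\eps_0$), and heredity of $\mathcal{I}$ gives $b_L\in\mathcal{I}^\perp$. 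Thus everything is reduced to producing $\eps_0$.

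The main obstacle is exactly this uniform gap, since a priori the measures $\gamma(\{x_n:n\in a\})$ with $a\in\mathcal{I}$ may accumulate at~$0$. Here I would first observe that $X$ is not SWCG (otherwise Corollary~\ref{as:4-cor}(iv), applied to $Y=X$, would give $\cK(B_X)\sim\omega^\omega$, contradicting $\cK(B_X)\sim\cK(\qu)$), so Lemma~\ref{charSWCG} supplies one fixed $\eps_0>0$ witnessing non-SWCG. I would then upgrade this to a genuine uniform gap by a Ramsey-theoretic refinement: stratify $\mathcal{I}$ according to the magnitude of the offending double limit (a quantity comparable to $\gamma$), and pass to a subsequence of $(x_n)$ on which only one threshold $\eps_0$ survives, all the while retaining, through the non-Polish witness for $E_0$, a copy of $\cK(\qu)$ on which $\psi$ can still be defined. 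Balancing the Ramsey sparsification (which creates the gap) against the preservation of non-Polishness (which is what keeps $\cK(\qu)\preceq\cdot$ alive) is the crux, and is the step I expect to be hardest; once it is in place, $g$ is Tukey for $\delta=\eps_0/2$ and yields $\cK(\qu)\preceq\mathcal{AK}(B_X)$, completing the proof.
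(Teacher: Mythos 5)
Your easy direction is fine: Corollary~\ref{as:4-cor}(iii) gives it at once (the paper instead routes it through Theorem~\ref{PDKclassification} and Proposition~\ref{AKtukeytop}, but your argument works and needs no determinacy). The hard direction, however, has a genuine gap, and it sits exactly at the point you yourself flag as the crux. The uniform gap you require --- an $\eps_0>0$ such that $\{x_n:n\in a\}$ has De Blasi measure of weak noncompactness at least $\eps_0$ for every $a\notin\mathcal{I}^\perp$ --- is simply false for the full dense set $D$, in \emph{every} non-reflexive space: take a sequence $(z_k)$ in $B_X$ with no weakly convergent subsequence, fix a small $\eta>0$, and choose $x_{n_k}\in D$ with $\|x_{n_k}-\eta z_k\|\leq \eta 2^{-k}$. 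Then $a:=\{n_k:k<\omega\}$ is infinite and $\{x_n:n\in a\}$ is not relatively weakly compact (a norm-null perturbation of $(\eta z_k)$), so $a\notin\mathcal{I}^\perp$, yet $\{x_n:n\in a\}\sub \{0\}+2\eta B_X$. Since $\eta$ is arbitrary, no $\eps_0$ exists; worse, taking $L=\{0\}$ shows that your bound $b_L=\{n:x_n\in L+\delta B_X\}$ fails to belong to $\mathcal{I}^\perp$ for every $\delta>0$, so the verification that $g$ is Tukey collapses no matter how $\delta$ is chosen. Your other concrete suggestion does not repair this: the $\eps_0$ witnessing that $X$ is not SWCG (Lemma~\ref{charSWCG}) expresses that no countable family of weakly compact sets is $\eps_0$-cofinal; it gives no lower bound on the De Blasi measure of \emph{individual} non-relatively-weakly-compact subsets of $D$, which is what your argument needs.

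Consequently one must restrict to a proper substructure of $D$ before any uniform $\delta$ can exist, and the entire difficulty --- which your sketch defers to a "Ramsey-theoretic refinement" --- is to perform this restriction while keeping $\cK(\qu)$ Tukey-below what remains. This is precisely what Steps 1--5 of the paper's proof accomplish, and the machinery is not an ordinary subsequence-extraction: Hurewicz's theorem produces a closed copy of $\qu$ inside the non-Polish space $E_0$, yielding an injection $\upsilon:2^{<\omega}\to B_X$ under which relative compactness in $2^{<\omega}$ corresponds exactly to relative weak compactness in $X$; the compact/non-compact dichotomy for subsets of $2^{<\omega}$ is then governed by the eight types of~\cite{avi-tod-IHES}, and the uniform $\delta$ is extracted from two partition theorems for those types: Theorem~\ref{finitepartition} (finite colorings, combined with Rosenthal's $\ell^1$-dichotomy) and Lemma~\ref{countablepartition} (countable colorings invariant under finite changes, combined with Lemmas~\ref{lem:Measurability}, \ref{wCauchy} and~\ref{l1seq}), iterated over the six non-compact types. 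The essential feature is that these theorems return \emph{nice embeddings} of $2^{<\omega}$ into itself, which preserve types and hence preserve simultaneously the copy of $\cK(\qu)$ and the equivalence between compactness of $A$ and weak compactness of $\upsilon(A)$; an arbitrary Ramsey sparsification of $(x_n)$ preserves neither, and the countable-partition statement you would need is available only for the non-compact types. Without this machinery, or a substitute for it, your map $g$ has no admissible $\delta$, and the proposal as it stands does not prove the proposition.
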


\begin{proof}[Proof of Theorem~\ref{introAKBXclassification}]
By Theorem~\ref{PDKclassification} we know that
$\mathcal{K}(B_X)$ is Tukey equivalent to either $\{0\}$, $\omega^\omega$, $\mathcal{K}(\mathbb{Q})$ or $[\mathfrak{c}]^{<\omega}$.
Of course, we have $\mathcal{K}(B_X) \sim \{0\}$ if and only if
$\mathcal{AK}(B_X) \sim \{0\}$ (if and only if $X$ is reflexive). By Proposition~\ref{AKtukeytop} we have
$$
	\mathcal{K}(B_X) \sim [\mathfrak{c}]^{<\omega} \quad \Longleftrightarrow \quad \mathcal{AK}(B_X)\sim [\mathfrak{c}]^{<\omega},
$$
while Proposition~\ref{AKKQ} says that
$$
	\mathcal{K}(B_X) \sim \mathcal{K}(\mathbb{Q}) \quad \Longleftrightarrow \quad \mathcal{AK}(B_X)\sim \mathcal{K}(\mathbb{Q}).
$$
Finally, $\cK(B_X)\sim \omega^\omega$ if and only if $\omega \preceq \mathcal{AK}(B_X) \preceq \omega^\omega$ (Corollary~\ref{as:4-cor}(ii)).
{An appeal to Lemma~\ref{lem:LV} finishes the proof.}
\end{proof}

We next prove Propositions~\ref{AKtukeytop} and~\ref{AKKQ}. We keep the notation of the previous section.

\subsection{Case $\mathcal{AK}(B_X)\sim [\mathfrak{c}]^{<\omega}$}

In this subsection we shall prove Proposition~\ref{AKtukeytop}.
To this end, the Ramsey principle needed is Lemma~\ref{Ramseychains}, which
is a corollary of Milliken's theorem~\cite{mil} (cf. \cite[Theorem 4]{avi-tod-3}).
Note that for any $i\in\{0,1\}$ the set $\mathcal{C}_i$ of all $i$-chains of~$2^{<\omega}$
is a closed subset of the compact metrizable space $2^{2^{<\omega}}$ (equipped with its usual product topology).
To state Lemma~\ref{Ramseychains} we need a definition.

\begin{defi}\label{defi:subtree}
A function $u: 2^{<\omega}\to 2^{<\omega}$ is called a {\em subtree} if for every $t,s\in 2^{<\omega}$ and $i\in \{0,1\}$ we have
$$
	t\smallfrown i \sqsubseteq s \quad \Longrightarrow u(t)\smallfrown i \sqsubseteq u(s).
$$
\end{defi}

Observe that any subtree maps $i$-chains to $i$-chains for $i\in \{0,1\}$.

\begin{lem}\label{Ramseychains}
Fix $i\in\{0,1\}$. Let $W$ be a finite set and $c:\mathcal{C}_i\to W$ an analytic measurable function
(i.e. for every $a\in W$ the set $c^{-1}(\{a\})$ belongs to the $\sigma$-algebra on~$\cC_i$
generated by the analytic sets). Then there exist a subtree $u:2^{<\omega}\to 2^{<\omega}$ and
$a\in W$ such that $c(u(S))=a$ for every $S\in \cC_i$.
\end{lem}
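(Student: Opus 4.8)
The plan is to derive the lemma from the infinite-dimensional Milliken theorem for strong subtrees, by encoding the coloring of $i$-chains as a coloring of strong subtrees. Recall that a \emph{strong subtree} of $2^{<\omega}$ is the image $e[2^{<\omega}]$ of a level-preserving embedding $e:2^{<\omega}\to 2^{<\omega}$, that is, $e$ preserves $\sqsubseteq$, sends nodes of equal length to nodes of equal length, and satisfies $e(t)\smallfrown j\sqsubseteq e(t\smallfrown j)$ for $j\in\{0,1\}$. Every such $e$ is in particular a subtree in the sense of Definition~\ref{defi:subtree} (so the observation after that definition applies: $e$ maps $i$-chains to $i$-chains), and a composition of two strong subtree embeddings is again one. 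A preliminary remark I would record is that the $i$-chains contained in $e[2^{<\omega}]$ are exactly the sets $e(S')$ with $S'\in\mathcal{C}_i$; this uses that $e$ reflects $\sqsubseteq$ and preserves the branching direction, so a turn in direction $i$ upstairs comes from a turn in direction $i$ downstairs.

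To each strong subtree $S=e[2^{<\omega}]$ I attach its \emph{canonical $i$-chain} $\kappa(S):=e(\kappa_*)$, where $\kappa_*:=\{\underbrace{(i,\dots,i)}_{p}:p<\omega\}$ is the all-$i$ branch of $2^{<\omega}$; since $e$ is determined by $S$, the map $\kappa$ is well defined and Borel. Then $\hat c:=c\circ\kappa$ is a finite coloring of the infinite strong subtrees of $2^{<\omega}$, and because $c$ is analytic-measurable and $\kappa$ is Borel, each class $\hat c^{-1}(a)$ lies in the $\sigma$-algebra generated by the analytic sets. As the infinite strong subtrees of $2^{<\omega}$ form a topological Ramsey space in which analytic-measurable sets are Ramsey, Milliken's theorem~\cite{mil} yields a strong subtree $S_0=u[2^{<\omega}]$ and a color $a\in W$ such that $\hat c(S_1)=a$ for every infinite strong subtree $S_1$ of $S_0$. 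Here $u$ is a subtree in the sense of Definition~\ref{defi:subtree}.

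It then remains to prove $c(u(S'))=a$ for every $S'\in\mathcal{C}_i$ (note $u(S')\in\mathcal{C}_i$ since $u$ maps $i$-chains to $i$-chains, so the left side is defined). Fix $S'=\{(s')^p:p<\omega\}$ with $(s')^p\smallfrown i\sqsubseteq(s')^{p+1}$. I would construct, level by level, a strong subtree embedding $v:2^{<\omega}\to 2^{<\omega}$ realizing $S'$ along the all-$i$ branch, i.e. $v(\underbrace{(i,\dots,i)}_{p})=(s')^p$: at each stage the node forced to continue in direction $i$ is sent to $(s')^{p+1}$, which legitimately extends $v(\cdot)\smallfrown i$ precisely because $S'$ is an $i$-chain, while the images of the remaining nodes are filled in by arbitrary compatible extensions (padding with $0$'s), keeping common levels aligned and images at a common level pairwise incomparable. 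Then $u\circ v$ is a strong subtree embedding whose image $S_1:=(u\circ v)[2^{<\omega}]$ is an infinite strong subtree of $S_0$, and its canonical $i$-chain is $\kappa(S_1)=(u\circ v)(\kappa_*)=u(v(\kappa_*))=u(S')$. Hence $c(u(S'))=\hat c(S_1)=a$, and since $S'$ was arbitrary the proof is complete.

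I expect the main obstacle to be twofold. First, one must invoke Milliken's theorem in the correct infinite-dimensional form and check that the analytic-measurability hypothesis on $c$ is exactly what makes the induced partition of strong subtrees Ramsey; this is the point where the topological-Ramsey-space machinery is needed, rather than the elementary finite-dimensional Milliken theorem. Second, one must carry out the bookkeeping in the level-by-level construction of $v$, verifying that prescribing the all-$i$ branch to follow the given $i$-chain never clashes with the strong-subtree requirements (level-preservation, branching, injectivity); the delicate part is that an \emph{arbitrary} $i$-chain, whose successive lengths may grow in any fashion, can always be completed to a full strong subtree.
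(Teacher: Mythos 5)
Your proof is correct: the reduction of the chain coloring to a coloring of strong subtrees via the canonical all-$i$ branch, the appeal to the infinite-dimensional (Souslin-measurable) form of Milliken's theorem, and the level-by-level completion of an arbitrary $i$-chain to a strong subtree embedding all check out. This is essentially the paper's own route, since the paper offers no proof of this lemma but states it precisely as a corollary of Milliken's theorem \cite{mil} (cf.\ \cite[Theorem 4]{avi-tod-3}); you have simply supplied the details that the paper delegates to those references.
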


When using this principle, the elements of~$W$ are usually called \emph{colors}
and the function~$c$ is called a \emph{coloring} of the $i$-chains. In this language, Lemma~\ref{Ramseychains}
states that if we color the $i$-chains with finitely many colors in a suitably measurable way,
then we can pass to a subtree where all $i$-chains have the same color.

The following lemmas are needed to prove Proposition~\ref{AKtukeytop}. Some of them will also be useful
in other sections.

\begin{lem}\label{lem:PerfectSubtree}
Let $P \sub 2^\omega$ be perfect. Then there is a subtree $u:2^{<\omega} \to 2^{<\omega}$ such that
$u(2^{<\omega}) \sub \{\sigma|_n: \, \sigma\in P, \, n<\omega\}$.
\end{lem}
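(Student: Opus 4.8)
The plan is to realize $u$ as an embedding of the branching structure of the full dyadic tree into the \emph{splitting} structure of the tree of initial segments of~$P$. Write $T_P := \{\sigma|_n : \sigma\in P,\ n<\omega\}$ for this tree, so that the target set in the statement is exactly~$T_P$. Since $P$ is closed, $P$ is precisely the set of branches of~$T_P$, and since $P$ has no isolated points, $T_P$ is a \emph{perfect tree}: every $s\in T_P$ has an extension $t\sqsupseteq s$ in~$T_P$ with both $t\smallfrown 0\in T_P$ and $t\smallfrown 1\in T_P$. Indeed, choosing $\sigma\in P$ with $s\sqsubseteq\sigma$ and, using that $\sigma$ is not isolated, another $\tau\in P$ agreeing with $\sigma$ on the first ${\rm length}(s)$ coordinates but with $\tau\neq\sigma$, the node $t:=\sigma|_n$, where $n\geq{\rm length}(s)$ is the first place where $\sigma$ and $\tau$ differ, is such a splitting node.

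Next I would reduce the defining condition of a subtree (Definition~\ref{defi:subtree}) to a purely local one. A direct induction on lengths shows that a map $u:2^{<\omega}\to 2^{<\omega}$ satisfies Definition~\ref{defi:subtree} if and only if $u(t)\smallfrown i\sqsubseteq u(t\smallfrown i)$ holds for all $t\in 2^{<\omega}$ and $i\in\{0,1\}$: this local condition first yields monotonicity ($t\sqsubseteq s\Rightarrow u(t)\sqsubseteq u(s)$), and then the general implication $t\smallfrown i\sqsubseteq s\Rightarrow u(t)\smallfrown i\sqsubseteq u(s)$ follows by combining $u(t)\smallfrown i\sqsubseteq u(t\smallfrown i)$ with $u(t\smallfrown i)\sqsubseteq u(s)$. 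Thus it suffices to construct $u$ with values in~$T_P$ satisfying this local successor condition.

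Finally I would build $u$ by recursion on ${\rm length}(t)$, maintaining the invariant that every value $u(t)$ is a splitting node of~$T_P$. Start by letting $u(\emptyset)$ be any splitting node of~$T_P$ (one exists above the root by the perfectness observation applied to $s=\emptyset$). Given a splitting node $u(t)\in T_P$, both $u(t)\smallfrown 0$ and $u(t)\smallfrown 1$ lie in~$T_P$; for each $i\in\{0,1\}$ I use perfectness once more to choose a splitting node $u(t\smallfrown i)\in T_P$ with $u(t)\smallfrown i\sqsubseteq u(t\smallfrown i)$. This preserves the invariant and guarantees the local condition, so $u$ is a subtree with $u(2^{<\omega})\sub T_P$, as required.

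I do not expect a genuine obstacle here: the only point requiring care will be the perfectness observation, namely that the tree of a perfect set is perfect, together with the elementary reformulation of the subtree condition as a successor condition; everything else is the routine recursion above.
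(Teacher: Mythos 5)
Your proof is correct and takes essentially the same approach as the paper's: both arguments recursively choose splitting nodes of the tree $T_P=\{\sigma|_n:\sigma\in P,\ n<\omega\}$ so that $u(t)\smallfrown i\sqsubseteq u(t\smallfrown i)$ for each $i\in\{0,1\}$, the only cosmetic difference being that the paper carries the recursion through an auxiliary map $\tilde{u}$ (with $\tilde{u}(t\smallfrown i)=u(t)\smallfrown i$) and leaves the verification of the subtree condition implicit, whereas you extract the splitting-node property of $T_P$ as a separate observation and check explicitly that the local successor condition implies Definition~\ref{defi:subtree}.
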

\begin{proof}
We define two functions~$u,\tilde{u}:2^{<\omega} \to 2^{<\omega}$ inductively. Set
$\tilde{u}(\emptyset):=\emptyset$. Suppose
$$
	\tilde{u}(t) \in T:=\{\sigma|_n: \, \sigma\in P, \, n<\omega\}
$$
has been constructed for $t\in 2^{<\omega}$. In order to define
$\tilde{u}(t\smallfrown 0), \tilde{u}(t\smallfrown 1) \in T$,
write $\tilde{u}(t)=\sigma|_n$ for some $\sigma\in P$ and $n<\omega$.
Since~$P$ is perfect, there is $\sigma'\in P \setminus \{\sigma\}$ such that $\sigma'|_n=\sigma|_n$.
If $m:=\min\{k\geq n:\sigma'|_{k+1}\neq \sigma|_{k+1}\}$, then we define
$\tilde{u}(t\smallfrown i):=\sigma|_m \smallfrown i$ for $i\in\{0,1\}$.
In addition, we define $u(t):=\sigma|_m$.
It is clear that $u$ satisfies the required properties.
\end{proof}

\begin{lem}\label{lem:Measurability}
If $X$ is separable, then the mapping $p:X^{**} \to \R$ given by
$$
	p(x^{**}):=d(x^{**},X)=\inf_{x\in X}\|x^{**}-x\|
$$
is ${\rm Borel}(X^{**},w^*)$-measurable.
\end{lem}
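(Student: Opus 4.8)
The plan is to rewrite the defining infimum of $p$ as a \emph{countable} infimum of weak$^*$-lower semicontinuous functions, which is then automatically Borel. First I would fix a countable norm-dense subset $D=\{x_n:n<\omega\}$ of $X$, which exists since $X$ is separable. For each fixed $x^{**}\in X^{**}$ the map $x\mapsto \|x^{**}-x\|$ is $1$-Lipschitz on $X$, so its infimum over the dense set $D$ agrees with its infimum over all of~$X$; that is, $p(x^{**})=\inf_{n<\omega}\|x^{**}-x_n\|$. This reduction to a countable index set is the only place where the separability of~$X$ is used, and it is the crux of the argument.

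Next I would record the lower semicontinuity of each summand. For every fixed $x^*\in X^*$ the evaluation $y^{**}\mapsto y^{**}(x^*)$ is weak$^*$-continuous by the very definition of the weak$^*$ topology on~$X^{**}$, hence so is $y^{**}\mapsto|y^{**}(x^*)|$. Since $\|y^{**}\|=\sup_{x^*\in B_{X^*}}|y^{**}(x^*)|$ is a supremum of weak$^*$-continuous functions, the norm is weak$^*$-lower semicontinuous; composing with the weak$^*$-homeomorphism $y^{**}\mapsto y^{**}-x_n$ (translation by the fixed element $x_n\in X\subset X^{**}$, which is weak$^*$-continuous because the weak$^*$ topology is a linear topology) shows that each $g_n(y^{**}):=\|y^{**}-x_n\|$ is weak$^*$-lower semicontinuous as well. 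In particular $\{g_n>a\}$ is weak$^*$-open for every $a\in\R$, so each $g_n$ is ${\rm Borel}(X^{**},w^*)$-measurable.

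Finally, $p=\inf_{n<\omega}g_n$ is a countable infimum of Borel functions: for each $a\in\R$ we have $\{p<a\}=\bigcup_{n<\omega}\{g_n<a\}$, a countable union of Borel sets, hence Borel. Therefore $p$ is ${\rm Borel}(X^{**},w^*)$-measurable, as claimed. I do not expect any serious obstacle here: once the passage to the countable infimum in the first step is justified by density together with the $1$-Lipschitz continuity of the distance in its second argument, the remainder is a routine application of lower semicontinuity and the stability of Borel measurability under countable infima.
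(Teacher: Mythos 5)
Your proof is correct and follows essentially the same route as the paper's: reduce the infimum to a countable one over a dense set, observe that each $x^{**}\mapsto\|x^{**}-x_n\|$ is $w^*$-lower semicontinuous as a supremum of $w^*$-continuous functionals, and conclude that a countable infimum of Borel functions is Borel. The only cosmetic difference is that the paper writes $\|x^{**}-x\|=\sup_{x^*\in B_{X^*}}x^*(x^{**}-x)$ directly, whereas you factor through a translation, which amounts to the same thing.
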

\begin{proof}
Let $X_0 \sub X$ be a countable dense set. Then for every $x^{**} \in X^{**}$ we have
\begin{equation}\label{eqn:p}
	p(x^{**})=\inf_{x\in X_0}\|x^{**}-x\|.
\end{equation}
Note that for each $x\in X$ the mapping $X^{**}\to \R$ given by
$$
	x^{**}\mapsto \|x^{**}-x\|=\sup_{x^*\in B_{X^*}} x^*(x^{**}-x)
$$
is $w^*$-lower semicontinuous (being the supremum of a collection of
$w^*$-continuous functions), hence ${\rm Borel}(X^{**},w^*)$-measurable.
Since $X_0$ is countable, from~\eqref{eqn:p} it follows that $p$ is ${\rm Borel}(X^{**},w^*)$-measurable.
\end{proof}

\begin{defi}
Let $\delta>0$. A sequence $(x_n)$ in~$X$ is called a {\em $\delta$-controlled} $\ell^1$-sequence
if it is bounded and
$$
	\delta\sum_{i=0}^n|a_i| \leq \left\|\sum_{i=0}^na_i x_i\right\|
$$
for every $n<\omega$ and every $a_0,\dots,a_n \in \erre$.
\end{defi}

\begin{lem}\label{lem:measurability}
Suppose $X$ is separable. Let $\{x_n:n<\omega\}\sub B_X$ be a countable set,
$f:2^{<\omega} \to \omega$ an injective function and $\delta>0$. Fix $i\in \{0,1\}$.
Each $S\in \mathcal{C}_i$ is enumerated as $S=\{S_n:n<\omega\}$ in
such a way that $S_n \smallfrown i \sqsubseteq S_{n+1}$ for all $n<\omega$.
Then:
\begin{enumerate}
\item[(i)] The set $W$ of all $S\in \cC_i$ such that $(x_{f(S_n)})$ is weakly Cauchy is coanalytic.
\item[(ii)] The set of all $S\in \cC_i$ such that $(x_{f(S_n)})$ is weakly Cauchy and
$$
	d\Big(w^*-\lim_{n\to \infty} x_{f(S_n)},X\Big) \geq \delta
$$
is coanalytic.
\item[(iii)] The set of all $S\in \cC_i$ such that $(x_{f(S_n)})$ is a $\delta$-controlled $\ell^1$-sequence is Borel.
\item[(iv)] The set of all $S\in \cC_i$ such that $(x_{f(S_n)})$ is an $\ell^1$-sequence is Borel.
\end{enumerate}
\end{lem}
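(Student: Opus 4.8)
\emph{Common preparation.} Everything rests on one observation: for each $n<\omega$ the map $\pi_n\colon\cC_i\to 2^{<\omega}$ assigning to $S$ its $n$-th element $S_n$ in the canonical enumeration is continuous, where $2^{<\omega}$ is discrete. Indeed, the elements of an $i$-chain have strictly increasing lengths, so the elements of $S$ of length at most ${\rm length}(S_n)$ are precisely $S_0,\dots,S_n$; hence knowing $S$ on an initial segment of lengths already fixes $S_0,\dots,S_n$, and $\pi_0,\dots,\pi_n$ are locally constant. Consequently $S\mapsto f(S_n)$ is continuous into the discrete space $\omega$, and all conditions below become conditions on the explicitly available real numbers $x^*(x_{f(S_n)})$ and norms $\|\sum a_j x_{f(S_j)}\|$. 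I also use that, as $X$ is separable, $(B_{X^*},w^*)$ is compact metrizable, hence Polish; fix a countable $w^*$-dense set $\{x^*_k:k<\omega\}\sub B_{X^*}$.

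\emph{Parts (iii) and (iv).} By homogeneity and continuity of the norm, the defining inequality of a $\delta$-controlled $\ell^1$-sequence need only be tested on rational coefficients, so the set in~(iii) equals
$$\bigcap_{n<\omega}\ \bigcap_{(a_0,\dots,a_n)\in\qu^{\,n+1}}\Bigl\{S\in\cC_i:\ \delta\sum_{j=0}^{n}|a_j|\le\Bigl\|\sum_{j=0}^{n}a_j\,x_{f(S_j)}\Bigr\|\Bigr\}.$$
Each set in this intersection depends only on the finitely many continuous coordinates $f(S_0),\dots,f(S_n)$, hence is clopen, and the intersection is therefore closed; in particular it is Borel. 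For~(iv), a bounded sequence is an $\ell^1$-sequence exactly when it is $\delta$-controlled for some $\delta>0$ (the upper estimate being automatic inside $B_X$), so the set in~(iv) is the union over $\delta\in\qu^+$ of the closed sets just described, again Borel.

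\emph{Part (i).} A bounded sequence fails to be weakly Cauchy precisely when some $x^*\in B_{X^*}$ makes $(x^*(x_{f(S_n)}))_n$ divergent, i.e.\ when there are rationals $p<q$ with $x^*(x_{f(S_n)})<p$ infinitely often and $x^*(x_{f(S_n)})>q$ infinitely often. Since $(x^*,S)\mapsto x^*(x_{f(S_n)})$ is continuous for each $n$ (the $w^*$-continuous evaluations composed with the continuous $S\mapsto f(S_n)$), the set of pairs $(x^*,S)\in B_{X^*}\times\cC_i$ for which $(x^*(x_{f(S_n)}))_n$ diverges is Borel. Projecting along the Polish factor $B_{X^*}$ shows that $\cC_i\sm W$ is analytic, so $W$ is coanalytic.

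\emph{Part (ii) and the main obstacle.} On $W$ the sequence $(x_{f(S_n)})$ is $w^*$-convergent in $X^{**}$; put $x^{**}_S:=w^*\text{-}\lim_n x_{f(S_n)}\in B_{X^{**}}$, so the set in~(ii) is $V_\delta=\{S\in W:\ p(x^{**}_S)\ge\delta\}$, where $p(\cdot)=d(\cdot,X)$ is the ${\rm Borel}(X^{**},w^*)$-measurable function of Lemma~\ref{lem:Measurability}. Writing $p=\inf_{x\in X_0}\|\cdot-x\|$ for a countable dense $X_0\sub X$ gives $V_\delta=\bigcap_{x\in X_0}\{S\in W:\ \|x^{**}_S-x\|\ge\delta\}$, and I would describe each factor through the equivalence $\|x^{**}_S-x\|\ge\delta\iff \forall m\ \exists x^*\in B_{X^*}:\ \lim_n x^*(x_{f(S_n)}-x)>\delta-\frac1m$. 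For fixed $x,m$ the set of pairs $(x^*,S)$ for which $\lim_n x^*(x_{f(S_n)}-x)$ exists and exceeds $\delta-\frac1m$ is Borel in $B_{X^*}\times\cC_i$, so its projection along the Polish factor $B_{X^*}$ is analytic; intersecting over the countably many $m$ and $x$ and then with $W$ yields $V_\delta$. The real obstacle is to certify $V_\delta$ as genuinely coanalytic and not merely as a member of the $\sigma$-algebra generated by the analytic sets: the previous description presents $V_\delta$ as the intersection of the coanalytic set $W$ with an analytic set, the analytic layer arising because the norm on $X^{**}$ is a supremum over the \emph{non-metrizable} space $(B_{X^*},w^*)$ applied to the Baire-one function $x^*\mapsto x^{**}_S(x^*)$, a supremum that cannot in general be reduced to the countable dense set $\{x^*_k\}$ (equivalently, $S\mapsto x^{**}_S$ need not be $w^*$-Borel into $X^{**}$ when $X^*$ is non-separable). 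To reach coanalyticity I would lean on Lemma~\ref{lem:Measurability}, using that $\{x^{**}:p(x^{**})\ge\delta\}=\bigcap_{x\in X_0}\{x^{**}:\|x^{**}-x\|\ge\delta\}$ is a bona fide $w^*$-Borel set assembled from the $w^*$-lower semicontinuous norms, whose superlevel sets are sequentially determined, and combine this with the coanalytic constraint $S\in W$. For the later use of Milliken's theorem (Lemma~\ref{Ramseychains}), what is actually required is only that $V_\delta$ lie in the $\sigma$-algebra generated by the analytic sets, and the projection description already secures that much.
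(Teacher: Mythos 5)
Your parts (i), (iii) and (iv) are correct and essentially identical to the paper's own proof: the same key observation that $S\mapsto S_n$ is locally constant on $\mathcal{C}_i$, the same clopen/closed decomposition over rational coefficients for (iii), the union over rational $\delta$ for (iv), and the same projection-of-a-Borel-set argument for (i). (In fact your remark that the maps $(x^*,S)\mapsto x^*(x_{f(S_n)})$ are \emph{jointly} continuous is a small simplification: the paper only records separate continuity and cites a theorem to get Borel measurability, which your observation makes unnecessary.)

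Part (ii) is where you have a genuine gap: the lemma asserts that the set is coanalytic, and what you actually establish is only that it equals $W$ intersected with an analytic set; your closing fallback, that membership in the $\sigma$-algebra generated by the analytic sets suffices for the later Ramsey-theoretic applications, may be true of those applications but does not prove the statement. The paper's proof is the route you gesture at and then abandon: the map $e\colon W\to (X^{**},w^*)$, $e(S)=w^*\text{-}\lim_n x_{f(S_n)}$, is Borel, so composing with the Borel function $d(\cdot\,,X)$ of Lemma~\ref{lem:Measurability} makes $S\mapsto d(e(S),X)$ Borel on~$W$; hence the set in (ii) has the form $B\cap W$ with $B$ Borel, and a Borel set intersected with a coanalytic set is coanalytic. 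The obstruction you raise --- that $\|x^{**}_S-x\|$ is a supremum of the Baire-one function $x^*\mapsto (x^{**}_S-x)(x^*)$ over $B_{X^*}$, which cannot be replaced by a fixed countable dense set of functionals, so that an existential quantifier over $B_{X^*}$ (an analytic layer) seems unavoidable --- is a legitimate subtlety, but it can be removed outright by Hahn--Banach duality instead of retreating: for $S\in W$ and $x\in X$, writing $y_n:=x_{f(S_n)}-x$, one has
\[
\|x^{**}_S-x\|=\sup_{x^*\in B_{X^*}}\lim_{n\to\infty} x^*(y_n)
=\sup_{m<\omega}\,\sup_{x^*\in B_{X^*}}\inf_{n\geq m}x^*(y_n)
=\sup_{m<\omega}\, d\bigl(0,\mathrm{co}\{y_n:\, n\geq m\}\bigr),
\]
where the last equality is the standard dual formula for the distance to a convex set. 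That quantity is a countable supremum of countable infima (over rational convex combinations, by density and continuity of the norm) of norm-continuous functions of~$S$, so $S\mapsto\|x^{**}_S-x\|$ agrees on~$W$ with a function that is genuinely Borel on all of~$\mathcal{C}_i$; no projection along $B_{X^*}$ is needed. Intersecting over a countable dense set $X_0\subseteq X$ and with $W$ then gives coanalyticity exactly as the lemma claims. (A minor slip: in part (ii) you call $(B_{X^*},w^*)$ non-metrizable after correctly noting in your preparation that it is compact metrizable; the true issue is the Baire-one, rather than continuous, nature of $x^{**}_S$ on it, and the formula above circumvents precisely that.)
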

\begin{proof} Note first that, for each $m<\omega$, the function
$$
	e_m: \cC_i \to X, \quad
	e_m(S):=x_{f(S_m)},
$$
is continuous to the norm topology of~$X$. Indeed, let $(S^k)$ be a sequence in~$\cC_i$ converging to~$S\in \cC_i$.
There is $k_0<\omega$ such that for every $k\geq k_0$ and every $s\in 2^{<\omega}$
with ${\rm length}(s)\leq {\rm length}(S_m)$ we have
$$
	s\in S^k \quad \Longleftrightarrow \quad s\in S,
$$
which implies that $S^k_n=S_n$ for all $n\leq m$. Hence $e_m(S^k)=e_m(S)$ for every $k\geq k_0$.

(i). We consider the set $B_{X^*}$ equipped with the $w^*$-topology. Since $X$ is separable, $(B_{X^*},w^*)$ is metrizable and so it is a Polish space.
By the continuity of the~$e_n$'s, the mapping
$$
	B_{X^*} \times \mathcal{C}_i \to \erre,\quad
	(x^*,S)\mapsto \big|x^*(e_{m}(S))-x^*(e_{k}(S))\big|,
$$
is separately continuous and so it is Borel (see e.g. \cite{burke} and references therein).
Thus, the set
$$
	E:=\bigcup_{\eps\in \mathbb{Q}^+}
	\bigcap_{n<\omega}\bigcup_{m,k\geq n}
	\Big\{(x^*,S)\in B_{X^*}\times \cC_i: \, \big|x^*(e_{m}(S))-x^*(e_{k}(S))\big| \geq \eps\Big\}
$$
is Borel in~$B_{X^*}\times \cC_i$. If $\pi: B_{X^*}\times \cC_i \to \cC_i$
stands for the second coordinate projection, then $\pi(E)$ is analytic. Clearly,
$\cC_i \setminus \pi(E)=W$, the set of all $S\in \cC_i$ for which $(x_{f(S_n)})$ is weakly Cauchy.

(ii). The function $e: W \to X^{**}$ defined by
$$
	e(S):=w^*-\lim_{n\to \infty} e_n(S)
$$
is Borel measurable from~$W$ to $(X^{**},w^*)$ (since each $e_n$ is $w^*$-continuous) and so,
by Lemma~\ref{lem:Measurability}, the real-valued function defined on~$W$ by
$S \mapsto d(e(S),X)$ is Borel. Since $W$ is coanalytic,
the set $\{S\in W: d(e(S),X)\geq \delta\}$ is coanalytic as well.

(iii). The set of all $S\in \cC_i$ such that $(e_n(S))$ is a $\delta$-controlled $\ell^1$-sequence can be written as
$$
	\bigcap_{I \in [\omega]^{<\omega}}
	\bigcap_{\phi\in \mathbb{Q}^I}
	\left\{
	S\in \cC_i: \,
	\delta \sum_{n\in I}|\phi(n)|
	\leq
	\Big\|\sum_{n\in I}\phi(n)e_n(S)\Big\|
	\right\}.
$$
Since each $e_n$ is continuous to the norm topology of~$X$, the set above is Borel in~$\cC_i$.
Finally, note that (iv) follows at once from~(iii).
\end{proof}

\begin{lem}\label{wCauchy}
Let $(x_n)$ be a bounded sequence in~$X$ such that
$$
	\eta(x_n):=\inf\{\|x^{**}-x\|: \, x^{**}\in {\rm clust}_{X^{**}}(x_n), \, x\in X\}>0
$$
where ${\rm clust}_{X^{**}}(x_n)$ stands for the set of all $w^*$-cluster points
of~$(x_n)$ in~$X^{**}$. Then for every $0<\delta<\eta(x_n)$ and every
$L\in \cK(X)$ the set $\{n<\omega: x_{n}\in L+\delta B_X\}$ is finite.
\end{lem}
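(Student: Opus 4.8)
The plan is to argue by contradiction, exploiting the $w^*$-compactness of bounded sets in~$X^{**}$. Suppose that for some $L\in \cK(X)$ and some $\delta$ with $0<\delta<\eta(x_n)$ the set $A:=\{n<\omega: x_n\in L+\delta B_X\}$ is infinite. For each $n\in A$ I would fix a decomposition $x_n=k_n+b_n$ with $k_n\in L$ and $\|b_n\|\le \delta$. The goal is then to produce a $w^*$-cluster point of~$(x_n)$ in~$X^{**}$ whose distance to~$X$ is at most~$\delta$, which contradicts the hypothesis $\delta<\eta(x_n)$.

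First I would use the weak compactness of~$L$. By the Eberlein--\v{S}mulian theorem, $L$ is weakly sequentially compact, so the sequence $(k_n)_{n\in A}$ admits a subsequence $(k_{n_j})$ converging weakly to some $k\in L$. Recall that, for sequences living in~$X$, weak convergence in~$X$ coincides with convergence in the $w^*$-topology of~$X^{**}$ (both express pointwise convergence on~$X^*$); hence $k_{n_j}\to k$ in $(X^{**},w^*)$. Next I would handle the remainders: since $\|b_{n_j}\|\le \delta$ and the ball $\delta B_{X^{**}}$ is $w^*$-compact by Banach--Alaoglu, the sequence $(b_{n_j})$, viewed in~$X^{**}$, has a $w^*$-cluster point~$b^{**}$, and the $w^*$-closedness of $\delta B_{X^{**}}$ forces $\|b^{**}\|\le \delta$. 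Passing to a subnet of $(b_{n_j})$ that $w^*$-converges to~$b^{**}$, and using that a subnet of the convergent sequence $(k_{n_j})$ still $w^*$-converges to~$k$ together with the $w^*$-continuity of addition, the corresponding subnet of $x_{n_j}=k_{n_j}+b_{n_j}$ converges in~$w^*$ to $x^{**}:=k+b^{**}$.

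Finally, $x^{**}$ is a $w^*$-cluster point of the subsequence $(x_{n_j})$, and hence of the whole sequence~$(x_n)$, so $x^{**}\in {\rm clust}_{X^{**}}(x_n)$. Taking $x=k\in X$ gives
$$
	d(x^{**},X)\le \|x^{**}-k\|=\|b^{**}\|\le \delta<\eta(x_n),
$$
which contradicts the definition of~$\eta(x_n)$ as the infimum of $\|x^{**}-x\|$ over all cluster points $x^{**}$ and all $x\in X$. I expect the only delicate point to be the bookkeeping when combining the \emph{sequential} weak convergence of~$(k_n)$ (obtained from Eberlein--\v{S}mulian inside the weakly compact set~$L$) with the \emph{net-theoretic} $w^*$-cluster point of the norm-small remainders~$(b_n)$; once one records that a subnet of a $w^*$-convergent sequence keeps the same limit and that $w^*$-addition is continuous, the two pieces assemble into the desired cluster point at distance $\le\delta$ from~$X$.
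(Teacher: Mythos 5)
Your proof is correct and follows essentially the same route as the paper's: decompose $x_n=k_n+b_n$ with $k_n\in L$ and $\|b_n\|\le\delta$, extract a weakly convergent subsequence of the $L$-parts via Eberlein--\v{S}mulian, and use the $w^*$-compactness of $\delta B_{X^{**}}$ to produce a cluster point of $(x_n)$ at distance at most $\delta$ from~$X$, contradicting $\delta<\eta(x_n)$. The only cosmetic difference is one of ordering: the paper fixes an arbitrary $w^*$-cluster point $x^{**}$ of the subsequence and notes that $x^{**}-y\in{\rm clust}_{X^{**}}(x_{n_k}-y_k)\subseteq\delta B_{X^{**}}$, whereas you first cluster the remainders $(b_{n_j})$ and then assemble $x^{**}=k+b^{**}$; the subnet bookkeeping you flag is the same in both versions.
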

\begin{proof}
Fix any $\delta>0$. Suppose there is $L\in \cK(X)$ such that $x_{n}\in L+\delta B_X$ for infinitely many $n$'s.
Then we can find a subsequence $(x_{n_k})$ and a weakly convergent sequence~$(y_k)$ in~$L$
such that $\|x_{n_k}-y_k\|\leq \delta$ for every~$k<\omega$.
Let $y\in L$ be the weak limit of~$(y_k)$ and fix an arbitrary $x^{**} \in {\rm clust}_{X^{**}}(x_{n_k})$.
Then
$$
	x^{**}-y \in {\rm clust}_{X^{**}}(x_{n_k}-y_k) \sub \delta B_{X^{**}},
$$
hence $\eta(x_n)\leq \delta$. This finishes the proof.
\end{proof}

The following lemma was proved in \cite[Lemma~5]{kal-alt}.

\begin{lem}\label{l1seq}
Let $\delta>0$. If $(x_n)$ is a $\delta$-controlled $\ell^1$-sequence in~$X$,
then $\eta(x_n) \geq \delta$.
\end{lem}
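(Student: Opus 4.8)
The plan is to prove the equivalent assertion that every $w^*$-cluster point $x^{**}\in{\rm clust}_{X^{**}}(x_n)$ satisfies $d(x^{**},X)\ge\delta$. First I would realize the cluster point analytically: as $U$ runs over the $w^*$-neighbourhoods of $x^{**}$, the index sets $\{n:x_n\in U\}$ have the finite intersection property, so they extend to an ultrafilter $\mathcal U$ on $\omega$, and then $x^{**}(x^*)=\lim_{n\to\mathcal U}x^*(x_n)$ for every $x^*\in X^*$.

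Second, I would encode the $\delta$-controlled $\ell^1$ estimate as an operator. The map $J:\ell^1\to X$ with $Je_n=x_n$ is bounded and bounded below by $\delta$, i.e. $\|Ja\|\ge\delta\|a\|_1$; hence $J^*$ is surjective and the biadjoint $J^{**}:(\ell^1)^{**}\to X^{**}$ is again bounded below by $\delta$ (testing $\|J^{**}\xi\|=\sup_{x^*\in B_{X^*}}|\langle\xi,J^*x^*\rangle|$ against the $w^*$-dense set $J^*(B_{X^*})\supseteq\delta B_{\ell^\infty}$). Writing $\nu=\lim_{\mathcal U}\in(\ell^\infty)^*=(\ell^1)^{**}$ for the functional $b\mapsto\lim_{n\to\mathcal U}b_n$, one checks $J^{**}\nu=x^{**}$ and $\|\nu\|=1$; since a free ultrafilter limit annihilates $c_0$, $\nu$ is purely finitely additive and $d(\nu,\ell^1)=1$. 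This already settles the distance to the closed span $Z:=\spa\{x_n\}$: every $z\in Z$ is a norm-limit of vectors $Ja^{(k)}$ with $a^{(k)}\in\ell^1$, and
\[
	\|x^{**}-Ja^{(k)}\|=\|J^{**}(\nu-a^{(k)})\|\ge\delta\,\|\nu-a^{(k)}\|\ge\delta\, d(\nu,\ell^1)=\delta ,
\]
so by continuity of the norm $\|x^{**}-z\|\ge\delta$, i.e. $d(x^{**},Z)\ge\delta$.

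Third comes the upgrade from $Z$ to all of $X$. Since $x^{**}=J^{**}\nu\in Z^{**}=(Z^\perp)^\perp$, for $x\in X$ with $d(x,Z)\ge\delta$ one gets $\|x^{**}-x\|\ge\delta$ at once by testing against a norm-one $\phi\in Z^\perp$ with $\phi(x)=d(x,Z)$, because $x^{**}(\phi)=0$. The substantial case is $d(x,Z)<\delta$, which cannot be reduced to the span estimate: the natural map $Z^{**}/Z\to X^{**}/X$ is only a contraction in general, so a priori $d(x^{**},X)$ could be strictly smaller than $d(x^{**},Z)$. Equivalently, I would need to show the induced quotient operator $\tilde J:(\ell^1)^{**}/\ell^1\to X^{**}/X$ is bounded below by $\delta$ and evaluate it on $\nu+\ell^1$. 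By duality this is the construction, for each $\varepsilon>0$, of a functional $\Lambda\in X^{***}$ with $\|\Lambda\|\le1$, $\Lambda|_X=0$ and $\Lambda(x^{**})\ge\delta-\varepsilon$: one takes $\Lambda$ to be a $w^*$-cluster point of functionals $y_k^*\in B_{X^*}$ that (i) satisfy $J^*y_k^*=\delta\mathbf 1_{F_k}$ on the coordinates for suitable $F_k\in\mathcal U$, so that $x^{**}(y_k^*)=\delta$, and (ii) are weak$^*$-null on $X$, so that $\Lambda\in X^\perp$.

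The hard part will be exactly this last point: arranging condition (ii) without spoiling the norm bound in (i). The obstruction is structural, since the $\ell^1$-sequence only ``sees'' the subspace $Z$, a functional pinned down on $\{x_n\}$ is completely unconstrained on $X\setminus Z$, and forcing it to be asymptotically annihilating there — equivalently, proving that $\tilde J$ is bounded below — is a genuine fact about how the copy of $\ell^1$ is placed inside $X$, not a formal consequence of $J^{**}$ being bounded below. This is the technical core of the statement; I would carry it out as in \cite[Lemma~5]{kal-alt}, exploiting the purely finitely additive nature of $\nu$ to select the sets $F_k$ along a tail of $\mathcal U$ so that the associated functionals can be made to vanish on $X$ in the limit.
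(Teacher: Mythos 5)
Your Steps 1 and 2 are correct, but they stop exactly where the lemma begins, and this matters doubly here because the paper itself offers no argument: its ``proof'' is precisely the citation to \cite[Lemma~5]{kal-alt}, so the entire content of the statement is the step you also defer to that same citation. What you actually establish is $\|x^{**}-Ja\|\ge\delta\|\nu-a\|\ge\delta$ for all $a\in\ell^1$, i.e. $d(x^{**},Z)\ge\delta$ for $Z=\overline{{\rm span}}\{x_n:n<\omega\}$, together with $\|x^{**}-x\|\ge d(x,Z)$ via a norm-one functional in $Z^\perp$. Combining these two facts with the triangle inequality yields only $\|x^{**}-x\|\ge\max\{d(x,Z),\,\delta-d(x,Z)\}\ge\delta/2$; the upgrade to the full constant $\delta$, which is exactly what $\eta(x_n)\ge\delta$ asserts, is left unproved, so as a proof of the lemma the proposal is circular.

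Moreover, the mechanism you sketch for the missing step cannot work in general: your conditions (i) and (ii) are jointly unsatisfiable in some spaces. Condition (i) pins $x^{**}(y_k^*)=\delta$ for every $k$ (because $F_k\in\mathcal{U}$), while if $X$ is a Grothendieck space --- e.g. $X=\ell^\infty$, which contains isometric ($1$-controlled) copies of the $\ell^1$-basis --- every weak$^*$-null sequence in $X^*$ is weakly null, so (ii) would force $x^{**}(y_k^*)\to 0$. The idea that actually closes the gap needs no annihilation on $X$ at all. Fix $x\in X$ and set $r:=\|x^{**}-x\|$; then every $f\in B_{X^*}$ with $f(x_n)=\delta g_n$ for all $n$ satisfies $|f(x)-\delta\lim_{\mathcal{U}}g_n|=|(x-x^{**})(f)|\le r$. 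Using your surjectivity $J^*(B_{X^*})\supseteq\delta B_{\ell^\infty}$, pick $f_k\in B_{X^*}$ with $f_k(x_n)=-\delta$ for $n<k$ and $f_k(x_n)=\delta$ for $n\ge k$; since tails belong to $\mathcal{U}$ (your $\mathcal{U}$ is free), the inequality gives $f_k(x)\ge\delta-r$. Any $w^*$-cluster point $f$ of $(f_k)$ in $B_{X^*}$ then has $f(x_n)=-\delta$ for every $n$ and $f(x)\ge\delta-r$, and the same inequality applied to $f$, whose $\mathcal{U}$-limit of values along the sequence is now $-\delta$, yields $f(x)\le r-\delta$; hence $\delta-r\le r-\delta$, i.e. $r\ge\delta$. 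It is this sign flip on longer and longer initial segments --- making the limiting functional read $-\delta$, rather than $0$, along the sequence, while the quantity $r$ itself controls its value at $x$ --- that turns $\delta/2$ into $\delta$; this is the substance of the cited lemma of Kalenda--Pfitzner--Spurn\'{y}, and it is absent from (indeed, incompatible with) your sketch.
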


Recall that $\mathcal{RK}(B_X)$ is the family of all relatively
weakly compact subsets of~$B_X$.

\begin{lem}\label{lem:TukeyTop}
Suppose there exist $\delta>0$ and a function $f:2^{<\omega} \to B_X$ such that:
\begin{enumerate}
\item[(i)] $f(S)\in \mathcal{RK}(B_X)$ for every $S \in \cC_0$;
\item[(ii)] $f(S)\not \subseteq L+\delta B_X$ for every $S \in \cC_1$ and every $L \in \cK(B_X)$.
\end{enumerate}
Then $[\mathfrak{c}]^{<\omega} \preceq \mathcal{AK}(B_X)$.
\end{lem}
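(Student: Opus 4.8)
The plan is to derive the conclusion from Lemma~\ref{lem:GeneralLemma}, which already performs the Ramsey-type argument producing a copy of $[\mathfrak{c}]^{<\omega}$. The only mismatch with the present hypotheses is one of orientation: Lemma~\ref{lem:GeneralLemma} wants the images of $1$-chains to belong to the family $\cA$ and the images of $0$-chains to be ``large'' (not $\leq A$), whereas here it is the $0$-chains whose images are relatively weakly compact and the $1$-chains whose images resist almost-inclusion. I would repair this by precomposing $f$ with the bit-flipping involution $\iota:2^{<\omega}\to 2^{<\omega}$, $\iota(s_0,\dots,s_k):=(1-s_0,\dots,1-s_k)$, which preserves $\sqsubseteq$ and hence exchanges $0$-chains and $1$-chains. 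Concretely, set $W:=B_X$, $\cA:=\mathcal{RK}(B_X)$, $h:=f\circ\iota$, and define a binary relation on $\mathcal{P}(B_X)$ by
\[
	B \leq A \quad \Longleftrightarrow \quad B \sub \overline{A}^{\,w}+\delta B_X .
\]

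Next I would verify the four hypotheses of Lemma~\ref{lem:GeneralLemma} for $h$, $\cA$ and $\leq$. Property~(i) holds because a finite union of relatively weakly compact sets is relatively weakly compact. For property~(ii), note that $\bigcup\{C:C\leq A\}=B_X\cap(\overline{A}^{\,w}+\delta B_X)$, so any $B\sub\bigcup\{C:C\leq A\}$ already satisfies $B\sub\overline{A}^{\,w}+\delta B_X$, i.e. $B\leq A$. For property~(iii), if $S$ is a $1$-chain then $\iota(S)$ is a $0$-chain, whence $h(S)=f(\iota(S))\in\mathcal{RK}(B_X)=\cA$ by hypothesis~(i). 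For property~(iv), if $S$ is a $0$-chain then $\iota(S)$ is a $1$-chain and $\overline{A}^{\,w}\in\cK(B_X)$ for every $A\in\cA$ (using that $B_X$ is weakly closed, so the weak closure of a set in $\mathcal{RK}(B_X)$ is a weakly compact subset of $B_X$); hypothesis~(ii) then gives $h(S)=f(\iota(S))\not\subseteq\overline{A}^{\,w}+\delta B_X$, that is $h(S)\not\leq A$.

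With the hypotheses in place, Lemma~\ref{lem:GeneralLemma} furnishes a Tukey function $\bar F:[\mathfrak{c}]^{<\omega}\to(\cA,\leq)$, each value $\bar F(B)$ being a relatively weakly compact subset of $B_X$. To land inside $\mathcal{AK}(B_X)$ I would post-compose with the weak-closure map, setting $g(B):=\overline{\bar F(B)}^{\,w}\in\mathcal{AK}(B_X)$, and check that $g$ is Tukey from $([\mathfrak{c}]^{<\omega},\sub)$ to $(\mathcal{AK}(B_X),\leq_\delta)$: given $L_0\in\mathcal{AK}(B_X)$ one has $L_0\in\cA$ and $\overline{L_0}^{\,w}=L_0$, so $g(B)\leq_\delta L_0$ forces $\bar F(B)\sub\overline{\bar F(B)}^{\,w}\sub L_0+\delta B_X$, i.e. $\bar F(B)\leq L_0$, and the Tukey-ness of $\bar F$ then confines $B$ to the finite set that $\bar F$ associates with $L_0$. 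An appeal to Remark~\ref{at:5}(ii) yields $[\mathfrak{c}]^{<\omega}\preceq\mathcal{AK}(B_X)$. The main point to get right is not any hard estimate but the bookkeeping: choosing the relation $\leq$ together with the bit-flip so that the two available hypotheses translate exactly into the four hypotheses of Lemma~\ref{lem:GeneralLemma}, and then transferring the reduction from relatively weakly compact sets to their weak closures.
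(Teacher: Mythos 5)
Your proof is correct and follows essentially the same route as the paper's: apply Lemma~\ref{lem:GeneralLemma} to the family $\mathcal{RK}(B_X)$ with an almost-inclusion relation of parameter~$\delta$, and then transfer the resulting reduction to $\mathcal{AK}(B_X)$ via the weak-closure map, invoking Remark~\ref{at:5}(ii). You are in fact slightly more scrupulous than the paper on one point of bookkeeping: the paper applies Lemma~\ref{lem:GeneralLemma} with the roles of $0$-chains and $1$-chains tacitly interchanged (its hypotheses put $1$-chains in $\cA$ and make $0$-chains large, the reverse of the situation here), whereas your precomposition with the bit-flipping involution makes that symmetry explicit, and your choice of relation $B \leq A \Leftrightarrow B \sub \overline{A}^{\,w}+\delta B_X$ cleanly packages the passage from $A\in\mathcal{RK}(B_X)$ to the weakly compact set $\overline{A}^{\,w}\in\cK(B_X)$ needed to use hypothesis~(ii).
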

\begin{proof}
Consider the binary relation on~$\mathcal{P}(B_X)$ defined by
$$
	A \leq_\delta B
	\quad
	:\Longleftrightarrow \quad
	A \sub B+\delta B_X.
$$
By (ii) we have $f(S)\not\leq_\delta A$ for every $1$-chain $S \sub 2^{<\omega}$
and every $A\in\mathcal{RK}(B_X)$. Lemma~\ref{lem:GeneralLemma} applied to the function~$f$, the
family~$\mathcal{RK}(B_X)$ and the binary relation~$\leq_\delta$
ensures that $[\mathfrak{c}]^{<\omega} \preceq (\mathcal{RK}(B_X),\leq_\delta)$.
Since the function
$$
	g: (\mathcal{RK}(B_X),\leq_\delta) \to (\mathcal{AK}(B_X),\leq_\delta),
	\quad
	g(A):=\overline{A}^{w},
$$
is Tukey, we conclude that $[\mathfrak{c}]^{<\omega} \preceq \mathcal{AK}(B_X)$.
\end{proof}

We can now prove the main result of this subsection.

\begin{proof}[Proof of Proposition~\ref{AKtukeytop}]
The ``if part'' (valid in ZFC) follows from Propositon~\ref{at:7}(i)
and the fact that $\cK(B_X)\preceq [\mathfrak{c}]^{<\omega}$ whenever $X$ is separable (by Remark~\ref{rem:CardinalTukey}).

Conversely, suppose now that $\cK(B_X)\sim [\mathfrak{c}]^{<\omega}$.
Let $D=\{x_n:n<\omega\}$ be a countable dense subset of~$B_X$
and let $\mathcal{I}$ be the set of all $A \sub \omega$
for which $\{x_n:n\in A\}\in \mathcal{RK}(D)$, i.e. $\{x_n:n\in A\}$ is relatively weakly compact.
By the proof of Proposition~\ref{pro:usoDobleLimite}, there is
an analytic family~$\mathcal{I}_0$ of subsets of $\omega$ such that $\mathcal{I}_0^\perp=\mathcal{I} \sim \mathcal{RK}(D)$.
Hence Theorem~\ref{projectiveideal} can be applied to~$\mathcal{I}_0$.
Since
$$
	\mathcal{I}_0^\perp \sim \mathcal{RK}(D)\sim \mathcal{K}(B_X) \sim [\mathfrak{c}]^{<\omega}
$$
(bear in mind Lemma~\ref{relativeBanach}), Case~2 in the proof of Theorem~\ref{projectiveideal} occurs and
so there is an injective function $u:2^{<\omega}\to \omega$ such that
$u(S)\in \mathcal{I}$ (resp. $u(S)\in \mathcal{I}^\perp$)
for every $0$-chain (resp. $1$-chain) $S \sub 2^{<\omega}$. Let $c: \cC_1 \to \{0,1,2\}$
be the coloring defined by
$$
	c(S):=\begin{cases}
	0 & \text{if $(x_{u(S_n)})$ is weakly Cauchy}, \\
	1 & \text{if $(x_{u(S_n)})$ is an $\ell^1$-sequence}, \\
	2 & \text{otherwise}. \\
	\end{cases}
$$
(Here we follow the notation of Lemma~\ref{lem:measurability}.)
Since $c$ is analytic measurable (by Lemma~\ref{lem:measurability}),
we can apply Lemma~\ref{Ramseychains} to find a subtree $v:2^{<\omega} \to 2^{<\omega}$
such that $c(v(\cdot))$ is constant on~$\cC_1$. Rosenthal's $\ell^1$-theorem (see e.g. \cite[Theorem~5.37]{fab-ultimo})
states that every bounded sequence in a Banach space contains either a weakly Cauchy subsequence or an $\ell^1$-subsequence, so it is
impossible that $c(v(S))=2$ for every $S\in \cC_1$. Writing $w:=u\circ v$, we are therefore reduced to consider two cases:

{\sc Case 1:} $(x_{w(S_n)})$ is weakly Cauchy for every $S\in \cC_1$. The $w^*$-limit $x_S^{**}$
of such a sequence belongs to~$X^{**}\setminus X$, because
otherwise $\{x_{w(S_n)}:n<\omega\}\in \mathcal{RK}(D)$, i.e. $u(v(S))\in \mathcal{I}$,
while $u(v(S))\in \mathcal{I}^\perp$ because $v(S)$ is a $1$-chain; this is a contradiction,
since $u(v(S))$ is infinite.

Let $G \sub 2^\omega$ be the set of all $\sigma = (\sigma_k)\in 2^\omega$ such that $\sigma_k=1$ for infinitely many $k$'s and,
for any such $\sigma$, consider
$$
	\sigma^{(1)}:=\{(\sigma_0,\dots,\sigma_{k}): \, \sigma_{k+1} = 1\} \in \cC_1.
$$
Note that $G$ is a $\cG_\delta$ subset of~$2^\omega$, hence $G$ is Polish. Write $G=\bigcup_{n<\omega}A_n$, where
$$
	A_n := \Big\{\sigma\in G : \, d\big(x_{\sigma^{(1)}}^{**},X\big)> \frac{1}{n+1}\Big\}.
$$
Each $A_n$ is Borel, by the proof of Lemma~\ref{lem:measurability}(ii) and the continuity
of the mapping $G \to \cC_1$ given by $\sigma\mapsto \sigma^{(1)}$ (which can be proved easily).
Fix $m<\omega$ such that $A_m$ is uncountable. Then $A_m$ contains a set $P$ homeomorphic to~$2^\omega$
(see e.g. \cite[Theorem~13.6]{kec-J}). By Lemma~\ref{lem:PerfectSubtree},
there is a subtree $\xi:2^{<\omega} \to 2^{<\omega}$ such that
$\xi(2^{<\omega}) \sub T:=\{\sigma|_n: \, \sigma\in P, \, n<\omega\}$.

{\em Claim:} For every $1$-chain $S \sub 2^{<\omega}$ and every $L \in \cK(B_X)$ we have
$$
	\{x_{w(\xi(S_n))}:n<\omega\} \not \subseteq L + \frac{1}{m+1}B_X.
$$
Indeed, $\xi(S)$ is also a $1$-chain, hence there exist
$\sigma\in G$ and a strictly increasing sequence $(k_n)$ in~$\omega$
such that $\xi(S_n)=\sigma|_{k_n}$ and $\sigma_{k_n}=1$ for all $n<\omega$.
Since $\xi(2^{<\omega})\sub T$, we have $\sigma \in P \sub A_m$.
Note that $\sigma^{(1)} \supseteq \xi(S)$ and so
$x^{**}_{\sigma^{(1)}}=x^{**}_{\xi(S)}$, therefore
$$
	d\Big(w^*-\lim_{n\to\infty}x_{w(\xi(S_n))},X\Big)> \frac{1}{m+1}.
$$
An appeal to Lemma~\ref{wCauchy} finishes the proof of the claim.

{\sc Case 2:} $g(S):=(x_{w(S_n)})$ is an $\ell^1$-sequence for every $S \in \cC_1$.
We can write $G=\bigcup_{n<\omega} B_n$, where
$$
	B_n := \Big\{
	\sigma\in G : \, g(\sigma^{(1)}) \mbox{ is } \frac{1}{n+1}\mbox{-controlled}
	\Big\}.
$$
By Lemma~\ref{lem:measurability}(iii) and the continuity
of the mapping $G \to \cC_1$ given by $\sigma\mapsto \sigma^{(1)}$,
each $B_n$ is Borel, and so it contains a subset homeomorphic to~$2^\omega$
whenever $B_n$ is uncountable. As in Case~1,
there exist $m<\omega$ and a subtree $\xi:2^{<\omega}\to 2^{<\omega}$ such that,
for every $1$-chain $S \sub 2^{<\omega}$, the sequence
$(x_{w(\xi(S_n))})$ is a $\frac{1}{m+1}$-controlled $\ell^1$-sequence,
which implies that
$$
	\{x_{w(\xi(S_n))}:n<\omega\} \not \subseteq L + \frac{1}{2(m+1)}B_X.
$$
for every $L \in \cK(B_X)$ (by Lemmas \ref{wCauchy} and~\ref{l1seq}).

In any of the two cases, we can apply Lemma~\ref{lem:TukeyTop} to
the function $f:2^{<\omega} \to B_X$ given by $f(t):=x_{w(\xi(t))}$
to conclude that $[\mathfrak{c}]^{<\omega} \preceq \mathcal{AK}(B_X)$.
\end{proof}

\subsection{Case $\mathcal{AK}(B_X)\sim \cK(\mathbb{Q})$}

This subsection is devoted to sketching the proof of Proposition~\ref{AKKQ}.
To this end, we shall use a Ramsey theorem recently proved in~\cite{avi-tod-IHES}.
Let us try to summarize the concepts and facts that we need, that can found in detail in~\cite{avi-tod-IHES} and in the
more extended preprint \cite{avi-tod-2}.
Throughout this subsection $2^{<\omega}$ is equipped with the topology inherited
from~$2^\omega$ via the  injection $\gamma:2^{<\omega}\to 2^\omega$ given by
 $$\gamma(s) = \begin{cases}
 s \smallfrown 1\smallfrown {\bf 0} \text{ if } s \text{ is not made exclusively of zeros}\\
 {\bf 0} \text{ if } s = \emptyset\\
 0^{n-1} \smallfrown 1 \smallfrown {\bf 0} \text{ if } s=0^n \text{ consists of } n \text{ many zeros},
 \end{cases}$$
that bijects $2^{<\omega}$ with the sequences that are eventually zero, and that makes $2^{<\omega}$ homeomorphic to~$\mathbb{Q}$.
There are eight special \emph{types} of infinite subsets of~$2^{<\omega}$, denoted as
$$
	\mathfrak{T} = \{[0],[1],[01],[^0{}_1],[^1{}_0], [^{01}{}_1],[^1{}_{01}], [_0{}^1{}_1]\}.
$$

The sets of type $[0]$, $[1]$ and $[01]$ are chains and the rest are antichains. A chain $\{s_0,s_1,\ldots\}$ is of type $[0]$ if $s_{n+1}$ is the result of adding only zeroes to the previous element $s_n$. It is of type $[01]$ if, for every $n$, $s_{n+1}$ is of the form $s_n \smallfrown 0 \smallfrown t_n$ where $t_n$ is not made exclusively of zeros, and it is of type $[1]$ if, for every $n$, $s_{n+1}$ is of the form $s_n \smallfrown 1 \smallfrown t_n$ for some arbitrary $t_n$. A set of one of the other types is of the form $\{s_0 \smallfrown r_0,s_1 \smallfrown r_1,\ldots\}$ where $\{s_0,s_1,\ldots\}$ is a chain of the type indicated by the lower row in the square-bracket expression of the type, and, for every $n$, $\{s_n,s_n \smallfrown r_n\}$ could be the first two elements of a chain of the type indicated by the upper row. The types $[^1{}_{01}]$ and $[_0{}^1{}_1]$ are distinguished by further details that we omit since they are irrelevant to our discussion. Some properties are the following:
\begin{itemize}
\item[(T1)] For every set $A \sub 2^{<\omega}$ of type~$\tau$ and every set $B\sub 2^{<\omega}$
of type $\tau'\neq \tau$, the intersection $A\cap B$ is finite.
\item[(T2)] Every infinite subset of $2^{<\omega}$ contains a further infinite subset which is of type~$\tau$,
for some $\tau\in\mathfrak{T}$.
\item[(T3)] Every set of type $\tau\in\{[0],[^1{}_0]\}$ is relatively
compact in $2^{<\omega}$, while every set of type $\tau\in\mathfrak{T}\setminus\{[0],[^1{}_0]\}$ is closed and discrete in $2^{<\omega}$. This is because, according to the description given above, sets of type $[0]$ and $[^1{}_0]$ converge to an infinite sequence of $2^\omega$ that is eventually zero, while sets of any other type converge to a sequence that has infinitely many ones.
\item[(T4)] For every $\tau\in\mathfrak{T}$, the family $\mathcal{S}_\tau$ of all sets of type~$\tau$ is a $\cG_\delta$ subset
of the compact metrizable space~$2^{2^{<\omega}}$, hence it can be viewed as a Polish space.
\end{itemize}

Associated to these types, we have the notion of {\em nice embedding}. The precise definition can be found in~\cite{avi-tod-IHES}, but we do not give it here, let us just point out that a nice embedding is an injective function $u:2^{<\omega}\to 2^{<\omega}$ that has the following properties:
\begin{enumerate}
\item[(N1)] For every $s,t\in 2^{<\omega}$, if ${\rm length}(s)<{\rm length}(t)$,
then ${\rm length}(u(s))<{\rm length}(u(t))$.
\item[(N2)] For every $s\in 2^{<\omega}$, we have:
\begin{itemize}
\item $u(s\smallfrown 0)=u(s)\smallfrown 0 \smallfrown \dots \smallfrown 0$ for some number of $0$'s;
\item $u(s\smallfrown 1) \sqsupseteq u(s)\smallfrown 1$.
\end{itemize}
\item[(N3)] For every $\tau \in \mathfrak{T}$, a set $A\subset 2^{<\omega}$ has type~$\tau$ if and only if $u(A)$ has type~$\tau$.
\item[(N4)] $u$ is a homeomorphism from $2^{<\omega}$ onto a closed subset of $2^{<\omega}$.
\end{enumerate}
The Ramsey theorem from~\cite{avi-tod-IHES} that we mentioned is the following:

\begin{thm}\label{finitepartition}
Fix $\tau\in\mathfrak{T}$. Let $W$ be a finite set and $c:\mathcal{S}_\tau\to W$ an analytic measurable function
(i.e. for every $a\in W$ the set $c^{-1}(\{a\})$ belongs to the $\sigma$-algebra on~$\mathcal{S}_\tau$
generated by the analytic sets).
Then there is a nice embedding $u:2^{<\omega}\to 2^{<\omega}$ such that $c$ is constant on $\{u(A) : A\in \mathcal{S}_\tau\}$.
\end{thm}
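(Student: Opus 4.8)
The plan is to recognize Theorem~\ref{finitepartition} as a pigeonhole principle of Milliken--Halpern--L\"auchli type, phrased for the eight types of subsets of $2^{<\omega}$ and the category of nice embeddings, and to prove it either through the abstract Ellentuck machinery of topological Ramsey spaces or, more concretely, through a fusion argument driven by the Halpern--L\"auchli theorem. First I would dispose of the measurability hypothesis: each color class $c^{-1}(\{a\})$ lies in the $\sigma$-algebra generated by the analytic subsets of the Polish space $\mathcal{S}_\tau$ (recall (T4)), and since every analytic set has the Baire property and the sets with the Baire property form a $\sigma$-algebra, $c$ is Baire measurable. As $W$ is finite, $c$ is therefore continuous on some dense $G_\delta$ set $C \sub \mathcal{S}_\tau$. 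This is the standard reduction that lets category arguments replace the full strength of analytic measurability.

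The combinatorial core is to build the nice embedding $u$ by a fusion, that is, a $\sqsubseteq$-coherent sequence of finite approximations $u_n\colon 2^{\le n}\to 2^{<\omega}$ satisfying (N1)--(N2) level by level. At each stage one has finitely many nodes already placed and retains the freedom to choose how the next level branches and how much padding of $0$'s and $1$'s is inserted; conditions (N1)--(N4) constrain but do not destroy this freedom, and crucially any subtree produced this way preserves all eight types by~(N3). The Halpern--L\"auchli theorem is what allows one, at each step, to thin the available extensions so that the color of every type-$\tau$ configuration compatible with the current approximation is forced to a single value $a\in W$; finiteness of $W$ guarantees that some value recurs cofinally, and a diagonalization across the infinitely many ``coordinates'' in which a type-$\tau$ set can be completed fixes one global color. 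In the limit $u=\bigcup_n u_n$ is a nice embedding, and genericity of the construction, arranged so that the type-$\tau$ subsets of $u(2^{<\omega})$ fall inside the comeager continuity set $C$, forces $c$ to be constant on $\{u(A):A\in\mathcal{S}_\tau\}$. Equivalently, one may verify the Ellentuck-style axioms (A.1)--(A.4) for the space of type-$\tau$ configurations under nice embeddings and invoke the abstract theorem that Baire-measurable sets are Ramsey.

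The hard part is the pigeonhole itself: adapting Halpern--L\"auchli to each of the eight types while respecting the rigidity imposed by (N1)--(N4). The types split into chains ($[0],[1],[01]$) and antichains, and a type-$\tau$ set is a chain decorated by tails $r_n$; the pigeonhole must simultaneously control the chain part and the decoration, which is where the genuine tree-Ramsey content, and the dependence on the dense-level structure of $2^{<\omega}$ imported through $\gamma$, enters. A secondary difficulty is the bookkeeping that keeps all of (N1)--(N4) intact along the fusion while homogenizing only the single type $\tau$, so that the limit $u$ is a bona fide nice embedding and not merely a type-$\tau$-preserving injection. Since this Ramsey theorem is precisely the content of~\cite{avi-tod-IHES}, the detailed verification of the pigeonhole and of the Ramsey-space axioms is carried out there, and in the present paper it suffices to quote it.
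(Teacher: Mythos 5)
Your proposal is correct and coincides with what the paper actually does: Theorem~\ref{finitepartition} is not proved in this paper at all, but is stated as a quotation of the Ramsey theorem from~\cite{avi-tod-IHES}, which is exactly the citation your argument ultimately rests on. The Halpern--L\"auchli/fusion sketch you prepend is therefore not load-bearing (and your reduction to metric Baire category is not the mechanism used in that reference, where the measurability enters through Ramsey-space/Milliken-type arguments rather than plain category), but since you explicitly defer the pigeonhole to~\cite{avi-tod-IHES}, your approach and the paper's are essentially the same.
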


In addition, we shall need some extension of this result for countable partitions, that holds only for some of the types;
see~\cite[Lemma 2.8.1]{avi-tod-2}.

\begin{lem}\label{countablepartition}
Fix $\tau\in\mathfrak{T}\setminus\{[0], [^1{}_0]\}$. Let $c:\mathcal{S}_\tau\to\omega$ be an
analytic measurable function such that $c(A) = c(B)$ whenever $A \triangle B$ is finite.
Then there is a nice embedding $u:2^{<\omega}\to 2^{<\omega}$ such that $c$ is constant on $\{u(A) : A\in \mathcal{S}_\tau\}$.
\end{lem}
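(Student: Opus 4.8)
The plan is to bootstrap the finite-colour result of Theorem~\ref{finitepartition} to the countable case by a diagonal fusion, exploiting the hypothesis that $c$ is invariant under finite modifications together with the fact that every type in $\mathfrak{T}\setminus\{[0],[^1{}_0]\}$ is closed and discrete (property~(T3)). The starting observation is that finite-change invariance makes $c$ a \emph{tail invariant}: if $A,B\in\mathcal{S}_\tau$ and both $A\setminus B$ and $B\setminus A$ are finite, then $c(A)=c(B)$; in particular $c(A)=c(A')$ for every cofinite $A'\subseteq A$, and (one checks from the explicit description of the types) such a tail $A'$ is again of type~$\tau$. This is the property that is lost for the compact types and that will drive the whole argument.

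For each $k<\omega$ I would first replace $c$ by the finite coarsening $c_k:\mathcal{S}_\tau\to\{0,1,\dots,k,\infty\}$ defined by $c_k(A):=c(A)$ if $c(A)\le k$ and $c_k(A):=\infty$ otherwise. Applying Theorem~\ref{finitepartition} repeatedly and composing the resulting nice embeddings (one checks that a composition of nice embeddings is again a nice embedding, and that by~(N3) the image of $\mathcal{S}_\tau$ under a nice embedding lies in $\mathcal{S}_\tau$), I would produce a decreasing sequence of nice embeddings with nested images on which each $c_k$ is constant, say with value $j_k$; since $c_{k+1}$ refines $c_k$, the values $j_k$ cohere. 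If some $j_k$ is a genuine natural number~$m$, then $c\equiv m$ on the corresponding image and the lemma is proved. The delicate case is $j_k=\infty$ for every~$k$, i.e.\ at stage~$k$ we have a nice embedding whose image only ever sees colours strictly greater than~$k$.

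To rule out that case I would run a single diagonal fusion combining the stages. Enumerating $2^{<\omega}$, I commit the values $u(t_0),u(t_1),\dots$ one at a time, always choosing the remaining behaviour inside the stage-$k$ reservoir once the first $k$ elements have been fixed, and checking at each step that the committed finite data extends to a nice embedding (that~(N1)--(N2) are respected at the commitment, and that~(N3)--(N4) hold in the limit). The payoff of this bookkeeping is that for every $A\in\mathcal{S}_\tau$ and every~$k$, the cofinite tail of $u(A)$ obtained by deleting the images of the first finitely many elements of~$A$ is a type-$\tau$ set lying entirely in the stage-$k$ reservoir, so its $c$-value exceeds~$k$; tail invariance then forces $c(u(A))>k$ for \emph{every}~$k$, which is absurd since $c(u(A))<\omega$. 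Hence $j_k=\infty$ for all $k$ is impossible, and the homogeneous embedding from the other case proves the lemma.

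The main obstacle is precisely the verification that this diagonal fusion yields a genuine nice embedding while keeping each tail of $u(A)$ inside a stage-$k$ reservoir of type~$\tau$, and this is exactly where the exclusion of $[0]$ and $[^1{}_0]$ is essential. By~(T3) these are the only relatively compact types: their sets converge in $2^{<\omega}$ to a (rational) point of~$2^\omega$, so deleting an initial segment shifts the limit and the required tail invariance breaks down. In fact the countable-colour statement is genuinely false for those two types, which is why Theorem~\ref{finitepartition} is best possible there. For all remaining types the limit is attained along infinitely many ones, finite modifications are invisible both to the type and to the colour, and the fusion carries through.
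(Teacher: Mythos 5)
The paper itself contains no proof of Lemma~\ref{countablepartition}: it is quoted from \cite[Lemma~2.8.1]{avi-tod-2}. So your proposal has to stand on its own, and its outer scheme is indeed sound as far as it goes: the coarsenings $c_k$ are analytic measurable, composites of nice embeddings are nice embeddings, images of $\mathcal{S}_\tau$ stay in $\mathcal{S}_\tau$ by~(N3), the values $j_k$ cohere, and in the bad case ($j_k=\infty$ for all $k$) the contradiction via finite-change invariance is correct \emph{provided} the fusion embedding exists. The genuine gap is that this fusion --- which you yourself call ``the main obstacle'' --- is never constructed, and it is not bookkeeping: it is the entire content of the lemma, and it cannot be extracted from the black-box statement of Theorem~\ref{finitepartition} by a routine diagonalization.

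Here is the concrete symptom that the gap is real. Nothing in your sketch, as written, actually uses $\tau\notin\{[0],[^1{}_0]\}$: cofinite subsets of type-$[0]$ sets are again of type $[0]$, the colouring is finite-change invariant for every type, and deleting finitely many elements of a convergent sequence does \emph{not} move its limit, so your stated reason (``deleting an initial segment shifts the limit and the required tail invariance breaks down'') is simply false. Hence a completed version of your argument would prove the statement for $[0]$ as well --- but it is false there: colour each $A\in\mathcal{S}_{[0]}$ by an index of its limit in $2^\omega$ (an eventually zero sequence, hence one of countably many). This colouring is Borel and finite-change invariant, yet no nice embedding $u$ homogenizes it, because by~(N2) the pure-zero chain above $t$ is mapped into the pure-zero chain above $u(t)$, so $\lim u(A)=u(s_0)\smallfrown{\bf 0}$ where $s_0$ is the $\sqsubseteq$-least element of $A$, and distinct stems such as $\emptyset$ and $(1)$ give distinct limits. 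What really separates the two excluded types from the other six is the asymmetry in~(N2): $u(t\smallfrown 0)$ is \emph{forced} to be a pure-zero extension of $u(t)$, while $u(t\smallfrown 1)$ may be chosen freely above $u(t)\smallfrown 1$; so a fusion can ``dive'' into deeper reservoirs only at $1$-steps, and the tails of $u(A)$ enter every reservoir exactly when every set of type $\tau$ has elements containing unboundedly many $1$'s --- true for the six admissible types (this is the content of the explanation after~(T3)), false for $[0]$ and for $[^1{}_0]$ (witness $\{s\smallfrown 0^n\smallfrown 1:\ n<\omega\}$, all of whose elements have the same number of $1$'s). Even granting this, your commitment ``choose $u(t_i)$ inside the stage-$i$ reservoir'' needs a nontrivial invariant: the image of a nice embedding need not contain \emph{any} extension of a given node, so one must arrange in advance that admissible extensions of each committed node exist inside the later reservoirs, and maintaining this together with (N1)--(N4) is precisely the work carried out in the cited reference and absent from your sketch.
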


\begin{proof}[Proof of Proposition~\ref{AKKQ}]
Suppose first that $\mathcal{AK}(B_X)\sim \mathcal{K}(\mathbb{Q})$. Then
$$
	\mathcal{K}(\mathbb{Q})\sim \mathcal{AK}(B_X) \preceq \mathcal{K}(B_X)
$$
(apply Proposition~\ref{at:7}(i)).
By Theorem~\ref{PDKclassification}, we get that either $\mathcal{K}(B_X)\sim\mathcal{K}(\mathbb{Q})$ or $\mathcal{K}(B_X)\sim [\mathfrak{c}]^{<\omega}$.
An appeal to Proposition~\ref{AKtukeytop} yields $\mathcal{K}(B_X) \sim \mathcal{K}(\mathbb{Q})$, as desired.

Suppose now that $\mathcal{K}(B_X)\sim \mathcal{K}(\mathbb{Q})$. We divide the proof
that $\mathcal{AK}(B_X)\sim \mathcal{K}(\mathbb{Q})$ into several steps.

{\sc Step~1.} There is an injective function
$\upsilon:2^{<\omega} \to B_X$ such that:
\begin{itemize}
\item[(*)] a set $A\subseteq 2^{<\omega}$ is relatively compact in~$2^{<\omega}$
if and only if $\upsilon(A)$ is relatively weakly compact in~$X$.
\end{itemize}
To prove this, let $D=\{x_n:n<\omega\}\sub B_X$ be a countable dense set.
By Proposition~\ref{pro:usoDobleLimite}, there is an analytic family $\mathcal{I}$
of subsets of~$\omega$ such that $\mathcal{RK}(D)\sim \mathcal{I}^\perp$.
Theorem~\ref{projectiveideal} can now be applied to~$\mathcal{I}$.
By Lemma~\ref{relativeBanach}, we have
$$
	\mathcal{I}^\perp \sim \mathcal{RK}(D) \sim \mathcal{K}(B_X) \sim \mathcal{K}(\mathbb{Q})
$$
and so Case~1 in the proof of Theorem~\ref{projectiveideal} occurs.
Therefore, there is a metric space~$(E_0,\rho)$ such that:
\begin{itemize}
\item $E_0$ is coanalytic (in some Polish space);
\item $\omega$ is a dense subset of~$E_0$;
\item a set $C \subseteq \omega$ is relatively compact in~$E_0$ if and only if $\{x_n:n\in C\}$ is relatively weakly compact;
\item $\mathcal{K}(E_0)\sim \mathcal{I}^\perp$.
\end{itemize}
Bearing in mind Fremlin's Theorem~\ref{Fr91classification}, we deduce that $E_0$ is not Polish.

By Hurewicz's theorem (see e.g.~\cite[21.18]{kec-J}), there is a closed set $F\subset E_0$ which is
homeomorphic to~$\mathbb{Q}$ (and so homeomorphic to~$2^{<\omega}$).
Enumerate $F = \{f_n:n<\omega\}$ and consider a set $G=\{g_n:n<\omega\}\subseteq \omega$ such that
$\rho(f_n,g_n)\leq \frac{1}{n+1}$ for all $n<\omega$. Let $\vf:2^{<\omega}\to \omega$ be a bijection such that
the induced mapping $t \mapsto f_{\vf(t)}$ is a homeomorphism between $2^{<\omega}$ and~$F$. It is
clear that the function
$$
	\upsilon:2^{<\omega} \to B_X,
	\quad
	\upsilon(t):=x_{g_{\varphi(t)}},
$$
satisfies the required properties.

{\sc Step~2.} For every infinite set $A \sub 2^{<\omega}$ we define $c_{\upsilon}(A) \in \{0,1,2\}$ by
$$
	c_\upsilon(A):=\begin{cases}
	0 & \text{if $\upsilon(A)$ is weakly Cauchy}, \\
	1 & \text{if $\upsilon(A)$ is an $\ell^1$-sequence}, \\
	2 & \text{otherwise}, \\
	\end{cases}
$$
where $\upsilon(A)$ is ordered as a sequence following the order $\leq_{lex}$ of~$2^{<\omega}$
(as defined in the proof of Lemma~\ref{lem:injectivization}).
We can assume without loss of generality that $c_\upsilon$ is constant (equal to~$0$ or~$1$) on~$\mathcal{S}_\tau$ for every $\tau\in \mathfrak{T}$.
To see this, enumerate $\mathfrak{T}=\{\tau_0,\dots,\tau_7\}$.
Since the restriction of $c_\upsilon$ to~$\mathcal{S}_{\tau_0}$ is analytic measurable,
Theorem~\ref{finitepartition} ensures the existence of a nice embedding~$u_0:2^{<\omega}\to 2^{<\omega}$
such that $c_\upsilon$ is constant on $\{u_0(A) : A\in \mathcal{S}_{\tau_0}\}$.
On the other hand, since $u_0$ is a homeomorphism onto a closed subset of~$2^{<\omega}$
(by property~(N4)), the composition $\upsilon_0:=\upsilon\circ u_0$
also has property~(*) above. Clearly,
$c_{\upsilon_0}$ is constant on~$\mathcal{S}_{\tau_0}$.
By the same argument, now applied to the mapping $c_{\upsilon_0}$ and the type~$\tau_1$,
there is a nice embedding~$u_1:2^{<\omega}\to 2^{<\omega}$
such that $c_{\upsilon_0}$ is constant
on $\{u_1(A) : A\in \mathcal{S}_{\tau_1}\}$. Note that $\upsilon_1:=\upsilon_0 \circ u_1$
satisfies property~(*) and that $c_{\upsilon_1}$ is constant on~$\mathcal{S}_{\tau_0}$
and also on~$\mathcal{S}_{\tau_1}$. By continuing in this way, we find
an injective function $\tilde{\upsilon}:2^{<\omega}\to B_X$ satisfying property~(*)
such that $c_{\tilde{\upsilon}}$ is constant on~$\mathcal{S}_\tau$ for every $\tau\in \mathfrak{T}$.
Bearing in mind Rosenthal's $\ell^1$-theorem (see e.g. \cite[Theorem~5.37]{fab-ultimo}), the constant value cannot be~$2$.

{\sc Step~3.} Let $\tau\in\mathfrak{T}\setminus\{[0],[^1{}_0]\}$ such that $c_\upsilon\equiv 0$ on~$\mathcal{S}_\tau$.
Any $A\in \mathcal{S}_\tau$ is infinite, closed and discrete in~$2^{<\omega}$, hence it is not relatively compact
and so $\upsilon(A)$ is not relatively weakly compact. Since $\upsilon(A)$ is weakly Cauchy, it converges
to some $x^{\ast\ast}_A\in X^{\ast\ast}\setminus X$. The function $\hat{c}:\mathcal{S}_\tau\to\N$ given by
$$
	\hat{c}(A) := \min\Big\{n\in \N: \, d\big(x^{\ast\ast}_A,X\big)>\frac{1}{n}\Big\}
$$
satisfies the requirements of Lemma~\ref{countablepartition} (use Lemma~\ref{lem:Measurability}),
so there is a nice embedding $u_\tau:2^{<\omega} \to 2^{<\omega}$ such that $\hat{c}$ is constant on~$\{u_\tau(A):A\in \mathcal{S}_\tau\}$,
that is, there is $\delta_\tau>0$ such that for every $A\in \mathcal{S}_\tau$ we have $d(x_{u_\tau(A)}^{**},X)> \delta_\tau$,
so $\upsilon(u_\tau(A))\not\subseteq L+\delta_\tau B_X$ for any weakly compact set $L\sub X$.

{\sc Step~4.} Let $\tau\in\mathfrak{T}\setminus\{[0],[^1{}_0]\}$ such that $c_\upsilon\equiv 1$ on~$\mathcal{S}_\tau$.
Define a function
$$
	\hat{c}:\mathcal{S}_\tau\to\N
$$
by declaring $\hat{c}(A)$ to be the least $n\in \N$ for which there is a finite set $F\subset A$ such that
$\upsilon(A \setminus F)$ is a $\frac{1}{n}$-controlled $\ell^1$-sequence. Since
$\hat{c}$ is analytic measurable, we can apply Lemma~\ref{countablepartition}
to obtain a nice embedding $u_\tau:2^{<\omega}\to 2^{<\omega}$ such that $\hat{c}$ is constant (say, equal to~$n_\tau$)
on~$\{u_\tau(A):A\in \mathcal{S}_\tau\}$. Writing $\delta_\tau:=\frac{1}{2n_\tau}$,
an appeal to Lemma~\ref{l1seq} ensures that for every $A\in \mathcal{S}_\tau$
the set $\upsilon(u_\tau(A))$ is not contained in any set of the form $L + \delta_\tau B_X$ with $L \sub X$ weakly compact.

{\sc Step~5.} We can assume that there is $\delta>0$ such that for every $\tau\in \mathfrak{T}\setminus\{[0],[^1{}_0]\}$
and every $A\in \mathcal{S}_\tau$, we have
$\upsilon(A)\not \subseteq L + \delta B_X$ for any weakly compact set $L \sub X$.
To check this, enumerate $\mathfrak{T}\setminus\{[0],[^1{}_0]\}=\{\tau_0,\dots,\tau_5\}$.
By the former steps applied to~$\tau_0$, there exist a nice embedding $u_0:2^{<\omega}\to 2^{<\omega}$ and $\delta_0>0$
such that, for every $A\in \mathcal{S}_{\tau_0}$, we have $\upsilon(u_0(A))\not \subseteq L + \delta_0 B_X$
for any weakly compact set $L \sub X$.
Now, we can apply again Step~3 or~4 to the composition $\upsilon\circ u_0:2^{<\omega}\to B_X$
and type~$\tau_1$ to obtain a nice embedding $u_1:2^{<\omega}\to 2^{<\omega}$ and $\delta_1>0$ such that, for every $A\in \mathcal{S}_{\tau_1}$,
we have $\upsilon(u_0(u_1(A))) \not \subseteq L + \delta_1 B_X$ for any weakly compact set $L \sub X$.
By continuing in this manner, we obtain a function $\tilde{\upsilon}:2^{<\omega}\to B_X$
with the same properties as~$\upsilon$ and a constant~$\delta>0$ such that,
for every $\tau\in \mathfrak{T}\setminus\{[0],[^1{}_0]\}$ and every $A\in \mathcal{S}_\tau$, we have
$\tilde{\upsilon}(A)\not \subseteq L + \delta B_X$ for any weakly compact set $L \sub X$.

{\sc Step~6.} Finally, we shall check that $\cK(2^{<\omega})\preceq \mathcal{AK}(B_X)$.
This will finish the proof since $2^{<\omega}$ is homeomorphic to~$\mathbb{Q}$.
Define
$$
	f:\mathcal{K}(2^{<\omega})\to \mathcal{AK}(B_X),
	\quad f(A):=\overline{\upsilon(A)}^{w}.
$$
We claim that for every $L\in\mathcal{AK}(B_X)$ there is $B\in\mathcal{K}(2^{<\omega})$ containing
any $A\in \mathcal{K}(2^{<\omega})$ for which $f(A)\sub  L + \delta B_X$. Indeed,
it suffices to check that
$$
	B=\overline{\upsilon^{-1}(L+\delta B_X)}
$$
is compact in~$2^{<\omega}$, which is equivalent to saying (by the metrizability of $2^{<\omega}$) that every infinite subset of
$\upsilon^{-1}(L+\delta B_X)$ contains a further infinite subset which is relatively compact in~$2^{<\omega}$.
Take an infinite set $C \sub \upsilon^{-1}(L+\delta B_X)$. By property~(T2), there is an infinite set~$D \sub C$
of some type~$\tau$. From Step~5 and the fact that $\upsilon(D) \sub L+\delta B_X$ it follows that
$\tau\in\{[0],[^1{}_0]\}$. Property~(T3) ensures that $D$ is relatively compact in~$2^{<\omega}$
and the proof is over.
\end{proof}

\section{Banach spaces not containing $\ell_1$}\label{section:nol1}

This section is entirely devoted to proving the following result, that holds in ZFC without need of additional axioms:

\begin{thm}\label{separabledual}
If $X$ is separable and contains no copy of~$\ell_1$, then $\mathcal{AK}(B_X)\sim \mathcal{K}(B_X)$. More precisely:
\begin{enumerate}
\item[(i)] $\mathcal{AK}(B_X)\sim \mathcal{K}(B_X) \sim \{0\}$ if $X$ is reflexive.
\item[(ii)] $\mathcal{AK}(B_X)\sim \mathcal{K}(B_X) \sim \omega^\omega$ if $X$ is not reflexive, has separable dual and the PCP.
\item[(iii)] $\mathcal{AK}(B_X)\sim \mathcal{K}(B_X) \sim \mathcal{K}(\mathbb{Q})$ if $X$ has separable dual and fails the PCP.
\item[(iv)] $\mathcal{AK}(B_X)\sim \mathcal{K}(B_X) \sim [\mathfrak{c}]^{<\omega}$ if $X$ has non-separable dual.
\end{enumerate}
\end{thm}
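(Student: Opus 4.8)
The plan is to treat the four cases through the dichotomy ``$X^*$ separable'' (which covers (i)--(iii): reflexive, non-reflexive with the PCP, and failing the PCP) versus ``$X^*$ non-separable'' (case (iv)). Throughout I would use two consequences of $X$ omitting $\ell_1$: by Rosenthal's theorem every bounded sequence has a weakly Cauchy subsequence, and a non-reflexive $\ell_1$-free space is never SWCG (a SWCG space is weakly sequentially complete, hence reflexive once $\ell_1$ is excluded). When $X^*$ is separable the $\cK(B_X)$ side of (i)--(iii) is supplied verbatim by Proposition~\ref{posets:3}, so the only work is on the $\AK(B_X)$ side and on case (iv). Case~(i) is immediate: reflexivity gives $\AK(B_X)\sim\{0\}$ by Theorem~\ref{swcg:2}(i) and $\cK(B_X)\sim\{0\}$ by Proposition~\ref{posets:3}(i). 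For (ii), Proposition~\ref{posets:3}(ii) gives $\cK(B_X)\sim\omega^\omega$, hence $\omega\preceq\AK(B_X)\preceq\omega^\omega$ by Corollary~\ref{as:4-cor}(ii); since $X$ is non-reflexive and $\ell_1$-free it is not SWCG, so Lemma~\ref{lem:LV} upgrades this to $\AK(B_X)\sim\omega^\omega$.

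For (iii), $X^*$ is separable and $X$ fails the PCP, so $\cK(B_X)\sim\cK(\qu)$ by Proposition~\ref{posets:3}(iii) and $\AK(B_X)\preceq\cK(B_X)\sim\cK(\qu)$ by Proposition~\ref{at:7}(i); it remains to prove $\cK(\qu)\preceq\AK(B_X)$. I would run the argument of Proposition~\ref{AKKQ}, noting that its sole appeal to analytic determinacy is Step~1, through Theorem~\ref{projectiveideal}. Here Step~1 can be done in ZFC: by \cite{edg-whe} the space $(B_X,w)$ is separable metric, coanalytic and not Polish, so Hurewicz's theorem (\cite[21.18]{kec-J}) yields a closed copy $F$ of $\qu\cong 2^{<\omega}$ inside $(B_X,w)$; a homeomorphism $\upsilon\colon 2^{<\omega}\to F\subseteq B_X$ then satisfies property~(*) of Step~1 automatically, because $F$ is weakly closed (so relative weak compactness in $X$ coincides with relative compactness in $F$). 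Steps~2--6 are ZFC --- the Ramsey inputs Theorem~\ref{finitepartition} and Lemma~\ref{countablepartition} and the measurability Lemma~\ref{lem:measurability} do not use determinacy --- and since $X$ omits $\ell_1$ the $\ell^1$-coloring alternative is vacuous by Rosenthal's theorem, leaving only the weakly Cauchy case. This yields $\cK(\qu)\preceq\AK(B_X)$, hence $\AK(B_X)\sim\cK(B_X)\sim\cK(\qu)$.

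For (iv), $X^*$ is non-separable. Since $\AK(B_X)\preceq\cK(B_X)\preceq[\con]^{<\omega}$ (Proposition~\ref{at:7}(i) and Remark~\ref{rem:CardinalTukey}, as $\cK(B_X)$ is upward directed of cardinality $\le\con$), it suffices to prove $[\con]^{<\omega}\preceq\AK(B_X)$, which by Lemma~\ref{lem:TukeyTop} reduces to producing $\delta>0$ and $f\colon 2^{<\omega}\to B_X$ whose $0$-chain images are relatively weakly compact and whose $1$-chain images satisfy $f(S)\not\subseteq L+\delta B_X$ for every weakly compact $L$. I would obtain the underlying tree from the ZFC form of the gap dichotomy: fixing a dense $D=\{x_n\}\subseteq B_X$ and the analytic double-limit ideal $\mathcal{I}$ with $\mathcal{I}^\perp=\mathcal{RK}(D)$ (Proposition~\ref{pro:usoDobleLimite}), Theorem~\ref{projectivestronggap} applies in ZFC to two \emph{analytic} families (Remark~\ref{rem:AvilesTodorcevic}), exactly as in the proof of Theorem~\ref{ZFCprojectiveideal}. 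In the ``gap'' alternative one gets an injection $u\colon 2^{<\omega}\to\omega$ carrying $0$-chains into relatively weakly compact index sets and $1$-chains into non-relatively-weakly-compact ones; setting $f(t)=x_{u(t)}$ and then homogenizing precisely as in the proof of Proposition~\ref{AKtukeytop} --- coloring $1$-chains by the Rosenthal trichotomy, discarding the $\ell^1$ and ``neither'' colors via Lemma~\ref{Ramseychains} and Rosenthal, and passing to a perfect subtree (Lemma~\ref{lem:PerfectSubtree}) on which $d\bigl(w^*\text{-}\lim_n x_{u(S_n)},X\bigr)>\delta$ uniformly (Lemmas~\ref{wCauchy} and~\ref{lem:measurability}) --- produces the required $f$ and $\delta$. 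Lemma~\ref{lem:TukeyTop} then gives $[\con]^{<\omega}\preceq\AK(B_X)$, so $\AK(B_X)\sim\cK(B_X)\sim[\con]^{<\omega}$.

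The main obstacle is certifying that case~(iv) always falls into the ``gap'' alternative rather than the ``countably separated'' one; equivalently, by Theorem~\ref{ZFCprojectiveideal} one must exclude $\cK(B_X)\preceq\cK(\qu)\times[\omega_1]^{<\omega}$. This cannot be decided by the coefficients $\add_\omega$ and $\cf$ alone, since $[\con]^{<\omega}$ and $\cK(\qu)\times[\omega_1]^{<\omega}$ share the additivity $\omega_1$ and have cofinalities $\con$ and $\mathfrak d$ that may coincide in a model of ZFC. The crux is therefore a structural implication: non-separability of $X^*$ must force the double-limit ideal $\mathcal{I}$ to fail countable separation from some analytic subfamily of $\mathcal{RK}(D)$. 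I would argue the contrapositive --- a countable separating family packages, via \cite[Theorem~10 and Proposition~11]{avi-tod}, the relative weak compactness of subsets of $D$ as relative compactness in a second-countable space, and from such a countable ``weak-compactness-determining'' family one recovers norm-separability of $X^*$ (using that, $X$ omitting $\ell_1$, every $x^{**}$ is a $w^*$-sequential limit from $B_X$, so that $d(\cdot,X)$ is detected by countably many test data). Making this conversion precise, and arranging the homogenization so that a single $\delta$ works along all $1$-chains, is where the real difficulty lies.
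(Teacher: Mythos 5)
Your treatment of cases (i)--(iii) is correct, and in (ii) and (iii) it follows genuinely different routes from the paper's. For (ii), the paper constructs an explicit reduction $\omega^\omega\preceq\AK(B_X)$ (Proposition~\ref{ZFCww}); you instead combine Proposition~\ref{posets:3}(ii), Corollary~\ref{as:4-cor}(ii), the observation that a non-reflexive $\ell^1$-free space cannot be SWCG (weak sequential completeness plus Rosenthal), and Lemma~\ref{lem:LV} --- this is shorter and legitimate, since Lemma~\ref{lem:LV} is a ZFC result. For (iii), the paper's Proposition~\ref{ZFCKQ} uses \cite[Theorem~2.4]{lop-sol} and $\mathcal{U}$-tree Ramsey theory; you instead apply Hurewicz's theorem directly to $(B_X,w)$ --- valid because separability of $X^*$ makes $(B_X,w)$ separable metrizable, coanalytic in the Polish space $(B_{X^{**}},w^*)$, and non-Polish by \cite{edg-whe} --- obtaining a weakly closed copy $F$ of $\qu$ in $B_X$, whence property~(*) of Step~1 of Proposition~\ref{AKKQ} holds for any homeomorphism $\upsilon:2^{<\omega}\to F$; Steps 2--6 of that proof indeed use only the ZFC Ramsey inputs (Theorem~\ref{finitepartition}, Lemma~\ref{countablepartition}) and the measurability lemmas, so this works. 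Both shortcuts exploit the fact that when $X^*$ is separable the determinacy-dependent Step~1 (Theorem~\ref{projectiveideal}) can be bypassed.

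Case (iv), however, contains a genuine gap, which you partly acknowledge: your plan needs to certify that non-separability of $X^*$ forces the ``gap'' alternative of Theorem~\ref{projectivestronggap} (applied, as in Theorem~\ref{ZFCprojectiveideal}, to the analytic ideal $\mathcal{I}$ of Proposition~\ref{pro:usoDobleLimite} versus analytic subfamilies of $\mathcal{I}^\perp$), and for this you offer only a programme (``a countable separating family \dots recovers norm-separability of $X^*$'') with no proof. This step cannot be rescued by Tukey-invariant reasoning: under CH one has $[\con]^{<\omega}=[\omega_1]^{<\omega}\preceq \cK(\qu)\times[\omega_1]^{<\omega}$, so the two alternatives of Theorem~\ref{ZFCprojectiveideal} are not mutually exclusive, and even the desired conclusion $\cK(B_X)\sim[\con]^{<\omega}$ is compatible with the ``separated'' alternative's conclusion; one must therefore extract $X^*$-separability directly from the separation structure, which is exactly what you leave unproved. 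The paper sidesteps the dichotomy entirely (Proposition~\ref{ZFCtop}): since $X$ is separable without $\ell^1$, $B_{X^{**}}$ is $w^*$-angelic (Odell--Rosenthal and Bourgain--Fremlin--Talagrand), and Dodos' theorem \cite[Theorem~3]{dod}, applied to the identity operator (whose dual has non-separable range), yields a biorthogonal system $\{(e_t,e_t^*):t\in2^{<\omega}\}$ whose branches $w^*$-converge to points of $X^{**}\setminus X$ and whose infinite antichains are weakly null (Lemma~\ref{lem:UsingDodos}). This gives a $K_3$-embedding; Lemma~\ref{lem:FromSimpleToRegular} upgrades it to a regular one (this is where the single uniform $\delta$ comes from, via Borel sets, a perfect set and Lemma~\ref{lem:PerfectSubtree}), and Lemmas~\ref{lem:regularTukeyTop}, \ref{KL3} and~\ref{discretelemma} then give $[\con]^{<\omega}\preceq\AK(B_X)$. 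Note also that the combinatorial pattern produced by Dodos' tree is chains-bad/antichains-good (the space $L_3$), not the $0$-chain/$1$-chain pattern required by your intended Lemma~\ref{lem:TukeyTop}: all subsets of branches, whether $0$-chains or $1$-chains, are non-relatively-weakly-compact there, so even if you had the tree you could not feed it to Lemma~\ref{lem:TukeyTop} as proposed. The missing ingredient is precisely this direct construction from non-separability of the dual, and without it case (iv), hence the theorem, is not established.
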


We already know from Proposition~\ref{at:7}(i) that $\mathcal{AK}(B_X)\preceq \mathcal{K}(B_X)$ holds for any Banach space~$X$
and we also know from Proposition~\ref{posets:3} the classification of $\mathcal{K}(B_X)$ when $X^*$ is separable.
Since case (i) is trivial and $\mathcal{K}(B_X)\preceq [\mathfrak{c}]^{<\omega}$ whenever $X$ is separable (by Remark~\ref{rem:CardinalTukey}),
in order to prove Theorem~\ref{separabledual} it remains to check:
\begin{itemize}
\item $\omega^\omega\preceq \mathcal{AK}(B_X)$ if $X$ has separable dual and is not reflexive.
\item $\mathcal{K}(\mathbb{Q})\preceq \mathcal{AK}(B_X)$ if $X$  has separable dual and fails the PCP.
\item $[\mathfrak{c}]^{<\omega}\preceq \mathcal{AK}(B_X)$ if $X$ has non-separable dual and contains no copy of~$\ell_1$.
\end{itemize}
We will prove these facts in Propositions~\ref{ZFCww}, \ref{ZFCKQ} and \ref{ZFCtop} below.
Note that Theorem~\ref{separabledual} implies, in particular, that
if $X$ is separable and contains no copy of~$\ell^1$, then
$\cf(\AK(B_X))\in \{{\mathfrak d},\con\}$
and $\add_\omega(\AK(B_X))\in \{\omega_1,{\mathfrak b}\}$ unless $X$ is reflexive.

As we noted in Proposition~\ref{at:7}(i), we have $\mathcal{AK}(E) \preceq \cK(E)$ for every $E \sub X$.
Lemma~\ref{discretelemma} below shows that under some assumption we have the converse reduction $\cK(E) \preceq \mathcal{AK}(B_X)$.
This lemma will be useful in the sequel.

\begin{lem}\label{discretelemma}
Let $E\subset B_X$. Suppose there is $\delta>0$ such that for every $A\subset E$ we have
\begin{enumerate}
\item[(i)] either $A$ is contained in a weakly compact subset of $E$,
\item[(ii)] or $A\not\subset L + \delta B_X$ for any $L\in \mathcal{K}(X)$.
\end{enumerate}
Then $\mathcal{K}(E) \preceq \mathcal{AK}(B_X)$. In particular, this Tukey reduction holds
if there is $\delta>0$ such that
\begin{equation}\label{eqn:dE}
	\|x^{**}-x\|>\delta \quad\mbox{for every }x^{**}\in \overline{E}^{w^*} \setminus E
	\mbox{ and every }x\in X.
\end{equation}
\end{lem}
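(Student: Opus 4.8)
The plan is to invoke Remark~\ref{at:5}(ii): since $\cK(E)$ is an ordinary poset, it suffices to exhibit a single $\delta>0$ and a Tukey function $g:\cK(E)\to(\AK(B_X),\leq_\delta)$. I would take the simplest candidate, the inclusion $g(K):=K$ (each $K\in\cK(E)$ is a weakly compact subset of $B_X$, hence an element of $\AK(B_X)$), with $\delta$ the constant supplied by the hypothesis. To check the Tukey condition, fix $L_0\in\AK(B_X)$ and apply the dichotomy to $A:=E\cap(L_0+\delta B_X)\sub E$. Because $L_0\in\cK(X)$ and $A\sub L_0+\delta B_X$, alternative~(ii) fails for this $A$, so alternative~(i) provides a weakly compact $K_0\sub E$ with $A\sub K_0$. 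This $K_0$ works: any $K\in\cK(E)$ with $g(K)=K\sub L_0+\delta B_X$ satisfies $K\sub E\cap(L_0+\delta B_X)=A\sub K_0$. The whole content of the main assertion is thus the single remark that $L_0$ itself disqualifies alternative~(ii).

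For the ``in particular'' clause I would show that condition~\eqref{eqn:dE} implies the dichotomy hypothesis with the \emph{same} $\delta$. Fix $A\sub E$ and suppose alternative~(ii) fails, i.e.\ $A\sub L+\delta B_X$ for some $L\in\cK(X)$; the task is to produce a weakly compact subset of $E$ containing $A$. The decisive step is to pass to the bidual: regard $A\sub E\sub B_X\sub B_{X^{**}}$ and consider the weak$^*$-closure $\overline{A}^{w^*}$ in $X^{**}$. Since $L$ is weakly compact it is $w^*$-compact in $X^{**}$, and $\delta B_{X^{**}}$ is $w^*$-compact by Banach--Alaoglu; as addition is $w^*$-continuous, $L+\delta B_{X^{**}}$ is $w^*$-compact, hence $w^*$-closed. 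From $A\sub L+\delta B_X\sub L+\delta B_{X^{**}}$ we obtain $\overline{A}^{w^*}\sub L+\delta B_{X^{**}}$, so every $x^{**}\in\overline{A}^{w^*}$ satisfies $d(x^{**},X)\leq\delta$.

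Now $\overline{A}^{w^*}\sub\overline{E}^{w^*}$, while \eqref{eqn:dE} asserts that any point of $\overline{E}^{w^*}\setminus E$ has distance \emph{strictly} greater than $\delta$ from $X$. The non-strict bound $d(x^{**},X)\le\delta$ therefore rules out membership in $\overline{E}^{w^*}\setminus E$, forcing $\overline{A}^{w^*}\sub E\sub X$. Being a $w^*$-compact subset of $X$, and since the $w^*$-topology restricts to the weak topology on $X$, the set $\overline{A}^{w^*}$ coincides with the weak closure $\overline{A}^{w}$ and is weakly compact; contained in $E$, it is the desired weakly compact superset of $A$. (Alternatively, once $\overline{A}^{w^*}\sub X$ one may invoke Grothendieck's test, Lemma~\ref{lem:Grothendieck}, to see that the bounded set $A$ is relatively weakly compact.) Hence~(i) holds whenever~(ii) fails, and the reduction follows from the first part.

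I expect the first paragraph to be essentially immediate, the genuine work being confined to the bidual computation. The main point to get right there is the $w^*$-closedness of $L+\delta B_{X^{**}}$ together with the interplay of the non-strict inequality $d(x^{**},X)\le\delta$ and the strict inequality in~\eqref{eqn:dE}; it is precisely this strict/non-strict gap that prevents any borderline $x^{**}$ from escaping $E$ and lets one run the argument with a single constant $\delta$ rather than shrinking it.
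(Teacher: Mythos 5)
Your argument is, in substance, the paper's own proof: the same identity map into $(\AK(B_X),\leq_\delta)$, the same set $E\cap(L_0+\delta B_X)$ to which the dichotomy is applied, and, for the ``in particular'' clause, the same passage to $\overline{A}^{w^*}\sub\overline{E}^{w^*}\cap(L+\delta B_{X^{**}})$. (The only cosmetic difference in the first part: the paper observes that $E\cap(L+\delta B_X)$ is weakly closed in $E$ and hence is itself weakly compact, and uses it as the bound, while you use the weakly compact superset $K_0$ furnished by alternative~(i); both work.)

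However, one step of your bidual computation is not valid as written. You assert that \eqref{eqn:dE} gives every $x^{**}\in\overline{E}^{w^*}\setminus E$ distance \emph{strictly} greater than $\delta$ from~$X$, and you then play this against the non-strict bound $d(x^{**},X)\le\delta$ valid on $\overline{A}^{w^*}$. But \eqref{eqn:dE} only bounds each individual norm, $\|x^{**}-x\|>\delta$ for every $x\in X$, whence $d(x^{**},X)\ge\delta$: the infimum of numbers exceeding $\delta$ may equal $\delta$. So the two distance estimates are compatible (both can equal $\delta$), and comparing them does not, by itself, force $\overline{A}^{w^*}\sub E$. The repair is one line, and it is what the paper's terse argument implicitly does: a point $x^{**}\in\overline{A}^{w^*}\sub L+\delta B_{X^{**}}$ comes with a witness $l\in L\sub X$ satisfying $\|x^{**}-l\|\le\delta$, and \eqref{eqn:dE} applied to this particular $l$ (rather than to the infimum over all of~$X$) already contradicts $x^{**}\in\overline{E}^{w^*}\setminus E$. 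With that substitution, the rest of your argument --- the $w^*$-compactness of $L+\delta B_{X^{**}}$, the inclusion $\overline{A}^{w^*}\sub E\sub X$, and the identification of $\overline{A}^{w^*}$ with the weak closure of~$A$ --- is exactly right.
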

\begin{proof}
We shall check that the function $f:\mathcal{K}(E)\to \mathcal{AK}(B_X)$ given by $f(K):=K$
is a Tukey reduction from $\cK(E)$ to~$(\AK(B_X),\leq_\delta)$.
Fix $L\in \mathcal{AK}(B_X)$ and define $L_0 := E \cap (L+\delta B_X)$. Then
$L_0$ is contained in a weakly compact subset of~$E$.
Since $L_0$ is weakly closed in~$E$, we conclude that $L_0$ is weakly compact.
Obviously, for every $K\in \mathcal{K}(E)$ satisfying $K\sub L+\delta B_X$ we have
$K\sub L_0$. This proves that $\mathcal{K}(E) \preceq \mathcal{AK}(B_X)$.
For the last statement, note that if $A \sub E \cap (L + \delta B_X)$ for some $L\in \mathcal{K}(X)$,
then $\overline{A}^{w^*} \sub \overline{E}^{w^{*}}\cap (L + \delta B_{X^{**}})$;
therefore,~\eqref{eqn:dE} implies that $\overline{A}^{w^*} \sub E$ and so $\overline{A}^{w^*}\in \cK(E)$. The proof is over.
\end{proof}

\begin{pro}\label{ZFCww}
If $X^*$ is separable and $X$ is not reflexive, then $\omega^\omega\preceq \mathcal{AK}(B_X)$.
\end{pro}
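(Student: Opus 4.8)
The plan is to build, for a suitable $\delta>0$, a single Tukey function $g:\omega^\omega\to(\mathcal{AK}(B_X),\leq_\delta)$; by Remark~\ref{at:5}(ii) this already yields $\omega^\omega\preceq\mathcal{AK}(B_X)$. The map will have the form $g(\varphi):=\overline{\{x_{ni}:\,i\le\varphi(n),\ n<\omega\}}^{\,w}$ for a doubly-indexed array $(x_{ni})_{n,i}$ in $B_X$, so the whole argument reduces to manufacturing an array whose rows $(x_{ni})_i$ are \emph{weakly Cauchy and stay uniformly far from $X$}, while any \emph{diagonal} selection (one point from each of infinitely many rows) is weakly null. The first property will force resistance to almost-inclusions and drive the Tukey condition, while the second will guarantee that each $g(\varphi)$ is genuinely weakly compact.

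Since $X^*$ is separable, $(B_{X^{**}},w^*)$ is compact metrizable; I fix a compatible metric $\rho$. As $X$ is non-reflexive, I choose $u^{**}\in B_{X^{**}}$ with $\eta:=d(u^{**},X)>0$. The crucial step is to produce a sequence $(y_n^{**})$ in $B_{X^{**}}$ with $y_n^{**}\to 0$ in the $w^*$-topology and $d(y_n^{**},X)\ge\eta_0$ for a fixed $\eta_0>0$. I obtain it from Goldstine's theorem: picking $w_n\in B_X$ with $w_n\to -u^{**}$ ($w^*$), the points $z_n:=u^{**}+w_n$ satisfy $z_n\to 0$ ($w^*$), $\|z_n\|\le 2$, and $d(z_n,X)=d(u^{**},X)=\eta$ (translation by $w_n\in X$); set $y_n^{**}:=z_n/2$ and $\eta_0:=\eta/2$. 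Now let $\epsilon_n:=\rho(y_n^{**},0)+2^{-n}\to 0$; since $B_X$ is $w^*$-dense in $B_{X^{**}}$ and $y_n^{**}$ lies in the $w^*$-open ball $U_n:=\{z\in B_{X^{**}}:\rho(z,0)<\epsilon_n\}$, I can choose a sequence $(x_{ni})_i$ in $B_X\cap U_n$ with $x_{ni}\to y_n^{**}$ ($w^*$) as $i\to\infty$. Each row is then weakly Cauchy with unique $w^*$-cluster point $y_n^{**}$, and the entire row lies in $U_n$, i.e. at $\rho$-distance $<\epsilon_n$ from $0$.

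With this array, $g$ is well defined: every sequence in $S_\varphi:=\{x_{ni}:\,i\le\varphi(n)\}$ either meets only finitely many rows (hence is finite) or has a subsequence $(x_{n_k,i_k})$ with $n_k\to\infty$, which satisfies $\rho(x_{n_k,i_k},0)<\epsilon_{n_k}\to 0$ and therefore converges weakly to $0\in X$; by Eberlein--\v{S}mulian $S_\varphi$ is relatively weakly compact, so $g(\varphi)\in\mathcal{AK}(B_X)$. For the Tukey property I take $\delta:=\eta_0/2$: since each row has unique $w^*$-cluster point $y_n^{**}$ with $d(y_n^{**},X)=\eta_0>\delta$, Lemma~\ref{wCauchy} shows that $\psi_L(n):=\sup\{i:\,x_{ni}\in L+\delta B_X\}$ is finite for every $L\in\mathcal{AK}(B_X)$. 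Then $g(\varphi)\subseteq L+\delta B_X$ forces $x_{ni}\in L+\delta B_X$ for all $i\le\varphi(n)$, hence $\varphi(n)\le\psi_L(n)$ for every $n$, i.e. $\varphi\le\psi_L$; thus $g:\omega^\omega\to(\mathcal{AK}(B_X),\leq_\delta)$ is Tukey.

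The main obstacle is precisely the tension resolved in the second paragraph: the rows must remain far from $X$ (to resist $L+\delta B_X$ and yield the reduction) yet their diagonal selections must collapse to a weakly compact set (so that $g$ lands in $\mathcal{AK}(B_X)$ rather than merely in $\cK(X)$). Reconciling these is exactly what the sequence $(y_n^{**})$, $w^*$-null but uniformly far from $X$, accomplishes, and its construction via a Goldstine translation combined with the metrizability of $(B_{X^{**}},w^*)$ is the heart of the argument; everything else is a routine diagonal/Eberlein--\v{S}mulian check.
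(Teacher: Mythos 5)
Your proof is correct. The heart of your argument is in fact the same as the paper's: both proofs manufacture a $w^*$-null sequence in $B_{X^{**}}$ that stays uniformly far from $X$ by translating a non-reflexivity witness by elements of $B_X$ (you write $y_n^{**}=\tfrac12(u^{**}+w_n)$ with $w_n\to -u^{**}$; the paper writes $y_n=\tfrac12(x^{**}-x_n)$ with $x_n\to x^{**}$, which is the same device up to sign), and both then approximate each $y_n^{**}$ by points of $B_X$ using the metrizability of $(B_{X^{**}},w^*)$. Where you genuinely diverge is in how the Tukey reduction is organized. The paper factors it as $\omega^\omega\preceq \mathcal{K}(E)\preceq \mathcal{AK}(B_X)$ for the countable set $E=\{y^n_m:n,m<\omega\}\cup\{0\}$: the second reduction comes from Lemma~\ref{discretelemma}, whose hypotheses require computing the $w^*$-closure of~$E$ exactly (the identity~\eqref{eqn:equalityE}, proved by a diagonal argument using the control $\rho(y^n_m,y_n)\leq 2^{-(n+m)}$), and the first reduction uses sets picking a \emph{single} point $y^n_{\varphi(n)}$ from each row. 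You instead build one Tukey function $g:\omega^\omega\to(\mathcal{AK}(B_X),\leq_\delta)$ directly (legitimate by Remark~\ref{at:5}(ii)), taking \emph{initial segments} of rows, and verify the Tukey condition row-by-row via Lemma~\ref{wCauchy} applied to each row's unique $w^*$-cluster point. The price is an Eberlein--\v{S}mulian check that each $g(\varphi)$ is weakly compact, which your trick of confining the $n$-th row to the shrinking ball $U_n$ around~$0$ makes immediate; this check replaces the paper's closure computation. Both routes are of comparable difficulty: the paper's detour through $\mathcal{K}(E)$ isolates Lemma~\ref{discretelemma} as a reusable tool (it reappears in Propositions~\ref{ZFCKQ} and~\ref{ZFCtop}), while your direct argument is self-contained, avoids the closure identity altogether, and makes transparent exactly which two properties of the array (rows far from $X$, diagonals weakly null) drive the reduction.
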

\begin{proof}
Since $X^\ast$ is separable, $B_{X^{\ast\ast}}$ is metrizable in the $w^\ast$-topology. Let $\rho$ be a metric on~$B_{X^{**}}$
that metrizes the $w^*$-topology. Fix $0<\theta<1$. By the non-reflexivity of~$X$
and Riesz's Lemma (see e.g. \cite[p.~2]{die-J}), there is $x^{\ast\ast}\in X^{\ast\ast}$ with $\|x^{\ast\ast}\|=1$
such that $\|x^{\ast\ast}-x\|>\theta$ for all $x\in X$.
Since $B_X$ is $w^*$-dense in~$B_{X^{**}}$, we can find a sequence $(x_n)$ in~$B_X$ that $w^*$-converges to~$x^{\ast\ast}$.
For each $n<\omega$, we define $y_n := \frac{1}{2}(x^{\ast\ast}-x_n)\in B_{X^{\ast\ast}}$ and we choose
a sequence $(y_{m}^{n})$ in~$B_X$ such that
\begin{equation}\label{eqn:doubleseq}
	\rho(y_{m}^n,y_n)\leq \frac{1}{2^{n+m}}
	\quad\mbox{for every }m<\omega.
\end{equation}
We claim that the set $E := \{y_{m}^n : n,m<\omega\}\cup\{0\} \sub B_X$ satisfies the equality
\begin{equation}\label{eqn:equalityE}
	\overline{E}^{w^\ast} = E\cup \{y_n : n<\omega\}.
\end{equation}
Indeed, let $(z_k)$ be a sequence in~$E$ that $w^*$-converges to some $y\in \overline{E}^{w^*}$. If $z_k=0$ only
for finitely many $k$'s, then by passing to a subsequence we may assume that
each $z_k$ is of the form $z_k=y^{n_k}_{m_k}$ for some $n_k,m_k<\omega$. Then there is
a further subsequence of~$(z_k)$, not relabeled, such that:
\begin{itemize}
\item[(a)] either $n_k<n_{k+1}$ for all $k<\omega$,
\item[(b)] or there is $n<\omega$ such that $n_k=n$ for all $k<\omega$.
\end{itemize}
If~(a) holds, then
$$
	\rho(y_{n_k},y) \leq
	\rho(y_{n_k},z_k)+\rho(z_k,y) \stackrel{\eqref{eqn:doubleseq}}{\leq}
	\frac{1}{2^{n_k+m_k}}+\rho(z_k,y) \leq
	\frac{1}{2^{n_k}}+\rho(z_k,y)
	\quad \mbox{for all }k<\omega,
$$
hence $\rho(y_{n_k},y) \to 0$ and so $y=0 \in E$ (because $(y_n)$ is $w^*$-convergent to~$0$).
If~(b) holds, then $z_k=y^n_{m_k}$ for all $k<\omega$ and so either
$y\in E$ or $y=y_n$. This proves~\eqref{eqn:equalityE}.

Note that~$E$ satisfies the hypotheses of Lemma~\ref{discretelemma}, by equality~\eqref{eqn:equalityE}
and the fact that $\|y_n-x\|>\frac{\theta}{2}$ for every $n<\omega$ and every $x\in X$.
Hence $\mathcal{K}(E)\preceq \mathcal{AK}(B_X)$.

It only remains to check that $\omega^\omega \preceq \mathcal{K}(E)$. Note that for every $\varphi\in \omega^\omega$ the
sequence $(y^{n}_{\varphi(n)})$ is weakly null, because $(y_n)$ is $w^*$-convergent to~$0$ and
$$
	\rho(y^{n}_{\varphi(n)}, y_n) \stackrel{\eqref{eqn:doubleseq}}{\leq}
	\frac{1}{2^{n+\varphi(n)}}
	\leq \frac{1}{2^{n}}\quad
	\mbox{for all }n<\omega.
$$
We claim that the function
$$
	F:\omega^{\omega}\to \mathcal{K}(E),
	\quad
	F(\varphi) :=
	\{y^{n}_{\varphi(n)} : n<\omega\} \cup \{0\},
$$
is Tukey. Indeed, fix $K\in \mathcal{K}(E)$. For each $n<\omega$,
the set $\{m<\omega: y^{n}_{m}\in K\}$ is finite, because $y^n_m \to y_n \in X^{\ast\ast}\setminus X$ in the
$w^*$-topology as $m\to \infty$. Thus, there is $\varphi_K\in \omega^\omega$
such that $y^{n}_{m}\not\in K$ for every $n<\omega$ and every $m> \varphi_K(n)$. Hence
$\varphi\leq \varphi_K$ whenever $F(\varphi) \sub K$. This proves that $F$ is a Tukey function.
\end{proof}

\begin{pro}\label{ZFCKQ}
If $X^*$ is separable and $X$ fails the PCP, then $\mathcal{K}(\mathbb{Q})\preceq \mathcal{AK}(B_X)$.
\end{pro}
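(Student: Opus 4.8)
The plan is to produce a single countable set $E\subseteq B_X$ to which Lemma~\ref{discretelemma} applies and whose weak topology is that of~$\mathbb{Q}$; then the two reductions $\mathcal{K}(\mathbb{Q})\sim\mathcal{K}(E)$ (Fremlin's Theorem~\ref{Fr91classification}) and $\mathcal{K}(E)\preceq\mathcal{AK}(B_X)$ (Lemma~\ref{discretelemma}) chain to give the result. Concretely, I would look for $E$ and a $\delta>0$ such that $E$ is homeomorphic to~$\mathbb{Q}$ in the weak topology and such that $\|x^{**}-x\|>\delta$ for every $x^{**}\in\overline{E}^{w^*}\setminus E$ and every $x\in X$, i.e.\ condition~\eqref{eqn:dE}. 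Since such an $E$ is countable and coanalytic, Fremlin's theorem yields $\mathcal{K}(E)\sim\mathcal{K}(\mathbb{Q})$, while~\eqref{eqn:dE} yields $\mathcal{K}(E)\preceq\mathcal{AK}(B_X)$.

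To locate the raw material, recall that $B_X$ embeds canonically into the compact metrizable (hence Polish) space $(B_{X^{**}},w^*)$, where its relative topology is exactly the weak topology, and that $B_X$ is coanalytic there (as shown in the proof of Proposition~\ref{posets:3}). Since $X$ fails the PCP and $X^*$ is separable, $(B_X,w)$ is not Polish by \cite[Theorem~A]{edg-whe}. Hurewicz's theorem (see \cite[21.18]{kec-J}) then supplies a subset $F\subseteq B_X$ that is closed in $(B_X,w)$ and homeomorphic to~$\mathbb{Q}$. Set $C:=\overline{F}^{w^*}$, a perfect compact subset of $(B_{X^{**}},w^*)$ in which $F$ is countable and dense; since $F$ is weakly closed in $B_X$, one checks $C\cap X=F$, so every point of $C\setminus F$ lies in $X^{**}\setminus X$ and hence satisfies $g(x^{**}):=d(x^{**},X)>0$ (as $X$ is norm-closed in $X^{**}$).

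The remaining, and main, difficulty is to upgrade ``$g>0$ on $C\setminus F$'' to a \emph{uniform} lower bound, which is what~\eqref{eqn:dE} demands. Here I would use that $g$ is $w^*$-Borel (Lemma~\ref{lem:Measurability}) and that $C\setminus F$, being the complement of a countable set in a perfect Polish space, is comeager in~$C$. Writing $C\setminus F=\bigcup_m\{g\geq 1/m\}$ and invoking the Baire category theorem, some $\{g\geq 1/m\}$ is non-meager, hence, having the Baire property, comeager in a nonempty clopen piece $C_0=C\cap[s_0]$. Inside the Cantor set~$C_0$ the set $\{g\geq 1/m\}$ contains a dense $G_\delta$, while $F_0:=F\cap C_0$ is still countable dense and homeomorphic to~$\mathbb{Q}$. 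A routine Cantor-scheme (fusion) construction, threading through the points of~$F_0$ to keep them dense while forcing every new branch point to avoid the meager $F_\sigma$ set $\{g<1/m\}\setminus F_0$, then produces a perfect compact set $C''\subseteq C_0$ with $E:=C''\cap F_0$ dense in $C''$ and $C''\setminus E\subseteq\{g\geq 1/m\}$.

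Taking $\delta:=1/(2m)$, we have $\overline{E}^{w^*}=C''$ and $g\geq 1/m>\delta$ on $\overline{E}^{w^*}\setminus E$, so~\eqref{eqn:dE} holds; and $E$, being countable, metrizable and dense in the perfect set $C''$, is homeomorphic to~$\mathbb{Q}$. This delivers the two ingredients and hence $\mathcal{K}(\mathbb{Q})\sim\mathcal{K}(E)\preceq\mathcal{AK}(B_X)$. I expect the uniform-separation extraction of the penultimate paragraph to be the only genuinely delicate point; everything else is an assembly of Edgar--Wheeler, Hurewicz, Fremlin and Lemma~\ref{discretelemma}.
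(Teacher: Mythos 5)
Your proposal is correct, but it takes a genuinely different route from the paper's proof. The paper does not use Hurewicz's theorem or Baire category here: it starts from the Banach-space characterization of PCP failure due to L\'opez-P\'erez and Soler Arias (\cite[Theorem~2.4]{lop-sol}), which produces an $\varepsilon$-separated tree $\{x_s : s\in\omega^{<\omega}\}$ of weakly convergent sequences, and the copy of~$\mathbb{Q}$ is realized as the partial sums $y_s=\sum_{t\sqsubseteq s}x_t$ along a suitable subtree. The uniform lower bound $\delta$ required by Lemma~\ref{discretelemma} is then extracted by Ramsey theory rather than category: the branches $\sigma\in\omega^\omega$ are partitioned into the Borel pieces $\{d(y^{**}_\sigma,X)\geq \frac{1}{m}\}$ (via Lemma~\ref{lem:Measurability}), the machinery of $\mathcal{U}$-trees from \cite[Section~7.2]{tod-J-3} yields a $\mathcal{U}$-tree all of whose branch limits lie in a single piece, and the degenerate case (all limits in~$X$) is ruled out by Namioka fragmentability combined with the $\varepsilon$-separation. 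Your argument replaces all of this by descriptive set theory inside $(B_{X^{**}},w^*)$: Hurewicz's dichotomy hands you at the outset a perfect compact set~$C$ meeting $X$ in a countable dense copy of~$\mathbb{Q}$, so the analogue of the degenerate case is impossible by construction ($C\setminus F\subseteq X^{**}\setminus X$), and Baire category plus a Cantor-scheme fusion replace the $\mathcal{U}$-tree homogenization; both proofs then funnel into Lemma~\ref{discretelemma}. Your route is shorter and reuses tools the paper already deploys elsewhere (Hurewicz is used in exactly this way in Step~1 of the proof of Proposition~\ref{AKKQ}), while the paper's route exhibits the copy of~$\mathbb{Q}$ concretely inside~$X$ as sums along a PCP-witnessing tree and avoids the fusion step. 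Two small repairs to your write-up: the phrase ``clopen piece $C_0=C\cap[s_0]$'' presumes $C$ is zero-dimensional, which is legitimate if you quote Hurewicz in its Cantor-set form (\cite[Theorem~21.18]{kec-J}) but should otherwise read ``relatively open piece'' (the fusion needs no clopenness); and in the fusion you should take the open dense sets witnessing comeagerness of $\{g\geq \frac{1}{m}\}$ to be decreasing, so that a branch with infinitely many~$1$'s, whose limit is forced into infinitely many of them, actually lands in all of them.
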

\begin{proof} We divide the proof into several steps.

{\sc Step~1.} Since $X$ contains no copy of~$\ell^1$ and fails the PCP, according to
\cite[Theorem~2.4]{lop-sol} there exist $\varepsilon>0$ and a set $\{x_s : s\in \omega^{<\omega}\}\sub X$ such that,
for every $s\in \omega^{<\omega}$, we have:
$$
	\|x_s\|> \varepsilon,
	\quad
	\Bigl\|\sum_{t \sqsubseteq  s}x_t\Bigr\| \leq 1
	\quad\mbox{and}\quad
	x_{s\smallfrown n} \to 0 \ \mbox{ weakly as }n\to \infty.
$$
For each $s\in \omega^{<\omega}$ we define $y_s := \sum_{t\sqsubseteq s}x_t \in B_X$. Note that
for every $s\in \omega^{<\omega}$ the sequence $(y_{s\smallfrown n})$ is weakly convergent to~$y_s$, while
$\|y_{s\smallfrown n}-y_s\|>\varepsilon$ for all~$n<\omega$.

Since $X^\ast$ is separable, $B_{X^{\ast\ast}}$ is metrizable in the $w^\ast$-topology. Let $\rho$ be a metric on~$B_{X^{**}}$
that metrizes the $w^*$-topology. For every $s=(s_0,\dots,s_k)\in \omega^{<\omega}$ we write
$$
	\Delta(s) := 2^{-(k+1+\sum_{i=0}^k s_i)}.
$$
Clearly, we can construct recursively an injective function $u: \omega^{<\omega} \to \omega^{<\omega}$ such that,
for each $s\in \omega^{<\omega}$, there is a strictly increasing $\varphi_s\in \omega^\omega$ such that
$$
	u(s \smallfrown n)=u(s) \smallfrown \varphi_s(n)
	\quad\mbox{and}\quad
	\rho(y_{u(s \smallfrown n)},y_{u(s)})\leq \Delta(s\smallfrown n)
	\quad \mbox{for every }n<\omega.
$$
Therefore, replacing the family $\{y_s:s\in \omega^{<\omega}\}$ by $\{y_{u(s)}:s\in \omega^{<\omega}\}$ if necessary,
we can assume without loss of generality that
\begin{equation}\label{eqn:star}
	\rho(y_{s\smallfrown n},y_s) \leq \Delta(s\smallfrown n)
	\quad \mbox{for every }s\in\omega^{<\omega} \mbox{ and }n<\omega.
\end{equation}

{\em Claim~1.} For every $\sigma\in \omega^\omega$ the sequence $(y_{\sigma|_n})$ is $w^\ast$-convergent to some $y_\sigma^{\ast\ast}\in B_{X^{\ast\ast}}$.
Indeed, since $(B_{X^{**}},\rho)$ is complete, it suffices to check that $(y_{\sigma|_n})$ is $\rho$-Cauchy, and this clearly follows from
the inequality
$$
	\rho(y_{\sigma|_{n+1}},y_{\sigma|_n}) \stackrel{\eqref{eqn:star}}{\leq} \Delta(\sigma|_{n+1})
	\leq\frac{1}{2^{n+1}}
	\quad
	\mbox{for all }n\in \N.
$$

{\em Claim~2.} The function $\omega^\omega \to B_{X^{**}}$ given by $\sigma\mapsto y_\sigma^{\ast\ast}$ is
continuous from the natural topology of~$\omega^\omega$ to the $w^\ast$-topology of~$B_{X^{\ast\ast}}$.
Indeed, given $s \in \omega^{<\omega}$ and $u=(u_0,\dots,u_k)\in \omega^{<\omega}$, inequality
\eqref{eqn:star} yields
\begin{equation}\label{eqn:DeltaExtension}
	\rho(y_{s\smallfrown u},y_s) \leq \sum_{i=0}^k \Delta(s\smallfrown (u_0,\dots,u_i))
	\leq
	\sum_{i=0}^k \frac{\Delta(s)}{2^{i+1}}
	\leq
	\Delta(s).
\end{equation}
It follows that
$\rho(y_{\sigma}^{**},y_{\sigma|_n})\leq \Delta(\sigma|_n) \leq 2^{1-n}$ for every $\sigma \in \omega^{\omega}$
and every $n<\omega$.
This implies that for every $\eps>0$ there is $n<\omega$ such that $\rho(y_{\sigma}^{**},y_{\tau}^{**})\leq \epsilon$
whenever $\sigma,\tau\in \omega^\omega$ satisfy $\sigma|_n=\tau|_n$. Thus,
the mapping $\sigma\mapsto y_\sigma^{\ast\ast}$ is $w^*$-continuous.

{\sc Step~2.} For each $n\in \N$ we define
$$
	A_n := \Bigl\{\sigma\in\omega^\omega : \,
	d\big(y^{\ast\ast}_\sigma,X\big)\geq \frac{1}{n}
	\Bigr\}
$$
and we also set
$$
	A_0 := \{\sigma\in\omega^\omega : \, y^{\ast\ast}_\sigma\in X\}=\omega^\omega \setminus \bigcup_{n\in \N}A_n.
$$
Note that $A_n$ is Borel for every $n< \omega$. Indeed, to check this for $n\geq 1$, it suffices to apply
Lemma~\ref{lem:Measurability} and the $w^*$-continuity of the mapping $\sigma \mapsto y_{\sigma}^{**}$.

Now, we are going to apply some Ramsey-theoretic principles from \cite[Section~7.2]{tod-J-3}.
We fix a non-principal ultrafilter $\mathcal{U}$ on~$\omega$. For coherence with the setting in
which our reference~\cite{tod-J-3} is written, let us consider the subtree
$\omega^{[<\omega]}\sub \omega^{<\omega}$  consisting of all strictly increasing finite sequences in~$\omega$
(which is naturally identified with the set $\mathbb{N}^{[<\infty]}$ of all finite subsets of~$\N$).
Let $\omega^{[\omega]}\sub \omega^\omega$ be the closed set of all strictly increasing infinite sequences in~$\omega$
(which is naturally identified with the set $\mathbb{N}^{[\infty]}$ of all infinite subsets of~$\N$).
For every $m<\omega$ we define $A'_m := A_m \cap \omega^{[\omega]}$. Since $A'_m$ is Borel, we
can apply \cite[Theorem~7.42 and Lemma~7.36]{tod-J-3} to conclude that $A'_m$ is $\mathcal{U}$-Ramsey.
Since $\omega^{[\omega]} = \bigcup_{m<\omega} A'_m$ and the family of $\mathcal{U}$-null
sets is a proper $\sigma$-ideal of subsets of~$\omega^{[\omega]}$ (see \cite[Lemma 7.41]{tod-J-3}),
there is $m<\omega$ such that $A'_m$ is not $\mathcal{U}$-null.
It follows from \cite[Definitions~7.37 and~7.39]{tod-J-3}
that there is a \emph{$\mathcal{U}$-tree} $\Upsilon\sub \omega^{[<\omega]}$ such that
\begin{equation}\label{eqn:upsilon}
	[\Upsilon] := \{\sigma\in \omega^{[\omega]} : \, \sigma|_n \in \Upsilon \mbox{ for every }n<\omega\}\sub A'_m.
\end{equation}
That $\Upsilon$ is a $\mathcal{U}$-tree means that it is a $\sqsubseteq$-downwards closed subtree of~$\omega^{[<\omega]}$
such that, for every $s\in \Upsilon$, the set $\{n<\omega: s\smallfrown n\in \Upsilon\}$ belongs to~$\mathcal{U}$.

{\sc Step~3.} Define $E := \{y_s : s\in\Upsilon\} \sub B_X$. Note that
$E$ equipped with the weak topology is a countable metrizable space without
isolated points. Indeed, given $s\in\Upsilon$ and a weakly open set~$V \sub X$ with~$y_s \in V$, the
fact that $(y_{s\smallfrown n})$ is weakly convergent to~$y_s$ ensures the existence of $n_0<\omega$ such that
$y_{s\smallfrown n}\in V$ for all $n\geq n_0$. Since $\Upsilon$ is a $\mathcal{U}$-tree,
the set $\{n\geq n_0: s\smallfrown n\in \Upsilon\}$ is infinite, so $E \cap V$ is infinite
as well. It follows that $E$ is homeomorphic to $\mathbb{Q}$.

We claim that
\begin{equation}\label{eqn:lancia}
	\overline{E}^{w^\ast} = E \cup \{y_\sigma^{\ast\ast} : \, \sigma\in [\Upsilon]\}.
\end{equation}
Indeed, the inclusion ``$\supseteq$'' being obvious, let us prove ``$\subseteq$''.
Fix a sequence $(z_k)$ in~$E$ that $w^*$-converges to some $y\in \overline{E}^{w^*}$,
and write $z_k=y_{s^k}$ for some $s^k\in \Upsilon$.
If $(z_k)$ admits a constant subsequence, then $y\in E$. Otherwise,
by passing to a subsequence, we can assume that $z_k\neq z_{k'}$ whenever $k\neq k'$.
It is not difficult to check that there is a subsequence
of~$(z_k)$, not relabeled, satisfying one of the following conditions:
\begin{itemize}
\item Condition~1: there exist $s\in \Upsilon$ and $\varphi\in \omega^{[\omega]}$ such that $s\smallfrown \varphi(k) \sqsubseteq s^k$
for every~$k<\omega$. Then
\begin{multline*}
	\rho(y_{s^k},y_s)\leq
	\rho(y_{s^k},y_{s\smallfrown \varphi(k)})+
	\rho(y_{s\smallfrown \varphi(k)},y_s) \leq \\
	\stackrel{\eqref{eqn:DeltaExtension}}{\leq}
	\Delta(s\smallfrown \varphi(k))+\rho(y_{s\smallfrown \varphi(k)},y_s) \leq
	\frac{1}{2^{\varphi(k)}}+\rho(y_{s\smallfrown \varphi(k)},y_s)
	\quad
	\mbox{for every }k<\omega,
\end{multline*}
hence $(y_{s^k})$ is weakly convergent to~$y_s$ and so $y=y_s$.

\item Condition~2: there exist $\sigma\in [\Upsilon]$ such that $\sigma|_k \sqsubseteq s^k$ for every $k<\omega$.
Then
\begin{multline*}
	\rho(y_{s^k},y_\sigma^{**})\leq
	\rho(y_{s^k},y_{\sigma|_k})+
	\rho(y_{\sigma|_k},y_\sigma^{**}) \leq \\
	\stackrel{\eqref{eqn:DeltaExtension}}{\leq}
	\Delta(\sigma|_k)+\rho(y_{\sigma|_k},y_\sigma^{**}) \leq
	\frac{1}{2^{k}}+\rho(y_{\sigma|_k},y_\sigma^{**})
	\quad
	\mbox{for every }k<\omega,
\end{multline*}
therefore $(y_{s^k})$ is $w^*$-convergent to~$y^{**}_\sigma$ and
so $y=y^{\ast\ast}_\sigma$.
\end{itemize}
This completes the proof of~\eqref{eqn:lancia}.

{\sc Step~4.}
If $m>0$, then $E$ satisfies the requirements of Lemma~\ref{discretelemma}
(by \eqref{eqn:upsilon} and~\eqref{eqn:lancia}), and we conclude that $\mathcal{K}(\mathbb{Q}) \sim \mathcal{K}(E)\preceq \mathcal{AK}(B_X)$.
Therefore, in order to finish the proof it remains to check that $m>0$. Our proof is by contradiction.
If $m=0$, then \eqref{eqn:upsilon} and~\eqref{eqn:lancia} yield $\overline{E}^{w^\ast}\sub X$ and hence $E$ is relatively weakly compact.
Therefore, $E$ is fragmented by the norm (see \cite{nam1}), so there is
a weakly open set $V \sub X$ such that $V\cap E\neq \emptyset$ and $\|y-y'\|\leq \epsilon$ for every $y,y'\in V\cap E$.
Arguing as in the proof that $E$ has no isolated points (Step~3), we find $s\in \Upsilon$ and $n<\omega$ such that $y_s,y_{s\smallfrown n}\in V \cap E$,
hence $\|y_s- y_{s\smallfrown n}\|\leq\varepsilon$, a contradiction. The proof is over.
\end{proof}

From now on we consider a topological space $J_3$ whose underlying set is $2^{<\omega} \cup 2^\omega$ and whose topology
is defined by:
\begin{itemize}
\item all points from $2^{<\omega}$ are isolated;
\item any $x\in 2^{\omega}$ has a neighborhood basis made of the sets $\{x\} \cup \{x|_k : k>n\}$, where $n<\omega$.
\end{itemize}
Let $K_3 := 2^{<\omega} \cup 2^\omega \cup \{\infty\}$ be its one-point compactification and
consider its subspace $L_3 := 2^{<\omega}\cup \{\infty\}$. The following lemma provides
a characterization of the compact subsets of these topological spaces.
Recall that
an {\em antichain} of~$2^{<\omega}$ is a subset made up of pairwise incomparable elements.

\begin{lem}\label{lem:compactL3}
\mbox{ }
\begin{enumerate}
\item[(i)] A set $C \sub J_3$ is relatively compact if and only if $C\cap 2^\omega$ is finite and $C$ contains no
infinite antichain of~$2^{<\omega}$.
\item[(ii)] An infinite set $C\subset L_3$ is compact if and only if
$\infty \in C$ and $C\cap \{x|_n : n<\omega\}$ is finite for every $x\in 2^{\omega}$.
\end{enumerate}
\end{lem}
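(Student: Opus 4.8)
The plan is to reduce both statements to a single criterion for compactness in $J_3$ and then to feed in the tree combinatorics of $2^{<\omega}$ through Ramsey's theorem. First I would record the topology explicitly: the points of $2^{<\omega}$ are isolated, each $x\in 2^\omega$ is the limit exactly of those of its initial segments lying in a given set, and therefore for any $C\sub J_3$ one has
\[
	\overline{C}^{J_3}=C\cup\{x\in 2^\omega:\ x|_k\in C \text{ for infinitely many } k\}.
\]
Two consequences I would extract at once: $2^\omega$ is a closed discrete subspace of $J_3$, and every antichain $A\sub 2^{<\omega}$ is closed and discrete in $J_3$ (an antichain contains at most one initial segment of any given $x$, hence has no limit point in $2^\omega$). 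I would also note that $J_3$ is locally compact Hausdorff, so that $K_3$ is a compact Hausdorff space whose neighbourhoods of $\infty$ are the sets $\{\infty\}\cup(J_3\sm L)$ with $L\sub J_3$ compact.

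Second, I would prove the key criterion: \emph{a closed set $K\sub J_3$ is compact if and only if $K\cap 2^\omega$ is finite and $K\cap 2^{<\omega}$ contains no infinite antichain.} Necessity is immediate: $K\cap 2^\omega$ is a closed discrete subset of the compact $K$, hence finite, and an infinite antichain inside $K$ would be an infinite closed discrete subset of $K$, which is impossible. For sufficiency, given an open cover I cover the finitely many points of $K\cap 2^\omega$ by basic neighbourhoods; deleting these leaves a closed set $K'\sub 2^{<\omega}$ still free of infinite antichains. Being closed and disjoint from $2^\omega$, $K'$ also has no infinite \emph{chain}, since an infinite chain $t_0\sqsubset t_1\sqsubset\cdots$ would converge in $J_3$ to its union $x\in 2^\omega$, forcing $x\in\overline{K'}=K'$, a contradiction. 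By Ramsey's theorem an infinite subset of the poset $(2^{<\omega},\sqsubseteq)$ contains an infinite chain or an infinite antichain, so $K'$ must be finite, and finitely many further members of the cover complete the argument.

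Part~(i) then follows by applying this criterion to $K=\overline{C}$: relative compactness of $C$ means $\overline{C}$ is compact, and since $\overline{C}\cap 2^{<\omega}=C\cap 2^{<\omega}$, the criterion yields exactly the two stated conditions---provided that for the nontrivial implication I also check that $\overline{C}\cap 2^\omega$ is finite whenever $C$ has no infinite antichain. This last point is the one genuinely combinatorial lemma: if infinitely many distinct branches $x^{(m)}$ each met $C$ infinitely often, I would pass (by Bolzano--Weierstrass in the usual topology of $2^\omega$) to a subsequence converging to some $y$ with strictly increasing splitting levels $d_m=\mathrm{split}(x^{(m)},y)$, then select $t_m\in C$ an initial segment of $x^{(m)}$ of length $>d_m$; the choice of lengths forces the $t_m$ to be pairwise incomparable, producing a forbidden infinite antichain.

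For part~(ii) I would work inside $K_3$, where compactness of $C\sub L_3$ is equivalent to $C$ being closed. Writing $D=C\cap 2^{<\omega}$, the closure formula shows that $C$ is closed precisely when no $x\in 2^\omega$ has infinitely many initial segments in $C$ (that is, $C\cap\{x|_n:n<\omega\}$ is finite for every $x$) and $\infty\in C$ whenever $\infty\in\overline{C}$. The first requirement is exactly the absence of an infinite chain in $D$; and when $C$ is infinite, $D$ is infinite with no infinite chain, so by Ramsey it contains an infinite antichain $A$, which---being closed, discrete and infinite in $J_3$---is not relatively compact, whence $\infty\in\overline{C}$ and closedness of $C$ forces $\infty\in C$. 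The converse implication makes $\overline{C}=C$ directly from the two listed conditions. The main obstacle throughout is to keep the chain/antichain dichotomy straight: the topology converts \emph{relative compactness} into \emph{absence of infinite antichains} and \emph{closedness inside $2^{<\omega}$} into \emph{absence of infinite chains}, and it is the clash of these two conditions, arbitrated by Ramsey's theorem, that powers every implication; the single most delicate step is the antichain--extraction argument establishing that only finitely many branches can traverse an antichain-free set.
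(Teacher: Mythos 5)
Your proof is correct, and its combinatorial heart coincides with the paper's: both rest on Ramsey's chain/antichain dichotomy for $(2^{<\omega},\sqsubseteq)$, on the fact that an infinite chain converges in $J_3$ to the branch it determines, and on the very same antichain-extraction argument (from infinitely many branches meeting $C$ infinitely often, pass to a subsequence converging in $2^\omega$ with strictly increasing splitting levels, then pick initial segments in $C$ beyond those levels to produce an infinite antichain inside $C$). The differences are organizational rather than substantive. You isolate a single reusable criterion --- a closed $K\sub J_3$ is compact iff $K\cap 2^\omega$ is finite and $K\cap 2^{<\omega}$ contains no infinite antichain --- and prove necessity by noting that $K\cap 2^\omega$ and any antichain are closed and discrete, hence finite inside a compact set; the paper instead extracts an explicit finite subcover from the canonical cover of $\overline{C}$. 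You then route part~(ii) through closedness in the compact Hausdorff space $K_3$, where the paper proves both directions of~(ii) directly (showing $K_3\setminus C$ is open for the converse). One spot where your route is needlessly heavy: to get $\infty\in C$ in~(ii) you invoke Ramsey to produce an infinite antichain and then argue via neighborhoods of $\infty$ in the one-point compactification, whereas the paper simply observes that all points of $L_3\setminus\{\infty\}$ are isolated, so an infinite compact $C$ cannot avoid $\infty$. Both treatments are valid; yours buys a unified criterion and a slicker necessity direction, at the cost of a more roundabout forward implication in~(ii).
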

\begin{proof} (i). Suppose first that $C$ is relatively compact. Since
$$
	\Bigl\{\{t\}: \, t\in \overline{C}\cap 2^{<\omega}\Bigr\}
	\cup
	\Bigl\{\{x\} \cup \{x|_k : k<\omega\}:\, x\in \overline{C}\cap 2^{\omega}\Bigr\}
$$
is an open cover of~$\overline{C}$, it admits a finite subcover, that is, there exist
finite sets $C_1 \sub \overline{C}\cap 2^{<\omega}$ and $C_2 \sub \overline{C}\cap 2^{\omega}$ such that
$$
	\overline{C} \sub C_1 \cup C_2 \cup \{x|_k : \, x\in C_2, \, k<\omega\}.
$$
Clearly, this implies that $\overline{C}\cap 2^\omega=C_2$ and that $\overline{C}$
contains no infinite antichain of~$2^{<\omega}$.

For the converse, suppose that $C\cap 2^\omega$ is finite and $C\cap 2^{<\omega}$ does not contain any infinite antichain. 
First we notice that $\overline{C}\cap 2^\omega$ is also finite. Otherwise we could find an infinite sequence
$\{x_n: n<\omega\}\sub \overline{C} \cap 2^\omega\setminus C$. 
We can suppose that $(x_n)$ is convergent to some $x\in 2^\omega$ in the usual topology of $2^\omega$ and that
$x_n\neq x$ for all $n<\omega$. Moreover, writing $k[n]:=\min\{k<\omega: x_n|_k \neq x|_k\}$ for all $n<\omega$, 
we can suppose that $k[1]< k[2] < k[3] < \dots$. 
For every $n<\omega$ we have $x_n\in \overline{C}\cap 2^\omega \setminus C$ and so there is $p[n] > k[n]$ such that $x_n|_{p[n]}\in C$. 
Clearly, the set $\{x_n|_{p[n]} : n<\omega\}$ is an infinite antichain of~$2^{<\omega}$ contained in~$C$, 
a contradiction. So $\overline{C}\cap 2^\omega$ is finite. 

Now take an open cover $\mathcal{U}$ of $\overline{C}$. Take $\mathcal{V}\sub \mathcal{U}$ finite that covers 
$\overline{C}\cap 2^\omega$. We claim that $D: = \overline{C}\setminus \bigcup\mathcal{V} \sub 2^{<\omega}$ 
is finite, and hence $\mathcal{U}$ has a finite subcover. Our proof is by contradiction.
If $D$ was infinite, then by Ramsey's theorem (see e.g. \cite[Theorem~9.1]{jec}) 
it would contain either an infinite chain or an infinite antichain. 
It cannot contain an infinite antichain because $D\sub \overline{C}\cap 2^{<\omega} = C\cap 2^{<\omega}$. 
On the other hand, if $D$ contained an infinite chain, then that chain 
(ordered in the natural way) would converge to some $x\in \overline{C} \cap 2^\omega$, but this is impossible because $\overline{C}\cap 2^\omega$ 
is contained in the open set $\bigcup\mathcal{V}$ disjoint from~$D$. This contradiction
finishes the proof that $\overline{C}$ is compact.

(ii). Let $C \sub L_3$ be a compact set. If $\infty \not \in C$, then $C$ is finite because points of $L_3\setminus\{\infty\}$ are isolated.
Take any $x\in 2^{\omega}$. Since the sequence $(x|_n)$ converges to~$x$ in~$K_3$ and $C$ is a compact subset of~$L_3$, the
set $C\cap \{x|_n : n<\omega\}$ is finite.

Conversely, let $C\sub L_3$ be a set such that $\infty\in C$ and
$C\cap \{x|_n : n<\omega\}$ is finite for every $x\in 2^\omega$.
We shall prove that $K_3\setminus C$ is open in~$K_3$ (hence $C$ is compact). Clearly,
if $t\in 2^{<\omega}\setminus C$, then its open neighborhood $\{t\}$ does not meet~$C$. On the other hand,
if $x\in 2^\omega \setminus C$, then there is $n<\omega$ such that
the open neighborhood of~$x$ given by $\{x\}\cup\{x|_k:k>n\}$ does not meet~$C$.
\end{proof}

\begin{lem}\label{KL3}
$\mathcal{K}(L_3) \sim [\mathfrak{c}]^{<\omega}$.
\end{lem}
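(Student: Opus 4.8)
The plan is to establish the two Tukey reductions $\cK(L_3)\preceq[\mathfrak{c}]^{<\omega}$ and $[\mathfrak{c}]^{<\omega}\preceq\cK(L_3)$ separately. The first is immediate: $\cK(L_3)$ is upwards directed (a finite union of compact sets is compact) and, since $L_3$ is countable, $|\cK(L_3)|\leq\mathfrak{c}$; hence Remark~\ref{rem:CardinalTukey} together with an injection of $\cK(L_3)$ into $\mathfrak{c}$ gives $\cK(L_3)\preceq[\cK(L_3)]^{<\omega}\preceq[\mathfrak{c}]^{<\omega}$. All the work lies in the reverse reduction, for which I would exploit the explicit description of compact sets furnished by Lemma~\ref{lem:compactL3}(ii): an infinite $C\sub L_3$ is compact exactly when $\infty\in C$ and $C$ meets every branch $\{x|_n:n<\omega\}$ ($x\in2^\omega$) in a finite set.

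To build a Tukey map out of $[\mathfrak{c}]^{<\omega}$, I would attach to each $\sigma=(\sigma_n)\in2^\omega$ the ``off-branch'' set
\[
	A_\sigma:=\{\sigma|_n\smallfrown(1-\sigma_n):n<\omega\}\sub 2^{<\omega},
\]
consisting, for each $n$, of the sibling of $\sigma|_{n+1}$. A short check shows that $A_\sigma$ is an antichain (given $m<n$, the two members $\sigma|_m\smallfrown(1-\sigma_m)$ and $\sigma|_n\smallfrown(1-\sigma_n)$ already disagree at coordinate $m$), so $A_\sigma$ meets every chain, in particular every branch, in at most one point; by Lemma~\ref{lem:compactL3}(ii) the set $A_\sigma\cup\{\infty\}$ is therefore compact. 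Identifying $\mathfrak{c}$ with $2^\omega$, I would then define
\[
	g:[2^\omega]^{<\omega}\to\cK(L_3),\qquad g(a):=\{\infty\}\cup\bigcup_{\sigma\in a}A_\sigma,
\]
which lands in $\cK(L_3)$ because a finite union of compact sets is compact, and which is clearly order preserving.

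The heart of the argument — and the step I expect to be the main obstacle — is to verify that $g$ is Tukey, i.e. that $\{\sigma\in2^\omega:A_\sigma\sub K_0\}$ is finite for every $K_0\in\cK(L_3)$. I would prove the underlying combinatorial fact: \emph{the union of infinitely many of the $A_\sigma$ always contains an infinite chain}, and hence fails to be relatively compact in $L_3$. Given distinct $\sigma^1,\sigma^2,\dots\in2^\omega$, compactness of $2^\omega$ yields a subsequence $\sigma^{i_j}\to\tau$ with $\tau\neq\sigma^{i_j}$ for all $j$; letting $n_j$ be the least coordinate at which $\sigma^{i_j}$ and $\tau$ disagree, convergence forces $n_j\to\infty$. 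Since $\sigma^{i_j}|_{n_j}=\tau|_{n_j}$ and $1-\sigma^{i_j}_{n_j}=\tau_{n_j}$, one has $\sigma^{i_j}|_{n_j}\smallfrown(1-\sigma^{i_j}_{n_j})=\tau|_{n_j+1}$, so $\tau|_{n_j+1}\in A_{\sigma^{i_j}}$ for every $j$. The prefixes $\{\tau|_{n_j+1}:j<\omega\}$ form an infinite chain inside $\bigcup_i A_{\sigma^i}$, whence this union meets the branch determined by $\tau$ infinitely often and is not compact, by Lemma~\ref{lem:compactL3}(ii).

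Granting this, Tukeyness of $g$ follows at once: if $\{\sigma:A_\sigma\sub K_0\}$ were infinite for some compact $K_0$, then $K_0$ would contain the union of infinitely many $A_\sigma$, hence an infinite chain, contradicting the compactness of $K_0$. Thus $\{a:g(a)\sub K_0\}\sub[\{\sigma:A_\sigma\sub K_0\}]^{<\omega}$ is bounded, $g$ is a Tukey map, and $[\mathfrak{c}]^{<\omega}=[2^\omega]^{<\omega}\preceq\cK(L_3)$; combined with the first paragraph this yields $\cK(L_3)\sim[\mathfrak{c}]^{<\omega}$. One could instead route the lower bound through the general principle of Lemma~\ref{lem:GeneralLemma} by manufacturing a suitable $f:2^{<\omega}\to2^{<\omega}$ sending $1$-chains into antichains and $0$-chains into chains, but the direct verification via the family $\{A_\sigma\}$ avoids that construction and seems shorter.
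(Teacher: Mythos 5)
Your proposal is correct and takes essentially the same route as the paper: the paper uses the identical antichains $A_x=\{x|_n\smallfrown(1-x(n)):n<\omega\}$ and the same compactness-in-$2^\omega$ argument to show that $\{x\in 2^\omega: A_x\sub J\}$ is finite for each admissible $J$, the only (cosmetic) difference being that the paper first passes to the intermediate poset $\cJ$ of subsets of $2^{<\omega}$ meeting every branch finitely and shows $\cK(L_3)\sim\cJ$, whereas you work directly in $\cK(L_3)$ by adjoining $\{\infty\}$ to the antichains.
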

\begin{proof}
Let $\cJ$ be the family of all sets $J\sub 2^{<\omega}$ such that $J\cap \{x|_n : n<\omega\}$ is finite for every $x\in 2^\omega$,
ordered by inclusion.

{\em Claim.} $\mathcal{K}(L_3) \sim \cJ$. Indeed, by Lemma~\ref{lem:compactL3}, we can define
$f:\mathcal{K}(L_3) \to \cJ$ and $g: \cJ \to \mathcal{K}(L_3)$ by
$$
	f(C):=C\cap 2^{<\omega}, \qquad
	g(J):=J \cup \{\infty\}.
$$
It is clear that both $f$ and~$g$ are Tukey functions, which proves the claim.

Therefore, it remains to show that $\cJ \sim [\mathfrak{c}]^{<\omega}$.
The Tukey reduction $\cJ \preceq [\mathfrak{c}]^{<\omega}$ follows
from Remark~\ref{rem:CardinalTukey}. Given $x=(x(n))\in 2^\omega$, we define
$$
	s_n^x:=x|_n \smallfrown (1-x(n))\in 2^{<\omega}
	\quad
	\mbox{for every }n<\omega
$$
and we write $A_x:=\{s_n^x:n<\omega\}$, so that $A_x\in\cJ$
(since it is an antichain). Define
$$
	f: [2^{\omega}]^{<\omega} \to \cJ,
	\quad
	f(F) := \bigcup_{x\in F} A_x.
$$
In order to check that $f$ is a Tukey function it is enough to show that for every
$J\in\cJ$ the set $H_J:=\{x\in 2^\omega:A_x\sub J\}$ is finite.
Our proof is by contradiction. If $H_J$ is infinite, then
by compactness there is a sequence $(x_j)$ in~$H_J$ converging to some $x\in 2^\omega$ in the usual topology of~$2^\omega$
such that $x\neq x_j$ for all $j<\omega$. Fix $n<\omega$. Then there is $j<\omega$ such that
$x_j|_n=x|_n$. Since $x_j\neq x$, there is $m\geq n$ such that $x_j|_m= x|_m$
and $x_j|_{m+1}\neq x|_{m+1}$, that is, $x|_{m+1}=s_m^{x_j}\in A_{x_j}\sub J$. As $n<\omega$
is arbitrary, we conclude that $J \cap \{x|_n:n<\omega\}$ is infinite, a contradiction.
\end{proof}

\begin{defi}\label{defi:K3function}
A function $\varphi: K_3 \to B_{X^{**}}$ is called a {\em $K_3$-embedding} if
\begin{enumerate}
\item[(i)] it is continuous from~$K_3$ to $(B_{X^{**}},w^*)$ and one-to-one;
\item[(ii)] $\varphi(\infty)=0$;
\item[(iii)] $\varphi(2^{<\omega}) \sub X$;
\item[(iv)] $\varphi(2^{\omega}) \sub X^{**} \setminus X$.
\end{enumerate}
A $K_3$-embedding $\varphi$ is called {\em regular} if the following condition holds:
\begin{enumerate}
\item[(v)] there is $\delta>0$ such that $\|\varphi(\sigma)-x\|> \delta$ for every $\sigma\in 2^\omega$ and every~$x\in X$.
\end{enumerate}
\end{defi}

\begin{lem}\label{lem:regularTukeyTop}
If $X$ admits a regular $K_3$-embedding, then $[\mathfrak{c}]^{<\omega} \preceq \AK(B_X)$.
\end{lem}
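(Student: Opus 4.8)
The plan is to reduce everything to Lemma~\ref{KL3}, which identifies $\mathcal{K}(L_3)$ with $[\mathfrak{c}]^{<\omega}$. By the transitivity of Tukey reducibility for asymptotic structures (Remark~\ref{at:4}) it suffices to prove $\mathcal{K}(L_3) \preceq \AK(B_X)$, and by Remark~\ref{at:5}(ii) this in turn amounts to exhibiting a \emph{single} Tukey function
$$
	g:\mathcal{K}(L_3) \to (\AK(B_X),\leq_\delta),
$$
where $\delta>0$ is the constant provided by the regularity condition~(v) of the $K_3$-embedding~$\varphi$. The natural candidate, which I would use, is $g(C):=\overline{\varphi(C\cap 2^{<\omega})}^{w}$.

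First I would check that $g$ takes values in $\AK(B_X)$. Since $\varphi$ is a continuous injection from the compact space $K_3$ into the Hausdorff space $(B_{X^{**}},w^*)$, it is a homeomorphism onto its ($w^*$-compact) image, so $\overline{\varphi(D)}^{w^*}=\varphi(\overline{D}^{K_3})$ for $D:=C\cap 2^{<\omega}$. The key observation is that $\overline{D}^{K_3}$ contains no point of $2^\omega$: by Lemma~\ref{lem:compactL3}(ii) the compactness of $C$ forces $C\cap\{x|_n:n<\omega\}$ to be finite for every $x\in 2^\omega$, so no branch accumulates to a point of $2^\omega$ inside $K_3$. Hence $\overline{D}^{K_3}\sub 2^{<\omega}\cup\{\infty\}$ and, using $\varphi(\infty)=0$ and $\varphi(2^{<\omega})\sub X$, we obtain $\overline{\varphi(D)}^{w^*}\sub X$. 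Being a $w^*$-compact subset of~$X$, it is weakly compact and coincides with the weak closure $\overline{\varphi(D)}^{w}=g(C)$; as $\varphi(2^{<\omega})\sub B_X$, we conclude $g(C)\in\AK(B_X)$.

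Next I would verify the Tukey condition. Given $L_0\in\AK(B_X)$, I would take
$$
	C_0:=\{t\in 2^{<\omega}: \varphi(t)\in L_0+\delta B_X\}\cup\{\infty\}.
$$
The crucial---and only---use of regularity is to show that $C_0$ is compact in $L_3$. Suppose some $x\in 2^\omega$ had infinitely many initial segments $x|_n$ lying in $C_0$. As $x|_n\to x$ in $K_3$ and $\varphi$ is $w^*$-continuous, $\varphi(x|_n)\to\varphi(x)$ in the $w^*$-topology; each such $\varphi(x|_n)$ belongs to $L_0+\delta B_X\sub L_0+\delta B_{X^{**}}$, and the latter set is $w^*$-compact (hence $w^*$-closed), because $L_0$ is $w^*$-compact in $X^{**}$ and addition is jointly $w^*$-continuous. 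Passing to the limit gives $\varphi(x)\in L_0+\delta B_{X^{**}}$, i.e.\ $d(\varphi(x),X)\le\delta$, contradicting condition~(v). Thus $C_0\cap\{x|_n:n<\omega\}$ is finite for every $x\in 2^\omega$, and since $\infty\in C_0$, Lemma~\ref{lem:compactL3}(ii) yields $C_0\in\mathcal{K}(L_3)$. Finally, if $C\in\mathcal{K}(L_3)$ satisfies $g(C)\leq_\delta L_0$, i.e.\ $g(C)\sub L_0+\delta B_X$, then $\varphi(t)\in L_0+\delta B_X$ for every $t\in C\cap 2^{<\omega}$, whence $C\sub (C\cap 2^{<\omega})\cup\{\infty\}\sub C_0$. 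This makes $g$ a Tukey function and completes the argument.

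I expect the compactness of $C_0$ to be the main obstacle: it is the one step where regularity is indispensable and where one must move carefully between $X$ and $X^{**}$, relying on the $w^*$-closedness of $L_0+\delta B_{X^{**}}$ to promote the membership $\varphi(x|_n)\in L_0+\delta B_X$ to a statement about the $w^*$-limit $\varphi(x)$.
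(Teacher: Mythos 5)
Your proof is correct and follows essentially the same route as the paper: both reduce the problem to $\mathcal{K}(L_3)\sim[\mathfrak{c}]^{<\omega}$ via Lemma~\ref{KL3}, exploit that $\varphi$ is a homeomorphism onto its $w^*$-compact image, and use the regularity constant $\delta$ together with the $w^*$-closedness of $L_0+\delta B_{X^{**}}$ to show that the $\leq_\delta$-preimage of each $L_0$ is bounded by a compact set. The only difference is organizational: the paper sets $E:=\varphi(L_3)$ and invokes the general Lemma~\ref{discretelemma} (whose hypothesis \eqref{eqn:dE} is verified by exactly your regularity argument), whereas you inline that lemma's proof by constructing the Tukey function $g$ and the compact bound $C_0$ explicitly.
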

\begin{proof}
Let $\varphi:K_3 \to B_{X^{**}}$ be a regular $K_3$-embedding and consider $E:=\varphi(L_3) \sub B_X$.
Since $\varphi$ is a homeomophism between $K_3$ and $\varphi(K_3)$, we have
$\overline{E}^{w^*} = \varphi(K_3)$ and the restriction $\varphi|_{L_3}$
is a homeomorphism between $L_3$ and~$E$.
Let $\delta>0$ be as in Definition~\ref{defi:K3function}(v). Then
$\|x^{**}-x\|>\delta$ for every $x^{**}\in \overline{E}^{w^*} \setminus E$ and every $x\in X$.
By Lemmas~\ref{KL3} and~\ref{discretelemma}, we have $[\mathfrak{c}]^{<\omega} \sim \mathcal{K}(L_3) \sim \mathcal{K}(E) \preceq \mathcal{AK}(B_X)$.
\end{proof}

Let us illustrate the notion of $K_3$-embedding with an example.

\begin{exa}\label{exa:C01}
The space $C[0,1]$ satisfies $\mathcal{AK}(B_{C[0,1]})\sim \mathcal{K}(B_{C[0,1]}) \sim [\mathfrak{c}]^{<\omega}$.
\end{exa}
\begin{proof}
Since $C[0,1]$ and $C(2^\omega)$ are isomorphic, it suffices to deal with~$X:=C(2^\omega)$
(bear in mind Proposition~\ref{at:7}(v)).
Define $\vf:K_3 \to B_{X^{**}}$ by declaring:
\begin{itemize}
\item $\varphi(\infty):=0$;
\item $\varphi(u):=1_{A_u}$ for every $u\in 2^{<\omega}$, where $1_{A_u}$ is the characteristic function
of the clopen $A_u:=\{\sigma\in 2^\omega: u \sqsubseteq \sigma\}$;
\item $\varphi(\sigma)(\mu):=\mu(\{\sigma\})$ for every $\sigma\in 2^\omega$ and every $\mu\in X^{*}$;
here $X^*$ is identified with the space of all regular Borel measures on~$2^\omega$ via Riesz's theorem.
\end{itemize}
Clearly, $\vf$ is one-to-one and satisfies properties (ii), (iii) and (iv) of Definition~\ref{defi:K3function}.
Writing $\delta_\tau \in B_{X^{*}}$ to denote the point mass at~$\tau\in 2^\omega$,
we have
$$
	\|\varphi(\sigma)-f\| \geq 	\sup_{\tau\in 2^\omega} \, \bigl|\delta_\tau(\{\sigma\})-f(\tau)\bigr|=
	\max\bigl\{|1-f(\sigma)|,\, \sup_{\tau\neq \sigma}|f(\tau)|\bigr\} \geq \frac{1}{2}
$$
for every $\sigma\in 2^\omega$ and every $f\in C(2^\omega)$. Therefore,
$\vf$ fulfills condition~(v) of Definition~\ref{defi:K3function}. In order to prove that $\vf$ is a regular $K_3$-embedding,
it remains to check that $\vf$ is continuous from~$K_3$ to $(B_{X^{**}},w^*)$. Obviously, $\vf$ is continuous at each isolated point
$u\in 2^{<\omega}$, while the continuity at each~$\sigma \in 2^\omega$ follows from the fact that
$$
	\varphi(\sigma)(\mu)=\mu(\{\sigma\})=\lim_{n\to \infty}\mu(A_{\sigma|_n})=\lim_{n\to \infty}\vf(\sigma|_n)(\mu)
$$
for every $\mu\in X^{*}$ (note that $(A_{\sigma|_n})$ is a decreasing sequence of sets with intersection~$\{\sigma\}$).
To show that $\vf$ is continuous at~$\infty$, let $W \sub X^{**}$ be any $w^*$-open neighborhood of~$0$. Then
$C:=K_3 \setminus \varphi^{-1}(W)$ is relatively compact in~$J_3$, by Lemma~\ref{lem:compactL3}(i)
and the following facts:
\begin{itemize}
\item $C\cap 2^\omega$ is finite (bear in mind that $w^{*}-\lim_{n\to \infty}\varphi(\sigma_n)=0$
for every sequence $(\sigma_n)$ of distinct points of~$2^\omega$);
\item $C\cap 2^{<\omega}$ contains no infinite antichain; indeed, if $(u_n)$ is a sequence of
incomparable elements of~$2^{<\omega}$, then $(1_{A_{u_n}})$ is bounded and pointwise convergent to~$0$, hence
weakly null (see e.g. \cite[Corollary~3.138]{fab-ultimo}).
\end{itemize}
It follows that $\varphi^{-1}(W) \supseteq K_3 \setminus \overline{C}^{J_3}$ is a neighborhood of~$\infty$ in~$K_3$.
This proves that $\vf$ is continuous at~$\infty$ and we conclude that $\vf$ is a
regular $K_3$-embedding.

By Lemma~\ref{lem:regularTukeyTop}, we have $[\mathfrak{c}]^{<\omega} \preceq \AK(B_X)$.
On the other hand, the reductions $\AK(B_X) \preceq \cK(B_X) \preceq [\mathfrak{c}]^{<\omega}$
follow from Proposition~\ref{at:7}(i) and the separability of~$X$ (see Remark~\ref{rem:CardinalTukey}).
\end{proof}

\begin{lem}\label{lem:FromSimpleToRegular}
If $X$ is separable and admits a $K_3$-embedding, then it also admits a regular $K_3$-embedding.
In particular, $\AK(B_X)\sim \cK(B_X) \sim [\mathfrak{c}]^{<\omega}$.
\end{lem}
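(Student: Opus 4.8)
The plan is to manufacture a regular $K_3$-embedding out of the given $K_3$-embedding $\varphi\colon K_3\to B_{X^{**}}$ by precomposing it with a self-map of $K_3$ that reroutes the branch space $2^\omega$ into a perfect set on which the distance $d(\varphi(\sigma),X)$ stays uniformly bounded away from~$0$. Once such a regular embedding is produced, the ``in particular'' clause is immediate: Lemma~\ref{lem:regularTukeyTop} gives $[\con]^{<\omega}\preceq\AK(B_X)$, while $\AK(B_X)\preceq\cK(B_X)$ by Proposition~\ref{at:7}(i), and $\cK(B_X)\preceq[\con]^{<\omega}$ because $\cK(B_X)$ is an upwards directed poset of cardinality at most $\con$ (Remark~\ref{rem:CardinalTukey}); chaining these yields $\AK(B_X)\sim\cK(B_X)\sim[\con]^{<\omega}$.

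The heart of the argument is a definability computation that lets me select the perfect set in ZFC. Set $g(\sigma):=d(\varphi(\sigma),X)$ for $\sigma\in 2^\omega$; by condition~(iv) of Definition~\ref{defi:K3function} we have $g(\sigma)>0$ for every $\sigma$, so $2^\omega=\bigcup_{m}A_m$, where $A_m:=\{\sigma\in 2^\omega: g(\sigma)\ge 1/m\}$. I claim that each $A_m$ is analytic. Fix a countable dense set $\{x_j:j<\omega\}\sub X$; since $X$ is separable, $(B_{X^*},w^*)$ is a Polish space, and the function $h(\sigma,x^*):=\langle x^*,\varphi(\sigma)\rangle=\lim_k\langle x^*,\varphi(\sigma|_k)\rangle$ is Borel on $2^\omega\times(B_{X^*},w^*)$, being a pointwise limit of the continuous functions $(\sigma,x^*)\mapsto\langle x^*,\varphi(\sigma|_k)\rangle$ (here I use that $\varphi(\sigma|_k)\to\varphi(\sigma)$ in $w^*$, which is exactly continuity of $\varphi$ at $\sigma\in 2^\omega\subset K_3$). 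Writing the complement as
$$
\{\sigma: g(\sigma)<1/m\}=\bigcup_{j<\omega}\ \bigcup_{\substack{r\in\qu\\ 0<r<1/m}}\ \{\sigma:\forall x^*\in B_{X^*}\ h(\sigma,x^*)-\langle x^*,x_j\rangle\le r\},
$$
each inner set is coanalytic, being a universal quantifier over the Polish space $(B_{X^*},w^*)$ applied to a Borel condition; hence the complement of $A_m$ is coanalytic and $A_m$ is analytic. Since $2^\omega$ is uncountable, some $A_m$ is uncountable, and by the perfect set property for analytic sets (see e.g.~\cite[Theorem~29.1]{kec-J}) $A_m$ contains a subset $P$ homeomorphic to $2^\omega$; as such, $P$ is a perfect subset of $2^\omega$ with $g\ge 1/m$ on $P$.

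With $P$ and $\delta:=1/(m+1)$ in hand, I would invoke Lemma~\ref{lem:PerfectSubtree} to obtain a subtree $u:2^{<\omega}\to 2^{<\omega}$ with $u(2^{<\omega})\sub\{\sigma|_n:\sigma\in P,\ n<\omega\}$, and extend $u$ to $\psi:K_3\to K_3$ by $\psi(\infty):=\infty$, $\psi(t):=u(t)$ for $t\in 2^{<\omega}$, and $\psi(\sigma):=\bigcup_n u(\sigma|_n)\in 2^\omega$ for $\sigma\in 2^\omega$. Since $u$ is a subtree it is injective and sends comparable (resp. incomparable) pairs to comparable (resp. incomparable) pairs, from which $\psi$ is injective and $\psi(\sigma)$ has all its initial segments in $\{\sigma|_n:\sigma\in P\}$, hence lies in $P$ (as $P$ is closed). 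The map $\psi$ is continuous: at isolated points trivially; at $\sigma\in 2^\omega$ because the $u(\sigma|_n)$ are nested initial segments of $\psi(\sigma)$ of unbounded length; and at $\infty$ because for a basic neighbourhood $K_3\setminus\overline{C}$ of $\infty$ (with $C$ relatively compact in $J_3$), the set $\psi^{-1}(\overline C)$ meets $2^\omega$ in a finite set and contains no infinite antichain of $2^{<\omega}$, so Lemma~\ref{lem:compactL3}(i) makes it relatively compact in $J_3$. Thus $\widetilde\varphi:=\varphi\circ\psi$ is continuous and injective, satisfies (ii)--(iv) of Definition~\ref{defi:K3function} because $\psi$ preserves the three layers $\{\infty\}$, $2^{<\omega}$ and $2^\omega$, and is regular: for $\sigma\in 2^\omega$ we have $\psi(\sigma)\in P$, whence $\|\widetilde\varphi(\sigma)-x\|=\|\varphi(\psi(\sigma))-x\|\ge g(\psi(\sigma))\ge 1/m>\delta$ for every $x\in X$.

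The main obstacle I expect is precisely the definability step of the second paragraph: a priori $\varphi|_{2^\omega}$ is only a pointwise $w^*$-limit of continuous maps into the \emph{non-metrizable} space $(B_{X^{**}},w^*)$ (note that $X^*$ need not be separable), so $g$ need not be Borel and Lemma~\ref{lem:Measurability} cannot be quoted directly. The point that keeps the argument inside ZFC --- rather than forcing a perfect-set property at the coanalytic level, which would require a determinacy hypothesis --- is to organize the quantifiers so that $A_m$ emerges as \emph{analytic} rather than merely coanalytic, which the computation above achieves by exploiting that $(B_{X^*},w^*)$ is Polish. The only other delicate point is continuity of $\psi$ at $\infty$, and that is dispatched cleanly by the combinatorial description of compactness in $J_3$ given in Lemma~\ref{lem:compactL3}.
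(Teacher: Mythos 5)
Your proof is correct, and its skeleton is the same as the paper's: decompose $2^\omega$ into the sets $A_m$ according to $d(\varphi(\sigma),X)$, extract a copy $P$ of the Cantor set from an uncountable $A_m$, pull back along the subtree of Lemma~\ref{lem:PerfectSubtree}, verify continuity at~$\infty$ through the antichain criterion of Lemma~\ref{lem:compactL3}(i), and finish with Lemma~\ref{lem:regularTukeyTop}, Proposition~\ref{at:7}(i) and Remark~\ref{rem:CardinalTukey}. The one genuine divergence is the definability step, and it is worth spelling out. The paper declares each $A_m$ \emph{Borel}, citing Lemma~\ref{lem:Measurability} and the $w^*$-continuity of~$\varphi$, and then uses the perfect set theorem for Borel sets; you object that $\sigma\mapsto\varphi(\sigma)$ is only a pointwise $w^*$-limit of continuous maps into the possibly non-metrizable $(B_{X^{**}},w^*)$, and instead you show $A_m$ is \emph{analytic} by a projection computation over the Polish space $(B_{X^*},w^*)$, invoking Souslin's perfect set theorem for analytic sets. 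Your computation is sound (the joint continuity of $(\sigma,x^*)\mapsto\langle x^*,\varphi(\sigma|_k)\rangle$, the identification of the inner sets as co-projections of Borel sets, and the closure of $\mathbf{\Pi}^1_1$ under countable unions are all used correctly), and it stays in ZFC, so it proves the lemma. Note, however, that your worry about the paper's step can be dispelled: since $K_3$ is compact metrizable and $\varphi$ is continuous into a Hausdorff space, the image $\varphi(K_3)$ is $w^*$-compact and \emph{metrizable}; the maps $\sigma\mapsto\varphi(\sigma|_k)$ and their pointwise limit $\sigma\mapsto\varphi(\sigma)$ all take values in this metrizable compact, so the limit is of Baire class~$1$, hence Borel, and composing with the Borel function $d(\cdot,X)$ of Lemma~\ref{lem:Measurability} shows that $A_m$ is in fact Borel. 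So both routes work; yours buys independence from that metrizability observation at the harmless cost of a weaker definability conclusion, while the paper's yields the sharper Borel statement once the missing remark about $\varphi(K_3)$ is supplied.
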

\begin{proof}
Let $\varphi:K_3 \to B_{X^{**}}$ be a $K_3$-embedding. For each $n\in \N$, define
$$
	A_n := \left\{\sigma\in 2^\omega : \,  d\big(\varphi(\sigma),X\big)\geq \frac{1}{n}\right\}
$$
and observe that $A_n$ is Borel (by Lemma~\ref{lem:Measurability} and the $w^*$-continuity of~$\varphi$).
Since $2^\omega = \bigcup_{n\in \N} A_n$, there is $n\in \N$ such that $A_n$ is uncountable.
It follows that $A_n$ contains a set~$P$ homeomorphic to~$2^\omega$
(see e.g. \cite[Theorem~13.6]{kec-J}).
By Lemma~\ref{lem:PerfectSubtree}, there is a subtree $u:2^{<\omega}\to 2^{<\omega}$
(as in Definition~\ref{defi:subtree}) such that
$$
	u(2^{<\omega}) \sub T:=\{\sigma|_n: \, \sigma\in P, \, n<\omega\}.
$$

{\em Claim~1.} For every $\sigma\in 2^\omega$ there exists the limit
$j(\sigma):=\lim_{n\to \infty}u(\sigma|_n)$ in~$K_3$ and $j(\sigma)\in P$.
Indeed, since $u(\sigma|_n) \sqsubseteq u(\sigma|_{n+1})$ for all $n<\omega$, there
exist $\tau\in 2^\omega$ and a strictly increasing sequence $(m_n)$ in~$\omega$
such that $\tau|_{m_n}=u(\sigma|_n)$ for all $n<\omega$. Clearly, this implies
that the sequence $(u(\sigma|_n))$ converges to~$\tau$ in~$K_3$. On the other hand,
for every $n<\omega$ there is $\sigma_n\in P$ such that $\sigma_n|_{m_n}=u(\sigma|_n)=\tau|_{m_n}$.
Therefore, the sequence $(\sigma_n)$ converges to~$\tau$ in~$2^\omega$ and, since~$P$ is
closed in~$2^\omega$, we conclude that $\tau\in P$.

Define $\tilde{\varphi}: K_3 \to B_{X^{**}}$ as follows:
\begin{itemize}
\item[(i)] $\tilde{\varphi}(\infty):=0$;
\item[(ii)] $\tilde{\varphi}(t):=\varphi(u(t))$ for every $t\in 2^{<\omega}$;
\item[(iii)] $\tilde{\varphi}(\sigma):=\varphi(j(\sigma))$ for every $\sigma\in 2^\omega$.
\end{itemize}

{\em Claim~2.} $\tilde{\varphi}$ is continuous from $K_3$ to $(B_{X^{**}},w^*)$. Indeed, obviously
$\tilde{\varphi}$ is continuous at each isolated point $t\in 2^{<\omega}$. On the other hand,
for every $\sigma\in 2^\omega$ we have
$$
	\tilde{\varphi}(\sigma)=\varphi\Bigl(\lim_{n\to \infty}u(\sigma|_n)\Bigr)
	=\lim_{n\to \infty}\varphi(u(\sigma|_n))=
	\lim_{n\to \infty}\tilde{\varphi}(\sigma|_n),
$$
which implies the continuity of~$\tilde{\varphi}$ at~$\sigma$. Finally, in order
to check the continuity at~$\infty$, let $W \sub X^{**}$ be a $w^*$-open neighborhood of~$0$. Then
$K:=K_3\setminus \varphi^{-1}(W) \sub J_3$ is compact and so
$K\cap 2^\omega$ is finite and $K$ contains no infinite antichain of~$2^{<\omega}$ (by Lemma~\ref{lem:compactL3}(i)).
Define
$$
	C:=\{t\in 2^{<\omega}:\, u(t)\in K\}\cup \{\sigma\in 2^\omega: \, j(\sigma)\in K\} \sub J_3
$$
and note that $C=K_3\setminus \tilde{\varphi}^{-1}(W)$.
Since $j: 2^\omega \to P$ is one-to-one, $C \cap 2^\omega$ is finite. Note that
$C$ contains no infinite antichain of~$2^{<\omega}$ (because
if $t,s\in C$ are incomparable, then $u(t),u(s) \in K$ are incomparable as well).
Another appeal to Lemma~\ref{lem:compactL3}(i) ensures that~$C$
is relatively compact in~$J_3$. Then $K_3 \setminus \overline{C}^{J_3}$
is an open neighborhood of~$\infty$ in~$K_3$ contained in~$\tilde{\varphi}^{-1}(W)$.
This finishes the proof of the claim.

Thus, $\tilde{\varphi}$ is a $K_3$-embedding. Since
$j(\sigma)\in P \sub A_n$ for every $\sigma\in 2^\omega$,
it follows that $\tilde{\varphi}$ is regular. From Lemma~\ref{lem:regularTukeyTop}
we get $[\mathfrak{c}]^{<\omega} \preceq \AK(B_X)$.
On the other hand, the reductions $\AK(B_X) \preceq \cK(B_X) \preceq [\mathfrak{c}]^{<\omega}$
follow from Proposition~\ref{at:7}(i) and the separability of~$X$.
\end{proof}

Recall that the dual ball $B_{Y^*}$ of a Banach space~$Y$ is is \emph{$w^*$-angelic} if, for any
set $A \subset B_{Y^*}$, every point in the $w^*$-closure of~$A$ is the $w^*$-limit of a sequence
contained in~$A$. This property holds if $Y$ is WCG (cf. \cite[Theorem~13.20]{fab-ultimo}) and also
if $Y$ is the dual of a separable Banach space not containing~$\ell^1$, thanks to the Bourgain-Fremlin-Talagrand
and Odell-Rosenthal theorems (cf. \cite[Theorems~5.49 and~5.52]{fab-ultimo}).

\begin{lem}\label{lem:UsingDodos}
Suppose that $B_{X^{**}}$ is $w^*$-angelic and that there is
a biorthogonal system $\{(e_t,e^*_t):t\in 2^{<\omega}\} \sub X \times X^*$ satisfying the following properties:
\begin{enumerate}
\item[(i)] $\|e_t\|\leq 1$ for every $t\in 2^{<\omega}$;
\item[(ii)] $\overline{\{e_t:t\in I\}}^{w^*} = \{e_t:t\in I\} \cup \{0\}$ for every infinite antichain~$I \sub 2^{<\omega}$;
\item[(iii)] for every $\sigma\in 2^\omega$ there exists $w^*-\lim_{n\to \infty} e_{\sigma|_n}=e_\sigma^{**}\in X^{**}\setminus X$;
\item[(iv)] $e_\sigma^{**}\neq e_\tau^{**}$ whenever $\sigma\neq \tau$.
\end{enumerate}
Then $E:=\{e_\sigma^{**}:\sigma\in 2^\omega\}$ is $w^{*}$-discrete and $\overline{E}^{w^*}=E\cup \{0\}$.
\end{lem}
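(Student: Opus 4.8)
The plan is to reduce both assertions to a single convergence fact and then read them off. First I record the ambient compactness: by hypothesis~(i) and the $w^*$-lower semicontinuity of the norm one has $\|e^{**}_\sigma\|\le 1$, so $E\sub B_{X^{**}}$ and every $w^*$-closure in sight lives inside the $w^*$-compact ball $B_{X^{**}}$. Throughout I will use the biorthogonality $e^*_t(e_s)=\delta_{ts}$ together with the $w^*$-angelicity of $B_{X^{**}}$, which allows me to replace arbitrary $w^*$-cluster points by limits of \emph{sequences} (and, for a cluster point of a sequence, to extract a convergent subsequence).

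The key step is the claim: \emph{if $(\sigma_n)$ is a sequence of pairwise distinct elements of $2^\omega$, then $0$ is the only possible $w^*$-cluster point of $(e^{**}_{\sigma_n})$.} To prove it, let $z^{**}$ be such a cluster point. By $w^*$-angelicity I pass to a subsequence with $e^{**}_{\sigma_n}\to z^{**}$ in the $w^*$-topology, and since $2^\omega$ is compact metrizable I further arrange $\sigma_n\to\sigma$ in $2^\omega$ with $\sigma_n\ne\sigma$ and with branch points $k_n:=\min\{i:\sigma_n(i)\ne\sigma(i)\}$ strictly increasing. Fix an arbitrary $x^*\in X^*$; I show $z^{**}(x^*)=0$. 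Since $e^{**}_{\sigma_n}$ is the $w^*$-limit of $(e_{\sigma_n|_m})_m$ by~(iii), I may pick $m_n>k_n$ so large that $|x^*(e_{\sigma_n|_{m_n}})-e^{**}_{\sigma_n}(x^*)|<2^{-n}$. Writing $u_n:=\sigma_n|_{m_n}$, the fact that the $k_n$ are strictly increasing forces $u_n$ and $u_m$ to disagree at coordinate $k_{\min(n,m)}$, so $\{u_n:n<\omega\}$ is an infinite antichain of $2^{<\omega}$. Hence by~(ii), by biorthogonality (each $e_{u_k}$ is $w^*$-isolated in $\{e_{u_n}:n<\omega\}$ through the $w^*$-open set $\{y^{**}:y^{**}(e^*_{u_k})>1/2\}$, which meets the set only at $e_{u_k}$), and by $w^*$-compactness, the sequence $(e_{u_n})$ has $0$ as its unique $w^*$-cluster point and therefore $e_{u_n}\to 0$ in the $w^*$-topology. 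Thus $x^*(e_{u_n})\to 0$; but by the choice of $m_n$ also $x^*(e_{u_n})\to z^{**}(x^*)$, whence $z^{**}(x^*)=0$. As $x^*$ was arbitrary, $z^{**}=0$, proving the claim.

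With the claim in hand both assertions are immediate. For $\overline E^{w^*}=E\cup\{0\}$: the inclusion $E\sub\overline E^{w^*}$ is trivial, and $0\in\overline E^{w^*}$ because a sequence of distinct $e^{**}_{\sigma_n}$ has a $w^*$-cluster point in the compact ball, which the claim identifies as $0$. Conversely, if $z^{**}\in\overline E^{w^*}\setminus E$ then by angelicity $z^{**}=w^*$-$\lim e^{**}_{\sigma_n}$ with the $\sigma_n$ pairwise distinct (a constant subsequence would force $z^{**}\in E$ via~(iv)), so the claim yields $z^{**}=0$. For $w^*$-discreteness, suppose some $e^{**}_\sigma$ were not $w^*$-isolated in $E$; then it would be a $w^*$-cluster point of $\{e^{**}_\tau:\tau\ne\sigma\}$, and angelicity would produce pairwise distinct $\tau_n\ne\sigma$ with $e^{**}_{\tau_n}\to e^{**}_\sigma$, forcing $e^{**}_\sigma=0$ by the claim and contradicting $e^{**}_\sigma\in X^{**}\setminus X$ from~(iii).

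The main obstacle is the diagonal–antichain extraction inside the claim: one must simultaneously push the truncation level $m_n$ past the branch point $k_n$ (so that the truncations $\sigma_n|_{m_n}$ form an antichain and~(ii) applies) and take it large enough that $x^*(e_{\sigma_n|_{m_n}})$ already approximates the iterated limit $e^{**}_{\sigma_n}(x^*)$. Making these two requirements coexist, and then transferring the antichain limit $0$ back to the original sequence through the double-limit estimate, is the delicate point; here the roles of $w^*$-angelicity (to work with sequences rather than nets) and of the compactness of $2^\omega$ (to arrange strictly increasing branch points) are both essential.
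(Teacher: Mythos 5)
Your proof is correct, but it follows a genuinely different route from the paper's. The paper first shows $\overline{E}^{w^*}\cap\{e_t:t\in 2^{<\omega}\}=\emptyset$ via biorthogonality, then proves two separate claims: using angelicity it represents any point of $\overline{E}^{w^*}$ as a $w^*$-limit of a sequence of \emph{basis vectors} $e_{t_n}$ (possible since $\overline{E}^{w^*}\sub\overline{\{e_t\}}^{w^*}$ by (iii)), and then invokes Ramsey's theorem to dichotomize $\{t_n\}$ into an infinite chain (limit lies in $E$ by (iii)) or an infinite antichain (limit is $0$ by (ii)); discreteness requires a second, more delicate run of this argument tracking indices $n_\tau$ at which $\tau$ branches away from $\sigma_0$, and finally $0\in\overline{E}^{w^*}$ is obtained from sequential compactness plus the two claims. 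You instead prove a single cluster-point claim — every $w^*$-cluster point of $(e^{**}_{\sigma_n})$ with pairwise distinct $\sigma_n$ is $0$ — and derive all three conclusions from it. Your combinatorial mechanism replaces Ramsey entirely: compactness of $2^\omega$ gives $\sigma_n\to\sigma$ with strictly increasing branch points $k_n$, and truncating past $k_n$ \emph{forces} $\{\sigma_n|_{m_n}\}$ to be an antichain, the chain alternative being absorbed into the limits $e^{**}_{\sigma_n}$ themselves; the price is the per-functional diagonal bookkeeping (the antichain depends on $x^*$), which is harmless since the conclusion $z^{**}(x^*)=0$ is quantified over $x^*$ anyway. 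What each approach buys: yours is more self-contained (no Ramsey theorem) and unifies the paper's Claims 1 and 2 and the final step into one statement; the paper's gives a cleaner global structural picture of $w^*$-limits of basis-vector sequences (chains yield $E$, antichains yield $0$). One point you gloss over — extracting from a $w^*$-cluster point an honest \emph{subsequence} limit via the paper's (weak) definition of angelicity — does need the pairwise-distinctness of the terms and a tail argument to reorder indices increasingly, but you flag it and it is routine, so it is not a gap.
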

\begin{proof} We first note that $E\sub B_{X^{**}}$ and
\begin{equation}\label{eqn:ClosureE}
	\overline{E}^{w^*}\cap \{e_t:t\in 2^{<\omega}\}=\emptyset.
\end{equation}
Indeed, note that for each $t\in 2^{<\omega}$ the set $\{x^{**}\in X^{**}: x^{**}(e_t^*)>\frac{1}{2}\}$
is a $w^*$-open neighborhood of~$e_t$ which is disjoint from~$E$, by property~(iii).

{\em Claim~1.} $\overline{E}^{w^*} \sub E\cup \{0\}$. To check this, note that
condition~(iii) ensures that $\overline{E}^{w^*} \sub \overline{\{e_t:t\in 2^{<\omega}\}}^{w^*}$.
Take any $x^{**}\in \overline{E}^{w^*}$. By the $w^*$-angelicity of~$B_{X^{**}}$, there is
a sequence $(t_n)$ in~$2^{<\omega}$ such that $(e_{t_n})$ is $w^*$-convergent to~$x^{**}$.
Since~$x^{**}\neq e_t$ for all~$t\in 2^{<\omega}$ (by~\eqref{eqn:ClosureE}), by passing to a further subsequence
we can assume that ${\rm length}(t_n)<{\rm length}(t_{n+1})$ for all $n<\omega$. Ramsey's theorem (see e.g. \cite[Theorem~9.1]{jec})
ensures that the set~$\{t_n:n<\omega\}$ contains either an infinite antichain or an infinite chain.
In the first case, condition~(ii) and~\eqref{eqn:ClosureE} imply that $x^{**}=0$. In the second case,
there exist $\sigma\in 2^\omega$, a subsequence~$(t_{n_k})$ and a strictly increasing sequence~$(m_k)$ in~$\omega$
such that $t_{n_k}=\sigma|_{m_k}$ for all~$k<\omega$, hence~(iii) yields $x^{**}=e_\sigma^{**}$. The claim is proved.

{\em Claim~2.} $E$ is $w^{*}$-discrete. Our proof is by contradiction. Suppose there is $\sigma_0\in 2^\omega$
such that $e_{\sigma_0}^{**}\in \overline{\{e_\sigma^{**}:\, \sigma\in 2^\omega\setminus \{\sigma_0\}\}}^{w^*}$.
For each $\sigma \in 2^\omega\setminus \{\sigma_0\}$ we fix $n_\sigma<\omega$ such that
$\sigma|_{n_\sigma} \neq \sigma_0|_{n_\sigma}$. Note that
$$
	e_{\sigma_0}^{**}\in \overline{\{e_\sigma^{**}:\, \sigma\in 2^\omega\setminus \{\sigma_0\}\}}^{w^*}
	\stackrel{{\rm (iii)}}{\sub} \overline{\{e_{\sigma|_n}:\, \sigma\in 2^\omega\setminus \{\sigma_0\},\, n\geq n_\sigma\}}^{w^*}.
$$
Arguing as in the proof of Claim~1 (bearing in mind that $e_{\sigma_0}^{**}\neq 0$),
there exist $\sigma \in 2^\omega$, a sequence~$(\tau_{k})$ in~$2^\omega \setminus \{\sigma_0\}$ and a strictly increasing
sequence~$(m_k)$ in~$\omega$ with $m_k\geq n_{\tau_k}$ such that
$w^*-\lim_{k\to \infty} e_{\tau_k|_{m_k}}^{**}=e_{\sigma_0}^{**}$
and $\tau_k|_{m_k}=\sigma|_{m_k}$ for all~$k<\omega$. By~(iii) we get $e_{\sigma_0}^{**}=e_\sigma^{**}$
and therefore~(iv) yields $\sigma_0=\sigma$. This contradicts that $\tau_k|_{n_{\tau_k}}\neq\sigma_0|_{n_{\tau_k}}$
for all $k<\omega$ and the claim is proved.

It only remains to prove that $0\in \overline{E}^{w^*}$. To this end, let $(\sigma_n)$ be a sequence
of distinct elements of~$2^\omega$. Since $(B_{X^{**}},w^*)$ is sequentially compact (because
it is an angelic compact), there exist $x^{**}\in B_{X^{**}}$
and a subsequence $(\sigma_{n_k})$ with $w^{*}-\lim_{k\to \infty}e_{\sigma_{n_k}}^{**}=x^{**}$. We can assume
that $x^{**}\neq e_{\sigma_{n_k}}^{**}$ for all $k<\omega$.
By Claim~1, we have $x^{**}\in E \cup \{0\}$. Since $E$ is $w^{*}$-discrete (Claim~2),
we conclude that $x^{**}=0$ and so $0\in \overline{E}^{w^{*}}$. This finishes the proof of the lemma.
\end{proof}

\begin{pro}\label{ZFCtop}
If $X$ is separable, has non-separable dual and contains no copy of~$\ell_1$, then
$\mathcal{AK}(B_X) \sim \cK(B_X) \sim [\mathfrak{c}]^{<\omega}$.
\end{pro}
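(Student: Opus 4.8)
The plan is to reduce everything to the single reduction $[\mathfrak{c}]^{<\omega}\preceq \AK(B_X)$ and then to invoke the embedding machinery already developed. Indeed, we have $\AK(B_X)\preceq \cK(B_X)$ by Proposition~\ref{at:7}(i) and $\cK(B_X)\preceq [\mathfrak{c}]^{<\omega}$ by Remark~\ref{rem:CardinalTukey} (as $X$ is separable), so the full chain $[\mathfrak{c}]^{<\omega}\preceq\AK(B_X)\preceq\cK(B_X)\preceq[\mathfrak{c}]^{<\omega}$ yields all the asserted equivalences at once. More efficiently, by Lemma~\ref{lem:FromSimpleToRegular} it suffices to exhibit a single $K_3$-embedding $\varphi:K_3\to B_{X^{**}}$, and I will obtain one by feeding Lemma~\ref{lem:UsingDodos}. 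Its standing hypothesis, the $w^*$-angelicity of~$B_{X^{**}}$, holds in our situation because $X$ is separable and contains no copy of~$\ell^1$, by the Odell--Rosenthal and Bourgain--Fremlin--Talagrand theorems recalled just before that lemma. Thus the whole proposition reduces to producing a bounded biorthogonal system $\{(e_t,e^*_t):t\in 2^{<\omega}\}\sub X\times X^*$ satisfying conditions (i)--(iv) of Lemma~\ref{lem:UsingDodos}.

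The construction of this dyadic system is the heart of the matter, and it is where the non-separability of~$X^*$ is used. Since $X$ is separable, $(B_{X^*},w^*)$ is compact metrizable, so the failure of separability of~$X^*$ means precisely that $(B_{X^*},w^*)$ is not fragmented by the norm: there are $\delta>0$ and a nonempty $w^*$-closed set every nonempty relatively $w^*$-open piece of which has norm-diameter at least~$\delta$. A Cantor-scheme extraction inside such a set yields a dyadic tree of vectors in~$B_X$ whose children are $\delta$-separated in the relevant direction, along which one simultaneously arranges, by a fusion argument, that: (a) each branch $(e_{\sigma|_n})_n$ is weakly Cauchy --- here Rosenthal's $\ell^1$-theorem \cite[Theorem~5.37]{fab-ultimo} is essential, and it is the second place where the absence of~$\ell^1$ enters --- so that $e^{**}_\sigma:=w^*\text{-}\lim_n e_{\sigma|_n}$ exists and remains at distance $\geq\delta$ from~$X$ (whence $e^{**}_\sigma\in X^{**}\setminus X$ and the $e^{**}_\sigma$ are pairwise distinct, giving conditions (iii)--(iv)); and (b) $e_t\to 0$ weakly along every antichain, giving condition~(ii). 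Biorthogonal functionals $e^*_t$ are produced along the extraction in the usual way, and condition~(i) is automatic since $e_t\in B_X$. I expect this simultaneous control of branches and of antichains to be the main technical obstacle.

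Granting the system, Lemma~\ref{lem:UsingDodos} gives that $E:=\{e^{**}_\sigma:\sigma\in 2^\omega\}$ is $w^*$-discrete with $\overline{E}^{w^*}=E\cup\{0\}$. Define $\varphi:K_3\to B_{X^{**}}$ by $\varphi(\infty):=0$, $\varphi(t):=e_t$ for $t\in 2^{<\omega}$, and $\varphi(\sigma):=e^{**}_\sigma$ for $\sigma\in 2^\omega$. Injectivity is clear from biorthogonality, from property~(iv), and from $\varphi(2^{<\omega})\sub X$, $\varphi(2^\omega)\sub X^{**}\setminus X$. Continuity of~$\varphi$ into $(B_{X^{**}},w^*)$ is immediate at the isolated points of~$2^{<\omega}$, holds at each $\sigma\in 2^\omega$ by property~(iii) (the $K_3$-neighbourhoods of~$\sigma$ are the tails $\{\sigma\}\cup\{\sigma|_k:k>n\}$), and holds at~$\infty$ because for any $w^*$-open $W\ni 0$ the set $K_3\setminus\varphi^{-1}(W)$ meets $2^\omega$ in a finite set (as $0$ is the only $w^*$-cluster point of~$E$) and contains no infinite antichain of~$2^{<\omega}$ (by property~(ii)), hence is relatively compact in~$J_3$ by Lemma~\ref{lem:compactL3}(i). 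Therefore $\varphi$ is a $K_3$-embedding in the sense of Definition~\ref{defi:K3function}, and an appeal to Lemma~\ref{lem:FromSimpleToRegular} yields $\mathcal{AK}(B_X)\sim\cK(B_X)\sim[\mathfrak{c}]^{<\omega}$, completing the proof.
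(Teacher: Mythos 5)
Your overall skeleton coincides with the paper's proof: both arguments note that $B_{X^{**}}$ is $w^*$-angelic (Odell--Rosenthal plus Bourgain--Fremlin--Talagrand), both feed a dyadic biorthogonal system into Lemma~\ref{lem:UsingDodos}, both assemble from it the map $\varphi:K_3\to B_{X^{**}}$ and verify continuity at~$\infty$ via Lemma~\ref{lem:compactL3}(i), and both finish with Lemma~\ref{lem:FromSimpleToRegular} together with the trivial reductions $\AK(B_X)\preceq\cK(B_X)\preceq[\mathfrak{c}]^{<\omega}$. The difference is at the one step that carries all the weight: the existence of a system $\{(e_t,e^*_t):t\in 2^{<\omega}\}\sub X\times X^*$ satisfying conditions (i)--(iv) of Lemma~\ref{lem:UsingDodos}. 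The paper does not construct it; it invokes Theorem~3 of Dodos~\cite{dod} (applied to the identity operator on~$X$), a published theorem whose proof rests on the structure theory of separable Rosenthal compacta. You instead sketch a direct construction and yourself flag it as ``the main technical obstacle''; that sketch, as written, is a genuine gap, not a proof.

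Concretely, two things go wrong in the sketch. First, the failure of norm-fragmentation of $(B_{X^*},w^*)$ (equivalently, non-Asplundness) produces Stegall-type $\delta$-separated dyadic structure in the \emph{dual}, i.e.\ a tree of functionals; turning this into a tree of vectors $e_t\in B_X$ whose branches and antichains behave as required is exactly the non-trivial dualization that needs an argument. Second, and more seriously, the properties to be arranged are not local: ``every branch $(e_{\sigma|_n})_n$ is weakly Cauchy with $d(w^*\mbox{-}\lim_n e_{\sigma|_n},X)\geq\delta$'', ``every infinite antichain is weakly null'', and ``distinct branches have distinct limits'' are continuum-many global conditions on the tree. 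A fusion over $2^{<\omega}$ handles countably many requirements imposed node by node, and Rosenthal's $\ell^1$-theorem lets you refine \emph{one} sequence at a time; but refining along one branch perturbs uncountably many others, so the ``simultaneous control of branches and of antichains'' cannot be obtained by the extraction you describe. This is precisely why the paper reaches for Dodos' theorem (whose proof goes through the representation theory of Rosenthal compacta) rather than an elementary Cantor-scheme argument. To repair your proof you should either cite that theorem, as the paper does, or supply the full construction --- which would amount to reproving it.
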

\begin{proof}
As we mentioned before Lemma~\ref{lem:UsingDodos}, since $X$ is separable and
contains no copy of~$\ell^1$, the ball $B_{X^{**}}$ is $w^*$-angelic.
Theorem~3 in~\cite{dod} (applied to the identity operator on~$X$)
ensures the existence of a biorthogonal system $\{(e_t,e^*_t):t\in 2^{<\omega}\} \sub X \times X^*$
as in Lemma~\ref{lem:UsingDodos}. Define a function $\varphi: K_3 \to B_{X^{**}}$ by
\begin{itemize}
\item $\varphi(\infty):=0$;
\item $\varphi(t):=e_t$ for every $t\in 2^{<\omega}$;
\item $\varphi(\sigma):=e_\sigma^{**}$ for every $\sigma\in 2^\omega$.
\end{itemize}
We next prove that $\varphi$ is continuous from~$K_3$ to $(B_{X^{**}},w^*)$.
Clearly, $\varphi$ is continuous at each point of~$2^{<\omega}\cup 2^\omega$.
To check that $\varphi$ is continuous at~$\infty$,
take any $w^{*}$-open set $W\sub X^{**}$ containing~$0$. Then:
\begin{enumerate}
\item[(a)] $(K_3 \setminus \varphi^{-1}(W)) \cap 2^{\omega}$ is finite. Indeed, it suffices
to note that $0\in \overline{\{e_{\sigma_n}^{**}:n<\omega\}}^{w^*}$ for every sequence $(\sigma_n)$ of
distinct elements of~$2^\omega$ (see the end of the proof of Lemma~\ref{lem:UsingDodos}).
\item[(b)] $(K_3 \setminus \varphi^{-1}(W)) \cap 2^{<\omega}$ contains no infinite antichain. Indeed,
this follows at once from property~(ii) in Lemma~\ref{lem:UsingDodos}.
\end{enumerate}
Conditions (a) and~(b) imply (use Lemma~\ref{lem:compactL3}(i)) that
$C:=K_3 \setminus \varphi^{-1}(W)$ is relatively compact in~$J_3$.
Then $K_3 \setminus \overline{C}^{J_3}$ is an open neighborhood of~$\infty$ in~$K_3$ contained in~$\varphi^{-1}(W)$.
This proves that $\varphi$ is continuous at~$\infty$.

Therefore, $\varphi$ is a $K_3$-embedding. By Lemma~\ref{lem:FromSimpleToRegular},
$\mathcal{AK}(B_X) \sim \cK(B_X) \sim [\mathfrak{c}]^{<\omega}$.
\end{proof}

The proof of Proposition~\ref{ZFCtop} shows that $K_3$ embeds in a natural way
into~$(B_{X^{**}},w^*)$ whenever $X$ is separable, has non-separable dual and contains no copy of~$\ell_1$.
This statement is related to the fact that non-$G_\delta$-points in Rosenthal compacta lie in the closure of a discrete set
of size continuum \cite{tod-J}.

\section{Unconditional bases}\label{section:unconditional}

Let us recall that an infinite countable set $\mathcal{B}$ of non-zero vectors in the Banach space~$X$ is called
an \emph{unconditional basic sequence} if
there exists $C>0$ such that for every finite sets $G\subset F \subset\mathcal{B}$ and
every $\alpha:F\to \mathbb{R}$ we have
$$
	\left\| \sum_{x\in G}\alpha(x)\, x\right\| \leq  C \left\| \sum_{x\in F}\alpha(x)\, x\right\|.
$$
If in addition $\overline{{\rm span}}(\mathcal{B}) = X$, then
$\mathcal{B}$ is called an {\em unconditional basis} of~$X$.

The interest of unconditional bases in this theory is that the partially ordered set $\mathcal{RK}(\mathcal{B})$
(made up of all subsets of~$\mathcal{B}$ which are relatively weakly compact, ordered by inclusion)
is combinatorially easier to analyze than $\mathcal{K}(B_X)$ and $\mathcal{AK}(B_X)$,
but yet it can provide information on these structures as Lemma~\ref{lem:Johnson} below shows.

We collect in the following lemma some well-known characterizations of relatively weakly compact
subsets of unconditional basic sequences.

\begin{lem}\label{lem:RKunconditional}
Let $\mathcal{B}$ be an unconditional basic sequence in~$X$. Then for any
infinite set $A\subset \mathcal{B}$ the following
statements are equivalent:
\begin{enumerate}
\item[(i)] $A\in \mathcal{RK}(\mathcal{B})$;
\item[(ii)] $A$ is weakly null, that is, $A$ converges to $0$ in the weak topology;
\item[(iii)] $A$ is weakly Cauchy, that is, $A$ converges in the weak$^\ast$ topology of~$X^{\ast\ast}$;
\item[(iv)] $A$ is bounded and contains no subsequence equivalent to the basis of~$\ell^1$.
\end{enumerate}
\end{lem}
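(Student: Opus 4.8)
The plan is to prove the four conditions equivalent by establishing the cycle of implications
$$
	\text{(ii)}\Rightarrow\text{(i)},\quad \text{(i)}\Rightarrow\text{(iv)},\quad \text{(iv)}\Rightarrow\text{(ii)},\quad \text{(ii)}\Rightarrow\text{(iii)},\quad \text{(iii)}\Rightarrow\text{(iv)},
$$
which together make $\{$(i),(ii),(iii),(iv)$\}$ strongly connected. Throughout I would enumerate $A$ as a sequence $(x_k)$ of distinct elements of the basic sequence $\mathcal{B}$, and I would keep track of boundedness: it is automatic in (i) (weakly compact sets are norm bounded), in (ii) (weakly convergent sequences are bounded) and in (iii) (weak$^*$-convergent sequences in $X^{**}$ are bounded by Banach--Steinhaus), so only in (iv) is it a standing hypothesis.

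I expect the routine implications to go as follows. For (ii)$\Rightarrow$(i): if $(x_k)$ is weakly null then every sequence in $A$ has either a constant subsequence or a subsequence that is a subsequence of $(x_k)$, hence weakly convergent (to $0$); by the Eberlein--\v{S}mulian theorem $A$ is relatively weakly compact. For (i)$\Rightarrow$(iv): a subsequence of $A$ equivalent to the unit vector basis of $\ell^1$ would be a sequence in a relatively weakly compact set with no weakly Cauchy subsequence (since, via the equivalence isomorphism $T$ and the fact that bounded operators preserve weak Cauchyness, it would transfer to a weakly Cauchy subsequence of the $\ell^1$-basis, impossible by the Schur property of $\ell^1$), contradicting Eberlein--\v{S}mulian. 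The implication (ii)$\Rightarrow$(iii) is immediate, as weak null convergence in $X$ is weak$^*$ convergence in $X^{**}$. Finally (iii)$\Rightarrow$(iv) uses the same $\ell^1$-obstruction: a weakly Cauchy sequence is bounded, and an $\ell^1$-basis subsequence would again produce, via $T$, a weakly Cauchy subsequence of the $\ell^1$-basis, contradicting the Schur property.

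The crux, and the step where unconditionality is genuinely used, is (iv)$\Rightarrow$(ii). Let $K$ be the unconditional constant of $\mathcal{B}$, so that the suppression inequality in the statement yields $\|\sum_k \epsilon_k a_k x_k\|\le K\|\sum_k a_k x_k\|$ for all scalars $(a_k)$ and signs $(\epsilon_k)$; in particular $\|\sum_k |a_k| x_k\|\le K\|\sum_k a_k x_k\|$. Suppose $A$ is \emph{not} weakly null. Then there is $x^*\in X^*$ with $\limsup_k |x^*(x_k)|=\eta>0$, and after passing to a subsequence (and replacing $x^*$ by $-x^*$ if needed) I may assume $x^*(x_k)\ge \eta/2$ for all $k$. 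For any finitely supported scalars $(a_k)$ this gives
$$
	\Bigl\|\sum_k a_k x_k\Bigr\|\ge \frac{1}{K}\Bigl\|\sum_k |a_k| x_k\Bigr\|\ge \frac{1}{K\|x^*\|}\sum_k |a_k|\, x^*(x_k)\ge \frac{\eta}{2K\|x^*\|}\sum_k |a_k|,
$$
while boundedness gives the upper estimate $\|\sum_k a_k x_k\|\le (\sup_k\|x_k\|)\sum_k|a_k|$. Hence this subsequence is equivalent to the unit vector basis of $\ell^1$, contradicting (iv); therefore $A$ must be weakly null. This averaging argument is where I expect the main difficulty to lie, since it is the only place that exploits the unconditional structure of $\mathcal{B}$ rather than soft functional-analytic facts, and it is precisely what fails for merely conditional basic sequences. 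Assembling the five implications completes the proof.
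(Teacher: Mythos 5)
Your proof is correct, and there is in fact nothing in the paper to compare it against: the authors state this lemma as a collection of ``well-known characterizations'' and give no proof at all, using it as a black box in Section~6. Your argument is the standard route they presumably had in mind, and it is complete: the implication graph (ii)$\Rightarrow$(i)$\Rightarrow$(iv)$\Rightarrow$(ii) together with (ii)$\Rightarrow$(iii)$\Rightarrow$(iv) is strongly connected, so the four conditions are equivalent; Eberlein--\v{S}mulian handles (ii)$\Rightarrow$(i) and (i)$\Rightarrow$(iv); and the sign-averaging estimate in (iv)$\Rightarrow$(ii) is exactly the point where unconditionality is used. Three small points of polish, none of them gaps. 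First, the paper defines unconditionality via a suppression-type constant $C$ (projections onto subsets $G\subset F$); the sign inequality you use follows from it with constant $K\le 2C$ by splitting a signed sum as the difference of the sums over $\{k:\epsilon_k=1\}$ and $\{k:\epsilon_k=-1\}$, so your appeal to ``the suppression inequality'' is justified. Second, when you exclude weakly Cauchy subsequences of the $\ell^1$ basis ``by the Schur property'', note that Schur as usually stated concerns weakly convergent sequences; the one-line supplement is that if $(e_{n_k})$ were weakly Cauchy then $e_{n_{2k}}-e_{n_{2k+1}}$ would be weakly null, hence norm null by Schur, contradicting that these differences have norm $2$ (alternatively, invoke weak sequential completeness of $\ell^1$). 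Third, in (ii)$\Rightarrow$(i), an injective sequence of elements of $A$ need not be a subsequence of your fixed enumeration $(x_k)$ in the given order; either pass to a further subsequence along which the indices are strictly increasing (always possible for an injective sequence of indices), or simply observe that convergence of the set $A$ to $0$ gives weak nullity of every injective sequence in $A$ regardless of order. With these clarifications your write-up is a complete proof of the lemma the paper leaves unproved.
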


The proof of the following lemma uses arguments by Johnson, Mercourakis
and Stamati~\cite{mer-sta-2} (cf. \cite[Theorems~7.40 and~7.41]{fab-alt-JJ}),
that we adapted to fit into our setting.

\begin{lem}\label{lem:Johnson}
If $\mathcal{B}=\{e_n:n<\omega\}$ is a semi-normalized unconditional basis of~$X$,
then $\mathcal{RK}(\mathcal{B}) \preceq \mathcal{AK}(B_X)$.
\end{lem}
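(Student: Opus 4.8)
The plan is to invoke Remark~\ref{at:5}(ii): since $\mathcal{RK}(\mathcal{B})$ is an ordinary poset, it suffices to produce a single $\delta>0$ and a single Tukey function $g:\mathcal{RK}(\mathcal{B})\to(\mathcal{AK}(B_X),\leq_\delta)$. The first candidate to try is the identity-type map $g(A):=\overline{A}^{w}$. This is well defined into $\mathcal{AK}(B_X)$: by Lemma~\ref{lem:RKunconditional} every infinite $A\in\mathcal{RK}(\mathcal{B})$ is weakly null, so $\overline{A}^{w}=A\cup\{0\}$ is weakly compact (finite $A$ being trivially so). With this $g$, the Tukey condition unwinds to the single assertion that \emph{there is $\delta>0$ such that for every weakly compact $L_0\subseteq B_X$ the set $A_0:=\{e\in\mathcal{B}:e\in L_0+\delta B_X\}$ is relatively weakly compact}; then $A_0\in\mathcal{RK}(\mathcal{B})$ bounds every $A$ with $g(A)\subseteq L_0+\delta B_X$.

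Thus everything reduces to controlling, at a fixed scale, the subsets of $\mathcal{B}$ that fail to be relatively weakly compact. The mechanism is the Rosenthal dichotomy packaged in Lemma~\ref{lem:RKunconditional}: if $A\subseteq\mathcal{B}$ is not relatively weakly compact it contains a subsequence $(e_{n_k})$ equivalent to the $\ell^1$-basis, and the unconditionality together with the semi-normalization of $\mathcal{B}$ should upgrade this to a \emph{$\delta$-controlled} $\ell^1$-sequence (the unconditional constant being what converts a mere $\ell^1$-subsequence into a controlled one). Lemma~\ref{l1seq} then gives $\eta(e_{n_k})\geq\delta$, and Lemma~\ref{wCauchy} forces $\{k:e_{n_k}\in L_0+\delta' B_X\}$ to be finite for every weakly compact $L_0$ and every $\delta'<\delta$. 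Hence $A_0$ could contain no such subsequence and would be relatively weakly compact, exactly as required; Grothendieck's test (Lemma~\ref{lem:Grothendieck}) is what lets one recognise the relevant weakly compact sets.

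The hard part is to make the control constant $\delta$ uniform, and I expect this to be the main obstacle. It is not automatic: within a single non-reflexive space one can have non-relatively-weakly-compact subsets of $\mathcal{B}$ whose $\ell^1$-constants — equivalently, whose De Blasi measures of weak noncompactness — degrade to $0$ (for instance a $c_0$-sum of copies of $\ell^1$ renormed with $\ell^1$-constants tending to zero), so the bound $A_0$ above can genuinely fail to be relatively weakly compact for the naive map. This is precisely where the Johnson--Mercourakis--Stamati argument must be adapted. The device I would pursue is the two-case Rosenthal/Ramsey analysis already carried out in the proof of Proposition~\ref{AKtukeytop}: separate, on a stabilised subfamily, the case where the extracted sequence is weakly Cauchy with $w^{*}$-limit at distance bounded below from $X$ from the case where it is a controlled $\ell^1$-sequence, so as to extract a uniform separation constant, and to replace the identity-type map by a reduction that encodes $A$ through canonical weakly compact bodies attached to the basis (weighted absolutely convex sets made weakly compact via Lemma~\ref{lem:Grothendieck}), thereby renormalising the degrading constants to a single effective scale. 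Verifying that one fixed $\delta$ survives this renormalisation, and that the reduction remains Tukey, is the central technical difficulty.
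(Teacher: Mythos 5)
Your opening reduction is exactly the paper's: the map $f(A):=\overline{A}^{w}$, together with the observation that its Tukey property at a fixed scale $\delta$ amounts to the single claim that $A_0:=\{e\in\mathcal{B}: e\in L_0+\delta B_X\}$ is relatively weakly compact for every weakly compact $L_0$. (One small oversight: a semi-normalized basis need not lie in $B_X$, so the paper actually maps into $\mathcal{AK}(\rho B_X)$ with $\rho=\sup_n\|e_n\|$ and scales back at the end.) The genuine gap is everything after that. Your reason for doubting the claim rests on the assertion that $\ell^1$-constants of subsequences of $\mathcal{B}$ are ``equivalently'' their De Blasi measures of weak noncompactness; that equivalence is false. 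Lemma~\ref{l1seq} gives only one direction ($\delta$-controlled implies $\eta\geq\delta$), and the converse fails badly. In your own candidate counterexample (a $c_0$-sum of copies of $\ell^1$ renormed so that the $n$-th block has $\ell^1$-constant $c_n\to 0$), the $n$-th block $\{f_{n,m}:m<\omega\}$ has mutual distances at least~$1$, while the $n$-th coordinate projection of any weakly compact subset of the sum is norm compact (Schur property of~$\ell^1$); hence that block is not contained in $L_0+\delta B_X$ for any weakly compact $L_0$ and any $\delta<\frac{1}{2}$, for every $n$. So degrading $\ell^1$-constants never produce a non-relatively-weakly-compact subset of $\mathcal{B}$ close to a weakly compact set, the naive map does not fail, and indeed the content of the lemma is precisely that it cannot fail.

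The mechanism you are missing works with the biorthogonal functionals rather than with $\ell^1$-constants, and needs no uniformity of the latter. Since $\mathcal{B}$ is a semi-normalized unconditional basis, $M:=\sup_n\|e_n^*\|<\infty$; put $\delta:=(2M)^{-1}$. If $e_n=x+y$ with $x\in L_0$ and $\|y\|\leq\delta$, then $1=e_n^*(e_n)=e_n^*(x)+e_n^*(y)\leq e_n^*(x)+\frac{1}{2}$, so $A_0\subseteq\Gamma(L_0):=\{e_n:\sup_{x\in L_0}|e_n^*(x)|\geq\frac{1}{2}\}$. That $\Gamma(L_0)$ is relatively weakly compact is proved by contradiction: if it contained a sequence $(e_{n_j})$ equivalent to the $\ell^1$-basis (with an arbitrary constant, via Lemma~\ref{lem:RKunconditional}), then unconditionality of the whole basis makes $P(x):=\sum_j e_{n_j}^*(x)e_{n_j}$ a bounded projection onto a subspace isomorphic to~$\ell^1$; the set $P(L_0)$ is then norm compact by the Schur property, and a standard extraction forces $e_{n_j}^*(x_j)\to 0$ along witnesses $x_j\in L_0$ of $|e_{n_j}^*(x_j)|\geq\frac{1}{2}$, a contradiction. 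Note that this is where unconditionality of the full basis is really used (existence of~$P$), not where you place it, and that Lemmas~\ref{wCauchy} and~\ref{l1seq} play no role. Finally, your proposed repair via the stabilization arguments of Proposition~\ref{AKtukeytop} cannot close the gap: passing to a subtree or subfamily uniformizes constants only on that subfamily, which suffices there because one merely needs to build some reduction from $[\mathfrak{c}]^{<\omega}$ into $\mathcal{AK}(B_X)$; here the domain $\mathcal{RK}(\mathcal{B})$ is fixed in advance, and the Tukey condition must bound $\mathcal{B}\cap(L_0+\delta B_X)$ for every weakly compact $L_0$ with the basis as given.
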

\begin{proof}
We begin by proving the following:

{\em Claim.} If $K \sub X$ is weakly compact and $\epsilon>0$, then the set
$$
	\Gamma(K,\epsilon):=\Big\{e_n \in \mathcal{B}: \, \sup_{x\in K}|e_n^*(x)|\geq \epsilon\Big\} \in \mathcal{RK}(\mathcal{B}).
$$
Our proof is by contradiction. If $\Gamma(K,\epsilon)\not\in \mathcal{RK}(\cB)$, then
there is a strictly increasing sequence $(n_j)$ in~$\omega$ such that $e_{n_j} \in \Gamma(K,\epsilon)$
for all $j<\omega$ and $(e_{n_j})$ is equivalent to the basis of~$\ell^1$
(apply Lemma~\ref{lem:RKunconditional}). Let $(x_j)$ be a sequence in~$K$ such that
\begin{equation}\label{limit2}
	|e_{n_j}^*(x_j)|\geq \epsilon \quad\mbox{for all }j<\omega.
\end{equation}
Since~$\mathcal{B}$ is
an unconditional basis of~$X$, we can consider the (bounded and linear) operator
$$
	P:X \to X, \quad
	P(x):=\sum_{j<\omega} e_{n_j}^*(x) e_{n_j},
$$
which is a projection onto~$Y:=\overline{{\rm span}}\{e_{n_j}:j<\omega\}$.
Since $K$ is weakly compact and $Y$~has the Schur property (because it is isomorphic to~$\ell^1$),
the set $P(K)$ is norm compact. Thus, by passing to a further subsequence, we may assume
that there is $x\in K$ such that $\|P(x_j)-P(x)\|\to 0$. Bearing in mind that $\{e_n^*:n<\omega\}$
is bounded (because $\cB$ is an unconditional basis of~$X$ and $\inf_{n<\omega}\|e_n\|>0$),
we conclude that
\begin{equation}\label{limit1}
	|e_{n_j}^*(x_j)-e_{n_j}^*(x)|=|e_{n_j}^*(P(x_j)) - e_{n_j}^*(P(x))| \to 0.
\end{equation}
On the other hand, the convergence of the series $\sum_{j<\omega} e_{n_j}^*(x) e_{n_j}$
and the fact that $\inf_{j<\omega}\|e_{n_j}\|>0$ imply
that $e_{n_j}^*(x) \to 0$, which combined with~\eqref{limit1} yields
$e_{n_j}^*(x_j) \to 0$. This contradicts~\eqref{limit2} and the {\em Claim} is proved.

Define
$$
	\delta:=(2\sup_{n<\omega}\|e_n^*\|)^{-1}>0.
$$
Set $\rho:=\sup_{n<\omega}\|e_n\|$. Finally, we prove that the function
$$
	f:\mathcal{RK}(\mathcal{B}) \to (\mathcal{AK}(\rho B_X),\leq_\delta),
	\quad f(A):=\overline{A}^w,
$$
is Tukey. Indeed, it suffices to prove if $K$ is a weakly compact subset of~$\rho B_X$
and $A\in \mathcal{RK}(\mathcal{B})$ satisfies $\overline{A}^w \sub K+\delta B_X$, then
$A \sub \Gamma(K,\frac{1}{2})$. We argue by contradiction. Suppose there is $e_n\in A$
such that $|e_{n}^*(x)| < \frac{1}{2}$ for every~$x\in K$. Write $e_n = x+y$, where $x\in K$ and $\|y\|\leq \delta$. Then
$$
	1=e_{n}^*(e_n) =e_n^*(x)+e_n^*(y) < \frac{1}{2}+\|e_n^*\|\delta \leq
	\frac{1}{2} + \frac{1}{2}=1,
$$
a contradiction which proves that $f$ is Tukey. Therefore, $\mathcal{RK}(\cB) \preceq \mathcal{AK}(\rho B_X)$.
The proof finishes by using that $\mathcal{AK}(\rho B_X) \sim \mathcal{AK}(B_X)$, which can be
deduced either from Proposition~\ref{at:7}(v) or, simply, by
taking into account that the spaces with closed unit balls $B_X$ and $\rho B_X$ are isometric.
\end{proof}

\begin{rem}
\rm The equivalence $\mathcal{RK}(\mathcal{B}) \sim \mathcal{AK}(B_X)$ does not hold in general
for an unconditional basis $\mathcal{B}$ of~$X$. Indeed, the usual basis~$\cB$
of~$c_0$ is weakly null and so it satisfies $\mathcal{RK}(\mathcal{B}) \sim \{0\} \not\sim
\cK(\mathbb{Q})\sim \mathcal{AK}(B_{c_0})$ (Example~\ref{exa:c0l1}).
\end{rem}

As a first application of Lemma~\ref{lem:Johnson}, we compute
$\mathcal{AK}(B_X)$ and $\cK(B_X)$ for the space $X=\ell^p(\ell^1)$, where $1<p<\infty$.

\begin{exa}\label{exa:l2l1}
The space $X=\ell^p(\ell^1)$, $1<p<\infty$, satisfies $\mathcal{AK}(B_X)\sim \cK(B_X) \sim \omega^\omega$.
\end{exa}
\begin{proof}
It is easy to see that a set $C \sub B_X$ is relatively weakly compact if and only
if $\pi_n(C)$ is relatively norm compact in~$\ell^1$ for every $n<\omega$, where $\pi_n:X \to \ell^1$ denotes the $n$-th coordinate projection
(combine the Schur property of~$\ell^1$ with the fact that, for $1<p<\infty$, weak convergence in the ball of an $\ell^p$-sum of Banach spaces
coincides with coordinate-wise weak convergence).
Therefore, if $\cB=\{e_{nm}:n,m<\omega\}$ is the usual unconditional basis of~$X$, then a set $C \sub \cB$ belongs to~$\mathcal{RK}(\cB)$
if and only if $C$ is contained in a set of the form
$$
	F(\varphi):=\{e_{nm}: \, n<\omega, \, m < \vf(n)\}
$$
for some $\vf\in \omega^\omega$. Now, it is easy to check that
the function $F:\omega^\omega \to \mathcal{RK}(\cB)$ is Tukey,
hence $\omega^\omega \preceq \mathcal{RK}(\cB)$. An appeal to Lemma~\ref{lem:Johnson}
yields $\omega^\omega \preceq \mathcal{AK}(B_X)$.

On the other hand, note that for every sequence $(L_n)$ of norm compact subsets of~$B_{\ell^1}$, the set
$\{x\in B_X: \, \pi_n(x)\in L_n \mbox{ for all }n<\omega\}$ is weakly compact. Therefore, the function
$$
	G: \cK(B_X) \to \bigl(\cK(B_{\ell^1})\bigr)^\omega,
	\quad
	G(L):=(\pi_n(L)),
$$
is Tukey and so $\cK(B_X) \preceq (\cK(B_{\ell^1}))^\omega \sim \omega^\omega$
(recall that $\cK(B_{\ell^1}) \sim \omega^\omega$, by Example~\ref{posets:4}).
Since $\mathcal{AK}(B_X) \preceq \cK(B_X)$ (Proposition~\ref{at:7}(i)), we have
$\mathcal{AK}(B_X)\sim \cK(B_X) \sim \omega^\omega$.
\end{proof}

Recall that if $X$ is SWCG and $Y\sub X$ is a non-reflexive subspace, then
$\mathcal{K}(B_Y)\sim\omega^\omega$ (Corollary~\ref{as:4-cor}(iv))
and $\AK(B_Y)\sim \omega$ under the additional assumption that $Y$ is complemented in~$X$
(combine Theorem~\ref{swcg:2} and the fact that the SWCG property
is inherited by complemented subspaces).
Mercourakis and Stamati constructed in \cite[Theorem 3.9(ii)]{mer-sta-2}
a subspace $Y\sub L^1[0,1]$ which is not SWCG. {By Corollary~\ref{as:4-cor}(v), such subspace
satisfies $\AK(B_Y)\sim \omega^\omega$.} The following theorem uses essentially the same construction
{of~\cite{mer-sta-2} and is included to show another application of Lemma~\ref{lem:Johnson}.}

\begin{thm}\label{mer_sta}
There is a subspace $Y$ of $L^1[0,1]$ such that $\AK(B_Y)\sim \omega^\omega$.
\end{thm}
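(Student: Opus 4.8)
The plan is to realize inside $L^1[0,1]$ a semi-normalized unconditional basic sequence $\cB=\{e_{nm}:n,m<\omega\}$ whose individual \emph{rows} behave like the unit vector basis of~$\ell^1$, while the whole family is weakly null ``across rows'', exactly in the spirit of the construction of Mercourakis and Stamati~\cite{mer-sta-2}. Setting $Y:=\overline{{\rm span}}(\cB)$, the sequence $\cB$ becomes an unconditional basis of~$Y$, and $Y$ is non-reflexive (it contains $\ell^1$). The two structural properties I would extract from the construction are: \textbf{(a)} for every $n<\omega$ the row $(e_{nm})_{m<\omega}$ is equivalent to the unit basis of~$\ell^1$, so that \emph{no} infinite subset of a single row is relatively weakly compact (by Lemma~\ref{lem:RKunconditional}, since every such subset contains an $\ell^1$-subsequence); and \textbf{(b)} every set $A\sub\cB$ meeting each row in a finite set is weakly null, hence belongs to $\mathcal{RK}(\cB)$ (again via Lemma~\ref{lem:RKunconditional}, as such a set uses infinitely many rows and can contain no $\ell^1$-subsequence).

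Granting (a) and (b), the argument runs just as in Example~\ref{exa:l2l1}. I would first check that the map
$$
	F:\omega^\omega \to \mathcal{RK}(\cB), \quad F(\varphi):=\{e_{nm}: n<\omega,\ m<\varphi(n)\},
$$
is a Tukey function: property~(b) guarantees $F(\varphi)\in\mathcal{RK}(\cB)$ for every $\varphi$, while property~(a) guarantees that for each $K\in\mathcal{RK}(\cB)$ the trace $K\cap\{e_{nm}:m<\omega\}$ is finite for every~$n$, so defining $\vf_K(n):=|\{m: e_{nm}\in K\}|$ yields $\vf\le\vf_K$ whenever $F(\vf)\sub K$. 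Thus $\omega^\omega\preceq\mathcal{RK}(\cB)$, and Lemma~\ref{lem:Johnson} then gives $\omega^\omega\preceq\mathcal{RK}(\cB)\preceq\AK(B_Y)$. For the reverse inequality I would invoke Example~\ref{posets:4}: since $Y$ is a non-reflexive subspace of~$L^1[0,1]$, we have $\cK(B_Y)\sim\omega^\omega$, whence $\AK(B_Y)\preceq\cK(B_Y)\sim\omega^\omega$ by Proposition~\ref{at:7}(i). Combining the two reductions yields $\AK(B_Y)\sim\omega^\omega$, as required.

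The main obstacle is the construction itself, i.e.\ verifying properties~(a) and~(b) simultaneously for concrete functions in~$L^1[0,1]$. The tension is precisely the one governed by the Dunford--Pettis criterion used in Example~\ref{posets:4}: a family in~$L^1$ is relatively weakly compact if and only if it is uniformly integrable, so I must arrange that each row \emph{fails} uniform integrability (to force $\ell^1$ behaviour within rows) while every finitely-supported-per-row selection \emph{is} uniformly integrable (to force weak nullity across rows). This is exactly the delicate balance achieved in~\cite{mer-sta-2}, and I would import their construction essentially verbatim, re-reading it only to confirm that the basis can be taken semi-normalized and unconditional so that Lemma~\ref{lem:Johnson} applies. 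The novelty of the present proof is not the space~$Y$ but the route to $\AK(B_Y)$: rather than deducing $\AK(B_Y)\sim\omega^\omega$ from Corollary~\ref{as:4-cor}(v) via non-SWCG-ness, we obtain the lower bound $\omega^\omega\preceq\AK(B_Y)$ directly from the combinatorics of $\mathcal{RK}(\cB)$ through Lemma~\ref{lem:Johnson}.
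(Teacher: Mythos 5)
Your proposal is correct, and its skeleton coincides with the paper's: both build a Mercourakis--Stamati-type unconditional basic sequence $\cB$ in $L^1$, both obtain the lower bound through Lemma~\ref{lem:Johnson} (via $\mathcal{RK}(\cB)\preceq\AK(B_Y)$), and both obtain the upper bound $\AK(B_Y)\preceq\omega^\omega$ from Proposition~\ref{at:7}(i) together with the Dunford--Pettis criterion (you via Example~\ref{posets:4}, the paper via Corollary~\ref{as:4-cor}(iv)). Where you genuinely diverge is in how $\omega^\omega\preceq\mathcal{RK}(\cB)$ is extracted. The paper keeps the basis singly indexed and argues through domination of uniform-integrability profiles: its Tukey map is $\varphi\mapsto\{f\in\BB:\, o(f,\cdot)\le\tilde\varphi\}$, which requires the regularized function $\tilde\varphi$ of Lemma~\ref{lem:functions}, the exact computation $o(f,n)=\tilde\varphi(n)$ of Lemma~\ref{lem:functionsGG}, and the auxiliary relation $\le_2$ on $\omega^\omega$. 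You instead impose a row structure and repeat the transversal argument of Example~\ref{exa:l2l1}, which needs only your two crude facts (a) and (b). This does work, and is arguably simpler, because the paper's own functions realize your structure after re-indexing: the basis consists of the functions $f_{m,p}$ of Lemma~\ref{lem:functionsG} composed with independent coordinate projections of $[0,1]^\omega$ (this composition is what yields unconditionality, \cite{ros2}); taking row $p$ to be $\{f_{m,p}: m\ge p\}$, each row carries spikes of fixed integral $\frac{1}{p}$ on sets of measure $\frac{1}{m}\to 0$, so no infinite subset of a row is uniformly integrable (your (a)), while any set meeting each row finitely is uniformly integrable, since the spike contribution of row $p$ is at most $\frac{1}{p}\to 0$ and the finitely many exceptional rows contribute a finite set (your (b)); Dunford--Pettis then converts (a) and (b) into the statements about $\mathcal{RK}(\cB)$ that your Tukey map $F$ needs. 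In exchange for recognizing this row structure, you avoid Lemmas~\ref{lem:functions} and~\ref{lem:functionsGG} entirely.

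Two caveats. First, the entire burden of your proof lies in the construction, which you defer to \cite{mer-sta-2} with a description that is not literally how that construction (or the paper's rendition of it) is organized; the re-indexing and the uniform-integrability estimates sketched above are a real, if routine, step you must supply. Second, your parenthetical justification of (b) is circular as written: that a transversal ``can contain no $\ell^1$-subsequence'' is precisely what the construction must guarantee (through uniform integrability of transversals), not a consequence of the set meeting every row finitely --- in $\ell^1(\ell^1)$, for instance, every transversal is an $\ell^1$-sequence. Relatedly, the mean-zero normalization built into Lemma~\ref{lem:functionsG} is what makes the independence trick produce a semi-normalized unconditional sequence, so it cannot be dispensed with when you ``import the construction verbatim.''
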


To deal with the proof of Theorem~\ref{mer_sta} we need some lemmas.

\begin{lem}\label{lem:functions}
Let $\varphi\in \omega^\omega$. Then there is $\tilde{\varphi}\in \omega^\omega$ such that:
\begin{enumerate}
\item[(i)] $\varphi \leq \tilde{\varphi}$;
\item[(ii)] $\tilde{\varphi}$ is strictly increasing;
\item[(iii)] $n\tilde{\varphi}(m)\geq (m+1)\tilde{\varphi}(n)$ for every $m>n\geq 1$;
\item[(iv)] $\tilde{\varphi}(n) \geq (n+1)(n+2)$ for every $n<\omega$.
\end{enumerate}
\end{lem}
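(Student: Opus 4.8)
The plan is to reduce condition~(iii) to a single monotonicity requirement and then build $\tilde{\varphi}$ by a straightforward recursion. Observe first that, for $m>n\ge 1$, the inequality $n\tilde{\varphi}(m)\ge (m+1)\tilde{\varphi}(n)$ in~(iii) is equivalent (after dividing by $n(m+1)>0$) to $m\,b_m\ge (n+1)\,b_n$, where we write $b_k:=\tilde{\varphi}(k)/(k(k+1))$ for $k\ge 1$. Since $m>n$ forces $m\ge n+1$, this holds automatically provided the sequence $(b_k)_{k\ge 1}$ is nondecreasing: in that case $m\,b_m\ge (n+1)\,b_m\ge (n+1)\,b_n$. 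Thus it suffices to construct $\tilde{\varphi}\in\omega^\omega$ dominating $\varphi$, strictly increasing, with $\tilde{\varphi}(n)\ge (n+1)(n+2)$, and such that $b_k\le b_{k+1}$ for all $k\ge 1$; this last condition unwinds (for $k\ge 2$) to $\tilde{\varphi}(k)\ge \tfrac{k+1}{k-1}\,\tilde{\varphi}(k-1)$.

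First I would define $\tilde{\varphi}$ recursively so as to meet all four demands simultaneously. Set
\[
\tilde{\varphi}(0):=\max\{\varphi(0),2\},\qquad
\tilde{\varphi}(1):=\max\{\varphi(1),\tilde{\varphi}(0)+1,6\},
\]
and, for $n\ge 2$,
\[
\tilde{\varphi}(n):=\max\Bigl\{\varphi(n),\ \tilde{\varphi}(n-1)+1,\ (n+1)(n+2),\ \bigl\lceil \tfrac{n+1}{n-1}\tilde{\varphi}(n-1)\bigr\rceil\Bigr\}.
\]
Each term on the right is a natural number, so $\tilde{\varphi}\in\omega^\omega$.

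Then I would verify the four conditions, all of which fall out immediately from the construction. Condition~(i) holds because $\varphi(n)$ appears inside the maximum defining $\tilde{\varphi}(n)$ for every $n$; condition~(ii) holds because $\tilde{\varphi}(n)\ge \tilde{\varphi}(n-1)+1>\tilde{\varphi}(n-1)$ for $n\ge 1$; condition~(iv) holds because $(n+1)(n+2)$ appears inside the maximum, the small cases $(0+1)(0+2)=2$ and $(1+1)(1+2)=6$ being covered by the first two clauses. For condition~(iii), the clause $\tilde{\varphi}(n)\ge \lceil \tfrac{n+1}{n-1}\tilde{\varphi}(n-1)\rceil\ge \tfrac{n+1}{n-1}\tilde{\varphi}(n-1)$ for $n\ge 2$ yields $b_n\ge b_{n-1}$, so $(b_k)_{k\ge 1}$ is nondecreasing and (iii) follows from the reduction of the first paragraph.

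The only genuinely non-mechanical point is that reformulation: recognizing that the awkward inequality~(iii) is governed by the monotonicity of $\tilde{\varphi}(n)/(n(n+1))$ combined with the integer gap $m\ge n+1$. Once this is seen nothing is delicate, and in particular the quadratic lower bound~(iv) is entirely compatible with the recursion, since the factor $\tfrac{n+1}{n-1}$ already forces at least quadratic growth.
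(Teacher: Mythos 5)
Your proof is correct and follows essentially the same route as the paper's: an inductive definition of $\tilde{\varphi}(n)$ as a maximum of finitely many terms, one term per required condition. The only difference is a mild streamlining: the paper enforces (iii) by putting all the terms $\frac{n+1}{m}\tilde{\varphi}(m)$, $1\leq m<n$, into the maximum, whereas you observe that the single consecutive-term constraint $\tilde{\varphi}(n)\geq \frac{n+1}{n-1}\tilde{\varphi}(n-1)$ already suffices, via the monotonicity of $\tilde{\varphi}(k)/(k(k+1))$ together with $m\geq n+1$ --- a nice simplification, but the same underlying construction.
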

\begin{proof}
Just define $\tilde{\varphi}$ inductively by taking
$$
	\tilde{\varphi}(n) \geq \max
	\Big\{
	\varphi(n),\tilde{\varphi}(n-1)+1,(n+1)(n+2), \frac{n+1}{m}\tilde{\varphi}(m) \mbox{ for }1\leq m<n
	\Big\}.
$$
\end{proof}

The straightforward proof of the following lemma is omitted.
We denote by~$\lambda$ the Lebesgue measure on the Borel $\sigma$-algebra of~$[0,1]$.

\begin{lem}\label{lem:functionsG}
Let $m,p\in \N$ with $m\geq p > 2$. Set $c:=\frac{1}{m}+\frac{1}{2}-\frac{1}{p} \in (\frac{1}{m},\frac{1}{2}]$
and $\alpha:=\frac{1}{2(1-c)} \in (\frac{1}{2},1]$. Then the function $f\in L^1[0,1]$ defined by
$$
	f(t):=\begin{cases}
	\frac{m}{p} & \text{if $t\in [0,\frac{1}{m}]$} \\
	1 & \text{if $t\in [\frac{1}{m},c]$} \\
	-\alpha & \text{if $t\in (c,1]$}
	\end{cases}
$$
satisfies $\int_0^1 f \, d\lambda =0$ and $\int_0^1 |f| \, d\lambda =1$.
\end{lem}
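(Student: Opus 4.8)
The plan is to treat $f$ as a simple function supported on the three consecutive subintervals $[0,\frac1m]$, $[\frac1m,c]$ and $(c,1]$ of $[0,1]$, so that both integrals reduce to finite sums of the form (value)$\times$(length); I would then check that the prescribed values, together with the specific choices of $c$ and $\alpha$, make these sums equal to $0$ and $1$ respectively.

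First I would record the elementary inequalities implicit in the statement, as they guarantee that the three pieces are genuine nondegenerate subintervals and fix the signs of $f$. From $p>2$ we get $\frac12-\frac1p>0$, whence $c=\frac1m+\big(\frac12-\frac1p\big)>\frac1m$; from $m\ge p$ we get $\frac1m\le\frac1p$, whence $c=\frac12+\big(\frac1m-\frac1p\big)\le\frac12$. Thus $c\in(\frac1m,\frac12]$, so $1-c\in[\frac12,1)$ and therefore $\alpha=\frac{1}{2(1-c)}\in(\frac12,1]$. In particular $\frac mp>0$, $1>0$ and $-\alpha<0$, so $f$ is positive on $[0,c]$ and negative on $(c,1]$; consequently $|f|$ agrees with $f$ on the first two pieces and equals $\alpha$ on the last.

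Next I would compute the lengths of the three pieces, namely $\frac1m$, $c-\frac1m=\frac12-\frac1p$, and $1-c$. For the first integral,
$$
\int_0^1 f\,d\lambda=\frac{m}{p}\cdot\frac1m+1\cdot\Big(\frac12-\frac1p\Big)+(-\alpha)(1-c)=\frac1p+\Big(\frac12-\frac1p\Big)-\frac12=0,
$$
where the last step uses $\alpha(1-c)=\frac12$ by the very definition of $\alpha$. For the second, the same length computation with the sign of the last piece reversed gives
$$
\int_0^1 |f|\,d\lambda=\frac1p+\Big(\frac12-\frac1p\Big)+\alpha(1-c)=\frac12+\frac12=1.
$$

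There is no serious obstacle here: the only points requiring care are the bookkeeping of the interval lengths (in particular the cancellation $c-\frac1m=\frac12-\frac1p$) and the observation that $\alpha$ was chosen precisely so that the mass $\alpha(1-c)$ of the negative piece equals $\frac12$, which is exactly what forces both the vanishing of $\int f\,d\lambda$ and the normalization $\int|f|\,d\lambda=1$.
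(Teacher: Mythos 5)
Your proof is correct: the sign analysis, the interval lengths (in particular $c-\tfrac1m=\tfrac12-\tfrac1p$), and the cancellation $\alpha(1-c)=\tfrac12$ are exactly what is needed, and the endpoint overlaps are harmless in $L^1$. The paper explicitly omits the proof as ``straightforward,'' and your computation is precisely that straightforward argument, so there is nothing to compare beyond noting agreement.
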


We shall use the following notation as in the proof of Example~\ref{posets:4}. Given any probability space $(\Omega,\Sigma,\mu)$, any function
$f\in L^1(\mu)$ and $n<\omega$, we denote by $o(f,n)$ the least $k<\omega$ such that, for every $B\in\Sigma$, we have:
\[
	\mbox{if } \mu(B)\le \frac{1}{k+1} \ \mbox{ then } \left| \int_{B} f\, d\mu\right|\le \frac{1}{n+1}.
\]

\begin{lem}\label{lem:functionsGG}
Let $\varphi\in \omega^\omega$ and $n\in\N$, $n\geq 2$.
Let $\tilde{\varphi} \in \omega^\omega$ be as in Lemma~\ref{lem:functions} and
let $f\in L^1[0,1]$ be as in Lemma~\ref{lem:functionsG} by taking $m=\tilde{\varphi}(n)+1$
and $p=n+1$. Then:
\begin{enumerate}
\item[(i)] $o(f,k)\leq \tilde{\varphi}(k)$ for every $k<\omega$;
\item[(ii)] $o(f,n)= \tilde{\varphi}(n)$.
\end{enumerate}
\end{lem}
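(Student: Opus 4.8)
The plan is to reduce the whole computation to the single nondecreasing function
\[
  g(s):=\sup\Bigl\{\Bigl|\int_B f\,d\lambda\Bigr| : B\subseteq[0,1]\text{ Borel},\ \lambda(B)\le s\Bigr\}.
\]
First I would note that $p=n+1\ge 3>2$ and $m=\tilde\varphi(n)+1\ge (n+1)(n+2)+1\ge p$, so Lemma~\ref{lem:functionsG} applies and $f$ has the stated three-level form. Since the defining condition of $o(f,k)$ involves the integrals $\lvert\int_B f\,d\lambda\rvert$ only through their supremum over sets of bounded measure, and since this supremum is attained, one gets the clean reformulation
$$
  o(f,k)=\min\Bigl\{k'<\omega : g\bigl(\tfrac{1}{k'+1}\bigr)\le \tfrac{1}{k+1}\Bigr\},
$$
which is well posed because $g$ is nondecreasing.

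Next I would compute $g$ explicitly by a greedy (rearrangement) argument, placing $B$ where $|f|$ is largest. The crucial numerical fact, coming from $m=\tilde\varphi(n)+1$, $p=n+1$ and property~(iv) of Lemma~\ref{lem:functions}, is that $\tfrac{m}{p}=\tfrac{\tilde\varphi(n)+1}{n+1}>n+2>1\ge\alpha$; hence the value $\tfrac{m}{p}$ on the set $[0,\tfrac1m]$ of measure $\tfrac1m$ strictly dominates the value $1$ and the negative value $-\alpha$. This gives $g(s)=\tfrac{m}{p}\,s$ for $0\le s\le\tfrac1m$ and $g(s)=\tfrac1p+s-\tfrac1m$ for $\tfrac1m\le s\le c$ (the negative part never improves the bound, since $\tfrac1p>\tfrac1m$ and $\alpha\le 1$); the values of $g$ beyond $c$ are irrelevant.

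With $g$ in hand, statement~(ii) is immediate: $g(\tfrac1m)=\tfrac{m}{p}\cdot\tfrac1m=\tfrac1p=\tfrac1{n+1}$ yields $o(f,n)\le\tilde\varphi(n)$, while the set $[0,\tfrac{1}{m-1}]=[0,\tfrac1m]\cup[\tfrac1m,\tfrac1{m-1}]$ (lying in the value-$1$ region, as $\tfrac{1}{m-1}\le c$) has measure $\tfrac1{\tilde\varphi(n)}$ and integral $\tfrac1{n+1}+\tfrac1{m(m-1)}>\tfrac1{n+1}$, so $o(f,n)\ge\tilde\varphi(n)$. For~(i) I would split into $k\ge n$ and $k<n$. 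When $k\ge n$, monotonicity of $\tilde\varphi$ gives $\tfrac{1}{\tilde\varphi(k)+1}\le\tfrac1m$, so $g(\tfrac{1}{\tilde\varphi(k)+1})=\tfrac{m}{p}\cdot\tfrac{1}{\tilde\varphi(k)+1}$ and $o(f,k)\le\tilde\varphi(k)$ reduces to $(\tilde\varphi(n)+1)(k+1)\le(n+1)(\tilde\varphi(k)+1)$; for $k>n$ this follows by writing the difference as $[(n+1)\tilde\varphi(k)-(k+1)\tilde\varphi(n)]-(k-n)$, bounding the bracket below by $\tilde\varphi(k)$ via property~(iii) of Lemma~\ref{lem:functions}, and using $\tilde\varphi(k)\ge(k+1)(k+2)>k$ from property~(iv). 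When $k<n$ one has $\tfrac{1}{\tilde\varphi(k)+1}\ge\tfrac1m$, so $g(\tfrac{1}{\tilde\varphi(k)+1})=\tfrac1{n+1}+\tfrac{1}{\tilde\varphi(k)+1}-\tfrac{1}{\tilde\varphi(n)+1}$, and $o(f,k)\le\tilde\varphi(k)$ reduces to $\tfrac{1}{\tilde\varphi(k)+1}\le\tfrac1{k+1}-\tfrac1{n+1}=\tfrac{n-k}{(k+1)(n+1)}$; this holds since $\tilde\varphi(k)+1>(k+1)(k+2)\ge\tfrac{(k+1)(n+1)}{n-k}$, the last step being the elementary inequality $(k+2)(n-k)\ge n+1$ for $0\le k\le n-1$ (the left side is concave in $k$, with endpoint values $2n$ and $n+1$).

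The hard part will be the bookkeeping: correctly identifying the two regimes of $g$ and the exact boundary value at $s=\tfrac1m$ (which pins down the equality in~(ii) rather than just an inequality), and then matching the three arithmetic inequalities among $\tilde\varphi(k),\tilde\varphi(n),k,n$ precisely to properties~(iii) and~(iv) of Lemma~\ref{lem:functions}. Everything else is a routine verification.
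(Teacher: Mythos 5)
Your strategy is correct and, once the packaging is stripped away, it is essentially the paper's own proof: the paper establishes (i) for $k\ge n$ from the bound $\bigl|\int_B f\,d\lambda\bigr|\le \frac{m}{p}\,\lambda(B)$ together with properties (iii) and (iv) of Lemma~\ref{lem:functions} (the same arithmetic you perform), establishes (ii) by integrating $f$ over an interval $[0,d]$ with $d$ slightly larger than $\frac1m$ (your witness $[0,\frac{1}{m-1}]$ plays exactly this role), and establishes (i) for $k<n$ by splitting $B$ into $B_0:=B\cap[0,\frac1m]$ and $B\setminus B_0$ and bounding the two pieces by $\frac{1}{n+1}$ and $\lambda(B)$ respectively --- which is precisely your second-regime estimate for $g$. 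Introducing the maximal function $g(s)$ is only a way of organizing these same estimates; it buys a cleaner statement of (ii) as an exact evaluation $g(\frac1m)=\frac{1}{n+1}$, at the price of having to control $g$ globally, and that is where the one error in your write-up occurs.

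The error is the sentence ``the values of $g$ beyond $c$ are irrelevant.'' In the case $k<n$ you evaluate $g$ at $s_k=\frac{1}{\tilde{\varphi}(k)+1}$ and use the \emph{equality} $g(s_k)=\frac1p+s_k-\frac1m$, which you proved only for $\frac1m\le s\le c$. But $s_k$ can exceed $c$: take $n=2$, $k=0$ and $\tilde{\varphi}(0)=2$ (allowed by Lemma~\ref{lem:functions}); then $s_0=\frac13$, while $c=\frac1m+\frac12-\frac13=\frac1m+\frac16<\frac13$ since $m=\tilde{\varphi}(2)+1\ge 13$. Past $c$ the equality genuinely fails: the positive and negative parts of $f$ each have total mass $\frac12$, so $g(s)=\frac12$ for all $s\ge c$, whereas your formula keeps growing. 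The repair is one line: since $\frac1p+s-\frac1m\ge\frac1p+c-\frac1m=\frac12\ge g(s)$ for $s\ge c$, the expression $\frac1p+s-\frac1m$ remains an \emph{upper bound} for $g$ on all of $[\frac1m,1]$, and an upper bound is all that your inequality chain
$\frac{1}{n+1}+\frac{1}{\tilde{\varphi}(k)+1}-\frac1m\le\frac{1}{k+1}$ (via $(k+2)(n-k)\ge n+1$, which is correct) actually uses. With that emendation, and keeping your parenthetical check $\frac{1}{m-1}\le c$ for the lower bound in (ii), the proof is complete.
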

\begin{proof} We divide the proof into several steps.

{\em Step~1.} $o(f,k)\leq \tilde{\varphi}(k)$ for every $k \geq n$.
Indeed, if $B \sub [0,1]$ is a Borel set with $\lambda(B)\leq \frac{1}{\tilde{\varphi}(k)+1}$, then
\begin{equation}\label{eqn:G-1}
	\Bigl|\int_{B} f \, d\lambda\Bigr| \leq \frac{\tilde{\varphi}(n)+1}{n+1}\cdot \lambda(B) \leq
	\frac{\tilde{\varphi}(n)+1}{n+1}\cdot \frac{1}{\tilde{\varphi}(k)+1}.
\end{equation}
If $k=n$, then~\eqref{eqn:G-1} yields $|\int_{B} f \, d\lambda| \leq \frac{1}{n+1}$, while
if $k>n$ then property~(iii) in Lemma~\ref{lem:functions} and~\eqref{eqn:G-1} imply that
$$
	\Bigl|\int_{B} f \, d\lambda\Bigr| \leq
	\frac{\tilde{\varphi}(n)+1}{n+1}\cdot \frac{1}{\tilde{\varphi}(k)+1} <
	\frac{\tilde{\varphi}(n)+1}{n+1}\cdot
	\frac{n}{(k+1)\tilde{\varphi}(n)}\leq  \frac{1}{k+1}.
$$
This shows that $o(f,k)\leq \tilde{\varphi}(k)$ whenever $k\geq n$.

{\em Step~2.} $o(f,n)= \tilde{\varphi}(n)$. Indeed, fix $k< \tilde{\varphi}(n)$.
Set $c:=\frac{1}{\tilde{\varphi}(n)+1}+\frac{1}{2}-\frac{1}{n+1} \in (\frac{1}{\tilde{\varphi}(n)+1},\frac{1}{2}]$
and choose $d \in \erre$ such that
$$
	\frac{1}{\tilde{\varphi}(n)+1}<d < \min\Bigl\{\frac{1}{k+1},c\Bigr\}.
$$
By the definition of~$f$, we have
$$
	\Bigl|\int_0^d f \, d\lambda\Bigr|=\frac{\tilde{\varphi}(n)+1}{n+1}\cdot \frac{1}{\tilde{\varphi}(n)+1}
	+ \Bigl(d-\frac{1}{\tilde{\varphi}(n)+1}\Bigr) > \frac{1}{n+1},
$$
hence $k>o(f,n)$. As $k< \tilde{\varphi}(n)$ is arbitrary, we get $o(f,n)=\tilde{\varphi}(n)$.

{\em Step~3.} $o(f,k)\leq \tilde{\varphi}(k)$ for every $k < n$. Indeed,
let $B \sub [0,1]$ be a Borel set with $\lambda(B)\leq \frac{1}{\tilde{\varphi}(k)+1}$.
Define $B_0:=B\cap [0,\frac{1}{\tilde{\varphi}(n)+1}]$. Then property~(iv) in Lemma~\ref{lem:functions} yields
\begin{multline*}
	\Bigl|\int_{B}f \, d\lambda \Bigr|\leq \Bigl|\int_{B_0}f \, d\lambda \Bigr|
	+\Bigl|\int_{B\setminus B_0}f \, d\lambda \Bigr| \leq \\ \leq
	\frac{1}{n+1} + \lambda(B) \leq \frac{1}{n+1}+\frac{1}{\tilde{\varphi}(k)+1}
	<
	\frac{1}{k+2}+\frac{1}{(k+1)(k+2)}=\frac{1}{k+1}.
\end{multline*}
It follows that $o(f,k)\leq \tilde{\varphi}(k)$. The proof is over.
\end{proof}

\begin{proof}[Proof of Theorem~\ref{mer_sta}]
Let $\cR_0=\{r_n:n<\omega\} \sub L^1[0,1]$ be the set made up of all functions as in Lemma~\ref{lem:functionsG}.
We shall now work in the space $X:=L^1([0,1]^\omega)$, which is isometric to $L^1[0,1]$.
For each $n<\omega$, let $f_n:[0,1]^\omega\to\er$ be given by $f_n:=r_n\circ\pi_n$, where $\pi_n:[0,1]^\omega \to [0,1]$
is the $n$-th coordinate projection. Then $\BB:=\{f_n:n<\omega\}$ is a (normalized) unconditional
basic sequence in~$X$ (see the proof of \cite[pp. 89-90]{ros2} and the references therein).
Let $Y:=\overline{{\rm span}}(\BB) \sub X$. By Lemma~\ref{lem:Johnson}, Proposition~\ref{at:7}(i)
and Corollary~\ref{as:4-cor}(iv) we have
\[
	\mathcal{RK}(\BB)\preceq \AK(B_Y)\preceq \cK(B_Y)\preceq \omega^\omega.
\]
So, in order to complete the proof we need to show that $\omega^\omega \preceq \mathcal{RK}(\BB)$.

As we already pointed out in Example~\ref{posets:4}, a bounded set
$C\sub X$ is relatively weakly compact if and only if $\{o(f,\cdot):f\in C\}$ is bounded above in~$\omega^\omega$.
Therefore, we can consider the function
$$
	F: \omega^\omega \to \mathcal{RK}(\BB),
	\quad
	F(\varphi):=\{f_n: \, o(f_n,\cdot) \leq \tilde{\varphi}\},
$$
where $\tilde{\varphi}$ is the function given by Lemma~\ref{lem:functions}.
We claim that $F$ is Tukey when $\omega^\omega$ is equipped with the binary relation $\leq_2$ defined by
$$
	\varphi \leq_2 \psi \quad :\Longleftrightarrow \quad \varphi(n) \leq \psi(n) \ \mbox{ for every }n\geq 2.
$$
Indeed, fix $C_0\in \mathcal{RK}(\BB)$ and take
$\varphi_0\in \omega^\omega$ such that $o(f_n,\cdot)\leq \vf_0$ for all $f_n \in C_0$.
We shall check that $\vf \leq_2 \vf_0$ whenever $\vf\in \omega^\omega$
satisfies $F(\vf)\sub C_0$. Take any $n\in\N$, $n\geq 2$. Let $r_j\in \cR_0$
be the function given by Lemma~\ref{lem:functionsG} by taking $m=\tilde{\vf}(n)+1$ and $p=n+1$.
By Lemma~\ref{lem:functionsGG}, we have $o(f_j,\cdot)=o(r_j,\cdot)\le \tilde{\vf}$ and $o(f_j,n)=o(r_j,n)=\tilde{\vf}(n)$.
Therefore, $f_j\in F(\vf)\sub C_0$ and so $\vf(n)\le\tilde{\vf}(n)\le \vf_0(n)$, which proves the claim.

Now, in order to finish the proof it suffices to check that $(\omega^\omega,\leq_2) \sim \omega^\omega$.
On one hand, the identity mapping $(\omega^\omega,\leq_2) \to \omega^\omega$ is obviously a Tukey function.
On the other hand, it is clear that the function $G:\omega^\omega \to (\omega^\omega,\leq_2)$
given by
$$	
	G(\varphi)(n):=\max\{\vf(0),\dots,\vf(n)\}
$$
is Tukey as well. The proof is over.
\end{proof}

We next explain a method to construct unconditional bases~$\BB$
for which $\mathcal{RK}(\mathcal{B})$ is Tukey equivalent to different posets. In this way, we shall get examples where
$\mathcal{RK}(\mathcal{B})$ is equivalent to $\{0\}$, $\omega$, $\omega^\omega$, $\mathcal{K}(\mathbb{Q})$
and $[\mathfrak{c}]^{<\omega}$. But more interesting, we shall also get a consistent example which is not equivalent to any of these
(see Theorem~\ref{omega1sequence} and the comments following it), thus showing that
Theorem~\ref{projectiveideal} and Corollary~\ref{cor:interrogacion} do not hold in general
in the absence of analytic determinacy.

Recall that a family $\mathcal{A}$ of subsets of a given countable infinite set $D$
is called {\em adequate} if, for any $A \sub D$, we have $A\in\mathcal{A}$ if and only if $[A]^{<\omega} \sub \mathcal{A}$. In this
case, we can define a norm on $c_{00}(D)$ (the linear space of all finitely supported real-valued functions on~$D$)
by the formula:
$$
	\|f\|_{\cA} := \sup\left\{\sum_{i\in T} |f(i)| : \, T\in \mathcal{A} \right\},
	\quad
	f\in c_{00}(D).
$$
In this way, the canonical Hamel basis of $c_{00}(D)$ becomes a (normalized) unconditional basis
$\mathcal{B}_{\mathcal{A}} = \{e_d:d\in D\}$ of the completion of $(c_{00}(D),\|\cdot\|_{\cA})$,
which we denote by~$\mathfrak{X}_{\cA}$. For more information on this space
(sometimes denoted by $E_{0,1}(\cA)$), we refer the reader to~\cite{arg-mer}.
Note that, as particular cases of this construction, we have:
\begin{itemize}
\item $\mathfrak{X}_{\cA}=c_0$ if $D=\omega$ and $\cA=\{\{n\}:n<\omega\}\cup\{\emptyset\}$;
\item $\mathfrak{X}_{\cA}=c_0(\ell^1)$ if $D=\omega\times \omega$ and $\cA$ is the family made up of all
sets of the form $\{(n,m):m\in F\}$, where $n<\omega$ and $F \sub \omega$.
\end{itemize}

\begin{lem}\label{nulladequate}
Let $\cA$ be an adequate family of subsets of a countable infinite set~$D$.
Let $C \sub D$. The following statements are equivalent:
\begin{enumerate}
\item[(i)] $C\in \mathcal{A}^\perp$ (i.e. $C\cap A$ is finite for every $A\in\cA$);
\item[(ii)] $\{e_d:d\in C\}\in \mathcal{RK}(\mathcal{B}_\mathcal{A})$.
\end{enumerate}
Therefore, $\mathcal{RK}(\mathcal{B}_\mathcal{A}) \sim \mathcal{A}^\perp$ (ordered by inclusion).
\end{lem}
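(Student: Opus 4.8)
The plan is to prove the final assertion by showing that the inclusion-preserving bijection $C\mapsto\{e_d:d\in C\}$ identifies $\cA^\perp$ with $\mathcal{RK}(\mathcal{B}_\cA)$; an order isomorphism of posets is in particular a Tukey equivalence, so everything reduces to the equivalence (i)$\Leftrightarrow$(ii). First I would record two elementary facts. Since an adequate family is hereditary, $C\in\cA^\perp$ holds \emph{if and only if no infinite subset of $C$ belongs to $\cA$}: if some $A\in\cA$ meets $C$ in an infinite set, then $A\cap C\in\cA$ (heredity) is an infinite subset of $C$ lying in $\cA$, and the converse is immediate. Next I would compute, for $f\in c_{00}(D)$, that $\|f\|_\cA=\sup\{\sum_{i\in S}|f(i)|:S\sub\mathrm{supp}(f),\ S\in\cA\}$, since intersecting any witness $S\in\cA$ with $\mathrm{supp}(f)$ keeps it in $\cA$. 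In particular, whenever $A\in\cA$ the family $\{e_d:d\in A\}$ is \emph{isometrically} equivalent to the unit vector basis of $\ell^1$ (take $S$ to be the support).

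With these facts the implication $\neg$(i)$\Rightarrow\neg$(ii) is immediate: if $C\notin\cA^\perp$, pick an infinite $A\sub C$ with $A\in\cA$; then $\{e_d:d\in A\}$ is an $\ell^1$-basis, hence not relatively weakly compact by Lemma~\ref{lem:RKunconditional}(iv), and since relative weak compactness passes to subsets, $\{e_d:d\in C\}\supseteq\{e_d:d\in A\}$ is not relatively weakly compact either.

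For the converse (i)$\Rightarrow$(ii), the heart of the lemma, I would assume $C\in\cA^\perp$ (the finite case being trivial). Passing to $Y:=\overline{\mathrm{span}}\{e_d:d\in C\}$, the norm computation shows $Y$ is isometric to $\mathfrak{X}_{\cA_C}$, where $\cA_C:=\{S\in\cA:S\sub C\}$ is adequate on $C$ and, by the reformulation, \emph{all of whose members are finite}. As weak nullity in $Y$ coincides with weak nullity in $X$, it suffices to show that the basis of $\mathfrak{X}_\cB$ is weakly null whenever $\cB$ is an adequate family of finite subsets of a countable set $E$. I would use the norming set $\Phi:=\{g\in X^*:\mathrm{supp}(g)\in\cB,\ \|g\|_\infty\le 1\}$, for which the bipolar theorem gives $B_{X^*}=\overline{\mathrm{conv}}^{w^*}(\Phi)$ (the polar of $\Phi$ is exactly $B_{\mathfrak{X}_\cB}$); every element of $\Phi$ is finitely supported because members of $\cB$ are finite.

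The main technical step, which I expect to be the only delicate point, is that $\Phi$ is $w^*$-compact. I would prove it by identifying $\Phi$ through coordinates with $K=\{g\in[-1,1]^E:\mathrm{supp}(g)\in\cB\}$, checking that $K$ is closed in the compact product space $[-1,1]^E$ (a finite subset of $\mathrm{supp}(g)$ is eventually contained in the support of an approximating net, hence lies in $\cB$ by heredity, so $\mathrm{supp}(g)\in\cB$ by adequacy), and verifying that the coordinate topology agrees with the $w^*$-topology on $\Phi$ (using that the coordinates of any element of $\mathfrak{X}_\cB$ tend to $0$). Once $\Phi$ is $w^*$-compact, Milman's theorem yields $\mathrm{ext}(B_{X^*})\sub\Phi$, so every extreme functional is finitely supported. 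Then for any sequence $(e_{d_k})$ of distinct basis vectors and any extreme $e^*\in B_{X^*}$ we have $e^*(e_{d_k})=0$ for all large $k$, whence Rainwater's theorem shows $(e_{d_k})$ is weakly null. Therefore every sequence of distinct elements of $\{e_d:d\in C\}$ has a weakly null subsequence, and Eberlein--\v{S}mulian gives relative weak compactness, i.e.\ (ii). Everything else is bookkeeping.
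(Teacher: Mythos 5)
Your proof is correct, but it takes a genuinely different route from the paper's. The paper handles both implications simultaneously: it embeds $\mathfrak{X}_{\cA}$ isomorphically into $C(K_\cA)$, where $K_\cA:=\{1_A:A\in\cA\}\sub 2^D$ is compact (its closedness is proved by the same heredity-plus-adequacy argument you use for the closedness of $\Phi$), with $T(e_d)$ equal to the $d$-th coordinate projection; since a bounded sequence in a $C(K)$ space is weakly null if and only if it is pointwise null, $(e_{d_n})$ is weakly null if and only if $1_A(d_n)=0$ eventually for each $A\in\cA$, i.e. $C\cap A$ is finite for every $A\in\cA$, and Lemma~\ref{lem:RKunconditional} then gives (i)$\Leftrightarrow$(ii) in one stroke. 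You instead split the equivalence: $\neg$(i)$\Rightarrow\neg$(ii) via isometric copies of the $\ell^1$-basis (which the paper gets for free from the pointwise criterion), and for (i)$\Rightarrow$(ii) you pass to the subspace $\mathfrak{X}_{\cA_C}$, where the adequate family consists of finite sets, show that every extreme point of its dual ball is finitely supported (bipolar theorem plus Milman's theorem applied to your $w^*$-compact set $\Phi$), and conclude by Rainwater and Eberlein--\v{S}mulian. Both arguments rest on the same underlying principle --- testing weak convergence against a distinguished $w^*$-compact family of functionals, your $\Phi$ being essentially the pull-back under $T$ of the point evaluations on $C(K_\cA)$ --- but the paper's version outsources the work to a known embedding and a standard $C(K)$ fact, making it much shorter, whereas yours is self-contained and isolates a statement of independent interest (finitely supported extreme points for adequate families of finite sets). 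One small repair: the right justification that the coordinate topology on $\Phi$ agrees with the $w^*$-topology is that on a norm-bounded subset of $X^*$ the $w^*$-topology coincides with the topology of pointwise convergence on the dense subspace $c_{00}$; your parenthetical about coordinates of elements of $\mathfrak{X}_{\cB}$ tending to $0$ is unnecessary for this (and would itself require all singletons to belong to $\cB$, which is anyway implicit if one wants $\|\cdot\|_{\cB}$ to be a norm and the basis normalized).
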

\begin{proof} The equivalence (i)$\Leftrightarrow$(ii) is obvious if $C$ is finite. Assume
that $C$ is infinite and enumerate $C=\{d_n:n<\omega\}$. Bearing in mind Lemma~\ref{lem:RKunconditional},
(ii) is equivalent to saying that the sequence~$(e_{d_n})$ is weakly null.
The set $K_\cA:=\{1_A:A\in \cA\} \sub 2^D$ is closed, hence compact,
when $2^D$ is equipped with its usual product topology. It is well-known (and easy to check)
that $\mathfrak{X}_{\cA}$ embeds isomorphically into~$C(K_\cA)$ via an operator $T:\mathfrak{X}_\cA \to C(K_\cA)$
such that $T(e_d)$ is the $d$-th coordinate projection for all~$d\in D$. Since
a bounded sequence in~$C(K_\cA)$ is weakly null if and only if it is pointwise null
(see e.g. \cite[Corollary~3.138]{fab-ultimo}), we conclude that (ii) is equivalent to saying that,
for each $A\in \cA$, we have $T(e_{d_n})(1_A)=1_A(d_n)=0$ for $n$ large enough, that is, $C\cap A$ is finite.
The proof is finished.
\end{proof}

\begin{prop}
Let $\mathcal{A}$ be the adequate family of all
chains (including the finite ones) of $D =2^{<\omega}$.
Then $\mathcal{RK}(\mathcal{B}_\mathcal{A}) \sim [\mathfrak{c}]^{<\omega}$.
\end{prop}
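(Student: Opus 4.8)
The plan is to combine the preceding lemma with the $\perp$-computation machinery from Section~\ref{section:KBXclassification}. By Lemma~\ref{nulladequate} we have $\mathcal{RK}(\mathcal{B}_\mathcal{A}) \sim \mathcal{A}^\perp$, so it suffices to prove that $\mathcal{A}^\perp \sim [\mathfrak{c}]^{<\omega}$, where $\mathcal{A}$ is the adequate family of all chains of $2^{<\omega}$. The upper bound $\mathcal{A}^\perp \preceq [\mathfrak{c}]^{<\omega}$ is automatic: $\mathcal{A}^\perp$ is upwards directed and has cardinality at most $\mathfrak{c}$, so Remark~\ref{rem:CardinalTukey} applies. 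The whole content is therefore the reduction $[\mathfrak{c}]^{<\omega} \preceq \mathcal{A}^\perp$.

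For that reduction, the natural tool is Lemma~\ref{lem:GeneralLemma}, applied with $W = 2^{<\omega}$, $\mathcal{A}:=\mathcal{A}^\perp$ equipped with the inclusion relation, and the \emph{identity} function $f:=\mathrm{id}:2^{<\omega}\to 2^{<\omega}$. First I would observe that $\mathcal{A}^\perp$ is closed under finite unions (a finite union of sets meeting every chain finitely still meets every chain finitely), so hypothesis~(i) holds, and that hypothesis~(ii) is trivially satisfied for the inclusion relation. The two remaining hypotheses are where the combinatorial geometry of $2^{<\omega}$ enters. For hypothesis~(iii), note that a $1$-chain $S\sub 2^{<\omega}$ is an \emph{antichain} in $2^{<\omega}$ under the initial-segment order $\sqsubseteq$: if $s^p\smallfrown 1 \sqsubseteq s^{p+1}$ for all $p$, then distinct elements of $S$ differ at some coordinate where one has a $1$ and the other has no entry (or the other branch), so no element of $S$ is an initial segment of another. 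Being an antichain, $S$ meets every $\sqsubseteq$-chain in at most one point, hence $S\in\mathcal{A}^\perp = f(S)$. For hypothesis~(iv), a $0$-chain $S$ is, by its very definition, a $\sqsubseteq$-chain (each element is an initial segment of the next, extended by $0$'s), so $S\in\mathcal{A}$; since $S$ is infinite, $S\cap S$ is infinite and $S$ is not almost contained in any member of $\mathcal{A}^\perp$, i.e. $f(S)=S\not\sub A$ for every $A\in\mathcal{A}^\perp$. (I would double-check the orientation of the $0$/$1$ convention against Lemma~\ref{lem:GeneralLemma}, which wants $1$-chains in $\mathcal{A}$ and $0$-chains outside; depending on the exact wording one simply swaps the roles, or notes that the two chain-types play symmetric roles here.)

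With all four hypotheses verified, Lemma~\ref{lem:GeneralLemma} immediately yields $[\mathfrak{c}]^{<\omega} \preceq (\mathcal{A}^\perp,\sub)$, and combined with the easy upper bound we conclude $\mathcal{A}^\perp \sim [\mathfrak{c}]^{<\omega}$, whence $\mathcal{RK}(\mathcal{B}_\mathcal{A}) \sim [\mathfrak{c}]^{<\omega}$ by Lemma~\ref{nulladequate}. The only genuinely delicate point is the bookkeeping in verifying hypotheses~(iii) and~(iv): one must match the paper's definitions of $0$-chain and $1$-chain (where $s^p\smallfrown i \sqsubseteq s^{p+1}$) against the claim that $1$-chains are antichains while $0$-chains are chains in the tree order $\sqsubseteq$. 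This is the heart of the argument and the place I would write out carefully, but it is purely combinatorial and requires no new machinery beyond the already-established Lemma~\ref{lem:GeneralLemma}. In fact this gives an alternative, more transparent route to a $[\mathfrak{c}]^{<\omega}$-example than invoking the full determinacy-based classification, since everything here is carried out in ZFC.
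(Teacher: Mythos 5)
Your reduction to proving $[\mathfrak{c}]^{<\omega}\preceq\mathcal{A}^\perp$ (via Lemma~\ref{nulladequate} and Remark~\ref{rem:CardinalTukey}) is fine, and so are your verifications of hypotheses (i) and (ii) of Lemma~\ref{lem:GeneralLemma}. The gap is exactly at the point you flagged as delicate: your claim that a $1$-chain is an antichain in $(2^{<\omega},\sqsubseteq)$ is false. By the paper's definition, an $i$-chain is an infinite set $S=\{s^p:p<\omega\}$ with $s^p\smallfrown i\sqsubseteq s^{p+1}$; since $s^p\sqsubseteq s^p\smallfrown i$, this forces $s^p\sqsubseteq s^{p+1}$, so \emph{every} $i$-chain, for $i=1$ just as for $i=0$, is an infinite chain in the tree order (the index $i$ only records the first bit appended after $s^p$ on the way to $s^{p+1}$). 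Consequently, with $f=\mathrm{id}$ no $1$-chain lies in $\mathcal{A}^\perp$ (an infinite $\sqsubseteq$-chain has infinite intersection with itself, a member of $\mathcal{A}$), so hypothesis~(iii) fails; and the escape hatch of ``swapping the roles of $0$ and $1$'' cannot help, because the obstruction is symmetric: both chain types violate (iii). As written, the lower bound is not proved.

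The repair is small but essential: you need a non-identity $f$, e.g. $f(s):=s\smallfrown 0$. If $S=\{s^p:p<\omega\}$ is a $1$-chain and $p<q$, then $s^q$ has bit $1$ at position ${\rm length}(s^p)$ while $f(s^p)=s^p\smallfrown 0$ has bit $0$ there, so $f(s^p)$ and $f(s^q)$ are incomparable; thus $f(S)$ is an infinite antichain, meets every chain in at most one point, and belongs to $\mathcal{A}^\perp$, giving (iii). If $S$ is a $0$-chain, then $f(s^p)=s^p\smallfrown 0\sqsubseteq s^{p+1}\sqsubseteq f(s^{p+1})$, so $f(S)$ is an infinite $\sqsubseteq$-chain and, since $\mathcal{A}^\perp$ is hereditary, $f(S)\not\sub A$ for every $A\in\mathcal{A}^\perp$, giving (iv). For comparison, the paper's own proof bypasses Lemma~\ref{lem:GeneralLemma} entirely: it observes that $\mathcal{A}^\perp$ is precisely the family $\cJ$ of sets meeting every branch $\{x|_n:n<\omega\}$ finitely, which was already shown Tukey equivalent to $[\mathfrak{c}]^{<\omega}$ in the proof of Lemma~\ref{KL3}; the Tukey map there, $F\mapsto\bigcup_{x\in F}A_x$ with $A_x=\{x|_n\smallfrown(1-x(n)):n<\omega\}$, uses the very same ``branch off and kill comparability'' trick that your corrected $f$ needs. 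Finally, your closing remark about avoiding determinacy is not a point of contrast: the paper's proof of this proposition is likewise pure ZFC.
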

\begin{proof}
Notice that $\mathcal{A}^\perp$ is the family of all subsets of $2^{<\omega}$ which do not contain any infinite chain. We
showed in the proof of Lemma~\ref{KL3} that this is Tukey equivalent to~$[\mathfrak{c}]^{<\omega}$.
The conclusion now follows from Lemma~\ref{nulladequate}.
\end{proof}

The following proposition, combined with Theorem~\ref{Fr91classification}, provides examples of unconditional bases~$\mathcal{B}$
for which $\mathcal{RK}(\mathcal{B})$ is Tukey equivalent to either $\{0\}$, $\omega$, $\omega^\omega$ or~$\mathcal{K}(\mathbb{Q})$.

If $E$ is a coanalytic subset of some Polish space, then \cite[Theorem 11]{avi2} asserts that there is
an adequate family $\mathcal{A}_E$ of closed and discrete subsets of~$E$ with the following
cofinality property:
for every infinite closed and discrete set $A\sub E$ there is an infinite set $B\sub A$ such that $B\in \mathcal{A}_E$.

\begin{prop}\label{fromadequate}
Let $E$ be a coanalytic subset of some Polish space. Let $D\sub E$ be a countable dense set
and consider the adequate family of subsets of~$D$ defined by $\mathcal{A}[E] := \{A\in\mathcal{A}_E : A\subset D\}$.
Then $\mathcal{RK}(\mathcal{B}_{\mathcal{A}[E]}) \sim \mathcal{K}(E)$.
\end{prop}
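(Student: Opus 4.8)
The plan is to reduce everything to a purely combinatorial identification of two families of subsets of $D$. By Lemma~\ref{nulladequate} we have $\mathcal{RK}(\mathcal{B}_{\mathcal{A}[E]}) \sim \mathcal{A}[E]^\perp$, while by Lemma~\ref{relativecompact} we have $\mathcal{K}_E(D) \sim \mathcal{K}(E)$, where $\mathcal{K}_E(D)$ denotes the subsets of~$D$ that are relatively compact in~$E$. Both $\mathcal{A}[E]^\perp$ and $\mathcal{K}_E(D)$ are subsets of $\mathcal{P}(D)$ ordered by inclusion, so it suffices to prove the set-theoretic equality $\mathcal{A}[E]^\perp = \mathcal{K}_E(D)$; the Tukey equivalence $\mathcal{RK}(\mathcal{B}_{\mathcal{A}[E]}) \sim \mathcal{K}(E)$ then follows by transitivity. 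Throughout I will use that $E$, being a subspace of a Polish space, is metrizable, and that every $A\in\mathcal{A}_E$ is closed and discrete in~$E$.

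For the inclusion $\mathcal{K}_E(D) \subseteq \mathcal{A}[E]^\perp$, I would take $C\subseteq D$ relatively compact in~$E$ and any $A\in\mathcal{A}[E]$, and argue by contradiction: if $C\cap A$ were infinite, then being an infinite subset of the compact set $\overline{C}$ it would have an accumulation point $x\in\overline{C}\subseteq E$. Since $A$ is closed in~$E$ and $C\cap A\subseteq A$, we get $x\in A$; but $A$ is discrete, so no point of~$A$ can be an accumulation point of $C\cap A\subseteq A$, a contradiction. Hence $C\cap A$ is finite for every $A\in\mathcal{A}[E]$, i.e. $C\in\mathcal{A}[E]^\perp$.

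The reverse inclusion is where the work lies, and it is the step I expect to be the main obstacle, as it is exactly the place where the special cofinality property of $\mathcal{A}_E$ from \cite[Theorem~11]{avi2} is needed. I would prove the contrapositive: if $C\subseteq D$ is not relatively compact in~$E$, then by metrizability there is a sequence in~$C$ with no subsequence converging in~$E$; discarding repetitions, this yields an infinite set $S\subseteq C$ of distinct elements with no accumulation point in~$E$, so $S$ is an infinite closed and discrete subset of~$E$ contained in~$D$. The cofinality property of $\mathcal{A}_E$ then furnishes an infinite $B\subseteq S$ with $B\in\mathcal{A}_E$; since $B\subseteq S\subseteq D$ we have $B\in\mathcal{A}[E]$, and since $B\subseteq C$ the intersection $C\cap B=B$ is infinite, so $C\notin\mathcal{A}[E]^\perp$. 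Combining the two inclusions gives $\mathcal{A}[E]^\perp=\mathcal{K}_E(D)$ as posets, and hence $\mathcal{RK}(\mathcal{B}_{\mathcal{A}[E]}) \sim \mathcal{A}[E]^\perp = \mathcal{K}_E(D) \sim \mathcal{K}(E)$, completing the proof. (The verification that $\mathcal{A}[E]$ is indeed adequate is routine: finiteness of the witnessing subsets for $\mathcal{A}_E$ automatically lands inside~$D$ once $A\subseteq D$, so it may be dispatched in one line.)
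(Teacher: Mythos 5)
Your proposal is correct and takes essentially the same route as the paper's own proof: both combine Lemma~\ref{nulladequate} and Lemma~\ref{relativecompact} with the set-theoretic identity $\mathcal{A}[E]^\perp=\mathcal{K}_E(D)$, which the paper justifies by the equivalence ``$C$ is relatively compact in~$E$ iff $C$ contains no infinite closed discrete set iff $C$ contains no infinite member of~$\mathcal{A}_E$''. Your two inclusions (compactness forces finite intersection with each closed discrete $A\in\mathcal{A}[E]$; non-compactness yields an infinite closed discrete $S\sub C$, to which the cofinality property of $\mathcal{A}_E$ from \cite[Theorem~11]{avi2} is applied) are exactly the two directions of that equivalence, spelled out in more detail.
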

\begin{proof}
By Lemma~\ref{nulladequate} we have $\mathcal{RK}(\mathcal{B}_{\mathcal{A}[E]}) \sim \mathcal{A}[E]^\perp$.
On the other hand, we claim that $\mathcal{A}[E]^\perp$ coincides with the family~$\mathcal{K}_E(D)$
of all subsets of~$D$ which are relatively compact in~$E$. Indeed, a set $C \sub E$
is relatively compact in the metric space~$E$ if and only if $C$ contains no infinite closed and discrete set,
which is equivalent to saying that $C$ contains no infinite element of~$\cA_E$.
Therefore, a set $C \sub D$ belongs to~$\mathcal{K}_E(D)$ if and only if $C \in \mathcal{A}[E]^\perp$, as claimed.
It follows that $\mathcal{RK}(\mathcal{B}_{\mathcal{A}[E]}) \sim \mathcal{K}_E(D)$.
An appeal to Lemma~\ref{relativecompact} finishes the proof.
\end{proof}

\begin{thm}\label{omega1sequence}
If there exists a coanalytic set $E \sub 2^\omega$ of cardinality $\omega_1$, then there is a normalized
unconditional basis $\mathcal{B}$ of a Banach space
such that $\mathcal{RK}(\mathcal{B})\sim [\omega_1]^{<\omega}$.
\end{thm}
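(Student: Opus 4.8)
The plan is to apply Lemma~\ref{nulladequate}: it suffices to build an adequate family $\cA$ on a countable set with $\cA^\perp \sim [\omega_1]^{<\omega}$, and then take $\cB := \cB_\cA$. I would work on the countable tree $2^{<\omega}$ and, for $x\in 2^\omega$, write $b_x := \{x|_n : n<\omega\}$ for the branch determined by $x$ (a chain of $2^{<\omega}$). After the harmless normalisations that $E$ is disjoint from the eventually-zero sequences and, in the interesting case $\omega_1<\con$, contains no perfect set, the aim is to produce an adequate $\cA$ whose orthogonal is
\[
	\cA^\perp = \bigl\{C\sub 2^{<\omega} : C\text{ has no infinite antichain and } |C\cap b_x|<\omega \text{ for all } x\in 2^\omega\sm E\bigr\}.
\]
Informally, $C\in\cA^\perp$ means that every infinite chain inside $C$ converges into $E$ and that $C$ contains no infinite antichain; this mimics relative compactness in the (non-metrizable) space obtained from the tree by adjoining $E$ as a discrete set of admissible limits, which is why Fremlin's Theorem~\ref{Fr91classification} is not in force here.

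Granting this identity, I would first check the two Tukey reductions witnessing $\cA^\perp\sim[\omega_1]^{<\omega}$, purely combinatorially. For $[\omega_1]^{<\omega}\preceq\cA^\perp$ I would use $f(F):=\bigcup_{x\in F}b_x$ for $F\in[E]^{<\omega}$: this set lies in $\cA^\perp$ (a finite union of chains has no infinite antichain, and meets each $b_y$ with $y\notin E$ finitely), and $f$ is Tukey because a member of $\cA^\perp$ can contain $b_x$ for only finitely many $x$ — infinitely many distinct branches inside a single $C$ would yield an infinite antichain, which is forbidden. For the reverse $\cA^\perp\preceq[\omega_1]^{<\omega}$ I would send $C$ to the pair $(F_C,H_C)$ with $F_C:=\{x\in E:|C\cap b_x|=\infty\}$ and $H_C:=C\sm\bigcup_{x\in F_C}b_x$; both are finite (by the same antichain argument for $F_C$, and by Ramsey's theorem together with the definition of $\cA^\perp$ for $H_C$), and the map is Tukey into $[E]^{<\omega}\times[2^{<\omega}]^{<\omega}\sim[\omega_1]^{<\omega}\times\omega\sim[\omega_1]^{<\omega}$, since $\{C:F_C\sub F_0,\ H_C\sub H_0\}$ is bounded above by $\bigcup_{x\in F_0}b_x\cup H_0\in\cA^\perp$.

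The real work, and the main obstacle, is the construction of $\cA$ itself. Its orthogonal is the intersection of two conditions: ``no infinite antichain'', which is the orthogonal of the (adequate) family of all antichains of $2^{<\omega}$; and ``$|C\cap b_x|<\omega$ for every $x\notin E$'', which I would like to realise as the orthogonal of an adequate family of chains that contains an infinite subchain of $b_x$ exactly when $x\in 2^\omega\sm E$, while meeting each $b_x$ with $x\in E$ in a finite set. Here lies the difficulty: adequacy is a finite-character property, and a finite chain cannot detect whether the branch carrying it converges into $E$ or into its (dense, analytic) complement; if one admitted all finite chains the induced family would capture every branch and the orthogonal would collapse to $[\con]^{<\omega}$ rather than $[\omega_1]^{<\omega}$. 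This is precisely the point where the coanalyticity of $E$ must enter through a rank: applying the rank-based construction of \cite[Theorem~11]{avi2} to $E$ furnishes an adequate family whose finite members carry certificates that separate the two kinds of branch. I would then let $\cA$ be the adequate family generated by the antichains together with this rank family, verify that $\cA\sub\{A:|A\cap b_x|<\omega\ \forall x\in E\}$ and that every infinite antichain and every chain converging outside $E$ contains an infinite member of $\cA$ (the cofinality property), and deduce the displayed description of $\cA^\perp$. Combined with the previous paragraph, Lemma~\ref{nulladequate} then gives $\mathcal{RK}(\cB)\sim\cA^\perp\sim[\omega_1]^{<\omega}$, as required.
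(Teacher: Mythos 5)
Your combinatorial work is essentially the paper's own: the displayed description of $\cA^\perp$ matches the paper's (sets covered, modulo a finite set, by finitely many branches $b_x$ with $x\in E$), the map $F\mapsto\bigcup_{x\in F}b_x$ and the splitting $C\mapsto (F_C,H_C)$ are exactly the paper's antichain and Ramsey arguments, and trading the paper's density normalization of~$E$ for the extra finite coordinate $[2^{<\omega}]^{<\omega}\sim\omega$ is legitimate. If the family $\cA$ you postulate existed, the rest of your argument would go through.

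The genuine gap is the construction of $\cA$, which you correctly flag as the real work but then resolve by an appeal that does not type-check. Theorem~11 of \cite{avi2} applied to the coanalytic set $E\sub 2^\omega$ produces an adequate family of closed and discrete subsets \emph{of $E$ itself}: its members are sets of points of $2^\omega$ (branch limits), not subsets of the countable tree, so it cannot directly furnish ``certificates'' attached to finite chains of $2^{<\omega}$, and no mechanism is offered for transporting a coanalytic rank on~$E$ into a finite-character family on the nodes --- that transport is precisely the missing content. The paper's device is to change the ambient space \emph{before} invoking \cite[Theorem~11]{avi2}: endow $K:=2^{<\omega}\cup 2^\omega$ with the compact metrizable topology induced by the injection $t\mapsto\bigl(t\smallfrown{\bf 0},({\rm length}(t)+1)^{-1}\bigr)$, $\sigma\mapsto(\sigma,0)$, into $2^\omega\times\R$, so that every node is isolated and each branch $c(\sigma)=\{\sigma|_n:n<\omega\}$ converges to~$\sigma$. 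Then $E':=2^{<\omega}\cup E$ is coanalytic in~$K$, and \cite[Theorem~11]{avi2} applied to~$E'$ yields an adequate family $\cA_{E'}$ of closed discrete subsets of~$E'$ (which one may assume contains all singletons) such that: every member meets each branch $c(\sigma)$ with $\sigma\in E$ in a finite set, since such a branch accumulates at $\sigma\in E'$; and every infinite subset of a branch $c(\sigma)$ with $\sigma\notin E$ is closed and discrete in~$E'$, hence contains an infinite member of~$\cA_{E'}$ by the cofinality property. Setting $\cA:=\{A\sub 2^{<\omega}:\,A\cap c(\sigma)\in\cA_{E'}\mbox{ for all }\sigma\in 2^\omega\}$ gives an adequate family that contains all antichains and realizes exactly the branch dichotomy you wanted, after which your orthogonal computation and Tukey reductions apply. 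Without this auxiliary compactification (or an equivalent device), your proof is incomplete at its central step.
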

\begin{proof}
We can suppose that $E$ is dense in~$2^\omega$.
Let us consider the set $K := 2^{<\omega} \cup 2^\omega$ equipped with the compact metrizable topology
induced by the one-to-one mapping $f:K\to 2^\omega \times \mathbb{R}$ defined by
\begin{itemize}
\item $f(\sigma) := (\sigma,0)$ for all $\sigma\in 2^\omega$;
\item $f(t) := (t\smallfrown {\bf 0},({\rm length}(t)+1)^{-1})$ for all $t\in 2^{<\omega}$.
\end{itemize}
Note that the topology inherited by $2^\omega \sub K$ is the usual one.
Then $E' := 2^{<\omega} \cup E$ is a coanalytic subset of~$K$. Let $\mathcal{A}_{E'}$ be an
adequate family of closed and discrete subsets of~$E'$ such that,
for every infinite closed and discrete set $A\sub E'$, there is an infinite set $B\sub A$ such that $B\in \mathcal{A}_{E'}$
(apply \cite[Theorem 11]{avi2}). Of course, we can suppose that $\mathcal{A}_{E'}$ contains all singletons.
For every $\sigma\in 2^\omega$ we denote $c(\sigma) := \{\sigma|_n : n<\omega\}\sub 2^{<\omega}$.
We define an adequate family of subsets of~$2^{<\omega}$ by
$$
	\mathcal{A}:= \{A\subset 2^{<\omega} :\, A\cap c(\sigma) \in \mathcal{A}_{E'} \text{ for all }\sigma\in 2^\omega \}.
$$
We shall show that the unconditional basis $\mathcal{B}_{\mathcal{A}}$ is the one that we are looking for.
By Lemma~\ref{nulladequate} we have $\mathcal{RK}(\mathcal{B}_{\mathcal{A}}) \sim \mathcal{A}^\perp$, so it is enough
to prove that $\mathcal{A}^\perp \sim [E]^{<\omega}$.

We next check the following equality
\begin{equation}\label{eqn:AVILES}
	\mathcal{A}^\perp = \Bigl\{A\sub 2^{<\omega} : \, A\sub \bigcup_{\sigma\in F} c(\sigma) \mbox{ for some finite set } F \sub E \Bigr\}.
\end{equation}
For the inclusion ``$\supseteq$'' it suffices to prove that $c(\sigma)\in \cA^\perp$ for every $\sigma\in E$. Given any $A\in \cA$,
we have $A \cap c(\sigma) \in \cA_{E'}$ and so $A \cap c(\sigma)$ is closed in~$E'$. Since the sequence
$(\sigma|_n)$ converges to~$\sigma \in E' \setminus A\cap c(\sigma)$ in the topology of~$K$, the set $A\cap c(\sigma)$ is finite.

We divide the proof of the inclusion ``$\subseteq$'' in~\eqref{eqn:AVILES} into several steps. Fix $A\in \cA^\perp$.

{\em Step~1.} Let $S$ be the set of all $\sigma\in 2^\omega$ for which $A\cap c(\sigma)$ is infinite. We claim that $S$ is finite.
Our proof is by contradiction. If $S$ is infinite, then we can find a sequence $(\sigma^k)$ in~$S$ converging to some
$\sigma\in 2^\omega$ with $\sigma\neq \sigma^k$ for all $k<\omega$. Write
$$
	n_k:=\min\{n<\omega: \, \sigma^k(n)\neq \sigma(n)\}\quad \mbox{for every }k<\omega.
$$
By passing to a further subsequence, we can suppose that $n_k<n_{k+1}$
for all $k<\omega$. Now, we can pick $t^k \in A \cap c(\sigma^k)$ with ${\rm length}(t^k)>n_k$
(since $A\cap c(\sigma^k)$ is infinite)
for every $k<\omega$. Notice that $B:=\{t^k:k<\omega\} \sub A$ is an antichain,
because for every $k<l<\omega$ we have
$$
	t^l(n_k)=\sigma^l(n_k)=\sigma(n_k) \quad\mbox{and}\quad
	t^k(n_k)=\sigma^k(n_k)\neq \sigma(n_k).
$$
Therefore, $|B\cap c(\tau)|\leq 1$ for every $\tau\in 2^\omega$. Since all singletons of~$2^{<\omega}$
belong to~$\mathcal{A}_{E'}$, we have $B\in \mathcal{A}$, which contradicts that $A\in \mathcal{A}^\perp$, finishing the proof of Step~1.

{\em Step~2.} $S\sub E$. Indeed, suppose that there is $\sigma\in S\setminus E$. Since $A\cap c(\sigma)$ is infinite
and the sequence $(\sigma|_n)$ converges to~$\sigma \in K\setminus E'$, we have that $A\cap c(\sigma)$
is a closed and discrete subset of~$E'$. Therefore, there is an infinite set $B\sub A\cap c(\sigma)$ such that $B\in \mathcal{A}_{E'}$.
Bearing in mind that $\cA$ is hereditary, we conclude that
$B\in \mathcal{A}$, and this contradicts again that $A\in\mathcal{A}^\perp$.

{\em Step~3.} The set $B:=A\setminus \bigcup_{\sigma\in S}c(\sigma)$ is finite. Again, our proof
is by contradiction. If $B$ is infinite, then Ramsey's theorem (see e.g. \cite[Theorem~9.1]{jec})
ensures that $B$ contains either an infinite chain or an infinite antichain of~$2^{<\omega}$. The first case is not possible
(because $B\cap c(\sigma)$ is finite for every $\sigma\in 2^\omega$), so there exists an infinite antichain $B_0$
contained in~$B$. Since $|B_0\cap c(\sigma)|\leq 1$ for every $\sigma\in 2^\omega$,
we get $B_0\in\mathcal{A}$ (bear in mind that all singletons of~$2^{<\omega}$ belong to $\mathcal{A}_{E'}$). This is a contradiction,
because $B_0$ is infinite and $B_0 \sub B \sub A\in \cA^\perp$.

Finally, since $E$ is dense in~$2^\omega$, we have $2^{<\omega} \sub \bigcup_{\sigma\in E}c(\sigma)$. Let $S_1 \sub E$
be a finite set such that $B \sub \bigcup_{\sigma\in S_1}c(\sigma)$. Then $S\cup S_1$ is a finite subset of~$E$
such that $A\sub \bigcup_{\sigma\in S \cup S_1}c(\sigma)$. This finishes the proof of~\eqref{eqn:AVILES}.

Finally, note that equality~\eqref{eqn:AVILES} allows us to define a function
$f: [E]^{<\omega} \to \cA^\perp$ by $f(F):=\bigcup_{\sigma\in F}c(\sigma)$
and a function $g:\cA^\perp \to [E]^{<\omega}$ such that
$A \sub \bigcup_{\sigma\in g(A)}c(\sigma)$ for every $A\in \cA^\perp$. Clearly,
both $f$ and $g$ are Tukey functions, so $\cA^\perp \sim [E]^{<\omega}$ and the proof is over.
\end{proof}

There exists a model of set theory where the axioms $MA_{\aleph_1}$ and Lusin's hypothesis~\textbf{L}
(every subset of cardinality $\omega_1$ of a Polish space is coanalytic) both
hold~\cite{mar-sol}. In such a model, the hypothesis of Theorem~\ref{omega1sequence} holds, and moreover
$[\omega_1]^{<\omega}$ is not Tukey equivalent to any of $\{0\}$, $\omega$, $\omega^\omega$, $\mathcal{K}(\mathbb{Q})$
or $[\mathfrak{c}]^{<\omega}$, because the cofinality of~$[\omega_1]^{<\omega}$ equals~$\aleph_1$, while the other
posets have cofinality either $\aleph_0$, or $\mathfrak{d}$ or $\mathfrak{c}$ (see Section~\ref{section:KBX}),
but under $MA_{\aleph_1}$ we have $\aleph_1<\mathfrak{d}$.

\section{Open problems}\label{section:Problems}

In this final section we collect some questions which we were not able to answer.

\begin{problem}
Is it true that $\AK(X) \sim \AK(B_X)$ for every non-reflexive $X$?
\end{problem}

We showed in Proposition~\ref{KXKBX} that
$\omega^\omega$ was the lowest possible nontrivial value of the Tukey class of~$\mathcal{K}(B_X)$.
In the case of $\mathcal{AK}(B_X)$ we can get also~$\omega$, but still we might ask if $\omega^\omega$
is the lowest possible value after $\omega$.

\begin{problem}\label{ns:1}
Is it true that for every Banach space $X$, either
$\mathcal{AK}(B_X)\preceq\omega$ (i.e. $X$ is SWCG) or else $\omega^\omega\preceq \mathcal{AK}(B_X)$?
\end{problem}

We next give a consistent non-separable counterexample
using cardinal invariants, but the separable case remains open for us.

\begin{exa}\label{exa:non-sep-l1}
\rm The space $X=\ell^1(\omega_1)$ is not SWCG and so $\cf(\mathcal{AK}(B_X))\geq \omega_1$.
By Lemma~\ref{lem:cofinal-l1sum} we have $\mathcal{AK}(B_X) \preceq [\omega_1]^{<\omega}$, hence
$\cf(\mathcal{AK}(B_X))= \omega_1$. On the other hand, $\cf(\omega^\omega)={\mathfrak d}$, so $\omega^\omega \not\preceq \AK(B_X)$
whenever $\omega_1<{\mathfrak d}$.
\end{exa}

Note that Lemma~\ref{lem:LV} gives an affirmative answer to the previous question for separable spaces satisfying
$\mathcal{AK}(B_X)\preceq\omega^\omega$.

The main open questions related to this work focus on the validity of Theorems~\ref{PDKclassification} and~\ref{introAKBXclassification}
in the absence of analytic determinacy:

\begin{problem}\label{problem:main}
Is it relatively consistent that  there is a non-reflexive separable Banach space $X$ such that
$\cK(B_X)$ is neither Tukey equivalent to $\omega^\omega$ nor to $\cK(\qu)$ nor to $[\con]^{<\omega}$?
\end{problem}

\begin{problem}\label{problem:main2}
Is it relatively consistent that there is a non-reflexive separable Banach space $X$ such that
$\mathcal{AK}(B_X)$ is neither Tukey equivalent to $\omega^\omega$ nor to $\cK(\qu)$ nor to $[\con]^{<\omega}$?
\end{problem}

Note that such a Banach space would necessarily contain~$\ell^1$, by Theorem~\ref{separabledual}.
We believe that the contruction of Theorem~\ref{omega1sequence} should also provide a consistent affirmative answer
to Problems~\ref{problem:main} and~\ref{problem:main2}, but we were not able to prove it.

{\subsection*{Acknowledgements}
We are very grateful to the referee for a careful reading of the manuscript and his/her valuable comments, including several corrections.
We also thank Michael Hrusak for a valuable suggestion.
A. Avil\'{e}s and J. Rodr\'{i}guez were partially supported by
the research projects MTM2011-25377 and MTM2014-54182-P funded by {\em Ministerio de Econom\'{i}a y Competitividad - FEDER}
and the research project 19275/PI/14 funded by {\em Fundaci\'{o}n S\'{e}neca - Agencia de Ciencia y Tecnolog\'{i}a
de la Regi\'{o}n de Murcia} within the framework of {\em PCTIRM 2011-2014}.
G. Plebanek was partially supported by NCN grant 2013/11/B/ST1/03596 (2014-2017).}

\end{document}